\title[Dimer FPUT Nanopterons with Exponentially Small, Nonvanishing Ripples]{Mass and Spring Dimer Fermi--Pasta--Ulam--Tsingou Nanopterons with Exponentially Small, Nonvanishing Ripples}
\author{Timothy E. Faver}
\address{Department of Mathematics, Kennesaw State University, 850 Polytechnic Lane, Marietta, GA 30060 USA, {\tt{tfaver1@kennesaw.edu}}}
\author{Hermen Jan Hupkes}
\address{Mathematical Institute, Universiteit Leiden, P.O. Box 9512, 2300 RA Leiden, The Netherlands, {\tt{hhupkes@math.leidenuniv.nl}}}
\keywords{FPU, FPUT, dimer, diatomic lattice, mass dimer, spring dimer, heterogeneous, granular media, traveling wave, nanopteron, periodic ripple, spatial dynamics, reversible bifurcation}
\subjclass[2020]{Primary 37K40; Secondary 35C07, 37K50, 37K60}
\date{\today}
\begin{document}

\begin{abstract}
We study traveling waves in mass and spring dimer Fermi--Pasta--Ulam--Tsingou (FPUT) lattices in the long wave limit. 
Such lattices are known to possess nanopteron traveling waves in relative displacement coordinates. 
These nanopteron profiles consist of the superposition of an exponentially localized ``core,'' which is close to a KdV solitary wave, and a periodic ``ripple,'' whose amplitude is small beyond all algebraic orders of the long wave parameter, although a zero amplitude is not precluded. 
Here we deploy techniques of spatial dynamics, inspired by results of Iooss and Kirchg\"{a}ssner, Iooss and James, and Venney and Zimmer, to construct mass and spring dimer nanopterons whose ripples are both exponentially small and also nonvanishing. 
We first obtain ``growing front'' traveling waves in the original position coordinates and then pass to relative displacement. 
To study position, we recast its traveling wave problem as a first-order equation on an infinite-dimensional Banach space; then we develop hypotheses that, when met, allow us to reduce such a first-order problem to one solved by Lombardi. A key part of our analysis is then the passage back from the reduced problem to the original one. 
Our hypotheses free us from working strictly with lattices but are easily checked for FPUT mass and spring dimers. 
We also give a detailed exposition and reinterpretation of Lombardi's methods, to illustrate how our hypotheses work in concert with his techniques, and we provide a dialogue with prior methods of constructing FPUT nanopterons, to expose similarities and differences with the present approach.
\end{abstract}

\maketitle

\section{Introduction}

\subsection{The polyatomic FPUT lattice}
A polyatomic, or polymer, Fermi--Pasta--Ulam--Tsingou (FPUT) lattice \cite{fput-original, dauxois, gmwz, pankov,vainchtein-survey} is an infinite, one-dimensional chain of particles, connected on the left and right to their nearest neighbors by springs, with motion constrained to a horizontal axis.
We index the particles and springs by integers $j \in \Z$; the $j$th spring connects the $j$th particle on the left to the $(j+1)$st particle on the right.
Let $m_j$ be the mass of the $j$th particle, let $\V_j$ be the potential of the $j$th spring, and let $\ell_j$ be the equilibrium length of the $j$th spring, so that the $j$th spring exerts the force $\V_j'(r-\ell_j)$ when stretched a distance $r$ along this horizontal axis. 

If $y_j$ denotes the position of the $j$th particle along the axis, relative to some fixed origin point, then Newton's second law gives the lattice's equations of motion:
\begin{equation}\label{eqn: original equations of motion with ell-j}
m_j\ddot{y}_j
= \V_j'(y_{j+1}-y_j-\ell_j) - \V_{j-1}'(y_j-y_{j-1}-\ell_{j-1}).
\end{equation}
We may assume that the equilibrium lengths are all 0 by making the change of variables
\begin{equation}\label{eqn: equilibrium length cov}
y_j = \begin{cases}
u_j + \medsum_{k=0}^{j-1} \ell_k, & j \ge 1 \\
u_0, & j = 0 \\
u_j-\medsum_{k=j}^{-1} \ell_k, & j \le -1
\end{cases}
\end{equation}
to find
\begin{equation}\label{eqn: rel disp y and u}
y_{j+1}-y_j - \ell_j
= u_{j+1}-u_j.
\end{equation}
Then $y_j$ satisfies \eqref{eqn: original equations of motion with ell-j} if and only if $u_j$ satisfies
\begin{equation}\label{eqn: original equations of motion}
m_j\ddot{u}_j 
= \V_j'(r_j) - \V_{j-1}'(r_{j-1}),
\end{equation}
where 
\begin{equation}\label{eqn: rel disp coords}
r_j 
:= u_{j+1} - u_j
\end{equation}
are the classical relative displacement coordinates.
In terms of relative displacement, the equations of motion are
\begin{equation}\label{eqn: rel disp eqns}
\ddot{r}_j
= \frac{1}{m_{j+1}}\V_{j+1}'(r_{j+1}) - \left(\frac{1}{m_j}+\frac{1}{m_{j+1}}\right)\V_j'(r_j) + \frac{1}{m_j}\V_{j-1}'(r_{j-1}).
\end{equation}

While most studies of lattice dynamics concentrate on relative displacement, we will construct solutions both in position coordinates \eqref{eqn: original equations of motion} and relative displacement \eqref{eqn: rel disp eqns}.
We state these main results in Theorems \ref{thm: main theorem mass dimer} and \ref{thm: main theorem spring dimer}, discuss the prior history of our problems of interest in Section \ref{sec: prior nanopteron results}, present our primary motivations for the current research in Section \ref{sec: guiding questions}, and give an overview of our methods in Section \ref{sec: spatial dynamics method}.

\begin{remark}
What we call ``position'' coordinates $u_j$ are really ``displacement from equilibrium'' coordinates, due to the change of variables \eqref{eqn: equilibrium length cov}.
Nonetheless, we will retain the ``position'' terminology so as not to overwork the noun ``displacement.''
\end{remark}

We now impose further structure on our material data by assuming that the masses of the particles and the potentials of the springs repeat with some finite periodicity. 
That is, for some integer $N \ge 1$ we have
\begin{equation}\label{eqn: N-periodic data}
m_{j+N} = m_j,
\qquad
\V_{j+N} = \V_j,
\quadword{and}
\ell_{j+N} = \ell_j, \ j \in \Z,
\end{equation}
in which case we might say that the lattice is $N$-polyatomic.
These material values will be fixed throughout our analysis; other studies permit the masses or springs to vary in certain limits, which we briefly discuss in Section \ref{sec: prior nanopteron results}.
If $N = 1$, then the lattice is called monatomic, and if $N = 2$, then the lattice is a (general) dimer.

From now on we will focus on dimers, and we will eventually specialize to two species of dimer: the mass dimer (or diatomic lattice), in which $\V_j = \V$ for some single potential function $\V$ and $m_1 \ne m_2$ are distinct, and the spring dimer, in which $m_j = m$ for some single mass value $m > 0$ and $\V_1 \ne \V_2$ are distinct.
We sketch the ``general'' dimer in Figure \ref{fig: general dimer}.

\begin{figure}
\[
\begin{tikzpicture}

\def\arith#1#2#3{
#3*#1+#3*#2
};

\def\squaremass#1#2#3{
\draw[fill=blue, opacity=.4, draw opacity=1,thick]
(#1,#2) rectangle (#1+2*#2,-#2);
\node at (#1+#2,0){$\boldsymbol{#3}$};
};

\def\circlemass#1#2#3{
\draw[fill=yellow,thick] (#1,0)node{$\boldsymbol{#3}$} circle(#2);
};

\def\spring#1#2#3#4{
\draw[line width = 1.5pt] (#1,0)
--(#1 + #2,0)
--(#1 + #2 + #3, #4)
--(#1 + #2 + 3*#3,-#4)
--(#1 + #2 + 5*#3,#4)
--(#1 + #2 + 7*#3,-#4)
--(#1 + #2 + 9*#3,#4)
--(#1 + #2 + 10*#3,0)
--(#1 + 2*#2 + 10*#3,0);
};

\def\d{.15}; 
\def\mh{1}; 
\def\sh{.25}; 
\def\swl{.15}; 
\def\r{.35}; 
\def \sl#1{\arith{2*\d}{10*\swl}{#1}}; 


\def\h{.45}; 
\def\M{.88}; 
\def\N{.3009}; 
\def\E{.2}; 


\def\coil#1{ 
{\N+\M*\t+\E*sin(4*\t*pi r)+#1},
{\h*cos(4*\t*pi r)}
}


\def\NLSPRING#1{
\draw[line width=1.5pt,domain={-.125:1.125},smooth,variable=\t,samples=100]
(#1,0)--plot(\coil{#1+\d})
--(#1+\sl{1},0);
}

\def\labeldown#1#2{
\draw[densely dotted,thick] (#1,-\mh/2-\d)--(#1,-\mh/2-\d-.75)node[below]{#2}
};

\def\brace#1#2{
\draw[decoration={brace, amplitude = 10pt,mirror},decorate,thick] 
(#1,-\mh/2-\d)--node[midway,below=7pt]{#2}(#1+\sl{1}+\mh/2+\r,-\mh/2-\d)
}


\fill[yellow] (-\r,\r) arc(90:-90:\r)--cycle;
\draw[thick] (-\r,\r) arc(90:-90:\r);


\NLSPRING{0};


\squaremass{\sl{1}}{\mh/2}{m};


\spring{\sl{1}+\mh}{\d}{\swl}{\sh};


\circlemass{\sl{2}+\mh+\r}{\r}{1};


\NLSPRING{\sl{2}+\mh+2*\r}:


\squaremass{\sl{3}+\mh+2*\r}{\mh/2}{m};


\spring{\sl{3}+2*\mh+2*\r}{\d}{\swl}{\sh};


\circlemass{\sl{4}+2*\mh+3*\r}{\r}{1};


\NLSPRING{\sl{4}+2*\mh+4*\r};


\fill[blue,opacity=.4]
(\sl{5}+2*\mh+4*\r,\mh/2) rectangle (\sl{5}+2*\mh+4*\r+\mh/2,-\mh/2);

\draw[thick] (\sl{5}+2*\mh+4*\r+\mh/2,\mh/2)
--(\sl{5}+2*\mh+4*\r,\mh/2)
--(\sl{5}+2*\mh+4*\r,-\mh/2)
--(\sl{5}+2*\mh+4*\r+\mh/2,-\mh/2);


\labeldown{\sl{1}+\mh/2}{$u_{j-1}$};
\labeldown{\sl{2}+\mh+\r}{$u_j$};
\labeldown{\sl{3}+3*\mh/2+2*\r}{$u_{j+1}$};
\labeldown{\sl{4}+2*\mh + 3*\r}{$u_{j+2}$};

\brace{\sl{1}+\mh/2}{$r_{j-1}$};
\brace{\sl{2}+\mh+\r}{$r_j$};
\brace{\sl{3}+3*\mh/2+2*\r}{$r_{j+1}$};

\end{tikzpicture}
\]
\caption{An FPUT general dimer (masses normalized to 1 and to $m \ne 1$)}
\label{fig: general dimer}
\end{figure}

We assume that the spring potentials $\V_1$ and $\V_2$ are real analytic with $\V_j'(0) > 0$ for $j=1$, 2 and $\V_j''(0) \ne 0$ for at least one $j$.
Routine nondimensionalizations (see \cite[Sec.\@ 1]{faver-wright} and \cite[Sec.\@ 1.2]{faver-spring-dimer}) allow us to write
\begin{equation}\label{eqn: dimer masses}
m_j
= \begin{cases}
1, &j \text{ is odd} \\
1/w, &j \text{ is even}
\end{cases}
\end{equation}
for some $w > 0$ and
\begin{equation}\label{eqn: dimer springs}
\V_j'(r)
= \begin{cases}
r + r^2 + \Vscr_1(r), &j \text{ is odd} \\
\kappa{r} + \beta{r}^2 + \Vscr_2(r), &j \text{ is even},
\end{cases}
\qquad
\lim_{r \to 0} \frac{\Vscr_1(r)}{r^2} = \lim_{r \to 0} \frac{\Vscr_2(r)}{r^2} = 0.
\end{equation}

We require $\kappa > 0$ but do not (yet) place any restrictions on $\beta$.
To ensure that the lattice is a dimer, then, we will always take $w \ne 1$ or $\kappa \ne 1$.
Moreover, by relabeling the original nondimensionalized lattice, we may always assume that at least one of $w$ or $\kappa$ is larger than~1.
Finally, when discussing a mass dimer, we will write the potential as 
\[
\V(r)
= r+r^2+\Vscr(r), \qquad \lim_{r \to 0} \frac{\Vscr(r)}{r^2} = 0.
\]

\begin{remark}
Our convention with the dimer's material data in \eqref{eqn: dimer masses} and \eqref{eqn: dimer springs} has been to normalize the ``odd'' parameters to 1, while the ``even'' parameters vary.
This is how \cite{faver-wright} treated the mass dimer, but \cite{faver-spring-dimer} used the reverse convention for the spring dimer.
One should keep this in mind when comparing our set-up, in particular our relative displacement traveling wave problem \eqref{eqn: rel disp tw eqns}, and results to theirs.
\end{remark}

\begin{remark}
The assumption of real analytic spring potentials is not so stringent a restriction as it might seem when we consider that most studies assume a high order of regularity for the potentials. 
For example, in the solitary wave papers \cite{friesecke-pego1, faver-goodman-wright} the potentials are at least $\Cal^5$, while in the nanopteron papers \cite{faver-wright, faver-spring-dimer, faver-hupkes-equal-mass} they are $\Cal^{\infty}$.
We discuss in Remark \ref{rem: why Cinfty potentials} the utility of $\Cal^{\infty}$-potentials for these latter papers and in Remark \ref{rem: why Lombardi real analytic} the technically essential reason why we demand real analytic potentials in the current approach.
\end{remark}

\subsection{Traveling waves in dimers}
We will work in two coordinate systems.
First, we can remain in position coordinates and make the traveling wave ansatz
\begin{equation}\label{eqn: position tw ansatz}
u_j(t)
= \begin{cases}
p_1(j-ct), &j \text{ is odd} \\
p_2(j-ct), &j \text{ is even.}
\end{cases}
\end{equation}
Here the profiles $p_1$ and $p_2$ are functions of a single real variable and $c \in \R$ is the wave speed.
Then we obtain the system of advance-delay differential equations
\begin{equation}\label{eqn: position tw eqns}
\begin{cases}
c^2p_1'' = \V_1'(S^1p_2-p_1)-\V_2'(p_1-S^{-1}p_2) \\
c^2p_2'' = w\V_2'(S^1p_1-p_2)-w\V_1'(p_2-S^{-1}p_1).
\end{cases}
\end{equation}
For $d \in \R$, the operator $S^d$ is the ``shift-by-$d$'' operator $(S^dp)(x) := p(x+d)$.

Although we will work primarily in the position traveling wave coordinates \eqref{eqn: position tw ansatz}, this is not the typical framework in the literature, which instead largely studies traveling waves in relative displacements.
For that, put
\begin{equation}\label{eqn: rel disp tw ansatz}
r_j(t)
= \begin{cases}
\varrho_1(j-ct), &j \text{ is odd} \\
\varrho_2(j-ct), &j \text{ is even}
\end{cases}
\end{equation}
to find that from the relative displacement equations \eqref{eqn: rel disp eqns}, the new profiles $\varrho_1$ and $\varrho_2$ must satisfy 
\begin{equation}\label{eqn: rel disp tw eqns}
\begin{cases}
c^2\varrho_1'' = -(1+w)\V_1'(\varrho_1)+(wS^1+S^{-1})\V_2'(\varrho_2) \\
c^2\varrho_2'' = (S^1+wS^{-1})\V_1'(\varrho_1)-(1+w)\V_2'(\varrho_2).
\end{cases}
\end{equation}

Any solution to the position traveling wave problem \eqref{eqn: position tw eqns} yields a solution to the relative displacement traveling wave problem \eqref{eqn: rel disp tw eqns} via the definition of relative displacement.
Specifically, if the pair $(p_1,p_2)$ solves \eqref{eqn: position tw eqns}, then the pair $(\varrho_1,\varrho_2)$ defined by 
\begin{equation}\label{eqn: tw profile rels}
\varrho_1 := S^1p_2 - p_1 
\quadword{and}
\varrho_2 := S^1p_1-p_2
\end{equation}
solves \eqref{eqn: rel disp tw eqns}.
This follows by factoring $S^{-1}$ out of the second term on the right in each equation in \eqref{eqn: position tw eqns}, e.g., $\V_2'(p_1-S^{-1}p_2) = S^{-1}\V_2'(S^1p_1-p_2)$.
However, a relative displacement solution does not necessarily guarantee a position solution; see Question \ref{ques: pos from rel disp?} below.

\subsection{Prior nanopteron results for dimers}\label{sec: prior nanopteron results}
Now that we possess an adequate vocabulary and notation for traveling wave problems in dimers, we can discuss our historical motivation and then, in the proper context, at last state our results.
Our interest in traveling waves in dimers arises from the fact that over long times, (solitary wave) solutions to certain KdV equations are very good approximations to solutions in relative displacement coordinates of \eqref{eqn: rel disp eqns}.
More precisely, Gaison, Moskow, Wright, and Zhang \cite{gmwz} used techniques from homogenization theory to prove the estimate
\begin{equation}\label{eqn: gmwz est}
\sup_{|t| \le T_0\ep^{-3}} \big|r_j(t) - \ep^2\Ksf_+(\ep(j+\cs{t})) - \ep^2\Ksf_-(\ep(j-\cs{t}))\big|
\le C\ep^{5/2}
\end{equation}
for solutions $r_j$ to \eqref{eqn: rel disp eqns}.
The functions $\Ksf_{\pm}$ are solutions of certain KdV equations whose coefficients are derived from the material data \eqref{eqn: N-periodic data} of the lattice, and $C$ and $T_0$ are (material-dependent) constants.
The $\ep$-dependent scaling $\ep^2\Ksf_{\pm}(\ep\cdot)$ is the classical ``long wave'' scaling; see \cite[Sec.\@ 1.2]{schneider-wayne-water-wave} for a concise historical overview.
The wave speed $\cs = \cs(\kappa,w)$ is the lattice's ``speed of sound,'' and we define it precisely for the dimer in terms of $w$ and $\kappa$ in \eqref{eqn: cs defn}.

The estimate \eqref{eqn: gmwz est} is actually valid for all ``$N$-polyatomic'' lattices satisfying the $N$-periodicity conditions of \eqref{eqn: N-periodic data} and some additional technical hypotheses; in particular, \eqref{eqn: gmwz est} holds for monatomic lattices and dimers without any further hypotheses.
Chirilus-Bruckner, Chong, Prill, and Schneider \cite{chirilus-bruckner-etal} obtained a similar estimate using Bloch wave transforms.
A version of this estimate was first proved by Schneider and Wayne in \cite{schneider-wayne} for monatomic lattices.
See \cite[Thm.\@ 5.2]{gmwz} and \cite[Thm.\@ 1.3]{pankov} for proofs in the framework of differential equations on $\ell^2(\Z)$ that such relative displacement solutions $r_j$ exist in the first place.

A natural question, then, is if these KdV approximate ``solutions'' extend to solitary wave solutions of \eqref{eqn: rel disp eqns} for all time.
Friesecke and Pego \cite{friesecke-pego1} answered this question affirmatively for the monatomic lattice.
Their solutions have the form
\begin{equation}\label{eqn: intro mono sol}
r_j(t)
= \ep^2\varsigma(\ep(j-\cep{t})) + \ep^3\hsf_{\ep}(\ep(j-\cep{t})),
\end{equation}
where $\varsigma$ is a rescaled $\sech^2$-type solution to a certain KdV equation that naturally arises as the monatomic lattice's ``continuum limit.''
The higher-order remainder $\hsf_{\ep} = \hsf_{\ep}(X)$ is exponentially localized in $X$; that is, there exist $C$, $q > 0$ such that $|\hsf_{\ep}(X)| \le Ce^{-q|X|}$ for all $X \in \R$.
The wave speed $\cep$ is ``near-sonic'' in the sense that $\cep^2 = \cs^2 + \O(\ep^2)$.
Friesecke and Wattis \cite{friesecke-wattis} also obtained solitary waves in monatomic lattices using variational techniques.

The results for dimers are rather different.
Via an ansatz established by Beale \cite{beale} for the capillary-gravity water wave problem and techniques refined by Amick and Toland \cite{amick-toland} for a singularly perturbed KdV-type equation, originally derived in \cite{hunter-scheurle}, Faver and Wright proved in \cite{faver-wright} that the mass dimer has relative displacement traveling waves of the form
\begin{equation}\label{eqn: intro dimer nano}
r_j(t)
= \ep^2\varsigma_w(\ep(j-\cep{t}))
+\ep^3\hsf_j^{\ep}(\ep(j-\cep{t})) 
+ \ep^2a_{\ep}\psf_j^{\ep}(\ep(j-\cep{t})).
\end{equation}
As before, $\varsigma_w$ is a scaled $\sech^2$-type solution to a KdV equation that is the dimer's ``continuum limit''; we give the precise formula for $\varsigma_w$ in \eqref{eqn: varsigma-w}.
The wave speed is $\cep^2 = \cs^2+\O(\ep^2)$, where now $\cs$ is the dimer's speed of sound.
Both $\hsf_j^{\ep} = \hsf_j^{\ep}(X)$ and the new term $\psf_j^{\ep} = \psf_j^{\ep}(X)$ are smooth in $X$ and $2$-periodic in $j$.

Although the remainder term $\hsf_j^{\ep}$ is exponentially localized in $X$ as in the Friesecke-Pego solution, the new term $\psf_j^{\ep}$ is periodic in $X$ with frequency $\O(\ep^{-1})$.
Moreover, taking
\begin{equation}\label{eqn: per profs FW}
r_j(t)
= \ep^2a_{\ep}\psf_j^{\ep}(\ep(j-\cep{t}))
\end{equation}
independently solves the relative displacement traveling wave problem \eqref{eqn: rel disp tw eqns}.

The ``amplitude'' coefficient $a_{\ep}$ is small beyond all algebraic orders of $\ep$ in the sense that 
\begin{equation}\label{eqn: small BAAO}
\lim_{\ep \to 0} \ep^{-r}a_{\ep}
= 0
\end{equation}
for all $r > 0$.
Thus the traveling wave \eqref{eqn: intro dimer nano} is a nanopteron, not a solitary wave like \eqref{eqn: intro mono sol}, unless $a_{\ep} = 0$.
Boyd \cite{boyd} coined the term ``nanopteron'' to denote a ``nonlocal'' or ``generalized'' solitary wave whose profile consists of the superposition of an exponentially localized ``core'' and extremely small asymptotic  ``ripples.''
We sketch the nanopteron from \eqref{eqn: intro dimer nano} in Figure \ref{fig: nanopteron}.

A largely similar result holds for relative displacement traveling waves in spring dimers \cite{faver-spring-dimer}; see Section \ref{sec: faver wright comparisons} for a more detailed leading order expansion.
For the sake of contrast with our methods in this paper, we outline in Appendix \ref{app: Beale} the ``Beale's ansatz'' method used in \cite{faver-wright, faver-spring-dimer}.

Beale's ansatz has facilitated the construction of traveling wave solutions to a variety of problems beyond the dimer long wave scenario.
Hoffman and Wright \cite{hoffman-wright} used it to construct nanopterons in the dimer ``small mass'' limit, in which $c$ is now fixed while $m \to 0^+$, thereby making the dimer monatomic.
Faver and Hupkes \cite{faver-hupkes-equal-mass} subsequently produced micropterons (the superposition of a localized core and an algebraically, but not exponentially, small periodic ripple) in the dimer ``equal mass'' limit, in which $c$ is again fixed and $m \to 1$.
See \cite[Sec.\@ 1.2]{faver-hupkes-numerics} for a succinct comparison of the long wave, small mass, and equal mass scenarios.
Faver \cite{faver-mim-nanopteron} studied a related small mass limit for mass-in-mass (MiM) lattices, which are, roughly, monatomic FPUT lattices in which each particle is paired with an internal resonator \cite{cpkd}.
There are numerous technical differences among these problems, on which we do not dwell here, but all three share an obvious difference from the long wave results in \cite{faver-wright, faver-spring-dimer}: the exponentially localized core of the traveling waves in these three ``material'' limits is always $\O(1)$ in the problem's relevant small parameter.
In contrast, as \eqref{eqn: intro dimer nano} specifies, the dimers' long wave core is $\O(\ep^2)$.
However, Johnson and Wright \cite{johnson-wright} used Beale's ansatz to study the gravity-capillary Whitham equation in the long wave limit, and their core was also $\O(\ep^2)$.
This small core naturally arises from the long wave scaling, which is, of course, not present in the material limits.  
It is quite interesting to note that Johnson, Truong, and Wheeler \cite{johnson-truong-wheeler} also studied the gravity-capillary Whitham equation using a nonlocal center manifold reduction and found, effectively, micropterons; they obtained an $\O(\ep^2)$ core but an explicit nonzero $\O(\ep)$ leading order term for their periodic ripples.

Exact traveling waves are far from the only area of interest for FPUT lattices.
We mention just a handful of related results here and discuss others in the context of future problems in Section \ref{sec: future directions}.
Pelinovsky-Schneider \cite{pelinovsky-schneider} considered the dimer small mass limit as an initial value problem in $\ell^2(\Z)$-type sequence spaces and found that if the initial data is close to a solution of the limiting monatomic lattice and the mass ratio is sufficiently small, then the dimer solution remains close to that monatomic solution.
McGinnis and Wright \cite{mcginnis-wright} moved well beyond the polyatomic regime to find that the classical wave equation is a good approximation for linear FPUT lattices with ``random'' material data, i.e., their potentials are $\V_j'(r) = \kappa_jr$, where $\kappa_j$ and the masses $m_j$ are all random variables.
Carmichael \cite{carmichael} proved a KdV approximation result like \eqref{eqn: gmwz est} for monatomic lattices with ``planar'' motion, where the lattice is not constrained to the horizontal like ours but can move in two dimensions.

\begin{figure}
\[
\begin{tikzpicture}

\def \nanoCoreAmp{2};
\def \nanoDecayRate{1.25};

\def \perAmp{.3};
\def \perFreq{15};

\draw[very thick,<->] (0,-.5)--(0,3)node[left]{$\ep^2\varsigma_w(\ep{X}) + \ep^3\hsf_j^{\ep}(\ep{X}) + \ep^2a_{\ep}\psf_j^{\ep}(\ep{X})$};
\draw[very thick,<->] (-7,0)--(7,0)node[right]{$X$};

\draw[ultra thick,blue] plot[domain=-6.9:6.9, samples=100, smooth] (\x,{\nanoCoreAmp/(1+(\nanoDecayRate*\x)^2)});

\def \bunderX{3.5};

\draw[thick] (-.35,\nanoCoreAmp)node[left]{$\O(\ep^2)$}--(.35,\nanoCoreAmp);
\fill[blue] (0,\nanoCoreAmp) circle(.1);

\foreach \x in {\bunderX, -\bunderX}
{
\draw[thick,densely dotted] (-\x,{\nanoCoreAmp/(1+(\nanoDecayRate*\x)^2)})--(-\x,-.75);
};

\draw[thick,decoration={brace, amplitude = 5pt,mirror},decorate] (-\bunderX,-.75)--node[midway,below]{$\O(\ep^{-1})$}(\bunderX,-.75);

\def \lowerCenterX{5.5};
\def \lowerCenterY{0};

\def \lowerRad{.5};
\def \upperRad{1.35};

\def \upperCenterX{\lowerCenterX};
\def \upperCenterY{\lowerCenterY+2.5};

\def \lowerRightTanX{\lowerCenterX+\lowerRad};
\def \lowerRightTanY{\lowerCenterY};

\def \lowerLeftTanX{\lowerCenterX-\lowerRad};
\def \lowerLeftTanY{\lowerCenterY};

\def \leftAngle{pi/10};

\def \rightAngle{11*pi/10}

\draw[thick] (\lowerCenterX,\lowerCenterY) circle(\lowerRad);

\draw[densely dotted,thick] (\lowerRightTanX,\lowerRightTanY) -- ({\upperCenterX+\upperRad*cos(-\leftAngle r)},{\upperCenterY+\upperRad*sin(-\leftAngle r)});
\draw[densely dotted,thick] (\lowerLeftTanX,\lowerLeftTanY) -- ({\upperCenterX+\upperRad*cos(\rightAngle r)},{\upperCenterY+\upperRad*sin(\rightAngle  r)});

\def \peaks{3};

\begin{scope}

\clip (\upperCenterX,\upperCenterY) circle(\upperRad);

\node[align=center,outer sep=0pt,inner sep=0pt] at (\upperCenterX, \upperCenterY){\begin{tikzpicture}

\draw[ultra thick,blue] plot[domain=-\upperRad-.1:\upperRad+.1,samples=100,smooth] (\x,{\perAmp*cos((2*\peaks-.5)*pi*\x/\upperRad r)});

\end{tikzpicture}};

\end{scope}

\node[below] at (\upperCenterX,\upperCenterY-\perAmp){freq.\@$\sim\O(1)$};

\draw[thick] (\upperCenterX,\upperCenterY) circle(\upperRad);

\draw[thick,densely dotted] 
({\upperCenterX-(2*\peaks-2)*\upperRad/(2*\peaks-.5)-1},{\upperCenterY+\perAmp})
--
({\upperCenterX-(2*\peaks-2)*\upperRad/(2*\peaks-.5)},{\upperCenterY+\perAmp});

\draw[thick,densely dotted] 
({\upperCenterX-(2*\peaks-2)*\upperRad/(2*\peaks-.5)-1},{\upperCenterY-\perAmp})
--
({\upperCenterX-(2*\peaks-1)*\upperRad/(2*\peaks-.5)},{\upperCenterY-\perAmp});

\draw [line width = .75pt,decorate,decoration={brace,amplitude=2pt,mirror}] 
({\upperCenterX-(2*\peaks-2)*\upperRad/(2*\peaks-.5)-1},{\upperCenterY+\perAmp})
--node[midway,left]{amp.\@ $\sim \O(\ep^{\infty})$}
({\upperCenterX-(2*\peaks-2)*\upperRad/(2*\peaks-.5)-1},{\upperCenterY-\perAmp});

\draw[->,thick,dashed] (-2.5,1)node[left]{expn.\@ loc.\@ core}--(-1,1);
\draw[->,thick,dashed] (3.5,3.5)node[left]{per.\@ ripples} to[bend left=20] (5,2.88);

\end{tikzpicture}
\]
\caption{The nanopteron traveling wave profile from \eqref{eqn: intro dimer nano} for the mass dimer (inspired by \cite[Fig.\@ 2]{faver-wright}).
In the inset, we write $\O(\ep^{\infty})$ to indicate that the amplitude of the periodic ripples is small beyond all orders of $\ep$.}
\label{fig: nanopteron}
\end{figure}
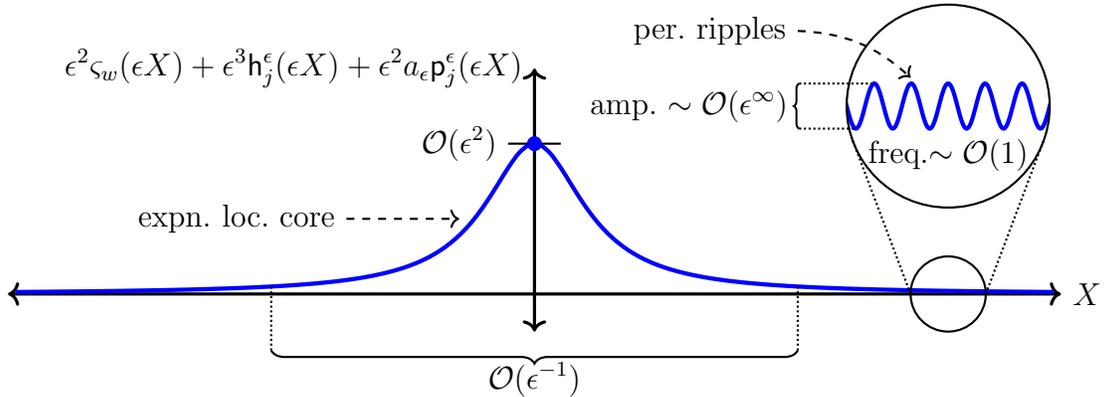

\subsection{Guiding questions about nanopterons in FPUT dimers}\label{sec: guiding questions}
The nanopteron results in \cite{faver-wright, faver-spring-dimer} suggest a number of interesting questions for FPUT dimers.

\begin{enumerate}[label={\bf{Question \arabic*.}}, ref={\arabic*}]

\item
\label{ques: per amp is 0?}
Is it possible that the periodic amplitude coefficient $a_{\ep}$ is 0?
In this case, the nanopteron reduces to a solitary wave.
There is a large body of numerical evidence in mass dimers suggesting that for a discrete sequence of mass ratios that accumulates at 0, the traveling wave problem \eqref{eqn: rel disp tw eqns} has solitary wave solutions \cite{VSWP, SV, lustri-porter, lustri, faver-hupkes-numerics}.
The dimer small mass limit \cite{hoffman-wright} must exclude such a sequence of mass ratios from its nanopteron constructions, although nonexistence of nanopterons is not established; a similar phenomenon occurs in \cite{faver-mim-nanopteron}, although there solitary waves can be constructed at the excluded mass ratios \cite{faver-goodman-wright}.
However, it is less clear if, given a fixed mass ratio, the long wave solution \eqref{eqn: intro dimer nano} can be a solitary wave for some value(s) of $\ep$.
This has been resolved for other long wave problems; Sun \cite{sun-ww-nonexistence} proved nonexistence of solitary waves for the water wave problem, as did Amick and McLeod \cite{amick-mcleod} for the Amick--Toland model KdV equation.

\item
\label{ques: per amp expn upper bound?}
Is it possible to obtain a sharper upper bound on the periodic amplitude in the form of an exponential estimate like $|a_{\ep}| \le A_1e^{-A_2/\ep}$?
Here $A_1$ and $A_2$ should be $\ep$-independent constants.
Boyd \cite{boyd} actually terms nanopterons only those nonlocal solitary waves whose ripples are exponentially small in a relevant small parameter, not small beyond all algebraic orders as in \eqref{eqn: small BAAO}.
Thus our terminology is somewhat of an abuse of Boyd's.

The ripples in Beale's water wave nanopterons, as well as those of Amick and Toland, are ``only'' small beyond all algebraic orders, but Sun and Shen \cite{sun-shen-at-expn-small, sun-shen-ww-expn-small} proved that both families of ripples are indeed exponentially small.
Conversely, Champneys and Lord \cite{champneys-lord} gave numerical evidence for extending the Amick--Toland nanopterons both to larger-amplitude periodic solutions and for larger values of the small parameter than the theory indicates. 

\item
\label{ques: phase shifted per?}
The relative displacement problem is translation invariant in the sense that if the family $\{r_j\}_{j \in \Z}$ solves \eqref{eqn: rel disp eqns}, then so does $\{r_{j+d}\}_{j \in \Z}$ for any $d \in \Z$.
Thus, per \eqref{eqn: per profs FW}, taking 
\[
r_j(t) 
= \ep^2a_{\ep}\psf_j^{\ep}(\ep(j-\cep{t})+\ep{d})
\]
also solves the mass (or spring) dimer version of \eqref{eqn: rel disp eqns}.
Can we then replace $\psf_j^{\ep}$ in \eqref{eqn: intro dimer nano} with this shifted version?
More precisely, are there relative displacement solutions $r_j$ with
\[
\lim_{t \to \pm\infty} \big|r_j(t) -\ep^2\varsigma_w(\ep(j-\cep{t}))- \ep^2a_{\ep}\psf_j^{\ep}(\ep(j-ct \pm d))\big|
= 0
\]
exponentially fast, for some --- or all --- shifts $d$?
For symmetry concerns, we allow a sign change in the phase shift from the asymptotics at $-\infty$ to $+\infty$.

Amick and Toland \cite{amick-toland} achieved this for their model equation; Sun's results in \cite{sun-at-phase-shift} further elaborate on the consequent dependence of the ripple amplitude on the phase shift.
A related construction of phase-shifted nanopterons is proposed, though not executed, for the mass dimer in \cite[Sec.\@ 6.6]{faver-wright}.
Since the proofs in \cite{amick-toland, faver-wright, faver-spring-dimer} hinge in no small part on symmetries (evenness/oddness of various terms in the nanopteron ansatz), and since shifts destroy symmetries (e.g., if $f$ is even and periodic, then $S^df$ is typically not even), this will require some quite delicate attention.

\item
\label{ques: pos from rel disp?}
What, if anything, can be deduced about position traveling waves from these relative displacement results?
We can formally solve for position from relative displacement via identities like
\[
u_j = \sum_{k=-\infty}^{j-1} r_k
\quadword{or}
u_j = \begin{cases}
u_0+\medsum_{k=0}^{j-1} r_k, \ j \ge 1 \\
u_0-\medsum_{k=j}^{-1} r_k, \ j \le -1.
\end{cases}
\]
However, barring some decay conditions on $r_k$, there is no guarantee that the series will converge, while analysis of the finite sums requires at least knowledge of $u_0$ (or $u_{j_*}$ for some fixed $j_* \in \Z$).
For example, the exponential localization of the monatomic solitary waves provides such sufficient decay, see \cite[Prop.\@ 5.5]{friesecke-pego1}, while the Friesecke--Wattis results \cite{friesecke-wattis} were developed in position coordinates.
However, if $r_k$ is a nanopteron, then the periodic tails will almost surely prevent convergence of the series, at least for some time values.

Working at the level of the traveling wave profiles $p_k$ from \eqref{eqn: position tw ansatz} and $\varrho_k$ from \eqref{eqn: rel disp tw ansatz} is no more enlightening.
We might be tempted to try to solve for $p_k$ in terms of $\varrho_k$ via \eqref{eqn: tw profile rels}.
Doing so pointwise is simply fruitless.
Given the reliance of \cite{friesecke-pego1, faver-wright, faver-spring-dimer} on Fourier analysis, we might try to express \eqref{eqn: tw profile rels} using Fourier transforms.
There is no guarantee that all of $p_k$ and $\varrho_k$ have well-defined transforms, and even if they do, we uncover the relation
\[
\begin{pmatrix*}
\hat{\varrho}_1(k) \\
\hat{\varrho}_2(k)
\end{pmatrix*}
= \begin{bmatrix*}[r]
-1 &e^{ik} \\
e^{ik} &-1
\end{bmatrix*}
\begin{pmatrix*}
\hat{p}_1(k) \\
\hat{p}_2(k)
\end{pmatrix*}.
\]
The matrix above is, of course, singular for $k \in 2\pi\Z$, and so there is no clear way to invert the system to solve for $\hat{p}_1$ and $\hat{p}_2$.

All of this is to say that we are not optimistic about working backwards from relative displacement results to position results.
\end{enumerate}

\subsection{The main results}
Motivated by these four questions, we approach the long wave problem in mass and spring dimers from a substantially different perspective from that of \cite{faver-wright, faver-spring-dimer}: spatial dynamics.
We present and interpret our results below and then give an overview of the spatial dynamics method in Section \ref{sec: spatial dynamics method}.
We defer a comparison of these techniques to the prior Beale's ansatz method until Appendix \ref{app: Beale sd comp}, by which point we will have presented sufficient technical detail for both methods to make worthwhile remarks.

Here are our results for the mass dimer, which we prove in Section \ref{sec: proof of main thm mass dimer}.

\begin{theorem}[Mass dimer]\label{thm: main theorem mass dimer}
Suppose $w > 1$ in \eqref{eqn: dimer masses} and $\V_1 = \V_2$ in \eqref{eqn: dimer springs}.
Set 
\begin{equation}\label{eqn: qw}
q_w
:= \left(\frac{6w(1+w)}{w^2-w+1}\right)^{1/2}
\end{equation}
and let $q \in (0,q_w)$.
There are constants $\ep_*$, $\Alpha_0$, $\Alpha_1$, $\Alpha_{\infty}$, $\omega_w > 0$ such that if $0 < \ep < \ep_*$ and
\begin{equation}\label{eqn: cep intro}
\cep 
:= \left(\frac{1+w}{2w}-\ep^2\right)^{-1/2},
\end{equation}
then the following hold.

\begin{enumerate}[label={\bf(\roman*)}, ref={(\roman*)}]

\item\label{part: main theorem mass dimer position per}
\mainthmlabel{Position ``periodic + growing'' solutions}
For $0 < \ep < \ep_*$ and $0 \le \alpha \le \Alpha_1$, there are real analytic maps $\varphi_{j,\ep}^{\alpha}$ and $\Gsf_{\ep}^{\alpha}$ such that 
\begin{equation}\label{eqn: md pos per soln}
u_j(t)
= \alpha\ep^2\varphi_{j,\ep}^{\alpha}(\ep(j-\cep{t}))+ \alpha\ep(\alpha+\ep^2)\Gsf_{\ep}^{\alpha}(\ep(j-\cep{t}))
\end{equation}
solves the equations of motion \eqref{eqn: original equations of motion} in position coordinates.
These maps satisfy the estimate
\begin{equation}\label{eqn: varphi G est md}
\sup_{\substack{0 < \ep < \ep_* \\ 0 \le \alpha \le \Alpha_1}} \left(\sup_{X \in \R} |\varphi_{j,\ep}^{\alpha}(X)| + \sup_{X \in \R\setminus\{0\}} \frac{|\Gsf_{\ep}^{\alpha}(X)|}{|X|}\right)
< \infty.
\end{equation}
The maps $\varphi_{j,\ep}^{\alpha}$ are $2$-periodic in $j$ and $2\pi\ep(\omega_w+\ep\varpi_{\ep}^{\alpha})$-periodic in $X$, where
\[
\sup_{\substack{0 < \ep < \ep_* \\ 0 \le \alpha \le \Alpha_1}} |\varpi_{\ep}^{\alpha}|
< \infty.
\]

\item\label{part: main theorem mass dimer position}
\mainthmlabel{Position ``growing front'' solutions}
For $0 < \ep < \ep_*$ and $\Alpha_0\ep{e}^{-\Alpha_{\infty}/\ep} \le \alpha \le \Alpha_1$, there are real analytic maps $\eta_{j,\ep}^{\alpha}$ and real numbers $\Lsf_{\ep}^{\alpha}$ and $\theta_{\ep}^{\alpha}$ such that if
\begin{equation}\label{eqn: Xi-ep-alpha intro}
\Tsf_{\ep}^{\alpha}(X)
= X+\ep^2\theta_{\ep}^{\alpha}\tanh\left(\frac{q_wX}{2}\right)
\end{equation}
and
\begin{multline}\label{eqn: md pos F}
\Fsf_{j,\ep}^{\alpha}(X)
= \left[\left(\frac{6w(w^2-w+1)}{(1+w)^3}\right)^{1/2}+\ep\Lsf_{\ep}^{\alpha}\right]\tanh\left(\frac{q_wX}{2}\right)
+ \ep\eta_{j,\ep}^{\alpha}(X) 
+ \alpha\ep\varphi_{j,\ep}^{\alpha}(\Tsf_{\ep}^{\alpha}(X)) \\
+ \alpha(\alpha+\ep^2)\Gsf_{\ep}^{\alpha}(\Tsf_{\ep}^{\alpha}(X)),
\end{multline}
with $\varphi_{j,\ep}^{\alpha}$ and $\Gsf_{\ep}^{\alpha}$ defined in part \ref{part: main theorem mass dimer position per}, then
\begin{equation}\label{eqn: md pos tw final}
u_j(t)
= \ep\Fsf_{j,\ep}^{\alpha}(\ep(j-\cep{t}))
\end{equation}
solves \eqref{eqn: original equations of motion}.
The scaled profile $\ep\Fsf_{j,\ep}^{\alpha}(\ep\cdot)$ is sketched in Figure \ref{fig: front}.

There are constants $C_{\pm} > 0$ such that 
\[
C_- < \theta_{\ep}^{\alpha} < C_+
\quadword{and}
|\Lsf_{\ep}^{\alpha}| < C_+
\]
for all $\ep$ and $\alpha$, while $\eta_{j,\ep}^{\alpha}$ is $2$-periodic in $j$ and satisfies
\begin{equation}\label{eqn: md pos est}
\sup_{\substack{0 < \ep < \ep_* \\ \Alpha_0\ep{e}^{-\Alpha_{\infty}/\ep} \le \alpha \le \Alpha_1 \\ X \in \R}} e^{q|X|}|\eta_{j,\ep}^{\alpha}(X)|
< \infty.
\end{equation}

\item\label{part: main theorem mass dimer relative disp per}
\mainthmlabel{Relative displacement exact periodic solutions}
For $0 < \ep < \ep_*$ and $0 \le \alpha \le \Alpha_1$, there are real analytic maps $\tvarphi_{j,\ep}^{\alpha}$ such that
\begin{equation}\label{eqn: md rel disp per soln}
r_j(t)
= \alpha\ep^2\tvarphi_{j,\ep}^{\alpha}(\ep(j-\cep{t}))
\end{equation}
solves the equations of motion \eqref{eqn: rel disp eqns} in relative displacement coordinates.
These maps do not vanish identically and satisfy the estimates
\begin{equation}\label{eqn: md rel disp per nonvanishing}
\inf_{\substack{0 < \ep < \ep_* \\ 0 \le \alpha \le \Alpha_1}} \sup_{X \in \R} |\tvarphi_{j,\ep}^{\alpha}(X)| > 0
\quadword{and}
\sup_{\substack{0 < \ep < \ep_* \\ 0 \le \alpha \le \Alpha_1 \\ X \in \R}} |\tvarphi_{j,\ep}^{\alpha}(X)| < 0.
\end{equation}
They are periodic in $X$, with the same period as $\varphi_{j,\ep}^{\alpha}$, and $2$-periodic in $j$.

\item\label{part: main theorem mass dimer relative disp}
\mainthmlabel{Relative displacement nanopteron solutions}
For $0 < \ep < \ep_*$ and $\Alpha_0\ep{e}^{-\Alpha_{\infty}/\ep} \le \alpha \le \Alpha_1$, there are real analytic maps $\teta_{j,\ep}^{\alpha}$ such that if
\begin{equation}\label{eqn: md rel disp N}
\Nsf_{j,\ep}^{\alpha}(X) 
= \frac{3w}{1+w}\sech^2\left(\frac{q_wX}{2}\right)
+ \alpha\tvarphi_{j,\ep}^{\alpha}(\Tsf_{\ep}^{\alpha}(X))
+ \ep\teta_{j,\ep}^{\alpha}(X),
\end{equation}
with $\Tsf_{\ep}^{\alpha}$ defined in \eqref{eqn: Xi-ep-alpha intro} and $\tvarphi_{j,\ep}^{\alpha}$ in part \ref{part: main theorem mass dimer relative disp per}, then
\begin{equation}\label{eqn: md rel disp tw final}
r_j(t)
= \ep^2\Nsf_{j,\ep}^{\alpha}(\ep(j-\cep{t}))
\end{equation}
solves \eqref{eqn: rel disp eqns}.
The maps $\teta_{j,\ep}^{\alpha}$ are $2$-periodic in $j$ with
\[
\sup_{\substack{0 < \ep < \ep_* \\ \Alpha_0\ep{e}^{-\Alpha_{\infty}/\ep} \le \alpha \le \Alpha_1 \\ X \in \R}}
e^{q|X|}|\teta_{j,\ep}^{\alpha}(X)|
< \infty.
\]
\end{enumerate}
\end{theorem}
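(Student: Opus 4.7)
The plan is to recast the position traveling wave system \eqref{eqn: position tw eqns} as a first-order reversible evolution equation $U' = \mathcal{L}_\ep U + \mathcal{N}_\ep(U)$ on an infinite-dimensional Banach space that records the values of $p_1, p_2$ together with their shifts $S^{\pm 1} p_1, S^{\pm 1} p_2$, in the spirit of Iooss--Kirchg\"assner. The wave speed $\cep$ in \eqref{eqn: cep intro} is tuned precisely so that at $\ep = 0$ the linear operator $\mathcal{L}_0$ carries a non-semisimple double eigenvalue at the origin---the spatial-dynamics signature of the KdV long-wave scaling---together with a pair of simple purely imaginary eigenvalues $\pm i\omega_w$, with the rest of the spectrum hyperbolic and bounded away from the imaginary axis. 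This is exactly the spectral configuration in which Lombardi constructs reversible orbits homoclinic to exponentially small periodic orbits.

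The next step is to extract the abstract hypotheses needed to reduce this infinite-dimensional problem to Lombardi's four-dimensional normal form---real analyticity, reversibility, spectral separation, resolvent estimates, and the precise structure of the resonance at $0$ and $\pm i\omega_w$---and then verify them for the mass dimer. The reduction produces a reversible ODE whose leading homoclinic is the KdV profile $\varsigma_w$ with scaling $q_w$ from \eqref{eqn: qw}. For parts \ref{part: main theorem mass dimer position per} and \ref{part: main theorem mass dimer relative disp per}, I would apply the Lyapunov--Devaney reversible center theorem at the imaginary pair $\pm i\omega_w$ of the reduced system to obtain a one-parameter family of small periodic orbits of amplitude $\alpha$, with amplitude-dependent frequency correction $\varpi_\ep^\alpha$. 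Lifting yields the position periodic profiles $\varphi_{j,\ep}^\alpha$ and the correction $\Gsf_\ep^\alpha$ satisfying \eqref{eqn: varphi G est md}; differencing via \eqref{eqn: tw profile rels} then produces the relative displacement family $\tvarphi_{j,\ep}^\alpha$, and the uniform lower bound in \eqref{eqn: md rel disp per nonvanishing} follows from checking that the map $(p_1,p_2) \mapsto (S^1 p_2 - p_1,\, S^1 p_1 - p_2)$ is nondegenerate along that family---a linear calculation at the level of the normal form.

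For parts \ref{part: main theorem mass dimer position} and \ref{part: main theorem mass dimer relative disp}, I would invoke Lombardi's main theorem on the reduced system: for every $\alpha$ in the exponentially small window $[\Alpha_0\ep e^{-\Alpha_\infty/\ep},\,\Alpha_1]$ there is a reversible orbit homoclinic to a periodic orbit of amplitude $\alpha$. Because the position coordinate is related to relative-displacement type variables by a discrete integration, the direction associated with the non-semisimple zero eigenvalue of $\mathcal{L}_0$ picks up the antiderivative of the $\sech^2$ core, producing the $\tanh(q_wX/2)$ front in \eqref{eqn: md pos F} together with its $\ep\Lsf_\ep^\alpha$ correction; $\eta_{j,\ep}^\alpha$ collects the exponentially localized remainder of the homoclinic, while $\Tsf_\ep^\alpha$ in \eqref{eqn: Xi-ep-alpha intro} is the phase adjustment required to glue the periodic tails on the two sides of the front. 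Differencing via \eqref{eqn: tw profile rels} then gives the relative displacement nanopteron \eqref{eqn: md rel disp tw final}, with the $\sech^2(q_wX/2)$ core of \eqref{eqn: md rel disp N} appearing as the formal derivative of $\tanh(q_wX/2)$ and $\teta_{j,\ep}^\alpha$ inheriting the exponential decay of $\eta_{j,\ep}^\alpha$.

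The hard part, flagged in the abstract, is the ``passage back'' from the reduced problem to the dimer: carrying Lombardi's finite-dimensional statements into statements about $p_1,p_2$ without losing reversibility, analyticity, the resonance structure, or the exponential control that underwrites the ripple. The exponential lower bound on admissible amplitudes---the left endpoint $\Alpha_0\ep e^{-\Alpha_\infty/\ep}$ of the window in \eqref{eqn: md pos est}---ultimately hinges on the nonvanishing of a Stokes-type constant extracted from the analytic continuation of $\varsigma_w$, which is the technical point where the real analyticity of the potential is genuinely essential. A secondary difficulty is calibrating the shift space and the phase diffeomorphism $\Tsf_\ep^\alpha$ so that the differencing step \eqref{eqn: tw profile rels} transports estimates from position to relative displacement without loss and simultaneously produces the nondegeneracy needed for \eqref{eqn: md rel disp per nonvanishing}.
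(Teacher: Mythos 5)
Your proposal rightly identifies the Iooss--Kirchg\"assner change of variables and Lombardi's reversible nanopteron theorem as the engine, and your heuristics for how the $\tanh$ front, $\sech^2$ core, and differencing step \eqref{eqn: tw profile rels} interact are sound. But there is a serious gap right at the start that would derail the whole argument.

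You assert that at $\ep = 0$ the linear operator $\mathcal{L}_0$ carries ``a non-semisimple \emph{double} eigenvalue at the origin,'' i.e.\ a $0^{2+}i\omega$ bifurcation, and claim this is exactly the configuration Lombardi handles. For the \emph{position} formulation this is false: the eigenvalue at $0$ has algebraic multiplicity \emph{four} (geometric multiplicity one), since the dispersion function $\Lambda(\cdot\,;\kappa,w,\cs)$ has a fourth-order root at the origin precisely when $c = \cs$. The extra two dimensions are not incidental; they come from the affine invariance of the position equations ($u_j \mapsto u_j + d_1 t + d_2$), which the traveling-wave problem inherits as a generalized eigenvector chain of length four. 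Lombardi's theorem requires multiplicity two, so it cannot be applied directly. The paper's central technical contribution --- which your proposal skips over entirely --- is the two-stage reduction that converts the $0^{4-}i\omega$ bifurcation to a $0^{2+}i\omega$ one: first quotienting by the translation direction $\chi_0$ (turning the flow along $\chi_0$ into an integrated drift, which is exactly the origin of the growing term $\Gsf_{\ep}^{\alpha}$ and the $\tanh$ front), and then fixing the value of the conserved first integral to implicitly solve for the $\chi_3$-component. Both steps require verifying a compatibility lemma (the paper's Lemma \ref{lem: chi3 reduction consistency}) so that solutions of the reduced problem really lift back to solutions of the original; without this, the ``passage back'' you flag as the hard part has no starting point. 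Your description of the $\tanh$ front ``picking up the antiderivative of the $\sech^2$ core'' is morally right, but the actual mechanism is the integral in the undoing of the translation-invariance quotient, and without the reduction you will never reach a system to which Lombardi applies.

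Two secondary points. First, the exponentially small lower bound on $\alpha$ does \emph{not} come from nonvanishing of a Stokes constant extracted from $\varsigma_w$: that analysis governs \emph{nonexistence} of purely localized solutions, which the paper explicitly does not prove. The lower bound is instead an artifact of Lombardi's phase-shift selection (an intermediate value argument that needs $\alpha$ bounded below by a quantity commensurate with an oscillatory integral). Second, the nonvanishing estimate \eqref{eqn: md rel disp per nonvanishing} cannot be settled by a generic nondegeneracy statement for $(p_1,p_2) \mapsto (S^1 p_2 - p_1, S^1 p_1 - p_2)$, since that map is singular as a Fourier multiplier at integer multiples of $2\pi$; the paper instead does an explicit case analysis on the eigenvector component $\Esf_w$ evaluated at the critical frequency $\omega_w$.
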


Next, we present our position and relative displacement solutions for spring dimers; we prove the following theorem in Section \ref{sec: proof of main thm spring dimer}.

\begin{theorem}[Spring dimer]\label{thm: main theorem spring dimer}
Suppose $w = 1$ in \eqref{eqn: dimer masses} and, in \eqref{eqn: dimer springs}, take $\kappa > 1$ and $\beta \in \R$ with $\kappa +\beta^3 \ne 0$.
Set 
\begin{equation}\label{eqn: qkappa}
q_{\kappa}
:= \left(\frac{6\kappa(1+\kappa)}{\kappa^2-\kappa+1}\right)^{1/2}
\end{equation}
and let $q \in (0,q_{\kappa})$.
There are constants $\ep_*$, $\Alpha_0$, $\Alpha_1$, $\Alpha_{\infty}$, $\omega_{\kappa} > 0$ such that if $0 < \ep < \ep_*$ and
\[
\cep 
:= \left(\frac{1+\kappa}{2\kappa}-\ep^2\right)^{-1/2},
\]
then the following hold.

\begin{enumerate}[label={\bf(\roman*)}, ref={(\roman*)}]

\item
\mainthmlabel{Position ``periodic + growing'' solutions}
For $0 < \ep < \ep_*$ and $0 \le \alpha \le \Alpha_1$, there are real analytic maps $\varphi_{j,\ep}^{\alpha}$ and $\Gsf_{\ep}^{\alpha}$ such that 
\begin{equation}\label{eqn: sd pos per soln}
u_j(t)
= \alpha\ep^2\varphi_{j,\ep}^{\alpha}(\ep(j-\cep{t}))+ \alpha\ep(\alpha+\ep^2)\Gsf_{\ep}^{\alpha}(\ep(j-\cep{t}))
\end{equation}
solves the equations of motion \eqref{eqn: original equations of motion} in position coordinates.

\item
\mainthmlabel{Position ``growing front'' solutions}
For $0 < \ep < \ep_*$ and $\Alpha_0\ep{e}^{-\Alpha_{\infty}/\ep} \le \alpha \le \Alpha_1$, there are real analytic maps $\eta_{j,\ep}^{\alpha}$ and real numbers $\Lsf_{\ep}^{\alpha}$ and $\theta_{\ep}^{\alpha}$ such that if 
\begin{multline}\label{eqn: sd pos F}
\Fsf_{j,\ep}^{\alpha}(X)
= \left(\frac{[6\kappa^3(1+\kappa)(\kappa^2-\kappa+1)]^{1/2}}{2(\beta+\kappa^3)} + \ep\Lsf_{\ep}^{\alpha}\right)\tanh\left(\frac{q_{\kappa}X}{2}\right)
+ \ep\eta_{j,\ep}^{\alpha}(X) 
+ \alpha\ep\varphi_{j,\ep}^{\alpha}(\Tsf_{\ep}^{\alpha}(\ep{x})) \\
+ \alpha(\alpha+\ep^2)\Gsf_{\ep}^{\alpha}(\Tsf_{\ep}^{\alpha}(\ep{x})).
\end{multline}
then
\begin{equation}\label{eqn: sd pos tw final}
u_j(t)
= \ep\Fsf_{j,\ep}^{\alpha}(\ep(j-\cep{t}))
\end{equation}
solves \eqref{eqn: original equations of motion}.

\item
\mainthmlabel{Relative displacement exact periodic solutions}
For $0 < \ep < \ep_*$ and $0 \le \alpha \le \Alpha_1$, there are real analytic maps $\tvarphi_{j,\ep}^{\alpha}$ such that
\begin{equation}\label{eqn: sd rel disp per soln}
r_j(t)
= \alpha\ep^2\tvarphi_{j,\ep}^{\alpha}(\ep(j-\cep{t}))
\end{equation}
solves the equations of motion \eqref{eqn: rel disp eqns} in relative displacement coordinates.

\item
\mainthmlabel{Relative displacement nanopteron-stegoton solutions}
For $0 < \ep < \ep_*$ and $\Alpha_0\ep{e}^{-\Alpha_{\infty}/\ep} \le \alpha \le \Alpha_1$, there are real analytic maps $\teta_{j,\ep}^{\alpha}$ such that if
\begin{equation}\label{eqn: sd rel disp N}
\Nsf_{j,\ep}^{\alpha}(X)
= \kappa^{[(-1)^j+1]/2}\frac{3\kappa^2}{\beta+\kappa^3}\sech^2\left(\frac{q_{\kappa}X}{2}\right)
+ \alpha\tvarphi_{j,\ep}^{\alpha}(\Tsf_{\ep}^{\alpha}(X))
+ \ep\teta_{j,\ep}^{\alpha}(X).
\end{equation}
then
\begin{equation}\label{eqn: sd rel disp tw final}
r_j(t)
= \ep^2\Nsf_{j,\ep}^{\alpha}(\ep(j-\cep{t}))
\end{equation}
solves \eqref{eqn: rel disp eqns}.

\end{enumerate}
The functions $\varphi_{j,\ep}^{\alpha}$, $\tvarphi_{j,\ep}^{\alpha}$, $\Gsf_{\ep}^{\alpha}$, $\eta_{j,\ep}^{\alpha}$, $\teta_{j,\ep}^{\alpha}$, and $\Tsf_{\ep}^{\alpha}$ and the scalars $\theta_{\ep}^{\alpha}$ and $\Lsf_{\ep}^{\alpha}$ have the same properties as their counterparts throughout Theorem \ref{thm: main theorem mass dimer}.
In particular, $\varphi_{j,\ep}^{\alpha}$ and $\tvarphi_{j,\ep}^{\alpha}$ are $2\pi\ep(\omega_{\kappa}+\ep\varpi_{\ep}^{\alpha})$-periodic in $X$.
\end{theorem}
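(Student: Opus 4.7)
The plan is to mirror, step for step, the proof of Theorem \ref{thm: main theorem mass dimer}, replacing the mass-dimer algebra with its spring-dimer analogue. Substituting the spring-dimer data $m_j = 1$, $\V_1'(r) = r + r^2 + \Vscr_1(r)$, $\V_2'(r) = \kappa r + \beta r^2 + \Vscr_2(r)$ into the position traveling wave system \eqref{eqn: position tw eqns} and rescaling by the long-wave parameter $\ep$ through $c = \cep$, I recast the resulting advance-delay problem as a first-order spatial-dynamics equation on the infinite-dimensional Banach space introduced in Section \ref{sec: spatial dynamics method}. The whole theorem will then follow by verifying the abstract hypotheses that allow a reduction to Lombardi's reduced problem, invoking the general construction to produce the solutions in position coordinates, and translating back to relative displacement through \eqref{eqn: tw profile rels}.

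The first technical task is a spectral analysis of the linearization at the zero state. Computing the characteristic function of the spring-dimer advance-delay operator at $c = \cep$, the assumption $\kappa > 1$ locates $\cep$ in the subsonic regime where precisely a double zero eigenvalue and a conjugate pair of purely imaginary eigenvalues of modulus $\omega_\kappa/\ep + O(1)$ persist for $\ep$ small. These respectively generate the slowly-varying tanh front and the rapidly oscillating ripple. The decay rate $q_\kappa$ of \eqref{eqn: qkappa} emerges from the real part of the next pair of roots, so the range $q \in (0,q_\kappa)$ is precisely the admissible exponential weight.

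Next, I expand the nonlinearity to extract the KdV-type reduced equation. Because the masses are constant but the springs alternate, the quadratic coefficients enter asymmetrically: odd springs contribute $1$ and even springs contribute $\beta$, while the linear part contributes factors of $\kappa$. Carrying out the multilinear expansion on the reduced center manifold produces an effective nonlinearity whose magnitude is proportional to $\beta + \kappa^3$, so the nondegeneracy hypothesis $\kappa + \beta^3 \ne 0$ — which by the stated normalization is the same as $\beta + \kappa^3 \ne 0$ — is precisely what guarantees a nontrivial $\sech^2$ leading term, yielding the amplitudes $\frac{[6\kappa^3(1+\kappa)(\kappa^2-\kappa+1)]^{1/2}}{2(\beta+\kappa^3)}$ in \eqref{eqn: sd pos F} and $\frac{3\kappa^2}{\beta+\kappa^3}$ in \eqref{eqn: sd rel disp N}. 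The $j$-dependent prefactor $\kappa^{[(-1)^j+1]/2}$ in the relative-displacement leading order simply records the alternating spring constants: for odd $j$ the coefficient is $1$, for even $j$ it is $\kappa$, consistent with the jump in $\V_j'$ at each site.

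Once the hypotheses of the abstract framework are verified, Lombardi's construction delivers, uniformly in $\ep \in (0,\ep_*)$ and $\alpha \in [\Alpha_0\ep e^{-\Alpha_\infty/\ep},\Alpha_1]$, the real-analytic periodic orbit $\varphi_{j,\ep}^\alpha$ and the growing-front profile $\Fsf_{j,\ep}^\alpha$ with the form \eqref{eqn: sd pos F}; rescaling by $\ep$ and restoring the traveling variable yields parts (i) and (ii). To obtain parts (iii) and (iv), I apply the pass-to-relative-displacement formulas \eqref{eqn: tw profile rels} to the position solutions: the linear combination of two shifted growing fronts cancels the unbounded tanh part and leaves an exponentially localized $\sech^2$-type core plus the ripple, producing the nanopteron-stegoton profile \eqref{eqn: sd rel disp N}; the exact periodic position solution in (i) maps to the exact periodic relative-displacement solution in (iii). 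The main obstacle throughout is not the abstract machinery, which is inherited from the mass-dimer analysis, but the bookkeeping of the spring-dimer's alternating spring constants through the linearization, the reduction, and the back-transformation — in particular, keeping track of how $\kappa$ and $\beta$ combine to produce the specific coefficients appearing in \eqref{eqn: sd pos F} and \eqref{eqn: sd rel disp N}, and confirming that the nondegeneracy condition $\kappa + \beta^3 \ne 0$ is indeed sufficient to rule out resonances in the reduced problem.
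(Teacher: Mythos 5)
Your overall strategy --- mirror the mass-dimer proof and swap the spring-dimer algebra --- is indeed what the paper does; the proof of this theorem in Section~\ref{sec: proof of main thm spring dimer} is essentially a short list of differences from Section~\ref{sec: proof of main thm mass dimer}. But you have not identified the differences that actually matter, and you misattribute the most important one.

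The decisive structural difference is a concrete algebraic fact about the generalized eigenvector $\chib_1$: for the mass dimer, $(\chib_1(1,w))_1 = (\chib_1(1,w))_2 = 0$, so the explicit $\sech^2$ coefficient on $\chib_1$ from \eqref{eqn: abstract drifting nanopteron} never appears in the first two components of the position profile; for the spring dimer, by \eqref{eqn: chib1} one has $(\chib_1(\kappa,1))_1 = \tfrac{1}{2}\tfrac{1-\kappa}{1+\kappa} = -(\chib_1(\kappa,1))_2 \ne 0$. This forces the extra, $j$-alternating $\sech^2$ term into the localized piece $\eta_{j,\ep}^{\alpha}$ of the position front, cf.\ \eqref{eqn: eta-j-ep-alpha sd pos}, and it is the sum of that new term with the $\sech^2$ produced by subtracting the $\ep$-shifted $\tanh$ terms that yields the stegoton prefactor $\kappa^{[(-1)^j+1]/2}$ in \eqref{eqn: sd rel disp N}, together with the new remainder $\teta_{j,7,\ep}^{\alpha}$ in \eqref{eqn: teta-ep-alpha1 sd}. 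Your gloss --- that the prefactor ``simply records the alternating spring constants'' --- is a physical heuristic, not the mechanism, and a proof that followed it would not produce the extra terms required. A second omitted difference: the nonvanishing argument for $\tvarphi_{j,\ep}^{\alpha}$ in part (iii) must now run with the complex quantity $\Esf_{\kappa}$ of \eqref{eqn: Esf-kappa} rather than the real $\Esf_w$ of \eqref{eqn: Esf-w}, and the case split on $\cos(\omega_*(\kappa,1))$ involves both $\re\Esf_{\kappa}$ and $\im\Esf_{\kappa}$.

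Several of your supporting claims are also off in ways that would matter if pressed: the wave speed $\cep$ is supersonic, not subsonic; the eigenvalue $0$ of $\L_0(\kappa,1)$ at $c = \cs$ has algebraic multiplicity $4$, and its reduction to multiplicity $2$ is the translation-invariance and first-integral quotient of Section~\ref{sec: 02+iomega reduction}, not a consequence of perturbing $c$ away from $\cs$; the imaginary eigenvalues $\pm i\omega_*$ are $\O(1)$ in the Iooss--Kirchg\"assner phase space, with the $\O(\ep^{-1})$ frequency only arising after the long-wave rescaling; and the paper deliberately does \emph{not} pass to a center manifold, precisely because Lombardi's oscillatory-integral estimates (Remark~\ref{rem: why Lombardi real analytic}) require analyticity that a center-manifold reduction would destroy, so invoking ``the reduced center manifold'' as you do would break the subsequent application of Theorem~\ref{thm: Lombardi}.
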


In the following sections we analyze and interpret various consequences of these two theorems.
We address the extent to which our results answer the four motivating questions above, and we compare our conclusions to the prior dimer nanopteron results.

\subsubsection{Growing fronts for position and nanopterons for relative displacement}
We have answered Question \ref{ques: pos from rel disp?} on position traveling waves: the position profiles $\Fsf_{j,\ep}^{\alpha}$ in \eqref{eqn: md pos F} and \eqref{eqn: sd pos F} are ``growing fronts'' with ripples.
We sketch $\Fsf_{j,\ep}^{\alpha}$ in Figure \ref{fig: front}.
To leading order it is dominated by the $\tanh$-term and then at higher order exhibits small ripples via $\varphi_{j,\ep}^{\alpha}$ and (possibly) linear growth in $\Gsf_{\ep}^{\alpha}$, due to the estimate \eqref{eqn: md pos est}.
We discuss below in Remark \ref{rem: is it really periodic?} why we do not necessarily rule out this linear growth.

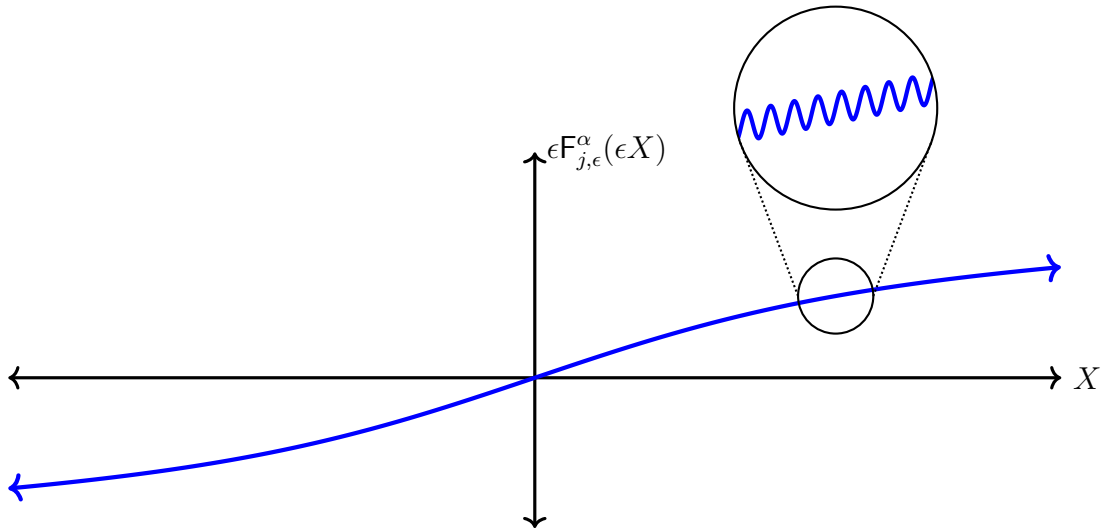
\begin{figure}
\[
\begin{tikzpicture}

\def\t{.25};
\def\T{1.5*\t};
\def\r{.5};
\def\c{1.1};
\def\ps{\t};

\draw[very thick,<->] (-7,0)--(7,0)node[right]{$X$};
\draw[very thick,<->] (0,-2)--(0,3)node[right]{$\ep\mathsf{F}_{j,\ep}^{\alpha}(\ep{X})$};

\draw[ultra thick,blue,<->] plot[domain=-7:7,smooth,samples=500] (\x,{\c*tanh(\t*\x)+\t^2*\x});

\def\lowerCenterX{4};
\def\lowerCenterY{\c*tanh(\t*\lowerCenterX)+\t^2*\lowerCenterX};

\def \lowerRad{.5};
\def \upperRad{1.35};

\def \upperCenterX{\lowerCenterX};
\def \upperCenterY{\lowerCenterY+2.5};

\def \lowerRightTanX{\lowerCenterX+\lowerRad};
\def \lowerRightTanY{\lowerCenterY};

\def \lowerLeftTanX{\lowerCenterX-\lowerRad};
\def \lowerLeftTanY{\lowerCenterY};

\def \leftAngle{pi/10};

\def \rightAngle{11*pi/10}

\draw[thick] (\lowerCenterX,{\lowerCenterY}) circle(\lowerRad);

\draw[densely dotted,thick] ({\lowerRightTanX},{\lowerRightTanY}) -- ({\upperCenterX+\upperRad*cos(-\leftAngle r)},{\upperCenterY+\upperRad*sin(-\leftAngle r)});
\draw[densely dotted,thick] ({\lowerLeftTanX},{\lowerLeftTanY}) -- ({\upperCenterX+\upperRad*cos(\rightAngle r)},{\upperCenterY+\upperRad*sin(\rightAngle  r)});

\begin{scope}

\clip ({\upperCenterX},{\upperCenterY}) circle(\upperRad);

\node[align=center,outer sep=0pt,inner sep=0pt] at ({\upperCenterX}, {\upperCenterY}){\begin{tikzpicture}

\draw[ultra thick,blue] plot[domain=-3:3,smooth,samples=500] (\x,{.2*\x+.2*sin(20*\x r)});

\end{tikzpicture}};

\end{scope}

\draw[thick] ({\upperCenterX},{\upperCenterY}) circle(\upperRad);

\end{tikzpicture}
\]

\caption{The slowly linearly growing front profile for position traveling waves.}
\label{fig: front}
\end{figure}

Also, we do not see quite the classical long wave scaling in the position solutions \eqref{eqn: md pos tw final} and \eqref{eqn: sd pos tw final}.
In each case, the leading order term from this scaling is only $\O(\ep)$, not $\O(\ep^2)$.

However, we do recover both the long wave scaling and nanopterons at the level of relative displacement.
The relative displacement profiles \eqref{eqn: md rel disp N} and \eqref{eqn: sd rel disp N} are each the superposition of an exponentially localized function and a periodic function, and the relative displacement solutions \eqref{eqn: md rel disp tw final} and \eqref{eqn: sd rel disp tw final} are $\O(\ep^2)$ at leading order.

\subsubsection{The periodic terms}
The periodic terms $\varphi_{j,\ep}^{\alpha}$ in the position front and $\tvarphi_{j,\ep}^{\alpha}$ in the relative displacement nanopteron exist for all $\alpha$ in the interval $[0,\Alpha_1]$, even though the fronts and nanopterons are only defined for $\alpha$ in the range $[\Alpha_0\ep{e}^{-\Alpha_{\infty}/\ep},\Alpha_1]$.
Inspired by \cite[Fig.\@ 7.2]{Lombardi}, we sketch in Figure \ref{fig-ranges} the different parameter ranges in the $(\ep,\alpha)$-plane for which the periodic terms and the fronts/nanopterons exist.

We remark that the auxiliary position solutions \eqref{eqn: md pos per soln} and \eqref{eqn: sd pos per soln}, which include the periodic terms $\varphi_{j,\ep}^{\alpha}$, need not be periodic themselves, due to the presence of the $\Gsf_{\ep}^{\alpha}$ term.
This is a marked contrast to the exact periodic solutions \eqref{eqn: md rel disp per soln} and \eqref{eqn: sd rel disp per soln} for relative displacement.

The periodic terms in the relative displacement nanopteron profiles are definitely not identically zero, due to the estimates \eqref{eqn: md rel disp per nonvanishing}.
While the amplitude $\alpha$ can be taken to be exponentially small in $\ep$, it can also extend to a small $\O(1)$ threshold independent of $\ep$.
This control over $\alpha$ addresses Questions \ref{ques: per amp is 0?} and \ref{ques: per amp expn upper bound?}.
However, while none of the relative displacement solutions that we construct are solitary waves, we did not prove the {\it{nonexistence}} of supersonic solitary traveling waves in dimers; see Section \ref{sec: future directions} for further discussion of this issue.

Finally, we are not free to pick the phase shift $\theta_{\ep}^{\alpha}$ to be an arbitrary number; instead, as we discuss in Appendix \ref{app: Lombardi}, we must accept the very particular value $\ep^2\theta_{\ep}^{\alpha}$.
Nonetheless, since this is $\O(\ep^2)$, our phase shift is not particularly large.
Because our results do not permit an arbitrary phase shift, we feel that the modifications to Beale's method for periodic phase shifts, as discussed in \cite{faver-wright, amick-toland}, will offer a more satisfactory introduction of shifts into the solutions, and a fuller resolution of Question \ref{ques: phase shifted per?} remains to be accomplished.

\begin{figure}

\begin{subfigure}[t]{.35\textwidth}

\[
\begin{tikzpicture}

\draw[very thick,<->] (-.5,0)--(3,0)node[right]{$\ep$};
\draw[very thick,<->] (0,-.5)--(0,3)node[right]{$\alpha$};

\draw[fill=blue, fill opacity=.4, thick] (0,2.5)--(2.5,2.5)--(2.5,0)--(0,0)--cycle;

\draw[thick] (2.5,-.25)node[below]{$\ep_*$}--(2.5,.25);
\draw[thick] (-.25,2.5)node[left]{$\Alpha_1$}--(.25,2.5);

\end{tikzpicture}
\]

\caption{Domain of existence for (linearly growing) periodic solutions.}
\end{subfigure}
\qquad
\qquad
\qquad
\begin{subfigure}[t]{.35\textwidth}

\[
\begin{tikzpicture}

\draw[very thick,<->] (-.5,0)--(3,0)node[right]{$\ep$};
\draw[very thick,<->] (0,-.5)--(0,3)node[right]{$\alpha$};

\draw[fill=blue, fill opacity=.4, thick] plot[domain=.5:2.5] (\x,{4*\x*exp(-5/\x)})--(2.5,2.5)--(0,2.5)--(0,0)--(.5,0);

\node[right] at (2.2,{4*2.2*exp(-5/2.2)}){$\alpha = \Alpha_0\ep{e}^{-\Alpha_{\infty}/\ep}$};

\draw[thick] (2.5,-.25)node[below]{$\ep_*$}--(2.5,.25);
\draw[thick] (-.25,2.5)node[left]{$\Alpha_1$}--(.25,2.5);

\end{tikzpicture}
\]

\caption{Domain of existence for (linearly growing) fronts and nanopterons}

\end{subfigure}

\caption{Ranges of the long wave parameter $\ep$ and the amplitude parameter $\alpha$ (based on \cite[Fig.\@ 7.2]{Lombardi})}
\label{fig-ranges}
\end{figure}
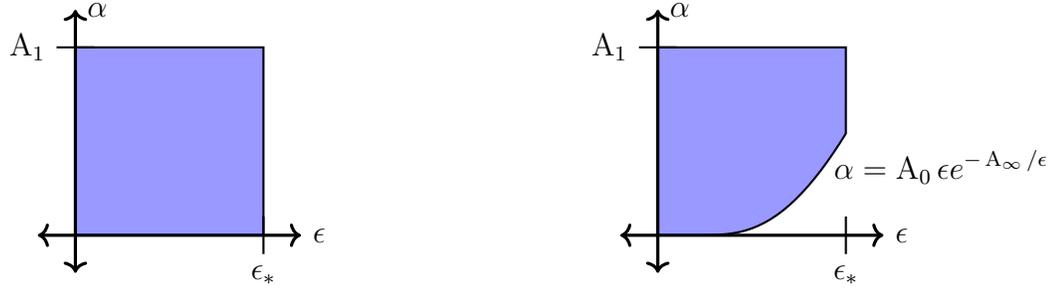

\subsubsection{Comparisons with prior dimer nanopterons}\label{sec: faver wright comparisons}
The nanopterons for relative displacement in mass dimers that Faver and Wright constructed via Beale's method \cite[Cor.\@ 6.3]{faver-wright} have the form
\[
r_j(t)
= \nu^2\varsigma_w(\nu(j-C_{\nu}{t})) + \O(\nu^3),
\]
pointwise in $j$ and $t$, where 
\begin{equation}\label{eqn: varsigma-w}
\varsigma_w(X)
:= \frac{3}{2}\left(\frac{1+w}{2w}\right)\sech^2\left(\left(\frac{1+w}{2w}\right)\frac{q_wX}{2}\right)
\end{equation}
and 
\begin{equation}\label{eqn: C-nu}
C_{\nu}
:= \left(\frac{2w}{1+w} + \nu^2\right)^{1/2}.
\end{equation}
The factor $q_w$ was defined in \eqref{eqn: qw}, while $\varsigma_w$ appeared, albeit obliquely, as the leading order term in \eqref{eqn: intro dimer nano}.
We are now writing $\nu$, not $\ep$, for the long wave small parameter here.

If we take 
\begin{equation}\label{eqn: cs rescaling}
\nu
= \frac{2w}{1+w}\ep,
\end{equation}
then 
\[
\nu^2\varsigma_w(\nu{X})
= \frac{3w}{1+w}\ep^2\sech^2\left(\frac{q_w\ep{X}}{2}\right).
\]
A comparison with \eqref{eqn: md rel disp N} and \eqref{eqn: md rel disp tw final} shows that this is exactly the leading order $\sech^2$-term of our nanopterons.
Likewise, with $\cep$ defined in \eqref{eqn: cep intro} and $C_{\nu}$ in \eqref{eqn: C-nu}, we find
\[
\cep^2-C_{\nu}^2
= \left(\frac{1+w}{2w}-\ep^2\right)^{-1}-\left(\frac{2w}{1+w}+\left(\frac{2w}{1+w}\right)^2\ep^2\right)
= \O(\ep^4),
\]
and so the rescaled wave speed from Beale's method agrees with ours to $\O(\ep^4)$.
Finally, we remark that the scaling factor $2w/(1+w)$ in \eqref{eqn: cs rescaling} is exactly the speed of sound for the mass dimer, see \eqref{eqn: cs defn}.

A similar rescaling of the small parameter shows the leading order agreement of our spring dimer nanopterons with those constructed by Faver \cite[Cor.\@ 4.4]{faver-spring-dimer}.
In that paper, nanopterons have the form
\[
r_j(t)
= \nu^2\kappa^{[(-1)^j+1]/2}\varsigma_{\kappa,\beta}(\nu(j-C_{\nu}{t})) + \O(\nu^3),
\]
where now
\[
\varsigma_{\kappa,\beta}(X)
:= \frac{3(1+\kappa)^2}{4(\beta+\kappa^3)}\sech^2\left(\left(\frac{1+\kappa}{2\kappa}\right)\frac{q_{\kappa}X}{2}\right),
\]
\[
C_{\nu}
:= \left(\frac{2\kappa}{1+\kappa}+\nu^2\right)^{1/2},
\]
and $q_{\kappa}$ was defined in \eqref{eqn: qkappa}.
It is, unsurprisingly, necessary to assume $\beta+\kappa^3\ne0$ here, too.
Rescaling 
\[
\nu
= \frac{2\kappa}{1+\kappa}\ep,
\]
we find
\[
\kappa^{[(-1)^j+1]/2}\varsigma_{\kappa,\beta}(X)
= \kappa^{[(-1)^j+1]/2}\frac{3\kappa^2}{\beta+\kappa^3}\ep^2\sech^2\left(\frac{q_{\kappa}\ep{X}}{2}\right),
\]
and this is our leading order $\sech^2$-term in \eqref{eqn: sd rel disp N}.
The scaling factor $2\kappa/(1+\kappa)$ is now the speed of sound for the spring dimer.

\subsubsection{Stegotons in the spring dimer}
At first glance Theorems \ref{thm: main theorem mass dimer} and \ref{thm: main theorem spring dimer} may look identical, and they essentially are, up to one key difference.
The spring dimer relative displacement profile, given in \eqref{eqn: sd rel disp N}, contains a factor on its leading-order localized term that is either $\kappa$ or 1, alternating with the parity of lattice site $j$.
Since we assume $\kappa > 1$ for the spring dimer, this means the leading order term is either slightly larger or smaller depending on $j$.
This phenomenon is observed in \cite{faver-spring-dimer}, which uses the parlance ``stegoton'' of LeVeque and Yong \cite{leveque1, leveque2} to describe this alternating behavior in relative displacement; our theorem below, like the result in \cite{faver-spring-dimer}, analytically confirms the stegoton's existence.

\subsubsection{Some physical interpretations}
Here is how we physically interpret Theorem \ref{thm: main theorem mass dimer}; the same will hold for Theorem \ref{thm: main theorem spring dimer}.
The ``growing front'' results for position coordinates mean that, over time, a fixed particle drifts further and further away from its equilibrium position.
If the term $\Gsf_{\ep}^{\alpha}$ is indeed bounded and not truly linearly growing, then that drift will asymptote to some constant length with small ``wiggles'' due to the ripples $\varphi_{j,\ep}^{\alpha}$.

At first glance the nanopteron results for relative displacement many seem strange; after all, while the $\sech^2$-term dominates the nanopteron, as this term decays the periodic ripples could induce negative values in relative displacement (especially if we do not select the amplitude $\alpha$ to be exponentially small).
That, however, would allow pairs of particles to ``cross'' or ``move through'' each other, which is physically bizarre.
(To be fair, with the linearly growing front we are allowing the possibility that the springs become arbitrarily long, which is also physically bizarre.)
But recall our first nondimensionalization of the problem in \eqref{eqn: equilibrium length cov} to take the spring equilibrium lengths to be 0.
And so, with $y_j$ as the original position coordinates from \eqref{eqn: original equations of motion with ell-j}, we find from \eqref{eqn: rel disp y and u} that
\[
y_{j+1}-y_j
= r_j + \ell_j,
\]
where $r_j$ is the nanopteron from \eqref{eqn: md rel disp tw final}.
By restricting $\ep$ and $\alpha$, we can make this $r_j$ as small as we like uniformly in time, and so over very long times, the distance between pairs of particles just settles down to oscillate around the equilibrium length of the spring connecting the particles.

\subsection{The spatial dynamics method}\label{sec: spatial dynamics method}
The progenitors of this ``spatial dynamics'' method for lattice problems were Iooss and Kirchg\"{a}ssner \cite{iooss-kirchgassner}, who applied it to the position traveling wave problem for Klein--Gordon lattices.
Such lattices consist, roughly, of a monatomic FPUT lattice with linear nearest-neighbor spring forces and an additional ``on-site'' potential applied to each particle.
The Iooss--Kirchg\"{a}ssner methods have been adapted to a host of subsequent lattice problems, including \cite{calleja-sire, iooss-fpu, james-sire, sire, sire-james, iooss-pelinovsky, iooss-james, hilder-derijk-schneider}.
Accessible surveys of their techniques appear in \cite{james-sire-review} and \cite[Sec.\@ 5.2.3]{haragus-iooss}.
In broad strokes, they make a special change of variables to convert their second-order, nonlocal traveling wave equation into a first-order ordinary differential equation on a particular infinite-dimensional Banach space, with the ``spatial'' variable of the traveling wave profile now taking the role of the ``time'' variable in this differential equation, hence the term ``spatial dynamics'' for this overall approach.
A key idea is that instead of shifting with $(S^{\pm1}p)(x) = p(x\pm1)$ as in \eqref{eqn: position tw eqns}, one replaces the shifts with ``continuous variables'' by putting $P(x,v) := p(x+v)$, so that the shift operator $S^{\pm1}p$ becomes the ``evaluation'' operator $P(x,\cdot) \mapsto P(x,\pm1)$.
We perform this change of variables for our problem \eqref{eqn: position tw eqns} in Section \ref{sec: IK-cov}.
Prior to studying the Klein--Gordon lattice, Iooss and Kirchg\"{a}ssner had earlier applied the spatial dynamics method to water wave problems \cite{iooss-kirchgassner-ww}; see the introduction to that paper for a detailed historical overview of the method in the water wave context, and also the articles \cite{dias-iooss, iooss-peyresq, Lombardi-iooss-gravity}.
The myriad underlying techniques date back to Kirchg\"{a}ssner \cite{kirchgassner-wave-solutions} and Mielke \cite{mielke-reduction}.

For future reference, let us write the first-order equation that Iooss and Kirchg\"{a}ssner, and successors, would study as 
\begin{equation}\label{eqn: sd intro}
U'(x)
= \L_0U(x) + \mu\L_1(\mu)U(x) + \nl(U(x),\mu),
\end{equation}
where $\mu$ is the appropriate expression of the long wave parameter in Iooss--Kirchg\"{a}ssner coordinates.
The operators $\L_0$ and $\L_1(\mu)$ are linear and $\nl$ is nonlinear.
A deft combination of spectral theory, normal form analysis, and center manifold theory can then capture, in different regimes and to different degrees of detail, small solutions to \eqref{eqn: sd intro} --- in particular, nanopterons. 
The existence of nanopterons is typically established by invoking the comprehensive results of Lombardi's magisterial monograph \cite{Lombardi}, which, together with the articles \cite{Lombardi-peyresq,Lombardi-orbits97, Lombardi-orbits96, Lombardi-nonpersistence}, provides a far-reaching set of hypotheses under which a problem like \eqref{eqn: sd intro} has nanopteron solutions.
These hypotheses are valid in general Banach spaces and not specific to lattice-type problems.
We give a careful overview of Lombardi's method in Appendix \ref{app: Lombardi} to put our language and methods in better dialogue with his.
The interested reader may wish at this point to consult our phrasing of Lombardi's nanopteron existence results in Theorem \ref{thm: Lombardi}.

We were specifically inspired to deploy the spatial dynamics method on FPUT lattices by Venney and Zimmer \cite{venney-zimmer}, who studied monatomic lattices with nearest {\it{and}} next-to-nearest neighbor couplings --- a spring dimer, after a fashion, though of a rather different genus from ours.
Venney and Zimmer, in turn, adapted techniques that Iooss and James \cite{iooss-james} used to construct traveling breathers in monatomic lattices.
Neither Venney and Zimmer, nor Iooss and James, obtained quite the same problem that Iooss and Kirchg\"{a}ssner encountered, and neither the Venney--Zimmer nor the Iooss--James problem was directly amenable to Lombardi's techniques.
The chief obstacle in \cite{venney-zimmer, iooss-james} to a direct application of Lombardi's results was that the center spectrum of their versions of the operator $\L_0$ from \eqref{eqn: sd intro} consisted of 0 (with algebraic multiplicity 4 and geometric multiplicity 1) and $\pm{i}\omega$ (both simple eigenvalues) for a certain ``critical frequency'' $\omega > 0$.
This is the same situation as in our FPUT dimers.
However, Lombardi calls for a {\it{double}} eigenvalue at 0.
By exploiting translation invariance and a conserved first integral inherent to their respective lattice problems, both pairs of authors were able to change variables and work on a ``reduced'' version of \eqref{eqn: sd intro} for which the linearization's eigenvalue at 0 does have multiplicity 2.

Building on the work of our spatial dynamics predecessors, here are the chief contributions of this paper to the spatial dynamics method for lattices, in addition to the FPUT-specific results in Theorems \ref{thm: main theorem mass dimer} and \ref{thm: main theorem spring dimer}.

\begin{enumerate}[label={\bf\arabic*.}]

\item
We state seven exact hypotheses under which we can solve an equation like \eqref{eqn: sd intro}, posed in an arbitrary Banach space, when $\L_0$ has an eigenvalue of algebraic multiplicity 4 at 0.
We enumerate these hypotheses in Section \ref{sec: hypos} and state their consequence for \eqref{eqn: sd intro} as Theorem \ref{thm: main abstract}.
The language of our hypotheses is inspired by Lombardi's assumptions in \cite[Ch.\@ 8]{Lombardi} and intended to facilitate the invocation of his results on our reduced problem, for which the linearization at $\mu=0$ has a double eigenvalue at 0.
Our arrangement of the hypotheses also arose by observing the common threads among the problems of \cite{iooss-james, venney-zimmer} and our FPUT dimers and distilling them into more general terminology than previously employed (particularly in the cases of our Hypotheses \ref{hypo: nondegen} and \ref{hypo: opt reg}).
Working at the level of these abstract hypotheses allows us to see precisely what the mass and spring dimers have in common, where they differ, and what is true for a general dimer, all without being obscured by the complex dependencies on the four parameters $c$, $w$, $\kappa$, and $\beta$.
We check the hypotheses for the FPUT dimers in Section \ref{sec: tw prob redux} and then analyze the consequences of the abstract Theorem \ref{thm: main abstract} for dimer position traveling waves in Section \ref{sec: proofs of main lattice theorems}.

\item
We provide a thorough treatment of the reduced problem that converts \eqref{eqn: sd intro} into the algebraic multiplicity 2 situation.
For reference, write this reduced problem as
\begin{equation}\label{eqn: sd intro reduced}
W' 
= \tL_0W + \mu\tL_1(\mu)W + \tnl(W,\mu)
\end{equation}
for some linear operators $\tL_0$ and $\tL_1(\mu)$ and a nonlinear operator $\tnl$.
In Section \ref{sec: 02+iomega reduction}, we show how any (small) solution of \eqref{eqn: sd intro} also solves \eqref{eqn: sd intro reduced}, and, importantly, we explain how any solution to the reduced problem really does yield a solution to the full original problem \eqref{eqn: sd intro}.
This reduced problem is not the finite-dimensional system that would arise from the center manifold theorem, which we do not use, so as to preserve the analyticity of various maps, which in turn permits the application of Lombardi's analytic-dependent hypotheses.
The operators $\tL_0$, $\tL_1(\mu)$, and $\tnl$ in \eqref{eqn: sd intro reduced} are really $\O(1)$ perturbations of the original operators $\L_0$, $\L_1(\mu)$, and $\nl$ from \eqref{eqn: sd intro}, and as such we cannot capture the properties of these new operators using ordinary perturbation theory.
With an eye toward our eventual nanopteron construction in relative displacement coordinates, we also prove a host of exact estimates on various terms and operators in the reduced problem.
These are all details that are not treated with particular precision in \cite{iooss-james, venney-zimmer}.
They are necessary not only for logical completeness but also to allow us to compare the leading order behavior of our spatial dynamics nanopterons against those previously found by Beale's method, as we discussed in Section \ref{sec: faver wright comparisons}.

\item
In Appendix \ref{app: Lombardi} we generalize Lombardi's nanopteron program from \cite{Lombardi}.
Since Lombardi originally aimed his treatment at the water wave problem, we have adapted some of his hypotheses to encompass broader problems.
We feel that our gloss of Lombardi's intricate methods offers an efficient, accessible outline that will be useful not just to work in concert with our hypotheses but for future nanopteron constructions in settings not limited to water waves or lattices.
\end{enumerate}

\subsection{Notation}
For clarity, we remark on some aspects of our (mostly standard) notation.

\begin{enumerate}[label=$\bullet$]

\item
We abbreviate $\R_+ := (0,\infty)$.

\item
If $\X$ is a vector space, then $\ind_{\X}$ is the identity operator on $\X$.

\item
If $\X$ and $\Y$ are normed spaces, then $\b(\X,\Y)$ is the space of bounded linear operators from $\X$ to $\Y$.
We write $\b(\X) := \b(\X,\X)$.
We denote the dual space of $\X$ by $\X^* = \b(\X,\R)$.

\item
If $\X$ and $\Y$ are vector spaces and $\Zcal\subseteq \X$ is a subspace of $\X$, and if $\A \colon \X \to \Y$ is a linear operator, then we write $\restr{\A}{\Zcal}$ for the restriction of $\A$ to $\Zcal$.

\item
If $\X$ and $\Y$ are normed spaces, $f \colon \X \to \Y$ is differentiable, and $U$, $\grave{U} \in \X$, then we denote the derivative of $f$ at $U$ evaluated at $\grave{U}$ by $Df(U)\grave{U}$.
Likewise, the second derivative of $f$ at $U$ evaluated at $(\grave{U},\breve{U})$ is $D^2f(U)[\grave{U},\breve{U}]$.
Occasionally we will write $D_Uf$ instead of $Df$ if $f$ is a function of more than just $U$.
For a function $f = f(x)$ of a real variable $x$ (complex variable $z$), we sometimes write $\partial_x[f] = f'$ ($\partial_z[f] = f'$).

\item
If $\X$ is a normed space, then for a function $f \colon \R \to \X$ we put
\[
\Lip(f)
:= \sup_{\substack{x,\grave{x} \in \R \\ x \ne \grave{x}}} \frac{\norm{f(x)-f(\grave{x})}_{\X}}{|x-\grave{x}|}.
\]

\item
If a quantity $f$ depends on the variable $x$ and some additional ``parameters,'' say, $\mu$ and $\nu$, we will often write $f(x;\mu,\nu)$; in general, data ``after the semi-colon'' is not part of the independent variable.
If we are considering the derivative of $f$ just with respect to $x$, we will write $Df(x;\mu,\nu)$.

\item
We review a number of conventions and notations for spectral theory in Appendix \ref{app: spectral theory}.
\end{enumerate}

\subsection{Acknowledgments}
We acknowledge support from the Netherlands Organization for Scientific Research (NWO) (grant 639.032.612).
\section{The Abstract Problem}\label{sec: abstract}

\subsection{The hypotheses}\label{sec: hypos}
Let $\D$ and $\X$ be Banach spaces with $\D$ continuously embedded in $\X$.
We will denote the norms of these spaces by $\norm{\cdot}_{\D}$ and $\norm{\cdot}_{\X}$, respectively.
For some $\mu_0 > 0$, we consider a family of maps $\F \colon \D \times [0,\mu_0] \to \X$ of the form \eqref{eqn: F-mu} given below, and we will construct solutions to the differential equation
\begin{equation}\label{eqn: abstract ode}
U'(x)
= \F(U(x),\mu)
\end{equation}
that are the superposition of a front, an exponentially localized term, a small-amplitude periodic term, and a (possibly) linearly growing term. 
To be clear, a solution to \eqref{eqn: abstract ode} is a map $U \in \Cal(\R,\D) \cap \Cal^1(\R,\X)$ that satisfies \eqref{eqn: abstract ode} for each $x \in \R$.
We are not making any assumptions on the well-posedness of \eqref{eqn: abstract ode}, and in particular we do not treat it as an initial value problem.

We assume a number of hypotheses on the map $\F$ and its constituent terms.
These hypotheses will permit us to convert \eqref{eqn: F-mu} to the ``reduced'' problem \eqref{eqn: fully reduced}, which we will then solve using Lombardi's nanopteron theory.
We will then undo our reduction procedure and recover solutions to the original problem \eqref{eqn: F-mu}.
We state our main result on solutions to \eqref{eqn: F-mu} in Theorem \ref{thm: main abstract}.

\begin{hypothesis}[structural properties of $\F$]\label{hypo: F structure}
The map $\F$ has the form
\begin{equation}\label{eqn: F-mu}
\F(U,\mu)
:= \L_0U + \mu\L_1(\mu)U + \nl(U,\mu),
\end{equation}
where $\L_0 \in \b(\D,\X)$ and $\L_1 \colon [0,\mu_0] \to \b(\D,\X)$ and $\nl \colon \X \times [0,\mu_0] \to \X$ are analytic.
The operator $\nl$ is quadratic in the sense that 
\begin{equation}\label{eqn: nl hypo}
\nl(0,\mu) = 0
\quadword{and}
D_U\nl(0,\mu) = 0
\end{equation}
for all $\mu$.
\end{hypothesis}

This hypothesis, along with the analyticity of $\nl$, implies that we have the expansion
\begin{equation}\label{eqn: nl expn}
\nl(U,\mu)
= \nl_0(U,U) + \nl_1(U,\mu)
\end{equation}
for some bounded bilinear operator $\nl_0 \colon \X \times \X \to \X$ and analytic map $\nl_1 \colon \X \times [0,\mu_0] \to \X$ satisfying
\begin{equation}\label{eqn: nl ests}
 \norm{\nl_1(U,\mu)}_{\X} 
\le C\mu\norm{U}_{\X}^2 + C\norm{U}_{\X}^3.
\end{equation}
for some $C > 0$ when $\norm{U}_{\X} \le 1$.
Equivalently, if $\nl$ has the form \eqref{eqn: nl expn}, then of course $\nl$ satisfies \eqref{eqn: nl hypo}.

\begin{hypothesis}[reversible symmetry]\label{hypo: symmetry}
There exists an operator $\S \in \b(\X)$ such that $\S^2 = \ind_{\X}$, $\norm{\S}_{\b(\X)} = 1$, and
\[
\S\L_0 = -\L_0\S,
\qquad
\S\L_1(\mu) = -\L_1(\mu)\S,
\quadword{and}
\nl(\S{U},\mu) = -\S\nl(U,\mu)
\]
for all $U \in \D$ and $0 \le \mu \le \mu_0$.
\end{hypothesis}

\begin{definition}\label{defn: reversible}
A map $f \colon \R \to \X$ is \defn{$\S$-reversible} if $\S{f}(x) = f(-x)$ for all $x \in \R$ and \defn{$\S$-antireversible} if $\S{f}(x) = -f(-x)$ for all $x$.
\end{definition}

The symmetry $\S$ does not play a particularly large role in our development in this section, beyond serving as a condition to verify for the invocation of Lombardi's nanopteron theorem.
In Lombardi's nanopteron method, however, the symmetry is a key feature to remove certain redundancies and prevent overdetermined systems, and we point out some of its specific uses at various times in Appendix \ref{app: Lombardi}.

Our next two hypotheses control the center spectrum of $\L_0$, considered now as an operator in $\X$ with domain $\D$, and its spectral projection.
See Appendix \ref{app: spectral theory} for our spectral theoretic conventions.

\begin{hypothesis}[center spectrum of $\L_0$]\label{hypo: spectrum}
There exists $\omega >0$ such that $\sigma(\L_0) \cap i\R = \{0,\pm{i}\omega\}$.
All three points are eigenvalues; 0 has algebraic multiplicity 4 and $\pm{i}\omega$ each has algebraic multiplicity 1.
Each eigenvalue is geometrically simple, and there exists $\lambda_0 > 0$ such that if $\lambda \in \sigma(\L_0)\setminus{i}\R$, then $|\re(\lambda)| \ge \lambda_0$.
\end{hypothesis}

\begin{hypothesis}[spectral projection for $\L_0$]\label{hypo: eigenproj}
Let $\Pi_0$ be the spectral projection for $\L_0$ corresponding to 0.
Write
\begin{equation}\label{eqn: abstract 0 proj}
\Pi_0U
= \sum_{k=0}^3 \chi_k^*[U]\chi_k,
\end{equation}
where $(\chi_0,\chi_1,\chi_2,\chi_3) \in \X^4$ is a Jordan chain for $\L_0$ associated with 0, and $\chi_k^* \in \X^*$ are functionals.
The vectors $\chi_k$ and the functionals $\chi_k^*$ have the following additional properties.

\begin{enumerate}[label={\bf(\roman*)}, ref={(\roman*)}]

\item\label{part: L1 chi0}
$\L_1(\mu)\chi_0 = 0$ for all $0 \le \mu \le \mu_0$.

\item\label{part: Q chi0}
(translation invariance) $\nl(U+\gamma\chi_0,\mu) = \nl(U,\mu)$ for all $U \in \D$, $\gamma \in \R$, and $0 \le \mu \le \mu_0$.

\item\label{part: S and chik} 
$\S\chi_k = (-1)^{k+1}\chi_k$.
\end{enumerate}

\end{hypothesis}

\begin{remark}\label{rem: 04+iomega}
Following \cite[Rem.\@ 3.1.12, 3.1.16]{Lombardi}, we say that the nonlinear operator $\F$ has a ``$0^{4-}i\omega$ bifurcation at $(0,0)$,'' since

\begin{enumerate}[label={\bf(\roman*)}]

\item
$\F(0,\mu) = 0$ for all $\mu$;

\item
The center spectrum of $\L_0 = D_U\F(0,0)$ is $\{0,\pm{i}\omega\}$, where 0 is an eigenvalue of algebraic multiplicity 4 and $\pm{i}\omega$ are eigenvalues of algebraic multiplicity 1;

\item
$\S\chi_0 = -\chi_0$, where $\chi_0$ is an eigenvector of $\L_0$ corresponding to 0.
\end{enumerate}
Our reduction procedure in Section \ref{sec: 02+iomega reduction}, modeled on that of \cite{venney-zimmer, iooss-james}, will allow us to consider a problem whose linearization has the more well-known ``$0^{2+}i\omega$'' bifurcation, in which 0 is an eigenvalue of algebraic multiplicity only 2.  
See \cite[Sec.\@ 4.3]{haragus-iooss} and \cite[Ex.\@ 3.2.9]{Lombardi} for more details on this bifurcation in both finite and infinite dimensions.
\end{remark}

Our antepenultimate hypothesis introduces a map $\J_{\mu}$ that serves as first integral for the problem \eqref{eqn: abstract ode}; in particular, per part \ref{part: Djmu on F} below, if $U$ solves \eqref{eqn: abstract ode}, then $x \mapsto J_{\mu}(U(x))$ is constant.
The other conditions of this hypothesis require $\J_{\mu}$ to interact with the symmetry $\S$, the eigenvectors $\chi_k$, and the eigenfunctional $\chi_3^*$ in suitable ways.

\begin{hypothesis}[first integral]\label{hypo: first int}
For each $0 \le \mu \le \mu_0$, there is a map $\J_{\mu} \colon \X \to \R$ with the following properties.

\begin{enumerate}[label={\bf(\roman*)},ref={(\roman*)}]

\item\label{part: first int diff}
The map $\X \times [0,\mu_0] \to \R \colon (U,\mu) \mapsto \J_{\mu}(U)$ is analytic.

\item\label{part: Djmu on F}
$D\J_{\mu}(U)\F(U,\mu) = 0$ for all $U \in \X$, where $\F$ is defined in \eqref{eqn: F-mu}.

\item\label{part: Jmu and S}
$\J_{\mu}(\S{U}) = \J_{\mu}(U)$ for all $U \in \X$.

\item\label{part: Jstar}
$\J_0(0) = 0$, and there is a functional $\Jscr_*(\mu) \in \X^*$ such that 
\begin{equation}\label{eqn: Jmu J0 quad}
\J_{\mu}(U)-\J_0(U)
= \mu\Jscr_*(\mu)U
\end{equation}
for all $U \in \D$.
The mapping $[0,\mu_0] \to \X^* \colon \mu \mapsto \Jscr_*(\mu)$ is analytic.

\item\label{part: Jmu on chi0}
$\J_{\mu}(U + \gamma\chi_0) = \J_{\mu}(U)$ for all $U \in \D$, $\gamma \in \R$.

\item\label{part: chi3 J0}
$\chi_3^*[U] = D\J_0(0)U$.
\end{enumerate}
\end{hypothesis}

The following two quantities are essential values for the precise expressions of our solutions to \eqref{eqn: abstract ode}, and they control the leading order linear and quadratic behavior of the eventual reduction of \eqref{eqn: abstract ode}.

\begin{hypothesis}[nondegeneracies]\label{hypo: nondegen}
Define
\begin{equation}\label{eqn: frak defn}
\Lfrak_0 
:= \chi_2^*[\L_1(0)\chi_1] - \Jscr_*(0)\chi_1
\quadword{and}
\Qfrak_0 
:= \chi_2^*\big[\nl_0(\chi_1,\chi_1)\big] - \frac{D^2\J_0(0)[\chi_1,\chi_1]}{2}.
\end{equation}
Then
\begin{equation}\label{eqn: nondegen}
\Lfrak_0 > 0
\quadword{and}
\Qfrak_0 \ne 0.
\end{equation}
\end{hypothesis}

Our final hypothesis concerns the solutions of certain affine first-order differential equations involving $\L_0$, and our phrasing of this hypothesis is arguably our greatest departure from the abstract structure implicit in \cite{venney-zimmer, iooss-james}.
To state this hypothesis, we need the machinery of Appendix \ref{app: opt reg}, specifically Definition \ref{defn: opt reg} for localized optimal regularity, Definition \ref{defn: per opt reg} for periodic optimal regularity, and Definition \ref{defn: subopt reg} for suboptimal regularity.
We do not elaborate on these concepts here, and they do not play a major role in the reduction process of Section \ref{sec: 02+iomega reduction}; however, they are critical to invoking Lombardi's results for our final problem.

Let $\Pi$ be the spectral projection for $\L_0$ corresponding to $\{0,\pm{i}\omega\}$; by the remarks at the conclusion of the proof of Theorem 6.17 in \cite{kato}, we may write
\begin{equation}\label{eqn: Pi spectral proj}
\Pi{U}
= \Pi_0U + \chi_+^*[U]\chi_+ + \chi_-^*[U]\chi_-
\end{equation}
for some functionals $\chi_{\pm}^* \in \X^*$, where $\chi_{\pm}$ are eigenvectors of $\L_0$ corresponding to $\pm{i}\omega$.
Put
\begin{equation}\label{eqn: hyperbolic space X D}
\X_{\hsf} := (\ind_{\X}-\Pi)\X
\quadword{and}
\D_{\hsf} := \D \cap \X_{\hsf}.
\end{equation}
Let $\X_{\hsf}$ and $\D_{\hsf}$ have the norms $\norm{\cdot}_{\X}$ and $\norm{\cdot}_{\D}$, respectively.

\begin{hypothesis}[optimal regularity]\label{hypo: opt reg}
There is a subspace $\Y_{\hsf}$ of $\X_{\hsf}$ such that 
\[
(\ind_{\X}-\Pi)\L_1(\mu)W \in \Y_{\hsf} 
\quadword{and}
(\ind_{\X}-\Pi)\nl(W,\mu) \in \Y_{\hsf}
\]
for all $W \in \D_{\hsf}$.
There exist $b \in (0,\pi)$, $q \in (0,\Lfrak_0^{1/2})$, and $\grave{q} < 0$ such that on the triple $(\D_{\hsf},\Y_{\hsf},\X_{\hsf})$, the operator $\restr{\L_0}{\D_{\hsf}}$ has the localized optimal regularity property with decay rate $q\Lfrak_0^{-1/2}$ and strip width $b$; the periodic optimal regularity property with base frequency $\omega$; and the suboptimal regularity property with growth rate $\grave{q}$.
\end{hypothesis}

Before we state the main result in Theorem \ref{thm: main abstract}, we collect a variety of immediate and useful consequences of the preceding hypotheses in the following lemma.

\begin{lemma}\label{lem: odds and ends}
The following hold for all $U \in \X$ and $0 \le \mu \le \mu_0$.

\begin{enumerate}[label={\bf(\roman*)}, ref={(\roman*)}]

\item\label{part: chi-k+1-star on L0}
$\chi_{k+1}^*[\L_0U] = \chi_k^*[U]$ for $k=0,1,2$, while $\chi_3^*[\L_0U]= 0$.

\item\label{part: chik-star and S}
$\chi_k^*[\S{U}] = (-1)^{k+1}\chi_k^*[U]$.

\item\label{part: chi3-star on L1 on chi3}
$\chi_3^*[\L_1(\mu)\chi_3] = 0$.

\item\label{part: Djmu on chi0}
$D\J_{\mu}(U)\chi_0 = 0$.

\item\label{part: Djmu0 on chi1 and chi3}
$D\J_0(0)\chi_1 = 0$ and $D\J_0(0)\chi_3 = 1$.

\item\label{part: Jscr and S}
$\Jscr_*(\mu)\S{U} = \Jscr_*(\mu)U$.

\item\label{part: Jscr on chi0}
$\Jscr_*(\mu)\chi_0 = 0$.

\end{enumerate}
\end{lemma}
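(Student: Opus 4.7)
The plan is to derive all seven parts directly from the structural hypotheses, with two key levers: the commutation $\L_0 \Pi_0 = \Pi_0 \L_0$ expanded through the projection formula \eqref{eqn: abstract 0 proj}, and the commutation $\S \Pi_0 = \Pi_0 \S$, which must first be extracted from the anti-commutation $\S \L_0 = -\L_0 \S$.

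For part \ref{part: chi-k+1-star on L0}, I would apply $\L_0 \Pi_0 U = \Pi_0 \L_0 U$ to an arbitrary $U \in \D$ and expand both sides using the Jordan chain identities that underlie Hypothesis \ref{hypo: eigenproj} ($\L_0 \chi_0 = 0$ and $\L_0 \chi_{k+1} = \chi_k$). The left side collapses to $\chi_1^*[U]\chi_0 + \chi_2^*[U]\chi_1 + \chi_3^*[U]\chi_2$, while the right side is the full expansion $\sum_k \chi_k^*[\L_0 U]\chi_k$; matching coefficients via linear independence of $(\chi_0,\chi_1,\chi_2,\chi_3)$ yields the dual chain formulas. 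The same mechanism, applied with $\Pi_0 \chi_j = \chi_j$ (a consequence of $\Pi_0^2 = \Pi_0$), establishes the biorthogonality $\chi_k^*[\chi_j] = \delta_{kj}$, which feeds several subsequent parts. Part \ref{part: Djmu0 on chi1 and chi3} is then immediate from Hypothesis \ref{hypo: first int} part \ref{part: chi3 J0}, as $D\J_0(0)\chi_1 = \chi_3^*[\chi_1] = 0$ and $D\J_0(0)\chi_3 = \chi_3^*[\chi_3] = 1$. For part \ref{part: chik-star and S}, I would first show $\S \Pi_0 = \Pi_0 \S$: from $\S \L_0 = -\L_0 \S$, infer the resolvent identity $\S(\lambda \ind_{\X}-\L_0)^{-1} = (\lambda \ind_{\X}+\L_0)^{-1}\S$, integrate over a small contour about $0$, and use the substitution $\mu = -\lambda$ (which preserves a counterclockwise circle about the origin) to identify the result as $\Pi_0 \S$. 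Expanding $\S \Pi_0 U = \Pi_0 \S U$ through \eqref{eqn: abstract 0 proj} and invoking Hypothesis \ref{hypo: eigenproj} part \ref{part: S and chik} then yields the dual-symmetry rule by linear independence. Part \ref{part: chi3-star on L1 on chi3} is a consequence: $\S \chi_3 = \chi_3$ together with Hypothesis \ref{hypo: symmetry} gives $\S \L_1(\mu)\chi_3 = -\L_1(\mu)\chi_3$, so applying $\chi_3^*$ and using part \ref{part: chik-star and S} with $(-1)^{3+1}=1$ forces $\chi_3^*[\L_1(\mu)\chi_3] = -\chi_3^*[\L_1(\mu)\chi_3]$.

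Parts \ref{part: Djmu on chi0}, \ref{part: Jscr and S}, and \ref{part: Jscr on chi0} are direct manipulations of the first integral. Differentiating $\J_\mu(U+\gamma\chi_0) = \J_\mu(U)$ (Hypothesis \ref{hypo: first int} part \ref{part: Jmu on chi0}) at $\gamma=0$ gives \ref{part: Djmu on chi0}. For the remaining two, I would subtract the invariances in Hypothesis \ref{hypo: first int} parts \ref{part: Jmu and S} and \ref{part: Jmu on chi0} at $\mu$ and at $0$ and apply the decomposition $\J_\mu - \J_0 = \mu\Jscr_*(\mu)$ of Hypothesis \ref{hypo: first int} part \ref{part: Jstar}: this yields $\mu\Jscr_*(\mu)(\S U - U) = 0$ for \ref{part: Jscr and S} and $\mu\Jscr_*(\mu)\chi_0 = 0$ for \ref{part: Jscr on chi0}. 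Dividing by $\mu > 0$ and extending to $\mu = 0$ by the analyticity of $\mu \mapsto \Jscr_*(\mu)$ finishes both. The only step that is not pure bookkeeping is the resolvent/contour argument underlying $\S\Pi_0 = \Pi_0\S$: the bare anti-commutation $\S\L_0 = -\L_0\S$ generally fails to commute with spectral projections, and the identity is rescued here only because the eigenvalue $0$ is symmetric under negation, so that the change of variables $\mu = -\lambda$ returns the contour integral to its original form. Everything else flows from the Jordan chain, biorthogonality, and the structural identities satisfied by $\J_\mu$.
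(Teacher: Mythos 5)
Your proposal is correct and takes essentially the same approach as the paper's proof: all seven parts are deduced from the Jordan chain identities, the commutation $\L_0\Pi_0 = \Pi_0\L_0$, the induced commutation $\S\Pi_0 = \Pi_0\S$ (which the paper cites from Haragus--Iooss and you rederive via a contour/resolvent argument), and the structural hypotheses on $\J_\mu$. The only small divergence is part \ref{part: Jscr on chi0}, where the paper applies part \ref{part: Jscr and S} together with $\S\chi_0 = -\chi_0$ to get $\Jscr_*(\mu)\chi_0 = \Jscr_*(\mu)\S\chi_0 = -\Jscr_*(\mu)\chi_0$, whereas you instead invoke the $\chi_0$-translation invariance of $\J_\mu$ and divide by $\mu$; both are equally short and valid.
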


\begin{proof}

\begin{enumerate}[label={\bf(\roman*)}]

\item
This follows from the commutativity of $\L_0$ and the spectral projection $\Pi_0$ defined in \eqref{eqn: abstract 0 proj}, the identities $\L_0\chi_0 = 0$ and $\L_0\chi_{k+1} = \chi_k$ for $k=1$, $2$, $3$, and the linear independence of the $\chi_k$.

\item
Since $\S$ anticommutes with $\L_0$ by Hypothesis \ref{hypo: symmetry}, the spectral projection $\Pi_0$ and $\S$ commute; see the remarks preceding Theorem 2.5 in Section 2.3.3 of \cite{haragus-iooss}.
Then the result follows from Hypothesis \ref{hypo: eigenproj}--\ref{part: S and chik} and the linear independence of the $\chi_k$.

\item
Since $\S^2 = \ind_{\X}$, we may use Hypothesis \ref{hypo: eigenproj}--\ref{part: S and chik} to calculate
\[
\L_1(\mu)\chi_3
= \L_1(\mu)\S^2\chi_3
= \big(\L_1(\mu)\S\big)\big(\S\chi_3\big)
= -\S\L_1(\mu)\chi_3.
\]
We combine this with another invocation of Hypothesis \ref{hypo: eigenproj}--\ref{part: S and chik} to calculate
\[
\chi_3^*[\L_1(\mu)\chi_3]
= -\chi_3^*[\S\L_1(\mu)\chi_3]
= -\chi_3^*[\L_1(\mu)\chi_3],
\]
and so $\chi_3^*[\L_1(\mu)\chi_3] = 0$.

\item
Use Hypothesis \ref{hypo: first int}--\ref{part: Jmu on chi0} to calculate the (directional) derivative
\[
D\J_{\mu}(U)\chi_0
= \lim_{\gamma \to 0} \frac{\J_{\mu}(U+\gamma\chi_0)-\J_{\mu}(U)}{\gamma}
= \lim_{\gamma \to 0} \frac{\J_{\mu}(U)-\J_{\mu}(U)}{\gamma}
= 0.
\]

\item
This follows at once from Hypothesis \ref{hypo: first int}--\ref{part: chi3 J0}.

\item
Use the definition of $\Jscr_*$ and Hypothesis \ref{hypo: first int}--\ref{part: Jmu and S}.

\item
Use part \ref{part: Jscr and S} and Hypothesis \ref{hypo: eigenproj}--\ref{part: S and chik} to calculate
\[
\Jscr_*(\mu)\chi_0
= \Jscr_*(\mu)\S\chi_0
= -\Jscr_*(\mu)\chi_0.
\qedhere
\]
\end{enumerate}
\end{proof}

\subsection{The main abstract result}\label{sec: main abstract result}
Now we can state our result on the existence of certain solutions to the abstract problem \eqref{eqn: abstract ode}.
We prove this theorem in Section \ref{sec: proof of thm: main abstract} after the reduction procedures of Section \ref{sec: 02+iomega reduction}.
We think of this theorem as the abstract existence result for near-sonic position traveling waves in dimer FPUT lattices, for we will show in Section \ref{sec: tw prob redux} that the position traveling wave problem \eqref{eqn: position tw eqns} satisfies, after a fashion, all the hypotheses above.

\begin{theorem}\label{thm: main abstract}
Assume Hypotheses \ref{hypo: F structure}, \ref{hypo: symmetry}, \ref{hypo: spectrum}, \ref{hypo: eigenproj}, \ref{hypo: first int}, \ref{hypo: nondegen}, and \ref{hypo: opt reg}.
Then there are constants $\Alpha_0$, $\Alpha_1$, $\mu_* > 0$ such that if
\begin{equation}\label{eqn: Ascr-mu}
\mu \in (0,\mu_*)
\quadword{and}
\alpha \in \left[\Alpha_0\mu\exp\left(-\frac{b\omega}{\Lfrak_0^{1/2}\mu^{1/2}}\right), \Alpha_1\right] =: \Ascr_{\mu},
\end{equation}
there is a solution to \eqref{eqn: abstract ode} of the form
\begin{multline}\label{eqn: abstract drifting nanopteron}
\Usf_{\mu}^{\alpha}(x)
= \mu^{1/2}\left[
\left(-\frac{3\Lfrak_0^{1/2}}{\Qfrak_0}+\mu^{1/2}\Lup_{\mu}^{\alpha}\right)
\tanh\left(\frac{\Lfrak_0^{1/2}\mu^{1/2}x}{2}\right)
+ \mu^{1/2}\Upsilon_{\mu}^{\alpha,0}(\mu^{1/2}x)
\right]
\chi_0 \\
-\frac{3\Lfrak_0}{2\Qfrak_0}\mu\sech^2\left(\frac{\Lfrak_0^{1/2}\mu^{1/2}x}{2}\right)\chi_1
+ \mu^{3/2}\Upsilon_{\mu}^{\alpha,*}(\mu^{1/2}x) \\
+\alpha\mu\Phi_{\mu}^{\alpha}(\Tup_{\mu}^{\alpha}(\mu^{1/2}x))
+ \alpha\mu^{1/2}(\alpha+\mu)\left(\int_0^{\Tup_{\mu}^{\alpha}(\mu^{1/2}x)} \Phi_{\mu}^{\alpha,\int}(s)\ds\right)\chi_0,
\end{multline}
where 
\begin{equation}\label{eqn: Tsf-mu-alpha}
\Tup_{\mu}^{\alpha}(X)
:= X + \mu\vartheta_{\mu}^{\alpha}\tanh\left(\frac{\Lfrak_0^{1/2}X}{2}\right)
\end{equation}
and the terms and coefficients above have the following properties.

\begin{enumerate}[label={\bf(\roman*)}]

\item
The maps $\Upsilon_{\mu}^{\alpha,0} \colon \R \to \R$ and $\Upsilon_{\mu}^{\alpha,*} \colon \R \to \D$ are real analytic and exponentially localized with
\begin{equation}\label{eqn: main expn loc}
\sup_{\substack{0 < \mu < \mu_* \\ \alpha \in \Ascr_{\mu} \\ X \in \R}} 
e^{q|X|}
\big(
|\Upsilon_{\mu}^{\alpha,0}(X)|
+|\partial_X[\Upsilon_{\mu}^{\alpha,0}](X)|
+\norm{\Upsilon_{\mu}^{\alpha,*}(X)}_{\D}
\big)
< \infty
\end{equation}
and $\chi_0^*[\Upsilon_{\mu}^{\alpha,*}(X)] = 0$ for all $X \in \R$.

\item
The real coefficient $\Lup_{\mu}^{\alpha}$ is uniformly bounded with
\[
\sup_{\substack{0 < \mu < \mu_* \\ \alpha \in \Ascr_{\mu} \\ X \in \R}} 
|\Lup_{\mu}^{\alpha}|
< \infty.
\]

\item
There are constants $0 < C_{\vartheta}^- < C_{\vartheta}^+$ such that the phase shift $\vartheta_{\mu}^{\alpha} \in \R$ satisfies
\begin{equation}\label{eqn: main phase shift}
0
< C_{\vartheta}^-
\le \vartheta_{\mu}^{\alpha} 
\le C_{\vartheta}^+
\end{equation}
for all $0 < \mu < \mu_*$ and $\alpha \in \Ascr_{\mu}$.

\item
The maps $\Phi_{\mu}^{\alpha} \colon \R \to \D$ and $\Phi_{\mu}^{\alpha,\int} \colon \R \to \R$ are defined for all $0 < \mu < \mu_*$ and $0 \le \alpha \le \Alpha_1$.
They are real analytic, periodic, and uniformly bounded with
\begin{equation}\label{eqn: main periodic est}
\sup_{\substack{0 < \mu < \mu_* \\
0 \le \alpha \le \Alpha_1 \\
X \in \R}} 
\left(
\norm{\Phi_{\mu}^{\alpha}(X)}_{\D} 
+ |\Phi_{\mu}^{\alpha,\int}(X)|
\right)
< \infty
\end{equation}
There is a constant $C_{\Phi} > 0$ such that 
\begin{equation}\label{eqn: main periodic lip}
\sup_{0 \le \alpha \le \Alpha_1}
\left(
\Lip(\Phi_{\mu}^{\alpha}) 
+ \Lip(\Phi_{\mu}^{\alpha,\int})
\right)
< C_{\Phi}\mu^{-1/2}
\end{equation}
for all $0 < \mu < \mu_*$.
The map $\Phi_{\mu}^{\alpha}$ satisfies the additional properties \eqref{eqn: Lombardi periodic1}, \eqref{eqn: Lombardi periodic frequency}, and \eqref{eqn: Lombardi periodic2} of part \ref{part: Lombardi periodic} of Theorem \ref{thm: Lombardi}, as well as $\chi_0^*[\Phi_{\mu}^{\alpha}(X)] = 0$ for all $X \in \R$.

\item
For each $0 < \mu < \mu_*$ and $0 \le \alpha \le \Alpha_1$, the map
\begin{equation}\label{eqn: Psf-mu-alpha}
\Psf_{\mu}^{\alpha}(x)
:= \alpha\mu\Phi_{\mu}^{\alpha}(\mu^{1/2}x) 
+ \alpha\mu^{1/2}(\alpha+\mu)\left(\int_0^{\mu^{1/2}x} \Phi_{\mu}^{\alpha,\int}(s) \ds\right)\chi_0
\end{equation}
also solves \eqref{eqn: abstract ode}.

\item
The solutions $\Usf_{\mu}^{\alpha}$ and $\Psf_{\mu}^{\alpha}$ are $\S$-reversible.
\end{enumerate}
\end{theorem}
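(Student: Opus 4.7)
The plan is to execute the three-stage strategy sketched in Section \ref{sec: spatial dynamics method}: first reduce the $0^{4-}i\omega$ equation \eqref{eqn: abstract ode} to a $0^{2+}i\omega$ equation on a codimension-two affine submanifold, then apply Lombardi's abstract nanopteron theorem (Theorem \ref{thm: Lombardi}) to the reduced problem, and finally lift the reduced solution back up to recover the expansion \eqref{eqn: abstract drifting nanopteron}.

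For the reduction, I would split $U = u_0\chi_0 + W$ with the side condition $\chi_0^*[W] = 0$. Hypothesis \ref{hypo: eigenproj}--\ref{part: L1 chi0} and --\ref{part: Q chi0}, together with $\L_0\chi_0 = 0$ and Hypothesis \ref{hypo: first int}--\ref{part: Jmu on chi0}, ensure that the equation for $W$ closes without any dependence on $u_0$, while the scalar $u_0$ satisfies the pure quadrature $u_0'(x) = \chi_0^*[\F(W(x),\mu)]$. Conservation of $\J_\mu$ along trajectories (Hypothesis \ref{hypo: first int}--\ref{part: Djmu on F}) lets me restrict to the level set $\J_\mu(W) = 0$, and Hypothesis \ref{hypo: first int}--\ref{part: chi3 J0} together with $\chi_3^*[\chi_3] = 1$ lets me apply the implicit function theorem to solve for the $\chi_3$-coordinate of $W$ in terms of the remaining coordinates near $W = 0$. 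The result is a reduced equation of the form $W' = \tL_0 W + \mu \tL_1(\mu) W + \tnl(W,\mu)$ on a codimension-two subspace whose center spectrum at $0$ now has algebraic multiplicity $2$. Hypotheses \ref{hypo: F structure}, \ref{hypo: symmetry}, and \ref{hypo: opt reg} transfer to the reduced system, and Hypothesis \ref{hypo: nondegen} supplies the nondegeneracy constants $\Lfrak_0$ and $\Qfrak_0$ from \eqref{eqn: frak defn} at exactly the places Lombardi requires.

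Applying Theorem \ref{thm: Lombardi} then produces the $\sech^2$-core $-\tfrac{3\Lfrak_0}{2\Qfrak_0}\mu\sech^2(\cdot)\chi_1$, the exponentially localized remainder $\mu^{3/2}\Upsilon_\mu^{\alpha,*}$ obeying \eqref{eqn: main expn loc}, the periodic ripple $\alpha\mu\Phi_\mu^\alpha$ obeying \eqref{eqn: main periodic est}--\eqref{eqn: main periodic lip}, the phase shift $\vartheta_\mu^\alpha$ with bounds \eqref{eqn: main phase shift}, and the $\S$-reversibility of the reduced solution. The independent periodic family from Lombardi's theorem, valid for $\alpha \in [0, \Alpha_1]$ without the exponential lower bound, supplies the reduced part of $\Psf_\mu^\alpha$. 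To recover the $\chi_0$-component I then integrate $u_0' = \chi_0^*[\F(W,\mu)]$. Using the Jordan dual-basis identities in Lemma \ref{lem: odds and ends}--\ref{part: chi-k+1-star on L0}, the dominant contribution is an adjoint-shifted projection of the $\chi_1$-coefficient of $W$, i.e.\ the $\sech^2$-core itself; its antiderivative is exactly the leading $\tanh$ term in the $\chi_0$-summand of \eqref{eqn: abstract drifting nanopteron}, with the asserted coefficient $-\tfrac{3\Lfrak_0^{1/2}}{\Qfrak_0}\mu^{1/2}$ obtained by direct integration of $-\tfrac{3\Lfrak_0}{2\Qfrak_0}\mu\sech^2$. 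The exponentially localized remainder integrates to $\mu\Upsilon_\mu^{\alpha,0}$ modulo a constant of integration absorbed by the secondary $\mu\vartheta_\mu^\alpha\tanh$ term inside $\Tup_\mu^\alpha$ of \eqref{eqn: Tsf-mu-alpha}, while the periodic ripple integrates to the slow-growth term $\alpha\mu^{1/2}(\alpha+\mu)\int_0^\cdot \Phi_\mu^{\alpha,\int}$ whose nonvanishing mean provides the linear growth; the scalar $\Lup_\mu^\alpha$ collects the higher-order corrections arising from the inner $\mu$-correction of $\Tup_\mu^\alpha$. Reversibility of $\Usf_\mu^\alpha$ and $\Psf_\mu^\alpha$ follows from the reduced-level reversibility of $W$ together with $\S\chi_0 = -\chi_0$ (Hypothesis \ref{hypo: eigenproj}--\ref{part: S and chik} at $k=0$), which makes $u_0\chi_0$ $\S$-reversible precisely when $u_0$ is odd; oddness of $\tanh$ and evenness of $\sech^2$ then close the argument.

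The principal obstacle is executing the reduction while preserving real analyticity: a finite-dimensional center-manifold reduction would sacrifice analyticity and therefore disqualify us from invoking Lombardi's machinery, so the elimination of $u_0$ and of the $\chi_3$-coordinate must be carried out exactly on the full infinite-dimensional subspace $\D$. A closely related difficulty, already flagged in Section \ref{sec: spatial dynamics method}, is that the reduced operators $\tL_1$ and $\tnl$ are $O(1)$ modifications of $\L_1$ and $\nl$ rather than perturbations, so their regularity, symmetry, and nondegeneracy properties cannot be harvested from ordinary perturbation theory but must be verified by hand, with particular care given to showing that the quadratic coefficient of the reduced nonlinearity is exactly $\Qfrak_0$ after accounting for the $D^2\J_0(0)$ correction introduced by the implicit-function elimination of $\chi_3$. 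Finally, verifying that the optimal-regularity triple $(\D_\hsf, \Y_\hsf, \X_\hsf)$ of Hypothesis \ref{hypo: opt reg} descends intact to the reduced problem is essential, since Lombardi's resolvent estimates operate only on hyperbolic directions and any loss of regularity there would obstruct the construction of both $\Upsilon_\mu^{\alpha,*}$ and $\Phi_\mu^\alpha$.
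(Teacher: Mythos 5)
Your proposal follows essentially the same route as the paper: remove the translation invariance by splitting off the $\chi_0$-direction, eliminate the $\chi_3$-direction via the first integral and the implicit function theorem (the paper does this in Section \ref{sec: 02+iomega reduction}, Lemma \ref{lem: big bad chi3 lemma}), apply Theorem \ref{thm: Lombardi} to the resulting $0^{2+}i\omega$ problem, and reconstruct $U$ by the quadrature \eqref{eqn: W-T-int soln to orig}; the leading $\tanh$ coefficient $-3\Lfrak_0^{1/2}/\Qfrak_0$ is indeed obtained by integrating the $\sech^2$-core exactly as you compute, and your concerns about preserving analyticity, about $\tL_1$ and $\tnl$ being $O(1)$ modifications, and about the descent of the optimal-regularity triple are precisely the delicate points the paper's Lemmas \ref{lem: big bad chi3 lemma} and \ref{lem: ultimate tFmu props} address. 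One small misstatement: the phase shift $\vartheta_{\mu}^{\alpha}$ in $\Tup_{\mu}^{\alpha}$ is not a constant of integration absorbed from the $\chi_0$-quadrature but is produced directly by Lombardi's theorem (it is the shift forced by the solvability condition \eqref{eqn: Lombardi osc int} in the fixed-point problem for the localized remainder); the $\Upsilon_{\mu}^{\alpha,0}$ and $\Lup_{\mu}^{\alpha}$ terms instead arise by splitting the two integrals in the $\chi_0$-quadrature into their $\tanh$-asymptotes plus localized remainders via Lemmas \ref{lem: the big tanh lemma} and \ref{lem: the other big tanh lemma}.
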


The map $\Usf_{\mu}^{\alpha}$ defined in \eqref{eqn: abstract drifting nanopteron} is a kind of ``growing front,'' a version of which we previously drew in Figure \ref{fig: front}.
Its leading-order term is the $\O(\mu^{1/2})$ product of a $\tanh$-type coefficient and the $\chi_0$ eigenvector.
This induces the ``front'' behavior.
The next higher-order terms are $\O(\mu)$ exponentially localized terms; one of these has the abstract coefficient $\Upsilon_{\mu}^{\alpha,0}$ on $\chi_0$, while the other has the very explicit $\sech^2$-type coefficient on the generalized eigenvector $\chi_1$.
For the purposes of our lattice calculations in Section \ref{sec: proofs of main lattice theorems}, it is important to separate the abstract localized coefficient from the explicit.

The final higher-order terms are the $\O(\mu^{3/2})$ localized term $\Upsilon_{\mu}^{\alpha,*}$, the genuinely periodic term $\Phi_{\mu}^{\alpha}$, and and an integral term.
The integral term may cause the ``growing'' behavior of the front if it is not also periodic; we discuss in Remark \ref{rem: is it really periodic?} why we are not more specific about its potential periodicity.

The parameter $\alpha$ controls the amplitude of the genuinely periodic term.
The interval $\Ascr_{\mu}$, defined in \eqref{eqn: Ascr-mu}, to which $\alpha$ belongs, has a nonzero, exponentially-small-in-$\mu$ lower bound and an $\O(1)$ upper bound.
Thus the periodic amplitude can be quite small, but nonvanishing, and also somewhat large.

Although the integral term could grow, at worst, linearly, it bears the factor $\alpha^2\mu^{1/2} + \alpha\mu^{3/2}$.
Consequently, for $\alpha$ small, this integral term is smaller ``in $\mu$'' than the genuinely periodic term.
That the integral appears as a factor on the eigenvector $\chi_0$ will be important for the lattice calculations in Section \ref{sec: proofs of main lattice theorems}.

We do not have the freedom to choose the phase shift $\vartheta_{\mu}^{\alpha}$ that appears in \eqref{eqn: Tsf-mu-alpha}; it is given to us by Lombardi's nanopteron method.
However, the phase shift is $\O(\mu)$, so, at least, it is small.

\subsection{Reduction to the $0^{2+}i\omega$ center spectrum}\label{sec: 02+iomega reduction}
We show that to solve \eqref{eqn: abstract ode}, it suffices to solve a simpler ``reduced'' differential equation, stated in \eqref{eqn: fully reduced}, and this simpler equation turns out to possess the $0^{2+}i\omega$ bifurcation from Remark \ref{rem: 04+iomega}.  
To convert \eqref{eqn: abstract ode} into \eqref{eqn: fully reduced}, we make several changes of variables, and we summarize the related notation in Figure \ref{fig: reduction}.
We will apply Lombardi's nanopteron theorem (Theorem \ref{thm: Lombardi}) to this simpler problem and then undo the reductions to recover solutions to the original problem.

\begin{figure}
\[
\begin{tikzpicture}

\node[draw, thick, draw = blue, rectangle, rounded corners] 
at (0,0)
{\eqref{eqn: abstract ode} $U' = \L_0U + \mu\L_1(\mu)U + \nl(U,\mu) \qquad \sigma(\L_0) \cap i\R = 0^{4+}i\omega$ 
};

\draw[thick,blue,->] 
(0,-.5)
--node[pos=.4,right,black]
{\begin{minipage}{1in}
\begin{align*} U &= \mu\chi_0+V 
\\ \mu &\in \R, \ \chi_0^*[V] = 0
\end{align*} 
\end{minipage}
}(0,-2);

\node[draw, thick, draw = blue, rectangle, rounded corners] 
at (0,-2.5)
{\eqref{eqn: V} $V' = \hL_0V + \mu\hL_1(\mu)V + \hnl(V,\mu) \qquad \sigma(\hL_0) \cap i\R = 0^{3+}i\omega$
};

\draw[thick,blue,->] 
(0,-3)
--node[pos=.4,right,black]
{\begin{minipage}{1in} 
\begin{align*} V &= \tau\chi_3+W 
\\ \tau &\in \R, \ \chi_0^*[W] = \chi_3^*[W] = 0 
\end{align*} 
\end{minipage}
}(0,-4.5);

\node[draw, thick, draw = blue, rectangle, rounded corners] 
at (0,-5){\eqref{eqn: fully reduced} $W' = \tL_0W + \mu\tL_1(\mu)W + \tnl(W,\mu) \qquad \sigma(\tL_0) \cap i\R = 0^{2+}i\omega$
};

\end{tikzpicture}
\]
\caption{The two reductions of the original system \eqref{eqn: abstract ode} to the simpler problem \eqref{eqn: fully reduced}.
The center spectrum notation follows Remark \ref{rem: 04+iomega}.}
\label{fig: reduction}
\end{figure}

\subsubsection{Removal of translation invariance}
By Hypothesis \ref{hypo: eigenproj}, we can write any $U \in \X$ uniquely as $U = \gamma\chi_0 + V$, where $\gamma \in \R$ and $\chi_0^*[V] = 0$.
This motivates the decompositions
\[
\hX := \set{V \in \X}{\chi_0^*[V]= 0}
\quadword{and}
\hD := \D \cap \hX.
\]
The spaces $\hX$ and $\hD$ retain the norms of $\X$ and $\D$, respectively.
Put
\begin{equation}\label{eqn: hL0}
\hL_0V
:= \L_0V - \chi_1^*[V]\chi_0
\end{equation}
and set
\begin{equation}\label{eqn: hL1 and hnl}
\hL_1(\mu)V := \L_1(\mu)V-\chi_0^*[\L_1(\mu)V]\chi_0
\quadword{and}
\hnl(V,\mu) := \nl(V,\mu)-\chi_0^*[\nl(V,\mu)]\chi_0.
\end{equation}
Last, define
\begin{equation}\label{eqn: Gamma-mu}
\Gamma_{\mu}(V)
:= \chi_1^*[V]+ \Gamma_{\mu}^*(V),
\qquad
\Gamma_{\mu}^*(V) := \mu\chi_0^*[\L_1(\mu)V] + \chi_0^*[\nl(V,\mu)],
\end{equation}
and
\begin{equation}\label{eqn: hFmu}
\hF(V,\mu) 
:= \hL_0V + \mu\hL_1(\mu)V + \hnl(V,\mu).
\end{equation}

Hypotheses \ref{hypo: eigenproj}--\ref{part: L1 chi0} and \ref{hypo: eigenproj}--\ref{part: Q chi0} then imply that \eqref{eqn: F-mu} is equivalent to the decoupled system
\begin{numcases}{}
\gamma'(x) = \Gamma(V(x),\mu) \label{eqn: mu} \\
V'(x) = \hF(V(x),\mu) \label{eqn: V}.
\end{numcases}
Given a solution $V$ to \eqref{eqn: V}, we can of course solve \eqref{eqn: mu} for $\gamma$, up to a constant of integration $\gamma_0 \in \R$:
\begin{equation}\label{eqn: mu soln}
\gamma(x)
= \gamma_0 + \int_0^x \Gamma_{\mu}(V(s)) \ds.
\end{equation}
Then the full solution to \eqref{eqn: F-mu} will have the form
\[
U(x)
= \gamma_0\chi_0 + \bunderbrace{\left(\int_0^x \Gamma_{\mu}(V(s)) \ds\right)\chi_0 + V(x)}{U_0(x)}.
\]
By Hypotheses \ref{hypo: eigenproj}--\ref{part: L1 chi0} and \ref{hypo: eigenproj}--\ref{part: Q chi0}, if a function $U$ of this form solves \eqref{eqn: F-mu}, then so does $U_0$, which depends only on $V$.
Thus it suffices to solve only \eqref{eqn: V}.
Manipulating the integral term that multiplies $\chi_0$ in $U_0$ will be one of our most delicate tasks later, in Sections \ref{sec: periodic + growing construction} and \ref{sec: nano + growing construction}.

Our work here has followed \cite[Sec.\@ 3.2]{venney-zimmer}; one difference is that we assume neither $\chi_0^*[\L_1(\mu)V] = 0$ nor $\chi_0^*[\nl(V,\mu)] = 0$, and so we must keep these terms in $\Gamma_1$, and thus in $\Gamma$, unlike \cite[Eqn.\@ (38)]{venney-zimmer}.
See Remark \ref{rem: mass dimer chi0-star} for the precise reason, in the context of our dimer FPUT problems, why we do not assume that these functionals vanish on these kinds of inputs.
However, the terms in $\Gamma_1$ are small, and so their effect is negligible.
See also \cite[Sec. 3]{iooss-fpu}, \cite[Sec. II.C.1]{iooss-james}, and \cite[Sec.\@ 2.2]{hilder-derijk-schneider} for the same sort of reduction of translation invariance.

Before proceeding, we note that $\J_{\mu}$ remains a first integral for \eqref{eqn: V}, and we also prove some useful properties of $\Gamma$.

\begin{lemma}\label{lem: first int reduced}
With $\hF$ defined in \eqref{eqn: hFmu}, we have $D\J_{\mu}(V)\hF(V,\mu) = 0$ for all $V \in \hD$.
\end{lemma}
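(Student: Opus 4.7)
The plan is to observe that $\hF$ differs from $\F$ only by a scalar multiple of $\chi_0$, and that $D\J_{\mu}$ annihilates both $\F$ (by Hypothesis \ref{hypo: first int}--\ref{part: Djmu on F}) and $\chi_0$ (by Lemma \ref{lem: odds and ends}--\ref{part: Djmu on chi0}).

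Concretely, I would first rewrite $\hF(V,\mu)$ by collecting the correction terms in the definitions \eqref{eqn: hL0} and \eqref{eqn: hL1 and hnl}, obtaining
\[
\hF(V,\mu) = \F(V,\mu) - \bigl(\chi_1^*[V] + \mu\chi_0^*[\L_1(\mu)V] + \chi_0^*[\nl(V,\mu)]\bigr)\chi_0 = \F(V,\mu) - \Gamma_{\mu}(V)\chi_0,
\]
recognizing the bracketed scalar as $\Gamma_{\mu}(V)$ from \eqref{eqn: Gamma-mu}. Then, by linearity of the derivative $D\J_{\mu}(V)$ (as a functional on $\X$), I would split
\[
D\J_{\mu}(V)\hF(V,\mu) = D\J_{\mu}(V)\F(V,\mu) - \Gamma_{\mu}(V)\, D\J_{\mu}(V)\chi_0.
\]

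Finally, I would invoke Hypothesis \ref{hypo: first int}--\ref{part: Djmu on F} to eliminate the first term and Lemma \ref{lem: odds and ends}--\ref{part: Djmu on chi0} to eliminate the second, yielding the conclusion. There is no real obstacle here; the lemma is essentially a bookkeeping consequence of the fact that the reduction from \eqref{eqn: abstract ode} to \eqref{eqn: V} only subtracts off components in the direction of the kernel vector $\chi_0$, which the first integral's gradient already ignores.
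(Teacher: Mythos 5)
Your proof is correct and identical to the paper's: you rewrite $\hF(V,\mu) = \F(V,\mu) - \Gamma_{\mu}(V)\chi_0$, then apply linearity of $D\J_{\mu}(V)$ together with Hypothesis \ref{hypo: first int}--\ref{part: Djmu on F} and Lemma \ref{lem: odds and ends}--\ref{part: Djmu on chi0}. Nothing to add.
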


\begin{proof}
Rewrite
\[
\hF(V,\mu)
= \F(V,\mu) -  \big(\chi_1^*[V]+ \mu\chi_0^*[\L_1(\mu)V] + \chi_0^*[\nl(V,\mu)]\big)\chi_0
\]
and use Hypothesis \ref{hypo: first int}--\ref{part: Djmu on F} and part \ref{part: Djmu on chi0} of Lemma \ref{lem: odds and ends}.
\end{proof}

\begin{lemma}\label{lem: Gamma-mu props}
The map $\Gamma_{\mu}$, defined in \eqref{eqn: Gamma-mu}, has the following properties.

\begin{enumerate}[label={\bf(\roman*)}, ref={(\roman*)}]

\item\label{part: Gamma-mu and S}
$\Gamma_{\mu}(\S{V}) = \Gamma_{\mu}(V)$ and $\Gamma_{\mu}^*(\S{V}) = \Gamma_{\mu}^*(\S{V})$ for all $V \in \X$.

\item
Suppose that $f \colon \R \to \X$ is $\S$-reversible (in the sense of Definition \ref{defn: reversible}).
Then $\Gamma_{\mu}\circ{f}$ and $\Gamma_{\mu}^*\circ{f}$ are even.

\item\label{part: Gamma-mu-star}
The map $\Gamma_{\mu}^*$ satisfies
\[
\sup_{\substack{0 < \mu < \mu_0 \\
0 < \norm{V}_{\X} \le 1 \\
0 < \alpha, \tau_1,\tau_2 < 1}} \big(\alpha\mu^2\big)^{-1} |\Gamma_{\mu}^*(\alpha\mu\tau_1\chi_3 + \alpha\mu\tau_2V)|
< \infty
\]
and
\[
\sup_{\substack{0 < \mu < \mu_0 \\
0 < \norm{V}_{\X} + \norm{\grave{V}}_{\X} < 1}}
(\mu^2\norm{\grave{V}}_{\X})^{-1}|\Gamma_{\mu}^*(\mu{V}+\mu{\grave{V}})-\Gamma_{\mu}^*(\mu{V})|
< \infty.
\]
\end{enumerate}
\end{lemma}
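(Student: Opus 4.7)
Parts \ref{part: Gamma-mu and S} and (ii) follow from the symmetry built into Hypothesis \ref{hypo: symmetry} together with the action of $\S$ on the eigenfunctionals recorded in Lemma \ref{lem: odds and ends}--\ref{part: chik-star and S}. To prove \ref{part: Gamma-mu and S}, I would check $\Gamma_{\mu}^*(\S V) = \Gamma_{\mu}^*(V)$ termwise: Hypothesis \ref{hypo: symmetry} gives $\L_1(\mu)\S V = -\S\L_1(\mu)V$ and $\nl(\S V,\mu) = -\S\nl(V,\mu)$, while Lemma \ref{lem: odds and ends}--\ref{part: chik-star and S} says $\chi_0^*[\S W] = -\chi_0^*[W]$, so the two sign flips cancel in each of $\chi_0^*[\L_1(\mu)\S V]$ and $\chi_0^*[\nl(\S V,\mu)]$. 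Combined with $\chi_1^*[\S V] = \chi_1^*[V]$ (another instance of Lemma \ref{lem: odds and ends}--\ref{part: chik-star and S}) and the decomposition \eqref{eqn: Gamma-mu}, this yields $\Gamma_{\mu}(\S V) = \Gamma_{\mu}(V)$. Part (ii) is then immediate: if $f$ is $\S$-reversible, then $(\Gamma_{\mu}\circ f)(-x) = \Gamma_{\mu}(\S f(x)) = (\Gamma_{\mu}\circ f)(x)$, and likewise for $\Gamma_{\mu}^*$.

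For the first estimate in part \ref{part: Gamma-mu-star}, I would set $W := \alpha\mu\tau_1\chi_3 + \alpha\mu\tau_2 V$, so that $\norm{W}_{\X} \le C\alpha\mu$ for a constant $C$ depending only on $\norm{\chi_3}_{\X}$; after shrinking $\mu_0$ if needed so that $\norm{W}_{\X} \le 1$ uniformly and \eqref{eqn: nl ests} applies, I would expand $\Gamma_{\mu}^*(W) = \mu\chi_0^*[\L_1(\mu)W] + \chi_0^*[\nl_0(W,W)] + \chi_0^*[\nl_1(W,\mu)]$ via \eqref{eqn: nl expn} and bound each piece. Linearity of $\L_1(\mu)$ (together with continuity of $\chi_0^*\circ\L_1(\mu)$ on the relevant inputs) gives $|\mu\chi_0^*[\L_1(\mu)W]| \le C\alpha\mu^2$; the bilinear estimate $|\chi_0^*[\nl_0(W,W)]| \le C\norm{W}_{\X}^2 \le C\alpha^2\mu^2 \le C\alpha\mu^2$ uses $\alpha<1$ to absorb the extra factor of $\alpha$; and \eqref{eqn: nl ests} gives $|\chi_0^*[\nl_1(W,\mu)]| \le C(\alpha\mu)^2\mu + C(\alpha\mu)^3$, which is negligible compared to $C\alpha\mu^2$. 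Summing recovers the claimed bound.

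For the second estimate in \ref{part: Gamma-mu-star}, I would split $\Gamma_{\mu}^*(\mu V + \mu\grave V) - \Gamma_{\mu}^*(\mu V)$ into three pieces. The linear $\L_1$-contribution is $\mu^2\chi_0^*[\L_1(\mu)\grave V]$, bounded by $C\mu^2\norm{\grave V}_{\X}$ by continuity. The bilinear difference $\chi_0^*[\nl_0(\mu V + \mu\grave V,\mu V + \mu\grave V) - \nl_0(\mu V,\mu V)]$ expands into terms linear or quadratic in $\grave V$ multiplied by $\mu^2$, each $O(\mu^2\norm{\grave V}_{\X})$ under the constraint $\norm{V}_{\X} + \norm{\grave V}_{\X} < 1$. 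The $\nl_1$-difference I would control via the mean value theorem together with a Cauchy estimate on the analytic $\nl_1$ near the origin, which upgrades \eqref{eqn: nl ests} to the pointwise derivative bound $\norm{D_U\nl_1(U,\mu)}_{\b(\X)} = O(\mu\norm{U}_{\X} + \norm{U}_{\X}^2)$ for $\norm{U}_{\X}$ small and produces an $O(\mu^3\norm{\grave V}_{\X})$ contribution. The only real subtlety is the disciplined accounting of powers of $\mu$ and $\alpha$: the resulting $O(\mu^2)$ scaling of $\Gamma_{\mu}^*$ (as opposed to $O(\mu)$) is precisely what makes $\Gamma_{\mu}^*$ a higher-order perturbation of the dominant linear functional $\chi_1^*$ in $\Gamma_{\mu}$, and this smallness is what allows $\Gamma_{\mu}^*$ to be treated as negligible throughout the reduction in Section \ref{sec: 02+iomega reduction}.
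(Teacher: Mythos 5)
Your proof is correct and follows essentially the same route the paper takes (which records the argument very tersely): parts (i) and (ii) via Hypothesis \ref{hypo: symmetry} and Lemma \ref{lem: odds and ends}--\ref{part: chik-star and S}, and part (iii) by expanding $\Gamma_{\mu}^*$ via \eqref{eqn: Gamma-mu}, \eqref{eqn: nl expn}, and \eqref{eqn: nl ests} and tracking powers of $\mu$ and $\alpha$. Your explicit accounting of the $\nl_0$ and $\nl_1$ contributions and the mean-value/Cauchy argument for the difference quotient are exactly the details the paper's one-sentence proof of (iii) leaves implicit, and your aside on the boundedness of $\chi_0^*\circ\L_1(\mu)$ correctly flags the one point the statement implicitly assumes (which holds in the lattice application, since $\L_1(\kappa,w)$ in \eqref{eqn: L1} is bounded on $\X$).
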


\begin{proof}

\begin{enumerate}[label={\bf(\roman*)}]

\item
This is a direct calculation using the definition of $\Gamma_{\mu}$, Hypotheses \ref{hypo: symmetry} and \ref{hypo: eigenproj}--\ref{part: S and chik}, and part \ref{part: chik-star and S} of Lemma \ref{lem: odds and ends}.

\item
We have $\Gamma_{\mu}(f(-x)) = \Gamma_{\mu}(\S{f}(x))$ since $f$ is $\S$-reversible, and $\Gamma_{\mu}(\S{f}(x)) = \Gamma_{\mu}(f(x))$ by part \ref{part: Gamma-mu and S}.
The same holds for $\Gamma_{\mu}^*$.

\item
This estimate follows from the definition of $\Gamma_{\mu}^*$ and the quadraticity of $\nl$ in its first variable.
\qedhere
\end{enumerate}
\end{proof}

\subsubsection{Fixing a value for the first integral}\label{sec: chi3 cov}
We can write any $V \in \hD$ uniquely as $V = \tau\chi_3 + W$, where $\tau \in \R$ and $\chi_3^*[W] = 0$, and so \eqref{eqn: V} is equivalent to
\begin{numcases}{}
\tau'(x) = \chi_3^*\big[\hF(\tau(x)\chi_3+W(x),\mu)\big] \label{eqn: tau chi3} \\
W'(x) = \hF(\tau(x)\chi_3+W(x),\mu)-\chi_3^*\big[\hF(\tau(x)\chi_3+W(x),\mu)\big]\chi_3 \label{eqn: W chi3}.
\end{numcases}
Now let
\begin{equation}\label{eqn: tilde spaces}
\tX := \set{W \in \hX}{\chi_3^*[W] = 0}
\quadword{and}
\tD := \hD \cap \tX.
\end{equation}

We will show that the values of the first integral $\J_{\mu}(V)$ and the ``orthogonal'' component $W$ completely determine $\chi_3^*(V)$, at least when everything is small.
In other words, we can solve $j = \J_{\mu}(\tau\chi_3+W)$ for $\tau$ given small $\mu$, $j \in \R$ and $W \in \tX$.
Since $\J_{\mu}$ is constant on solutions of the coupled system \eqref{eqn: tau chi3}--\eqref{eqn: W chi3} by Lemma \ref{lem: first int reduced}, this system therefore reduces to a problem in $W$ alone, at least for ``small-amplitude'' solutions.
This is the same procedure that Iooss and James performed to obtain \cite[Eqn.\@ (33)]{iooss-james} and likewise that Venney and Zimmer began at the start of \cite[Sec.\@ 3.5]{venney-zimmer}.
However, these papers did not address a certain ``consistency'' problem with this approach as we do; see Lemma \ref{lem: chi3 reduction consistency} and the comments preceding it.
Moreover, we need quite precise estimates on how $\tau$ is so ``completely determined'' by the values of $W$ and $\J_{\mu}(V)$.
We mention that quite a related result appears in \cite[Lem.\@ 4.1]{hilder-derijk-schneider}, albeit with the intention of different estimates and applications; our goals are sufficiently distinct as to warrant the full proof below.

Part \ref{part: tau = T Jmu} of the following lemma makes precise how $W$ and $J_{\mu}(V)$ completely determine $\chi_3^*(V)$.
We also include several detailed expansions and estimates whose usefulness will only be apparent later.
We will work on the balls
\[
\Bfrak(r)
:= \set{(W,j,\mu) \in \tX \times \R \times [0,\mu_0]}{ \max\{\norm{W}_{\X}, |j|, \mu\} < r}.
\]

\begin{lemma}\label{lem: big bad chi3 lemma}
There exists $\mu_{\T} \in (0,\min\{\mu_0,1\})$ and an analytic map $\T \colon \Bfrak(\mu_{\T}) \to \R$ with the following properties.

\begin{enumerate}[label={\bf(\roman*)},ref={(\roman*)}]

\item\label{part: T0 = 0}
$\T(0,0,\mu) = 0$ for $0 \le \mu \le \mu_{\T}$.

\item\label{part: j = Jmu T}
$j = \J_{\mu}(\T(W,j,\mu)\chi_3+W)$ for $(W,j,\mu) \in \Bfrak(\mu_{\T})$.

\item\label{part: tau = T Jmu}
$\tau = \T(W,\J_{\mu}(\tau\chi_3+W),\mu)$ for $\max\{\norm{W}_{\X},|\tau|, \mu\} < \mu_{\T}$.

\item\label{part: T and S}
$\T(\S{W},j,\mu) = \T(W,j,\mu)$ for $(W,j,\mu) \in \Bfrak(\mu_{\T})$.

\item\label{part: deriv of T}
$D_W\T(W,j,\mu)\grave{W} = -\frac{D\J_{\mu}(\T(W,j,\mu)\chi_3+W)\grave{W}}{D\J_{\mu}(\T(W,j,\mu)\chi_3+W)\chi_3}$
and $D_W\T(0,0,\mu)\grave{W} = -\frac{\mu\Jscr_*(\mu)\grave{W}}{1+\mu\Jscr_*(\mu)\chi_3}$ for all $\grave{W}$, $\breve{W} \in \tX$ and $(W,j,\mu) \in \Bfrak(\mu_{\T})$.

\item\label{part: 2deriv of T}
$D_{WW}^2\T(0,0,0)[\grave{W},\breve{W}]
= -D^2\J_0(0)[\grave{W},\breve{W}] + D^2\J_0(0)[\grave{W},\chi_3]D\J_0(0)\breve{W}$ for all $\grave{W}$, $\breve{W} \in \tX$.

\item\label{part: T quad est}
$\sup_{\substack{0 < |\alpha| \le 1 \\ \norm{W}_{\X} \le \mu_{\T} \\ 0 < \mu < \mu_{\T}}} \alpha^{-1}\mu^{-2}|\T(\alpha\mu{W},0,\mu)| < \infty$.

\item\label{part: T Lip est}
$\sup_{\substack{0 < |\alpha| \le 1 \\ \norm{W}_{\X}+\norm{\grave{W}}_{\X} \le \mu_{\T} \\ 0 < \mu < \mu_{\T}}} \mu^{-2}|\T(\alpha\mu{W}+\mu\grave{W},0,\mu)-\T(\alpha\mu{W},0,\mu)| < \infty$.
\end{enumerate}
\end{lemma}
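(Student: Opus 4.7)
The plan is to view (i)--(iv) as an application of the analytic implicit function theorem and (v)--(viii) as consequences of implicit differentiation and Taylor expansion. Define $F \colon \R \times \tX \times \R \times [0,\mu_0] \to \R$ by
\[
F(\tau, W, j, \mu) := \J_{\mu}(\tau\chi_3 + W) - j,
\]
so that solving $F = 0$ for $\tau$ as a function of $(W, j, \mu)$ yields (ii). By Hypothesis \ref{hypo: first int}--\ref{part: first int diff}, $F$ is analytic on its domain. At the origin, Hypothesis \ref{hypo: first int}--\ref{part: Jstar} gives $F(0,0,0,0) = \J_0(0) = 0$, while $\partial_{\tau} F(0,0,0,0) = D\J_0(0)\chi_3 = 1$ by Lemma \ref{lem: odds and ends}--\ref{part: Djmu0 on chi1 and chi3}. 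The analytic implicit function theorem therefore produces $\mu_{\T} \in (0, \min\{\mu_0, 1\})$ and an analytic $\T \colon \Bfrak(\mu_{\T}) \to \R$ with $\T(0,0,0) = 0$ satisfying (ii), and $\T$ is the unique small solution to $F = 0$. Item (iii) follows by inverting (ii) via this uniqueness.

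For (i), the decomposition of Hypothesis \ref{hypo: first int}--\ref{part: Jstar} gives $\J_{\mu}(0) = \J_0(0) + \mu\Jscr_*(\mu)(0) = 0$, so $\tau = 0$ satisfies $F(\tau, 0, 0, \mu) = 0$, and uniqueness forces $\T(0, 0, \mu) = 0$. For (iv), Hypothesis \ref{hypo: eigenproj}--\ref{part: S and chik} gives $\S\chi_3 = \chi_3$, and parts \ref{part: chik-star and S} of Lemma \ref{lem: odds and ends} show that $\S$ preserves $\tX$ (because $\chi_0^*[\S W] = -\chi_0^*[W] = 0$ and $\chi_3^*[\S W] = \chi_3^*[W] = 0$ for $W \in \tX$); combining with Hypothesis \ref{hypo: first int}--\ref{part: Jmu and S} yields
\[
\J_{\mu}(\T(W, j, \mu)\chi_3 + \S W) = \J_{\mu}(\S(\T(W, j, \mu)\chi_3 + W)) = j,
\]
so uniqueness again gives $\T(\S W, j, \mu) = \T(W, j, \mu)$.

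The derivative formulas (v) and (vi) come from implicit differentiation of (ii). Differentiating $j = \J_{\mu}(\T(W,j,\mu)\chi_3 + W)$ in $W$ in direction $\grave W$ and solving yields the first formula of (v); the second follows by setting $\T(0,0,\mu) = 0$ and using the decomposition $D\J_{\mu}(U) = D\J_0(U) + \mu\Jscr_*(\mu)$ from Hypothesis \ref{hypo: first int}--\ref{part: Jstar} together with $D\J_0(0)\grave W = \chi_3^*[\grave W] = 0$ (since $\grave W \in \tX$) and $D\J_0(0)\chi_3 = 1$. Differentiating (v) once more and evaluating at $(0, 0, 0)$, where $\T = 0$ and $D_W\T(0,0,0) = 0$, reduces the resulting identity to $D^2_{WW}\T(0, 0, 0)[\grave W, \breve W] = -D^2\J_0(0)[\grave W, \breve W]$; the additional term in the statement of (vi) vanishes on $\tX$ because $D\J_0(0)\breve W = \chi_3^*[\breve W] = 0$, so the two formulations agree.

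Finally, the estimates (vii) and (viii) follow from Taylor's theorem applied to $\T(\cdot, 0, \mu)$ about $W = 0$, using $\T(0, 0, \mu) = 0$. The crucial observation is that the first-order term $D_W\T(0, 0, \mu)W$ is $O(\mu)\norm{W}_{\X}$, because the numerator in the formula of (v) collapses to $\mu\Jscr_*(\mu)W$ via the $\tX$-vanishing of $D\J_0(0)$. Thus Taylor expansion at the origin gives $\T(\alpha\mu W, 0, \mu) = \alpha\mu \cdot D_W\T(0, 0, \mu)W + O(\alpha^2\mu^2\norm{W}_{\X}^2) = O(\alpha\mu^2)$, proving (vii), while (viii) follows from the fundamental theorem of calculus,
\[
\T(\alpha\mu W + \mu\grave W, 0, \mu) - \T(\alpha\mu W, 0, \mu) = \mu\int_0^1 D_W\T(\alpha\mu W + s\mu\grave W, 0, \mu)\grave W\, ds,
\]
combined with a uniform $O(\mu)$ bound on $D_W\T$ throughout a $\mu$-sized neighborhood of the origin. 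The main technical obstacle is establishing this uniform bound: one must verify that the numerator $D\J_{\mu}(\T\chi_3 + W)\grave W$ in the formula of (v) remains $O(\mu + \norm{W}_{\X} + |\T|)\norm{\grave W}_{\X}$ once the $\tX$-vanishing of $D\J_0(0)$ cancels the leading constant term, while the denominator stays near $1$, so that the full fraction is $O(\mu)\norm{\grave W}_{\X}$ on the relevant parameter set.
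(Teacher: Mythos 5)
Your proposal is correct and takes essentially the same approach as the paper: define an auxiliary function and apply the analytic implicit function theorem, then obtain the derivative formulas by implicit differentiation and the estimates in (vii)--(viii) by Taylor / fundamental-theorem-of-calculus expansions together with the explicit formula from (v). The minor differences are that the paper applies the fundamental theorem of calculus twice in (viii) where you apply it once plus a uniform $O(\mu)$ bound on $D_W\T$, and your observation that the extra term $D^2\J_0(0)[\grave{W},\chi_3]D\J_0(0)\breve{W}$ in (vi) vanishes for $\breve{W}\in\tX$ (since $D\J_0(0)\breve{W}=\chi_3^*[\breve{W}]=0$) is a correct simplification consistent with how the lemma is actually used later.
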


\begin{proof}
We first construct the map $\T$.
Put
\begin{equation}\label{eqn: Jsf}
\Jsf(\tau,W,j,\mu)
:= j-\J_{\mu}(\tau\chi_3+W),
\end{equation}
so $\Jsf$ is real analytic on $\R \times \tX \times \R \times [0,\mu_0]$ by Hypothesis \ref{hypo: first int}--\ref{part: first int diff}.
Hypothesis \ref{hypo: first int}--\ref{part: Jstar} gives
\begin{equation}\label{eqn: Jsf-0-0-0-mu}
\Jsf(0,0,0,\mu)
= -\J_0(0)
= 0
\end{equation}
for all $\mu$, and part \ref{part: Djmu0 on chi1 and chi3} of Lemma \ref{lem: odds and ends} implies
\[
D_{\tau}\Jsf(0,0,0,0)
= -D\J_0(0)\chi_3
= -1.
\]
The analytic implicit function theorem then provides $\mu_1 \in (0,\mu_0]$ and $\mu_2 > 0$ and an analytic map $\T \colon \Bfrak(\mu_1) \to (-\mu_2,\mu_2)$ such that if $(W,j,\mu) \in \Bfrak(\mu_1)$ and $|\tau| < \mu_2$, then $\Jsf(\tau,W,j,\mu) = 0$ if and only if $\tau = \T(W,j,\mu)$.

Now we prove the more specialized properties of $\T$.

\begin{enumerate}[label={\bf(\roman*)}]

\item 
The identity \eqref{eqn: Jsf-0-0-0-mu} implies that $\T(0,0,\mu) = 0$ for all $\mu$.

\item 
The identity $\Jsf(\T(W,j,\mu),W,j,\mu) = 0$ and the definition of $\Jsf$ in \eqref{eqn: Jsf} give 
\[
j 
= \J_{\mu}(\T(W,j,\mu)\chi_3+W).
\]

\item 
Since $\J_0(0) = 0$ by Hypothesis \ref{hypo: first int}--\ref{part: Jstar}, the continuity result from Hypothesis \ref{hypo: first int}--\ref{part: first int diff} gives $\mu_3 \in (0,\min\{\mu_1,\mu_2\})$ such that if 
\begin{equation}\label{eqn: W tau mu cond}
\min\{\norm{W}_{\X}, |\tau|, \mu\} 
< \mu_3
\end{equation}
then $|\J_{\mu}(\tau\chi_3+W)| \le \mu_1$.
Assume that  \eqref{eqn: W tau mu cond} holds.
Then $(W,\J_{\mu}(\tau\chi_3+W),\mu) \in \Bfrak(\mu_1)$ and $|\tau| < \mu_2$, while a direct calculation yields $\Jsf(\tau,W,\J_{\mu}(\tau\chi_3+W),\mu) = 0$.
Consequently, \eqref{eqn: W tau mu cond} forces
\[
\tau 
= \T(W,\J_{\mu}(\tau\chi_3+W),\mu).
\]

\item 
First, since $\norm{\S{W}}_{\X} \le \norm{W}_{\X}$ by Hypothesis \ref{hypo: symmetry}, we have $(\S{W},j,\mu) \in \Bfrak(\mu_1)$ whenever $(W,j,\mu) \in \Bfrak(\mu_1)$.
Thus $\T(\S{W},j,\mu)$ is in fact defined for $(W,j,\mu) \in \Bfrak(\mu_1)$.
Next, we already know that $\Jsf(W,\T(W,j,\mu),j,\mu) = 0$. 
If we can also show that $\Jsf(W,\T(\S{W},j,\mu),j,\mu) = 0$, then we must have $\T(\S{W},j,\mu) = \T(W,j,\mu)$.

The definition of $\Jsf$ in \eqref{eqn: Jsf},  Hypotheses \ref{hypo: eigenproj}--\ref{part: S and chik} and \ref{hypo: first int}--\ref{part: Jmu and S}, and part \ref{part: chik-star and S} of Lemma \ref{lem: odds and ends} enable us to calculate $\Jsf(\S{W},\tau,j,\mu) = \Jsf(W,\tau,j,\mu)$ for any $\tau$.
Then
\[
0
= \Jsf(\S{W},\T(\S{W},j,\mu),j,\mu)
= \Jsf(W,\T(\S{W},j,\mu),j,\mu),
\]
as desired.

\item 
Part \ref{part: Djmu0 on chi1 and chi3} of Lemma \ref{lem: odds and ends} tells us that $D\J_0(0)\chi_3 = 1$.
The analyticity of $\J$ and $\T$ then give $\mu_{\T} \in (0,\min\{\mu_3,1\})$ such that if $(W,j,\mu) \in \Bfrak(\mu_{\T})$, then $D\J_{\mu}(\T(W,j,\mu)\chi_3+W)\chi_3 \ne 0$.

Fix $j$, $\mu \in \R$ with $\max\{|j|,\mu\} < \mu_{\T}$.
By part \ref{part: j = Jmu T} above, the map $\Jfrak(W) := \J_{\mu}(\T(W,j,\mu)\chi_3+W)$ is constant for $\norm{W}_{\X} < \mu_{\T}$, and so for any $\grave{W} \in \tX$, we have
\begin{equation}\label{eqn: derivT}
0
= D\Jfrak(W)\grave{W}
= D\J_{\mu}(\T(W,j,\mu)\chi_3+W)\big[(D_W\T(W,j,\mu)\grave{W})\chi_3+\grave{W}\big].
\end{equation}
This implies
\[
D_W\T(W,j,\mu)\grave{W} 
= -\frac{D\J_{\mu}(\T(W,j,\mu)\chi_3+W)\grave{W}}{D\J_{\mu}(\T(W,j,\mu)\chi_3+W)\chi_3}.
\]

In the special case $W = j = 0$, we use the identity $\T(0,0,\mu)$ from part \ref{part: T0 = 0} above and the formula \eqref{eqn: derivT} to calculate
\[
D_W\T(0,0,\mu)\grave{W}
= -\frac{D\J_{\mu}(0)\grave{W}}{D\J_{\mu}(0)\chi_3}.
\]
By Hypotheses \ref{hypo: first int}--\ref{part: Jstar} and \ref{hypo: first int}--\ref{part: chi3 J0}, we have
\[
D\J_{\mu}(0)U
= D\J_0(0)U + \big(D\J_{\mu}(0)-D\J_0(0)\big)U
= \chi_3^*[U] + \mu\Jscr_*(\mu)U
\]
for any $U \in \X$.
Since $\chi_3^*[\grave{W}] = 0$ for $\grave{W} \in \tX$ and $\chi_3^*[\chi_3] = 1$, we conclude
\[
D_W\T(0,0,\mu)\grave{W}
= -\frac{\mu\Jscr_*(\mu)U}{1+\mu\Jscr_*(\mu)\chi_3}.
\]

\item 
This follows from part \ref{part: deriv of T} and a lengthy calculation involving the product and chain rule in Banach spaces.

\item 
Since $\T(0,0,\mu) = 0$, we may use the fundamental theorem of calculus twice and part \ref{part: deriv of T} to write
\begin{multline*}
\T(\alpha\mu{W},0,\mu)
= \int_0^1\int_0^1 D_{WW}^2\T(st\alpha\mu{W},0,\mu)[t\alpha\mu{W},\alpha\mu{W}] \ds\dt + D_W\T(0,0,\mu)(\alpha\mu{W}) \\
= \alpha\mu^2\left(\alpha\int_0^1\int_0^1 D_{WW}^2\T(st\alpha\mu{W},0,\mu)[tW,W] \ds\dt - \frac{\Jscr_*(\mu)W}{1+\mu\Jscr_*(\mu)\chi_3}\right).
\end{multline*}
The estimate then follows from the analyticity of $\T$.

\item 
Similar to part \ref{part: T quad est}, we use the fundamental theorem of calculus twice to rewrite the difference $\T(\alpha\mu{W}+\mu\grave{W},\mu,0)-\T(\alpha\mu{W},\mu,0)$ in the form
\[
\mu^2\left(\int_0^1\int_0^1 D_{WW}^2\T(s(\alpha\mu{W}+t\mu\grave{W}),0,\mu)[\alpha{W}+t\grave{W},\grave{W}] \ds\dt- \frac{\Jscr_*(\mu)\grave{W}}{1+\mu\Jscr_*(\mu)\chi_3}\right).
\qedhere
\]
\qedhere
\end{enumerate}
\end{proof}

Now let $\mu_1$ and $\mu_2$ be as in Lemma \ref{lem: big bad chi3 lemma}.
Suppose that $\tau_{\mu} \colon \R \to \R$ and $W_{\mu} \colon \R \to \tD$ solve the system \eqref{eqn: tau chi3}--\eqref{eqn: W chi3} with $0 \le \mu  \le \mu_{\T}$.
Put $j_{\mu}(x) := J_{\mu}(\tau_{\mu}(x)\chi_3+W_{\mu}(x))$, so $j_{\mu}$ is constant by Hypothesis \ref{hypo: first int}--\ref{part: Djmu on F}. 
Suppose also that $\tau_{\mu}$ and $W_{\mu}$ are small enough that $\max\{\norm{W_{\mu}(x)}_{\X},|\tau_{\mu}(x)|,\mu\} < \mu_{\T}$ for all $x \in \R$. 
Then part \ref{part: tau = T Jmu} of Lemma \ref{lem: big bad chi3 lemma} implies $\tau_{\mu}(x) = \T(W_{\mu}(x),j_{\mu}(0),\mu)$ for all $x$.
That is, the coefficient on $\chi_3$ is completely determined by the part of the solution orthogonal to $\chi_3$ and the first integral --- at least if everything is sufficiently small.

Conversely, this suggests a strategy for solving \eqref{eqn: V}, equivalently the coupled system \eqref{eqn: tau chi3}--\eqref{eqn: W chi3}: assume $\tau(x) = \T(W(x),j,\mu)$ for $j \in \R$ fixed and attempt to solve
\begin{equation}\label{eqn: W chi3 no tau}
W'(x)
= \hF(\T(W(x),j,\mu)\chi_3+W(x),\mu)-\chi_3^*\big[\hF(\T(W(x),j,\mu)\chi_3+W(x),\mu)\big]\chi_3.
\end{equation}
This equation is now an ordinary differential equation in $W$ with $\mu$ and $j$ as parameters.
Of course, here we assume that $W$, $j$, and $\mu$ small enough for $\T(W(x),j,\mu)$ to be defined.

This is precisely what we shall do, except there is a nagging question of consistency.
If $W$ solves \eqref{eqn: W chi3 no tau} and is sufficiently small, does putting $\tau(x) := \T(W(x),j,\mu)$ solve \eqref{eqn: tau chi3}?
Happily, this is the case, at least if we pick $j = 0$.
In other words, we are freezing the value of the first integral to be 0.

\begin{lemma}\label{lem: chi3 reduction consistency}
Suppose that $W \colon \R \to \tD$ solves \eqref{eqn: W chi3 no tau} with $j = 0$ and $\max\{\norm{W(x)}_{\X},\mu\} < \mu_{\T}$ for all $x$.
Then the map $\tau(x) := \T(W(x),0,\mu)$ solves \eqref{eqn: tau chi3}.
\end{lemma}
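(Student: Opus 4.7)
The plan is to verify the consistency by direct computation: differentiate $\tau(x) = \T(W(x),0,\mu)$ using the chain rule, substitute the known ODE for $W$, and then exploit the fact that $\J_\mu$ is a first integral of $\hF$.

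First I would observe the geometric content of the claim. By Lemma \ref{lem: big bad chi3 lemma}\ref{part: j = Jmu T} with $j=0$, setting $\tau(x) := \T(W(x),0,\mu)$ forces
\[
\J_\mu\bigl(\tau(x)\chi_3 + W(x)\bigr) = 0 \quad \text{for all } x \in \R.
\]
Writing $V(x) := \tau(x)\chi_3 + W(x)$, this says the candidate solution lives in the zero level set of $\J_\mu$, which is consistent with Lemma \ref{lem: first int reduced} (which guarantees $\J_\mu$ is constant along actual trajectories of $V' = \hF(V,\mu)$). This level-set interpretation is what makes the consistency work, and it is the conceptual heart of the proof.

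Next I would carry out the differentiation. Applying the chain rule and using \eqref{eqn: W chi3 no tau} with $j=0$,
\[
\tau'(x) = D_W\T(W(x),0,\mu)\,W'(x) = D_W\T(W(x),0,\mu)\Bigl[\hF(V(x),\mu) - \chi_3^*[\hF(V(x),\mu)]\,\chi_3\Bigr].
\]
Invoking the explicit derivative formula from Lemma \ref{lem: big bad chi3 lemma}\ref{part: deriv of T}, together with the identity $\T(W(x),0,\mu)\chi_3 + W(x) = V(x)$, gives
\[
D_W\T(W(x),0,\mu)\grave{W} = -\frac{D\J_\mu(V(x))\grave{W}}{D\J_\mu(V(x))\chi_3}
\]
for any $\grave{W} \in \tX$, where the denominator is nonzero by the choice of $\mu_\T$ in the proof of Lemma \ref{lem: big bad chi3 lemma}\ref{part: deriv of T}.

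Finally I would substitute and simplify. Linearity of $D\J_\mu(V(x))$ splits the bracket into two terms:
\[
\tau'(x) = -\frac{D\J_\mu(V(x))\hF(V(x),\mu)}{D\J_\mu(V(x))\chi_3} + \chi_3^*[\hF(V(x),\mu)]\cdot \frac{D\J_\mu(V(x))\chi_3}{D\J_\mu(V(x))\chi_3}.
\]
The first term vanishes by Lemma \ref{lem: first int reduced}, and the second collapses to $\chi_3^*[\hF(V(x),\mu)]$, which is precisely the right-hand side of \eqref{eqn: tau chi3}. There is no real obstacle here; the only subtle point is ensuring $D\J_\mu(V(x))\chi_3 \neq 0$ so that the formula from Lemma \ref{lem: big bad chi3 lemma}\ref{part: deriv of T} is applicable, and this is already guaranteed by the hypothesis $\max\{\|W(x)\|_\X,\mu\} < \mu_\T$ together with the smallness of $\tau(x) = \T(W(x),0,\mu)$ inherited from Lemma \ref{lem: big bad chi3 lemma}\ref{part: T0 = 0} and continuity.
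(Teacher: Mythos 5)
Your proof is correct and follows essentially the same route as the paper's: differentiate $\tau$ via the chain rule, apply the formula for $D_W\T$ from Lemma \ref{lem: big bad chi3 lemma}\ref{part: deriv of T}, substitute the ODE \eqref{eqn: W chi3 no tau}, and invoke Lemma \ref{lem: first int reduced} to kill the $D\J_\mu(V)\hF(V,\mu)$ term. The level-set remark at the start and the explicit note on the nonvanishing denominator are helpful glosses but do not alter the argument.
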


\begin{proof}
Since $(W(x),0,\mu) \in \Bfrak(\mu_{\T})$ for all $x \in \R$, we may differentiate $\tau$ directly using part \ref{part: deriv of T} from Lemma \ref{lem: big bad chi3 lemma} and find, suppressing all $x$-dependencies,
\[
\tau_{\mu}'
= D_W\T(W,0,\mu)W'
= -\frac{D\J_{\mu}(\T(W,0,\mu)\chi_3+W)W'}{D\J_{\mu}(\T(W,0,\mu)\chi_3+W)\chi_3}.
\]
We use \eqref{eqn: W chi3 no tau} to calculate
\begin{multline*}
D\J_{\mu}(\T(W,0,\mu)\chi_3+W)W' \\
= D\J_{\mu}(\T(W,0,\mu)\chi_3+W)\hF(\T(W,0,\mu)\chi_3+W,\mu)) \\
-\chi_3^*\big[\hF(\T(W,0,\mu)\chi_3+W,\mu))\big]D\J_{\mu}(\T(W,0,\mu)\chi_3+W)\chi_3.
\end{multline*}
Lemma \ref{lem: first int reduced} implies
\[
D\J_{\mu}(\T(W,0,\mu)\chi_3+W)\hF(\T(W,0,\mu)\chi_3+W,\mu))
= 0,
\]
and so
\begin{multline*}
\tau_{\mu}'
= \frac{\chi_3^*\big[\hF(\T(W,0,\mu)\chi_3+W,\mu))\big]D\J_{\mu}(\T(W,0,\mu)\chi_3+W)\chi_3}{D\J_{\mu}(\T(W,0,\mu)\chi_3+W)\chi_3} \\
= \chi_3^*\big[\hF(\T(W,0,\mu)\chi_3+W,\mu))\big] 
= \chi_3^*\big[\hF(\tau_{\mu}\chi_3+W,\mu)\big].
\qedhere
\end{multline*}
\end{proof}

\subsubsection{The fully reduced system}
From now on, we assume $0 \le \mu \le \mu_{\T}$, where $\mu_{\T}$ was defined in Lemma \ref{lem: big bad chi3 lemma}, and, for $W \in \tX$ with $\norm{\W}_{\X} \le \mu_{\T}$, we put
\[
\T_{\mu}(W)
:= \T(W,0,\mu),
\]
with $\T$ also defined in Lemma \ref{lem: big bad chi3 lemma}.
We will study the ``fully reduced'' problem
\begin{equation}\label{eqn: fully reduced}
W'(x)
= \tF(W(x),\mu),
\end{equation}
where
\begin{equation}\label{eqn: tFmu}
\tF(W,\mu)
:= \hF(\T_{\mu}(W)\chi_3+W,\mu)-\chi_3^*\big[\hF(\T_{\mu}(W)\chi_3+W,\mu)\big]\chi_3.
\end{equation}
The work in the previous two sections implies that if $W$ solves \eqref{eqn: fully reduced} and is sufficiently small, then 
\begin{equation}\label{eqn: W-T-int soln to orig}
U(x)
:= W(x) 
+ \T_{\mu}(W(x))\chi_3
+ \left(\int_0^x \Gamma_{\mu}\big(\T_{\mu}(W(s))\chi_3 + W(s)\big) \ds\right)\chi_0
\end{equation}
solves the original problem \eqref{eqn: abstract ode}.

We rewrite $\tF$ in the form
\begin{equation}\label{eqn: tFmu2}
\tF(W,\mu)
= \tL_0W
+ \mu\tL_1(\mu)W
+ \tnl(W,\mu),
\end{equation}
where we use the definition of $\hF$ in \eqref{eqn: hFmu} and its constituent terms from \eqref{eqn: hL0} and \eqref{eqn: hL1 and hnl} to expand
\begin{equation}\label{eqn: tL0}
\tL_0W 
:= \hL_0W
= \L_0W - \chi_1^*[W]\chi_0,
\end{equation}
\begin{equation}\label{eqn: tL1}
\begin{aligned}
\tL_1(\mu)W
&:= \L_1(\mu)W-\chi_0^*[\L_1(\mu)W]\chi_0-\chi_3^*[\L_1(\mu)W]\chi_3
- \left(\frac{\Jscr_*(\mu)W}{1+\mu\Jscr_*(\mu)\chi_3}\right)\chi_2 \\
&-\mu\left(\frac{\Jscr_*(\mu)W}{1+\mu\Jscr_*(\mu)\chi_3}\right)\big(\L_1(\mu)\chi_3-\chi_0^*[\L_1(\mu)\chi_3]\chi_0\big),
\end{aligned}
\end{equation}
and
\begin{equation}\label{eqn: tnl}
\begin{aligned}
\tnl(W,\mu)
&:=\nl(\T_{\mu}(W)\chi_3+W,\mu)
-\chi_0^*\big[\nl(\T_{\mu}(W)\chi_3+W,\mu)\big]\chi_0 \\
&-\chi_3^*\big[\nl(\T_{\mu}(W)\chi_3+W,\mu)\big]\chi_3 
+ \nl_{\T}(W,\mu)\chi_2 \\
&+ \mu\nl_{\T}(W,\mu)\big(\L_1(\mu)\chi_3-\chi_0^*[\L_1(\mu)\chi_3]\chi_0\big),
\end{aligned}
\end{equation}
with
\[
\nl_{\T}(W,\mu)
:= \T_{\mu}(W)-D\T_{\mu}(0)W
= \T_{\mu}(W)+\mu\left(\frac{\Jscr_*(\mu)W}{1+\mu\Jscr_*(\mu)\chi_3}\right).
\]
We have used the various hypotheses, part \ref{part: chi3-star on L1 on chi3} of Lemma \ref{lem: odds and ends}, and part \ref{part: deriv of T} of Lemma \ref{lem: big bad chi3 lemma} to simplify and cancel some terms to obtain the expansion \eqref{eqn: tFmu2}.
Observe that 
\[
\chi_k^*[\tL_0W]
= \chi_k^*[\tL_1(\mu)W]
= \chi_k^*[\tnl(W,\mu)]
= 0, \ k = 0, 3
\]
for all $W \in \tD$, and so $\tF$ and its constituent terms do indeed map $\tD$ into $\tX$.

The next lemma shows that $\tF$ satisfies all the properties of Theorem \ref{thm: Lombardi}, which guarantees nanopteron solutions to the reduced problem \eqref{eqn: fully reduced}.

\begin{lemma}\label{lem: ultimate tFmu props}
For $0 \le \mu \le \mu_1$, the map $\tF$ defined in \eqref{eqn: tFmu2} has the following properties.

\begin{enumerate}[label={\bf(\roman*)}, ref={(\roman*)}]

\item
The mappings $\mu \mapsto \tL_1(\mu)$ and $(W,\mu) \mapsto \tnl(W,\mu)$ are analytic with
\[
\sup_{0 \le \mu \le \mu_1} \norm{\tL_1(\mu)}_{\b(\tD,\tX)} \le C
\quadword{and}
\sup_{0 \le \mu \le \mu_1} \norm{\tnl(W,\mu)}_{\X} \le C\norm{W}_{\X}^2.
\]

\item
The symmetry $\S$ anticommutes with each term in $\tF$:
\[
\S\tL_0 = -\tL_0\S,
\qquad
\S\tL_1(\mu) = -\tL_1(\mu)\S,
\quadword{and}
\S\tnl(W,\mu) = - \tnl(\S{W},\mu).
\]

\item\label{part: Fmu center spectrum}
The center spectrum of $\tL_0$ as an operator in $\tX$ with domain $\tD$ is 
\[
\sigma(\tL_0) \cap i\R
= \{0,\pm{i}\omega\},
\]
where $\pm{i}\omega$ are the pure imaginary eigenvalues of $\L_0$ from Hypothesis \ref{hypo: spectrum}.
The point 0 is an eigenvalue of multiplicity 2 with generalized eigenvectors $\chi_1$ and $\chi_2$ satisfying $\tL_0\chi_1 = 0$, $\tL_0\chi_2 = \chi_1$, and $\S\chi_1 = \chi_1$.
The points $\pm{i}\omega$ are eigenvalues of algebraic multiplicity 1. 

\item
The spectral projection for $\tL_0$ corresponding to 0 has the form 
\begin{equation}\label{eqn: tPi0}
\tPi_0W 
= \chi_1^*[W]\chi_1 + \chi_2^*[W]\chi_2,
\end{equation}
where
\begin{equation}\label{eqn: tFmu nondegen}
\chi_2^*\big[\tL_1(0)\chi_1\big]
> 0
\quadword{and}
\chi_2^*\big[D_{WW}^2\tF(0,0)[\chi_1,\chi_1]\big]
\ne 0.
\end{equation}

\item
Let $\tPi$ be the spectral projection for $\tL_0$ corresponding to $\{0,\pm{i}\omega\}$ and put
\begin{equation}\label{eqn: hyperbolic space tX tD}
\tX_{\hsf} := (\ind_{\tX}-\tPi)(\tX)
\quadword{and}
\tD_{\hsf} := \tX_{\hsf} \cap \tD.
\end{equation}
Put $\tY_{\hsf} := \Y_{\hsf}$, where $\Y_{\hsf}$ was defined in Hypothesis \ref{hypo: opt reg}, and take $b \in (0,\pi)$ and $q \in (0,\Lfrak_0^{1/2})$ from that hypothesis as well.
Then on the triple $(\tD_{\hsf},\tY_{\hsf},\tX_{\hsf})$, the operator $\restr{\tL_0}{\D_{\hsf}}$ has the localized optimal regularity property with decay rate $q\Lfrak_0^{-1/2}$ and strip width $b$; the periodic optimal regularity property with base frequency $\omega$; and the suboptimal regularity property with growth rate $\grave{q}$.
\end{enumerate}
\end{lemma}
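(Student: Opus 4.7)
The plan is to take each of the five parts in turn, leveraging the explicit formulas \eqref{eqn: tL0}--\eqref{eqn: tnl} together with Lemmas \ref{lem: odds and ends} and \ref{lem: big bad chi3 lemma}. For \textbf{(i)}, analyticity and the uniform bound on $\tL_1(\mu)$ are immediate from \eqref{eqn: tL1}, since each summand is assembled from analytic ingredients provided by Hypotheses \ref{hypo: F structure} and \ref{hypo: first int}, and shrinking $\mu_1$ keeps $1+\mu\Jscr_*(\mu)\chi_3$ bounded away from zero. For $\tnl$, the quadraticity and the quadratic bound follow from \eqref{eqn: tnl} once one uses Lemma \ref{lem: big bad chi3 lemma}\ref{part: T quad est} to control $|\T_\mu(W)|$ by $\|W\|_{\X}^2$ and recognizes $\nl_\T(W,\mu)=\T_\mu(W)-D\T_\mu(0)W$ as the Taylor remainder of $\T_\mu$ at zero. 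For \textbf{(ii)}, the anticommutation of $\S$ with each constituent of $\tF$ is a term-by-term check reducing to Hypothesis \ref{hypo: symmetry}, parts \ref{part: chik-star and S} and \ref{part: Jscr and S} of Lemma \ref{lem: odds and ends}, part \ref{part: T and S} of Lemma \ref{lem: big bad chi3 lemma}, and the signs $\S\chi_0=-\chi_0$, $\S\chi_2=-\chi_2$, $\S\chi_3=\chi_3$ from Hypothesis \ref{hypo: eigenproj}\ref{part: S and chik}.

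For \textbf{(iii)}, I exploit the decomposition $\tX = \mathrm{span}\{\chi_1,\chi_2,\chi_+,\chi_-\} \oplus \X_\hsf$, which is invariant under $\tL_0$ since the rank-one correction $W\mapsto -\chi_1^*[W]\chi_0$ is supported on $\chi_0\notin\tX$ yet $\chi_0^*[\tL_0 W]$ and $\chi_3^*[\tL_0 W]$ still vanish on $\tD$ by biorthogonality. Direct algebra using $\chi_j^*[\chi_k]=\delta_{jk}$ then gives $\tL_0\chi_1=\L_0\chi_1-\chi_0=0$ and $\tL_0\chi_2=\chi_1$, producing a Jordan chain of length two at $0$; from \eqref{eqn: Pi spectral proj} one has $\chi_1^*[\chi_\pm]=0$, so $\tL_0\chi_\pm=\pm i\omega\chi_\pm$; and $\chi_1^*$ annihilates $\X_\hsf$, so $\tL_0=\L_0$ there and the hyperbolic spectrum is inherited. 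This closes \textbf{(iii)} and, by biorthogonality on the two-dimensional center space, delivers the projection formula \eqref{eqn: tPi0} required for \textbf{(iv)}.

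The main obstacle is the nondegeneracy \eqref{eqn: tFmu nondegen}. For the linear coefficient I apply $\chi_2^*$ to \eqref{eqn: tL1} at $\mu=0$, $W=\chi_1$: the $\chi_0$- and $\chi_3$-corrections die under $\chi_2^*$, the final line vanishes by its $\mu$-factor, and the $\chi_2$-correction contributes $-\Jscr_*(0)\chi_1$, leaving $\chi_2^*[\tL_1(0)\chi_1]=\chi_2^*[\L_1(0)\chi_1]-\Jscr_*(0)\chi_1=\Lfrak_0>0$ by Hypothesis \ref{hypo: nondegen}. For the quadratic coefficient, since $\tL_0$ and $\mu\tL_1(\mu)$ are linear in $W$, $D_{WW}^2\tF(0,0)=D_{WW}^2\tnl(0,0)$, so I expand \eqref{eqn: tnl} at $(0,0)$ by the chain rule. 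The necessary inputs are $D\T_0(0)=0$ and $D_{WW}^2\T_0(0)[\chi_1,\chi_1]=-D^2\J_0(0)[\chi_1,\chi_1]$ from Lemma \ref{lem: big bad chi3 lemma}\ref{part: deriv of T}--\ref{part: 2deriv of T} together with $D\J_0(0)\chi_1=0$ from Lemma \ref{lem: odds and ends}\ref{part: Djmu0 on chi1 and chi3}, plus $D^2\nl(0,0)=2\nl_0$ from \eqref{eqn: nl expn}; applying $\chi_2^*$ then kills the $\chi_0$- and $\chi_3$-corrections and leaves $2\chi_2^*[\nl_0(\chi_1,\chi_1)]-D^2\J_0(0)[\chi_1,\chi_1]=2\Qfrak_0\ne 0$. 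Finally, part \textbf{(v)} reduces to the observation that $\tPi = \restr{\Pi}{\tX}$, because the $\chi_0^*$ and $\chi_3^*$ terms in \eqref{eqn: Pi spectral proj} vanish on $\tX$; a double-inclusion argument then gives $\tX_\hsf=\X_\hsf$ and $\tD_\hsf=\D_\hsf$, and on this common space $\chi_1^*[W]=0$ forces $\restr{\tL_0}{\tD_\hsf}=\restr{\L_0}{\D_\hsf}$, so the three regularity properties carry over verbatim from Hypothesis \ref{hypo: opt reg}.
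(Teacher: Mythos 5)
Your argument is correct and covers each part of the lemma, but in parts (iii)--(iv) you take a noticeably more hands-on route than the paper does. The paper dispatches (iii) and the projection formula in (iv) by invoking the abstract Lemma~\ref{lem: main spectral theory lemma}, which packages precisely the ``drop the algebraic multiplicity of $0$ by two when restricting to $\ker\chi_0^* \cap \ker\chi_3^*$'' phenomenon in a general form, together with the uniqueness of spectral projections from Appendix~\ref{app: elementary spectral theory}. You instead unwind that lemma: you exhibit the $\tL_0$-invariant decomposition $\tX = \operatorname{span}\{\chi_1,\chi_2,\chi_+,\chi_-\}\oplus\X_{\hsf}$, compute the Jordan block on $\operatorname{span}\{\chi_1,\chi_2\}$ directly, and read off that $\tL_0$ agrees with $\L_0$ on the other two summands. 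This is more elementary and self-contained; what the paper's route buys is a cleanly reusable statement and the ability to cite the uniqueness of the spectral projection rather than reconstructing it by biorthogonality. Your nondegeneracy computations in (iv) match the paper's line by line, including the reductions $D\T_0(0)=0$ and $D^2_{WW}\T(0,0,0)[\chi_1,\chi_1]=-D^2\J_0(0)[\chi_1,\chi_1]$.

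One item to be aware of: the paper's proof of part (v) does more than verify the three regularity properties. After establishing $\tX_{\hsf}=\X_{\hsf}$, $\tD_{\hsf}=\D_{\hsf}$ and $\restr{\tL_0}{\tD_{\hsf}}=\restr{\L_0}{\D_{\hsf}}$ (as you do), it also computes from \eqref{eqn: tL1} and \eqref{eqn: tnl} that $(\ind_{\tX}-\tPi)\tL_1(\mu)W$ and $(\ind_{\tX}-\tPi)\tnl(W,\mu)$ land in $\Y_{\hsf}$ for $W\in\tD_{\hsf}$, because the hyperbolic parts of the extra correction terms in $\tL_1$ and $\tnl$ are just $(\ind_{\X}-\Pi)$ applied to terms already covered by Hypothesis~\ref{hypo: opt reg}. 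This inclusion is not part of the lemma's stated conclusion, so your proof of the lemma \emph{as stated} is complete; but it is the other half of Lombardi's hypothesis~\ref{part: Lombardi opt reg}, and the invocation of Theorem~\ref{thm: Lombardi} in Section~\ref{sec: proof of thm: main abstract} relies on it, so it must be checked somewhere, and this proof is the natural place.
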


\begin{proof}
\begin{enumerate}[label={\bf(\roman*)}]

\item
This is obvious from the definitions in \eqref{eqn: tL0}, \eqref{eqn: tL1}, and \eqref{eqn: tnl}.

\item
This follows from Hypothesis \ref{hypo: symmetry}, Hypothesis \ref{hypo: eigenproj}--\ref{part: S and chik}, parts \ref{part: chik-star and S} and  \ref{part: Jscr and S} of Lemma \ref{lem: odds and ends}, and part \ref{part: T and S} of Lemma \ref{lem: big bad chi3 lemma}.

\item
This follows from Hypothesis \ref{hypo: spectrum} and Lemma \ref{lem: main spectral theory lemma}.

\item
That the spectral projection has this form follows from Lemma \ref{lem: main spectral theory lemma}.
For the inequality in \eqref{eqn: tFmu nondegen}, we use the definition of $\tL_1$ in \eqref{eqn: tL1} and the identities $\chi_2^*[\chi_0] = \chi_2^*[\chi_3] = 0$ to calculate
\[
\chi_2^*[\tL_1(0)\chi_1]
= \chi_2^*[\L_1(0)\chi_1]- \Jscr_*(0)\chi_1
= \Lfrak_0
> 0,
\]
where the positivity is Hypothesis \ref{hypo: nondegen}.

The proof of the nonequality in \eqref{eqn: tFmu nondegen} is slightly more involved.
From the definition of $\tF$ in \eqref{eqn: tFmu2}, we calculate
\[
\chi_2^*[\tF(W,0)]
= \chi_2^*[\tL_0W] + \chi_2^*\big[\nl(\T_{\mu}(W)\chi_3 + W,\T_{\mu}(W)\chi_3+W)\big] + \T_{\mu}(W).
\]
We immediately have
\[
\chi_2^*[\tL_0W]
= \chi_2^*[\L_0W]-\chi_1^*[W]\chi_2^*[\chi_0]
= \chi_3^*[W]
= 0.
\]
Next, we rewrite
\[
\nl(\T_{\mu}(W)\chi_3 + W,\T_{\mu}(W)\chi_3+W,0)
= \nl_0(\T_{\mu}(W)\chi_3+W,\T_{\mu}(W)\chi_3+W) + \nl_1(\T_{\mu}(W)\chi_3+W,0)
\]
from the expansion of $\nl$ in \eqref{eqn: nl expn}.

By parts \ref{part: T0 = 0} and \ref{part: deriv of T} of Lemma \ref{lem: big bad chi3 lemma}, we have 
\begin{equation}\label{eqn: T-quad est}
|\T_0(W)| 
\le C\norm{W}_{\X}^2
\end{equation}
for $\norm{W}_{\X} < \mu_1$.
The bilinearity of $\nl_0$ implies
\begin{equation}\label{eqn: nl0 with T}
\nl_0(\T_{\mu}(W)\chi_3+W,\T_{\mu}(W)\chi_3+W)
= \T_{\mu}(W)^2\nl_0(\chi_3,\chi_3) + 2\T_{\mu}(W)\nl_0(\chi_3,W) + \nl_0(W,W).
\end{equation}
The estimate \eqref{eqn: T-quad est} and the bilinearity of $\nl_0$ imply that the first two terms in \eqref{eqn: nl0 with T} are quartic and cubic in $W$, respectively.
Next, the term $\nl_1(\T_{\mu}(W)\chi_3+W,0)$ is then cubic in $W$ by \eqref{eqn: nl ests} and the estimate \eqref{eqn: T-quad est}.

We conclude
\begin{align*}
\chi_2^*\big[D_{WW}^2\tF(0,0)[\chi_1,\chi_1]\big]
&= 2\chi_2^*\big[\nl_0(\chi_1,\chi_1)\big]+ D_{WW}^2\T(0)[\chi_1,\chi_1] \\
&= 2\chi_2^*\big[\nl_0(\chi_1,\chi_1)\big] - D^2\J_0(0)[\chi_1,\chi_1]
\end{align*}
by part \ref{part: 2deriv of T} of Lemma \ref{lem: big bad chi3 lemma}.
This final quantity is $2\Qfrak_0 \ne 0$, per Hypothesis \ref{hypo: nondegen}.

\item
First, the spectral projection for $\tL_0$ onto $\{0,\pm{i}\omega\}$ has the form
\[
\tPi{W}
= \tPi_0W + \chi_+^*[W]\chi_+ + \chi_-^*[W]\chi_-, \qquad W \in \tD_{\hsf}.
\]
To verify this, use Lemma \ref{lem: main spectral theory lemma} and the uniqueness of the spectral projection discussed in Appendix \ref{app: elementary spectral theory}.
Then $\tPi{W} = \Pi{W}$ for all $W \in \tD_{\hsf}$, where $\Pi$ was defined in \eqref{eqn: Pi spectral proj}.
It then follows from the definition of $\tX$ and $\tD$ in \eqref{eqn: tilde spaces} that
\[
\tX_{\hsf} = \X_{\hsf},
\qquad
\tD_{\hsf} = \D_{\hsf},
\quadword{and}
\tY_{\hsf} = \Y_{\hsf} \subseteq \tX_{\hsf}.
\]
The spaces $\X_{\hsf}$ and $\D_{\hsf}$ were defined in \eqref{eqn: hyperbolic space X D} and $\tX_{\hsf}$ and $\tD_{\hsf}$ in \eqref{eqn: hyperbolic space tX tD}.

We use \eqref{eqn: tL1} to calculate
\begin{align*}
(\ind_{\tX}-\tPi)\tL_1(\mu)W
&= (\ind_{\X}-\Pi)\tL_1(\mu)W \\
&= (\ind_{\X}-\Pi)\left(\L_1(\mu)W-\mu\left(\frac{\Jscr_*(\mu)W}{1+\mu\Jscr_*(\mu)\chi_3}\right)\L_1(\mu)\chi_3\right)
\in \Y_{\hsf}
\end{align*}
and \eqref{eqn: tnl} to calculate 
\begin{align*}
(\ind_{\tX}-\tPi)\tnl(W,\mu)
&= (\ind_{\X}-\Pi)\tnl(W,\mu) \\
&= (\ind_{\X}-\Pi)\big(\nl(\T_{\mu}(W)\chi_3+W,\mu) + \mu\nl_{\T}(W,\mu)\L_1(\mu)\chi_3\big)
\in \Y_{\hsf},
\end{align*}
with both inclusions from Hypothesis~\ref{hypo: opt reg}.

Finally, $\restr{\tL_0}{\tD_{\hsf}} = \restr{\L_0}{\D_{\hsf}}$, so we can just directly import the assumptions of Hypothesis \ref{hypo: opt reg} on the triple $(\tD_{\hsf}, \tY_{\hsf},\tX_{\hsf}) = (\D_{\hsf},\X_{\hsf},\Y_{\hsf})$.
\qedhere
\end{enumerate}
\end{proof}

\subsection{The proof of Theorem \ref{thm: main abstract}}\label{sec: proof of thm: main abstract}
We will use the following term for brevity.

\begin{definition}\label{defn: unif bd}
Let $\Y$ be a Banach space.
A family of functions $f_{\mu}^{\alpha} \colon \R \to \Y$ defined for $\mu \in \mathscr{I}_1 \subseteq (0,1)$ and $\alpha \in \mathscr{I}_2 \subseteq \R$ is \defn{uniformly bounded} if
\[
\sup_{\substack{\mu \in \mathscr{I}_1 \\ \alpha \in \mathscr{I}_2 \\ X \in \R}}
\norm{f_{\mu}^{\alpha}(X)}_{\Y}
< \infty.
\]
We say that the family is \defn{uniformly bounded and $q$-localized} if the estimate above is true when $\norm{f_{\mu}^{\alpha}(X)}_{\Y}$ is replaced by $e^{q|X|}\norm{f_{\mu}^{\alpha}(X)}_{\Y}$, with $q \in \R$.
\end{definition}

Now we begin the proof of Theorem \ref{thm: main abstract}.
Lemma \ref{lem: ultimate tFmu props} allow us to invoke Theorem \ref{thm: Lombardi} to obtain nanopteron solutions to the fully reduced problem \eqref{eqn: fully reduced}.
Specifically, these solutions have the form $W = \Wsf_{\mu}^{\alpha}$, where
\begin{equation}\label{eqn: W nano}
\Wsf_{\mu}^{\alpha}(x)
= -\frac{3\Lfrak_0}{2\Qfrak_0}\mu\sech^2\left(\frac{\Lfrak_0^{1/2}\mu^{1/2}x}{2}\right)\chi_1
+ \mu^{3/2}\tUpsilon_{\mu}^{\alpha}(\mu^{1/2}x)
+ \alpha\mu\tPhi_{\mu}^{\alpha}(\Tup_{\mu}^{\alpha}(\mu^{1/2}x))
\end{equation}
and $\Tup_{\mu}^{\alpha}$ is defined in \eqref{eqn: Tsf-mu-alpha}.
Taking $W = \tPsf_{\mu}^{\alpha}$, where
\begin{equation}\label{eqn: tPsf-mu-alpha}
\tPsf_{\mu}^{\alpha}(x) 
:= \alpha\mu\tPhi_{\mu}^{\alpha}(\mu^{1/2}x),
\end{equation}
also solves \eqref{eqn: fully reduced}.

The maps $\tUpsilon_{\mu}^{\alpha}$ and $\tPhi_{\mu}^{\alpha}$ have the same properties as (their tilde-denuded counterparts) in Theorem \ref{thm: Lombardi}.
In particular, $\tUpsilon_{\mu}^{\alpha} \colon \R \to \tD_{\hsf}$ is uniformly bounded and $q$-localized, while $\tPhi_{\mu}^{\alpha} \colon \R \to \tD_{\hsf}$ is uniformly bounded.
Both maps are $\S$-reversible. 
Also,
\begin{equation}\label{eqn: chi0-star on tPsf-mu-alpha}
\chi_0^*[\tPsf_{\mu}^{\alpha}(X)]
= \chi_0^*[\tUpsilon_{\mu}^{\alpha}(X)]
= 0.
\end{equation}

All of the functions defined above are valid for $\mu$ and $\alpha$ in the ranges
\begin{equation}\label{eqn: Lombardi intervals}
0 < \mu < \tmu_*
\quadword{and}
\tAlpha_0\exp\left(-\frac{b\omega}{\Lfrak_0^{1/2}\mu^{1/2}}\right) \le \alpha \le \tAlpha_1,
\end{equation}
where $\tAlpha_1 > 0$ is independent of $b$ and $q$, while $\tmu_*$ and $\tAlpha_0$ may depend on $b$ and $q$.
We may assume that $\tAlpha_0$, $\tAlpha_1$, and $\tmu_*$ are small enough that $\T_{\mu}(\Wsf_{\mu}^{\alpha}(x))$ and $\T_{\mu}(\tPsf_{\mu}^{\alpha}(x))$ are defined for all $x \in \R$ and $\mu$ and $\alpha$  in the intervals \eqref{eqn: Lombardi intervals}.

It will also be convenient to abbreviate
\begin{equation}\label{eqn: tSigma-mu-alpha}
\tSigma_{\mu}^{\alpha}(X)
:= -\frac{3\Lfrak_0}{2\Qfrak_0}\sech^2\left(\frac{\Lfrak_0^{1/2}X}{2}\right)\chi_1
+ \mu^{1/2}\tUpsilon_{\mu}^{\alpha}(X),
\end{equation}
so that $\tSigma_{\mu}^{\alpha}$ is uniformly bounded and $q$-localized, while the solution $\Wsf_{\mu}^{\alpha}$ has the ``localized + (asymptotically) periodic structure''
\begin{equation}\label{eqn: W nano loc + per}
\Wsf_{\mu}^{\alpha}(x)
= \mu\tSigma_{\mu}^{\alpha}(\mu^{1/2}x) + \alpha\mu\tPhi_{\mu}^{\alpha}(\Tup_{\mu}^{\alpha}(\mu^{1/2}x)).
\end{equation}

In the following sections we will convert these solutions to the reduced problem \eqref{eqn: fully reduced} into solutions to the original problem \eqref{eqn: abstract ode} by undoing the changes of variables above.
The subsequent work is not particularly difficult, but we need to keep careful track of various powers of $\mu$ and $\alpha$.

\subsubsection{Construction of the ``periodic + growing'' solutions $\Psf_{\mu}^{\alpha}$}\label{sec: periodic + growing construction}
Per \eqref{eqn: W-T-int soln to orig}, since the map $\tPsf_{\mu}^{\alpha}(s) = \alpha\mu\tPhi_{\mu}^{\alpha}(\mu^{1/2}s)$ solves the reduced problem \eqref{eqn: fully reduced}, the map
\begin{multline}\label{eqn: Psf-mu-alpha origin}
\Psf_{\mu}^{\alpha}(x)
:= \left(\int_0^x \Gamma_{\mu}\big(\T_{\mu}(\alpha\mu\tPhi_{\mu}^{\alpha}(\mu^{1/2}s))\chi_3
+\alpha\mu\tPhi_{\mu}^{\alpha}(\mu^{1/2}s)\big)\ds\right)\chi_0
+ \T_{\mu}(\alpha\mu\tPhi_{\mu}^{\alpha}(\mu^{1/2}x))\chi_3 \\
+ \alpha\mu\tPhi_{\mu}^{\alpha}(\mu^{1/2}x)
\end{multline}
solves the original problem \eqref{eqn: abstract ode}.
We will rewrite this expression in the form \eqref{eqn: Psf-mu-alpha}.

We first put
\begin{equation}\label{eqn: tPhi-T}
\tPhi_{\mu}^{\alpha,\T}(X)
:= \alpha^{-1}\mu^{-2}\T_{\mu}(\alpha\mu\tPhi_{\mu}^{\alpha}(X)).
\end{equation}
Then $\tPhi_{\mu}^{\alpha,\T}$ is periodic, and it is uniformly bounded in the sense of Definition \ref{defn: unif bd} by part \ref{part: T quad est} of Lemma \ref{lem: big bad chi3 lemma}.
It is also even, since the $\S$-reversibility of $\tPhi_{\mu}^{\alpha}$ and part \ref{part: T and S} of Lemma \ref{lem: big bad chi3 lemma} imply
\begin{equation}\label{eqn: tPhi-mu-alpha-T even}
\alpha\mu^2\tPhi_{\mu}^{\alpha,\T}(-X)
= \T_{\mu}(\alpha\mu\tPhi_{\mu}^{\alpha}(-X))
= \T_{\mu}(\alpha\mu\S\tPhi_{\mu}^{\alpha}(X))
= \T_{\mu}(\alpha\mu\tPhi_{\mu}^{\alpha}(X)).
\end{equation}

Next, we define
\begin{equation}\label{eqn: Phi-mu-alpha nano defn}
\Phi_{\mu}^{\alpha}(X)
:= \mu\tPhi_{\mu}^{\alpha,\T}(X)\chi_3 + \tPhi_{\mu}^{\alpha}(X)
\end{equation}
to see that $\Phi_{\mu}^{\alpha}$ is uniformly bounded and
\begin{equation}\label{eqn: TPchi3 + P id}
\T_{\mu}(\alpha\mu\tPhi_{\mu}^{\alpha}(X))\chi_3 
+ \alpha\mu\tPhi_{\mu}^{\alpha}(X)
= \alpha\mu\Phi_{\mu}^{\alpha}(X).
\end{equation}

It remains for us to treat the integral in \eqref{eqn: Psf-mu-alpha origin}.
With $\Gamma_{\mu}$ and $\Gamma_{\mu}^*$ defined in \eqref{eqn: Gamma-mu}, we have
\[
\Gamma_{\mu}(\T_{\mu}\big(\alpha\mu\tPhi_{\mu}^{\alpha}(X))\chi_3
+\alpha\mu\tPhi_{\mu}^{\alpha}(X)\big)
= \alpha\mu\chi_1^*[\tPhi_{\mu}^{\alpha}(X)]
+ \Gamma_{\mu}^*\big(\T(\alpha\mu\tPhi_{\mu}^{\alpha}(X),\mu,0)\chi_3
+\alpha\mu\tPhi_{\mu}^{\alpha}(X)\big).
\]
Set
\begin{equation}\label{eqn: tPhi-mu-alpha-chi}
\tPhi_{\mu}^{\alpha,\chi}(X)
:= \alpha^{-1}\chi_1^*[\tPhi_{\mu}^{\alpha}(X)].
\end{equation}
This map is periodic and uniformly bounded by the expansions \eqref{eqn: Lombardi periodic1} and \eqref{eqn: Lombardi periodic2}.
It is also even, by a calculation like \eqref{eqn: tPhi-mu-alpha-T even}.

Next, let
\[
\tPhi_{\mu}^{\alpha,\Gamma}(X)
:= (\alpha\mu^2)^{-1}\Gamma_{\mu}^*\big(\T_{\mu}(\alpha\mu\tPhi_{\mu}^{\alpha}(X))\chi_3
+\alpha\mu\tPhi_{\mu}^{\alpha}(X)\big).
\]
Part \ref{part: Gamma-mu-star} of Lemma \ref{lem: Gamma-mu props} guarantees that $\tPhi_{\mu}^{\alpha,\Gamma}$ is  uniformly bounded, and it is also periodic.
Finally, it is even, since
\[
\Gamma_{\mu}\big(\T_{\mu}\big(\alpha\mu\tPhi_{\mu}^{\alpha}(-X))\chi_3 + \alpha\mu\tPhi_{\mu}^{\alpha}(-X)\big)
= \Gamma_{\mu}\big(\S(\T_{\mu}(\alpha\mu\tPhi_{\mu}^{\alpha}(X))\chi_3+\alpha\tPhi_{\mu}^{\alpha}(X)\big)
\]
by the $\S$-reversibility of $\tPhi_{\mu}^{\alpha}$, part \ref{part: T and S} of Lemma \ref{lem: big bad chi3 lemma}, part \ref{part: chik-star and S} of Lemma \ref{lem: odds and ends}, and part \ref{part: Gamma-mu and S} of Lemma \ref{lem: Gamma-mu props}.

We now abbreviate
\begin{equation}\label{eqn: Phi-mu-alpha-int}
\Phi_{\mu}^{\alpha,\int}(X)
:= \big(\alpha\mu(\alpha+\mu)\big)^{-1}\big(\alpha^2\mu\tPhi_{\mu}^{\alpha,\chi}(X) + \alpha\mu^2\tPhi_{\mu}^{\alpha,\Gamma}(X)\big),
\end{equation}
so that $\Phi_{\mu}^{\alpha,\int}$ is, once again, periodic, uniformly bounded, and even.
We integrate and find
\begin{equation}\label{eqn: Mu-T-Phi id}
\int_0^x \Gamma_{\mu}\big(\T_{\mu}(\alpha\mu\tPhi_{\mu}^{\alpha}(\mu^{1/2}s))\chi_3
+\alpha\mu\tPhi_{\mu}^{\alpha}(\mu^{1/2}s)\big)\ds
= \alpha\mu^{1/2}(\alpha+\mu)\int_0^{\mu^{1/2}x} \Phi_{\mu}^{\alpha,\int}(s) \ds.
\end{equation}

The estimate \eqref{eqn: main periodic est} follows from all the uniform bounds mentioned above.
The Lipschitz estimate \eqref{eqn: main periodic lip} follows from \eqref{eqn: Lombardi periodic1} and \eqref{eqn: Lombardi periodic frequency} and the various uniform bounds above.
Finally, the formula \eqref{eqn: Psf-mu-alpha} follows from \eqref{eqn: TPchi3 + P id} and \eqref{eqn: Mu-T-Phi id}.

\begin{remark}\label{rem: is it really periodic?}
For the map $\Psf_{\mu}^{\alpha}$ to be periodic, it is necessary and sufficient that the integral term in \eqref{eqn: Psf-mu-alpha origin} be periodic.
Since the integrand is periodic and real analytic, the periodicity of the integral is equivalent to the vanishing of the zeroth Fourier coefficient of the integrand.
However, the integrand here is quite complicated due to the three terms in $\Gamma_{\mu}$ from \eqref{eqn: Gamma-mu}, where now each of these terms is a composition involving both $\T_{\mu}$ and $\tPhi_{\mu}^{\alpha}$.
We know a great deal about these maps from Lemma \ref{lem: big bad chi3 lemma} and part \ref{part: Lombardi periodic} of Theorem \ref{thm: Lombardi}, but our present store of knowledge is not sufficient to help us divine whether or not the zeroth Fourier mode vanishes.
\end{remark}

\subsubsection{Construction of the full ``nanopteron + growing'' solutions $\Usf_{\mu}^{\alpha}$}\label{sec: nano + growing construction}
We may use \eqref{eqn: W-T-int soln to orig} to see that, with $\Wsf_{\mu}^{\alpha}$ defined in \eqref{eqn: W nano}, the map
\begin{equation}\label{eqn: UsF-mu-alpha origin}
\Usf_{\mu}^{\alpha}(x)
:= \left(\int_0^x \Gamma_{\mu}\big(\T_{\mu}(\Wsf_{\mu}^{\alpha}(s))\chi_3 + \Wsf_{\mu}^{\alpha}(s)\big) \ds\right)\chi_0
+ \T_{\mu}(\Wsf_{\mu}^{\alpha}(x))\chi_3
+ \Wsf_{\mu}^{\alpha}(x)
\end{equation}
solves the original problem \eqref{eqn: abstract ode}.
As in Section \ref{sec: periodic + growing construction}, we rewrite a number of the terms above to highlight certain powers of $\mu$ and $\alpha$, and now also with an eye toward separating exponentially localized terms from (asymptotically) periodic ones.

We begin this time with the integral in \eqref{eqn: UsF-mu-alpha origin}.
First, from the definitions of $\Gamma$ and $\Gamma_1$ in \eqref{eqn: Gamma-mu} and the definition of $\tPhi_{\mu}^{\alpha,\chi}$ in \eqref{eqn: tPhi-mu-alpha-chi}, we calculate
\begin{multline}\label{eqn: Mu(T(W))1}
\Gamma_{\mu}\big(\T_{\mu}(\Wsf_{\mu}^{\alpha}(s))\chi_3 + \Wsf_{\mu}^{\alpha}(s)\big)
= -\frac{3\Lfrak_0}{2\Qfrak_0}\mu\sech^2\left(\frac{\Lfrak_0^{1/2}\mu^{1/2}s}{2}\right)
+ \mu^{3/2}\chi_1^*[\tUpsilon_{\mu}^{\alpha}(\mu^{1/2}s)] \\
+ \alpha^2\mu\tPhi_{\mu}^{\alpha,\chi}(\Tup_{\mu}^{\alpha}(\mu^{1/2}s))
+ \Gamma_{\mu}^*\big(\T_{\mu}(\Wsf_{\mu}^{\alpha}(s))\chi_3 + \Wsf_{\mu}^{\alpha}(s)\big).
\end{multline}

Put
\[
\tSigma_{\mu}^{\alpha,\T}(X)
:= \mu^{-2}\big(\T_{\mu}(\mu\tSigma_{\mu}^{\alpha}(X) + \alpha\mu\tPhi_{\mu}^{\alpha}(X))-\T_{\mu}(\alpha\mu\tPhi_{\mu}^{\alpha}(X))\big),
\]
where $\tSigma_{\mu}^{\alpha}$ was defined in \eqref{eqn: tSigma-mu-alpha}.
Part \ref{part: T Lip est} of Lemma \ref{lem: big bad chi3 lemma} guarantees that $\tSigma_{\mu}^{\alpha,\T}$ is uniformly bounded.
The methods of the preceding section also show that $\tSigma_{\mu}^{\alpha,\T}$ is even.
Finally, the expansion of $\Wsf_{\mu}^{\alpha}$ in \eqref{eqn: W nano loc + per} gives
\[
\T_{\mu}(\Wsf_{\mu}^{\alpha}(x))
= \mu^2\tSigma_{\mu}^{\alpha,\T}(\mu^{1/2}x) + \alpha\mu^2\tPhi_{\mu}^{\alpha,\T}(\Tup_{\mu}^{\alpha}(\mu^{1/2}x)),
\]
where $\tPhi_{\mu}^{\alpha,\T}$ was defined in \eqref{eqn: tPhi-T}.

Now set
\begin{multline*}
\tSigma_{\mu}^{\alpha,\Gamma}(X)
:= \mu^{-2}\Gamma_{\mu}^*\big([\alpha\mu^2\tPhi_{\mu}^{\alpha,\T}(\Tup_{\mu}^{\alpha}(X))\chi_3+\alpha\mu\tPhi_{\mu}^{\alpha}(\Tup_{\mu}^{\alpha}(X))] + [\mu^2\tSigma_{\mu}^{\alpha,\T}(X)\chi_3 + \mu\tSigma_{\mu}^{\alpha}(X)]\big) \\
-  \mu^{-2}\Gamma_{\mu}^*\big(\alpha\mu^2\tPhi_{\mu}^{\alpha,\T}(\Tup_{\mu}^{\alpha}(X))\chi_3+\alpha\mu\tPhi_{\mu}^{\alpha}(\Tup_{\mu}^{\alpha}(X))\big).
\end{multline*}
Part \ref{part: Gamma-mu-star} of Lemma \ref{lem: Gamma-mu props} implies that $\tSigma_{\mu}^{\alpha,\Gamma}$ is uniformly bounded and $q$-localized in the sense of Definition \ref{defn: unif bd}, and the methods of Section \ref{sec: periodic + growing construction} show that it is even.
Last, abbreviate
\[
\tUpsilon_{\mu}^{\alpha,\int}(X)
:= \chi_1^*[\tUpsilon_{\mu}^{\alpha}(X)] + \mu^{1/2}\tSigma_{\mu}^{\alpha,\Gamma}(X),
\]
so $\tUpsilon_{\mu}^{\alpha,\int}$ is uniformly bounded and $q$-localized and even.

We may now use the work above and the definition of $\Phi_{\mu}^{\alpha,\int}$ in \eqref{eqn: Phi-mu-alpha-int} to rearrange \eqref{eqn: Mu(T(W))1} into
\begin{multline*}
\Gamma_{\mu}\big(\T_{\mu}(\Wsf_{\mu}^{\alpha}(s))\chi_3 + \Wsf_{\mu}^{\alpha}(s)\big)
= -\frac{3\Lfrak_0}{2\Qfrak_0}\mu\sech^2\left(\frac{\Lfrak_0^{1/2}\mu^{1/2}s}{2}\right)
+ \mu^{3/2}\Upsilon_{\mu}^{\alpha,\int}(\mu^{1/2}s) \\
+ \alpha\mu(\alpha+\mu)\Phi_{\mu}^{\alpha,\int}(\Tup_{\mu}^{\alpha}(\mu^{1/2}s)),
\end{multline*}
and so
\begin{multline}\label{eqn: Mu int W}
\int_0^x \Gamma_{\mu}\big(\T_{\mu}(\Wsf_{\mu}^{\alpha}(s))\chi_3 + \Wsf_{\mu}^{\alpha}(s)\big) \ds
= -\frac{3\Lfrak_0^{1/2}}{\Qfrak_0}\mu^{1/2}\tanh\left(\frac{\Lfrak_0^{1/2}\mu^{1/2}x}{2}\right)
+ \mu\int_0^{\mu^{1/2}x}\tUpsilon_{\mu}^{\alpha,\int}(s) \ds \\
+ \alpha\mu^{1/2}(\alpha+\mu)\int_0^{\mu^{1/2}x}\Phi_{\mu}^{\alpha,\int}(\Tup_{\mu}^{\alpha}(s)) \ds.
\end{multline}

We rewrite the remaining two integrals in \eqref{eqn: Mu int W} to isolate their leading-order behavior.
Part \ref{part: tanh3} of Lemma \ref{lem: the big tanh lemma} (with $\nu = \mu^{1/2}$ and $q_* = \Lfrak_0^{1/2}$) allows us to write
\begin{equation}\label{eqn: Upsilon1}
\int_0^X \tUpsilon_{\mu}^{\alpha,\int}(s) \ds
= \Lup_{\mu,1}^{\alpha}\tanh\left(\frac{\Lfrak_0^{1/2}X}{2}\right)
+ \Upsilon_{\mu}^{\alpha,1}(X),
\end{equation}
where $\Lup_{\mu}^{\alpha,\int}$ is uniformly bounded and $\Upsilon_{\mu}^{\alpha,1}$ is uniformly bounded and $q$-localized.
Part \ref{part: tanh2} of Lemma \ref{lem: the other big tanh lemma} (now with $q_* = \Lfrak_0^{1/2}/2$) allows us to write
\begin{equation}\label{eqn: Upsilon2}
\int_0^X \Phi_{\mu}^{\alpha,\int}(\Tup_{\mu}^{\alpha}(s)) \ds
= \int_0^{\Tup_{\mu}^{\alpha}(X)} \Phi_{\mu}^{\alpha,\int}(s) \ds
+ \mu^{1/2}\Lup_{\mu,2}^{\alpha,2}\tanh\left(\frac{\Lfrak_0^{1/2}X}{2}\right)
+ \mu^{1/2}\Upsilon_{\mu}^{\alpha,2}(X),
\end{equation}
where $\Lup_{\mu}^{\alpha,\infty}$ is uniformly bounded and $\Upsilon_{\mu}^{\alpha,2}$ is uniformly bounded and $\Lfrak_0^{1/2}$-localized, and therefore $q$-localized, since $0 < q < \Lfrak_0^{1/2}$.

Put
\[
\Lup_{\mu}^{\alpha}
:= \Lup_{\mu}^{\alpha,1} + \alpha(\alpha+\mu)\Lup_{\mu}^{\alpha,2}
\]
and
\begin{equation}\label{eqn: Upsilon-mu-alpha-0}
\Upsilon_{\mu}^{\alpha,0}(X)
:= \Upsilon_{\mu}^{\alpha,1}(X) + \alpha(\alpha+\mu)\Upsilon_{\mu}^{\alpha,2}(X)
\end{equation}
Then $\Lup_{\mu}^{\alpha}$ is uniformly bounded and $\Upsilon_{\mu}^{\alpha,0}$ is uniformly bounded and $q$-localized.
By inspection of \eqref{eqn: Upsilon1} and \eqref{eqn: Upsilon2}, we see that $\partial_X[\Upsilon_{\mu}^{\alpha,0}]$ is also uniformly bounded and $q$-localized.

Last, define
\begin{equation}\label{eqn: Upsilon-mu-alpha-star}
\Upsilon_{\mu}^{\alpha,*}(X)
:= \mu^{1/2}\tSigma_{\mu}^{\alpha,\T}(X)\chi_3 + \tUpsilon_{\mu}^{\alpha}(X).
\end{equation}
By \eqref{eqn: chi0-star on tPsf-mu-alpha}, we have $\chi_0^*[\Upsilon_{\mu}^{\alpha,*}(X)] = 0$.
Then the definition of $\Wsf_{\mu}^{\alpha}$ in \eqref{eqn: W nano}, the definition of $\Phi_{\mu}^{\alpha}$ in \eqref{eqn: Phi-mu-alpha nano defn}, and the integral identity \eqref{eqn: Mu int W} allow us to rewrite the solution $\Usf_{\mu}^{\alpha}$ from \eqref{eqn: UsF-mu-alpha origin} into the version \eqref{eqn: abstract drifting nanopteron} foretold in the statement of Theorem \ref{thm: main abstract}.

\subsubsection{Reversibility proofs}
We show that each of the terms in the definition of $\Usf_{\mu}^{\alpha}$ in \eqref{eqn: abstract drifting nanopteron} is $\S$-reversible.
We will need the following lemma, whose proof is a direct calculation using Definition \ref{defn: reversible} and Hypothesis \ref{hypo: eigenproj}--\ref{part: S and chik}.

\begin{lemma}\label{lem: odd/even reversible lemma}
Let $f \colon \R \to \R$ be a function and define $g_k(X) := f_k(X)\chi_k$, for $k=1,\ldots,4$, where $\chi_k$ is one of the generalized eigenvectors from Hypothesis \ref{hypo: eigenproj}.
Then $g_k$ is reversible if $f$ is an odd function and $k$ is an even integer, or if $f$ is an even function and $k$ is an odd integer.
\end{lemma}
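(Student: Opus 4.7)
The plan is to unpack Definition \ref{defn: reversible} directly and then invoke the single identity $\S\chi_k = (-1)^{k+1}\chi_k$ from Hypothesis \ref{hypo: eigenproj}--\ref{part: S and chik}. By the linearity of $\S$,
\[
\S{g}_k(X) = f(X)\S\chi_k = (-1)^{k+1}f(X)\chi_k,
\]
while $g_k(-X) = f(-X)\chi_k$. Since $\chi_k \ne 0$, the reversibility condition $\S{g}_k(X) = g_k(-X)$ is equivalent to the scalar identity
\[
(-1)^{k+1}f(X) = f(-X), \qquad X \in \R.
\]

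I would then read off the two cases. If $k$ is odd, the sign factor $(-1)^{k+1}$ equals $+1$, so the required identity becomes $f(X) = f(-X)$, i.e., $f$ is even. If $k$ is even, the factor is $-1$, so the identity becomes $-f(X) = f(-X)$, i.e., $f$ is odd. In either of the two hypothesized combinations of parities in the lemma statement, the scalar equation above holds, so $g_k$ is $\S$-reversible.

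There is no real obstacle here: the proof is a two-line calculation driven entirely by the eigenvector sign rule in Hypothesis \ref{hypo: eigenproj}--\ref{part: S and chik} and the definition of $\S$-reversibility. The only thing worth flagging is a minor notational wrinkle in the statement (the lemma writes $f_k$ although only a single function $f$ has been introduced, and the indexing range $k=1,\ldots,4$ should match the range $k=0,1,2,3$ used for the Jordan chain in Hypothesis \ref{hypo: eigenproj}); these are cosmetic and do not affect the argument.
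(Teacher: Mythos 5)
Your proof is correct and follows exactly the approach the paper indicates (a direct calculation from Definition \ref{defn: reversible} and Hypothesis \ref{hypo: eigenproj}--\ref{part: S and chik}; the paper does not spell it out further). Your observation about the typographical issues ($f_k$ vs.\ $f$, and the index range $k=1,\ldots,4$ vs.\ $k=0,\ldots,3$) is also accurate.
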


It follows from \eqref{eqn: Upsilon1} and \eqref{eqn: Upsilon2} and the evenness of $\tUpsilon_{\mu}^{\alpha,\int}$ and $\Phi_{\mu}^{\alpha,\int}$ that the maps $\Upsilon_{\mu,1}^{\alpha,0}$ and $\Upsilon_{\mu}^{\alpha,1}$ are odd.
Then from its definition in \eqref{eqn: Upsilon-mu-alpha-0}, the map $\Upsilon_{\mu}^{\alpha,0}$ is also odd.
Consequently, the prefactor functions on $\chi_0$ in \eqref{eqn: abstract drifting nanopteron} are odd, and so Lemma \ref{lem: odd/even reversible lemma} shows that the $\chi_0$-term in \eqref{eqn: abstract drifting nanopteron} is $\S$-reversible.

Next, the $\sech^2$-prefactor on $\chi_1$ is even, so that term in \eqref{eqn: abstract drifting nanopteron} is $\S$-reversible.
Also, the map $\tSigma_{\mu}^{\alpha,\T}$ is even, so the $\chi_3$-term in the definition of $\Upsilon_{\mu}^{\alpha,*}$ in \eqref{eqn: Upsilon-mu-alpha-star} is $\S$-reversible.
Last, the map $\tUpsilon_{\mu}^{\alpha}$ is $\S$-reversible by Theorem \ref{thm: Lombardi}.
Thus $\Upsilon_{\mu}^{\alpha,*}$ is $\S$-reversible.

Since $\Tup_{\mu}^{\alpha}$ is odd, the last two terms in \eqref{eqn: abstract drifting nanopteron} will be $\S$-reversible if we know that $\Psf_{\mu}^{\alpha}$, defined in \eqref{eqn: Psf-mu-alpha}, is $\S$-reversible.
From its definition in \eqref{eqn: Phi-mu-alpha nano defn}, the map $\Phi_{\mu}^{\alpha}$ is $\S$-reversible by the same arguments as for $\Upsilon_{\mu}^{\alpha,*}$.
The $\chi_0$-term in \eqref{eqn: Psf-mu-alpha} is $\S$-reversible by the evenness of the integrand $\Phi_{\mu}^{\alpha,\int}$ and Lemma \ref{lem: odd/even reversible lemma}.
Thus $\Psf_{\mu}^{\alpha}$ is $\S$-reversible. 
\section{The Position Traveling Wave Problem}\label{sec: tw prob redux}

We will reformulate the position traveling wave problem \eqref{eqn: position tw eqns} for the general dimer into a first-order differential equation posed on an infinite-dimensional Banach space.
After choosing a small parameter that controls the distance of the wave speed from the speed of sound, we will show that this equation satisfies the hypotheses of Section \ref{sec: hypos}.
We are able to verify all the hypothesis except the symmetry for the general dimer; to obtain a symmetry, we must specialize to either a mass or a spring dimer.

We develop this first-order equation in Section \ref{sec: IK-cov} and introduce the near-sonic small parameter in Section \ref{sec: near-sonic}.
In between and afterward, we uncover a host of properties of this equation that are needed to verify the hypotheses.
Table \ref{table: hypo proof location} outlines precisely where the different hypotheses are considered for the concrete lattice problems.
Table \ref{table: notation} connects the notation of various operators and quantities in the abstract problem of Section \ref{sec: abstract} to the concrete lattice situation of this section.

\begin{table}
\[
\begin{tabular}{l|l}
\hline
{\bf{Hypothesis}} &{\bf{Discussion and Verification}} \\
\hline
\ref{hypo: F structure}: structural properties &Section \ref{sec: near-sonic} \\
\ref{hypo: symmetry}: symmetry &Section \ref{sec: symmetries} \\
\ref{hypo: spectrum}: center spectrum &Propositions \ref{prop: eigenprops} and \ref{prop: Lambda props}  \\
\ref{hypo: eigenproj}: zero eigenprojection &Sections \ref{sec: gen ev}, \ref{sec: eigenproj}, and \ref{sec: omega ev}  \\
\ref{hypo: first int}: first integral &Section \ref{sec: first int} \\
\ref{hypo: nondegen}: nondegeneracies &Section \ref{sec: nondegen} \\
\ref{hypo: opt reg}: optimal regularity &Section \ref{sec: opt reg} \\
\hline
\end{tabular}
\]

\

\caption{Location of the proofs of the various hypotheses}
\label{table: hypo proof location}
\end{table}

\begin{table}
\[
\begin{tabular}{Sc|Sc|Sl}
\hline
{\bf{Section \ref{sec: abstract}}} &{\bf{Section \ref{sec: tw prob redux}}} &{\bf{Definition in Section \ref{sec: tw prob redux}}} \\
\hline
$\X$ &$\X$ & \eqref{eqn: X defn} \\
\hline
$\D$ &$\D$ & \eqref{eqn: D defn} \\
\hline
$U$ &$\Ub$ &\eqref{eqn: Ub} \\
\hline
$\F(U,\mu)$ &$\F(\Ub,\mu;\kappa,\beta,\Vscr_1,\Vscr_2,w)$ &\eqref{eqn: F-ep-kappa-beta-w} \\
\hline
$\L_0$ &$\L_0(\kappa,w)$ &\eqref{eqn: star simplicity} \\
\hline
$\L_1(\mu)$ &$\L_1(\kappa,w)$ &\eqref{eqn: Lsuper1} \\
\hline
$\nl(U,\mu)$ &$\nl(\Ub,\mu;\kappa,\beta,\Vscr_1,\Vscr_2,w)$ &\eqref{eqn: concrete nl} \\
\hline
$\nl_0(U,\grave{U})$ &$\nl_0(\Ub,\grave{\Ub};\kappa,\beta,w)$ &\eqref{eqn: concrete nl0} \\
\hline
$\S$ &$\S_{\Mb}$ or $\S_{\Kb}$ &\eqref{eqn: md symm}, \eqref{eqn: sd symm} \\
\hline
$\omega$ &$\omega_*(\kappa,w)$ &\eqref{eqn: star simplicity}; Proposition \ref{prop: Lambda props}, parts \ref{part: Lambda-1}, \ref{part: Lambda-3}  \\
\hline
$\chi_k$ &$\chib_k(\kappa,w)$ &\eqref{eqn: chib0}, \eqref{eqn: chib1}, \eqref{eqn: chib2}, \eqref{eqn: chib3} \\
\hline
$\chi_k^*[U]$ &$\chib_k^*[\Ub;\kappa,w]$ &\eqref{eqn: chib0-star}, \eqref{eqn: chib1-star}, \eqref{eqn: chib2-star}, \eqref{eqn: chib3-star} \\
\hline
$\chi_{\pm}$ &$\chib_{\pm}(\kappa,w)$ &\eqref{eqn: chib-pm-omega} \\
\hline 
$\chi_{\pm}^*[U]$ &$\chib_{\pm}^*[\Ub;\kappa,w]$ &Section \ref{sec: omega ev} \\
\hline
$\J_{\mu}(U)$ &$\J_{\mu}(\Ub;\V_1,\V_2,\kappa,w)$ &\eqref{eqn: concrete first int} \\
\hline
$\Jscr_*(\mu)$ &$\Jscr_*(\mu;\kappa,w)$ &\eqref{eqn: Jscr concrete} \\
\hline
$\Lfrak_0$ &$\Lfrak_0(\kappa,w)$ &\eqref{eqn: lin nondegen concrete} \\
\hline
$\Qfrak_0$ &$\Qfrak_0(\kappa,\beta,w)$ &\eqref{eqn: quad nondegen concrete} \\
\hline
\end{tabular}
\]

\

\caption{Correspondence of notation between Sections \ref{sec: abstract} and \ref{sec: tw prob redux}}
\label{table: notation}
\end{table}

\subsection{The Iooss--Kirchg\"{a}ssner change of variables}\label{sec: IK-cov}
Recall that $p_1$ and $p_2$ are the traveling wave profiles in position coordinates, and $p_1$ and $p_2$ must solve \eqref{eqn: position tw eqns}.
Following Iooss and Kirchg\"{a}ssner \cite[Sec.\@ 2]{iooss-kirchgassner}, we set $\xi_j = p_j'$ and then define functions $P_j$ on $\R \times [-1,1]$ by
\begin{equation}\label{eqn: Pj}
P_j(x,v)
:= p_j(x+v).
\end{equation}
Observe that 
\[
P_j(x,0) = p_j(x+0) = p_j(x)
\quadword{and}
\partial_x[P_j](x,v) = p_j(x+v) = \partial_v[P_j](x,v).
\]
We collect these new coordinates into one vector as
\begin{equation}\label{eqn: Ub}
\Ub(x)
:= \big(p_1(x),p_2(x),\xi_1(x),\xi_2(x),P_1(x,\cdot),P_2(x,\cdot)\big),
\end{equation}
so that $\Ub(x) \in \X$, where 
\begin{equation}\label{eqn: X defn}
\X := \set{(p_1,p_2,\xi_1,\xi_2,P_1,P_2) \in \R^4 \times \Cal([-1,1]) \times \Cal([-1,1])}{P_1(0) = p_1, \ P_2(0) = p_2}
\end{equation}
We will be rather cavalier about whether we write elements of $\X$ as ``row'' or ``column'' vectors.
We will also work on the space
\begin{equation}\label{eqn: D defn}
\D
:= \X \cap \big(\R^4 \times \Cal^1([-1,1]) \times \Cal^1([-1,1])\big).
\end{equation}

The spaces $\D$ and $\X$ are Banach spaces with the usual maximum norms.
For $\Ub \in \X$ we will denote sometimes its components by $(\Ub)_j$ for $j=1,\ldots,6$, so that if $\Ub = (p_1,p_2,\xi_1,\xi_2,P_1,P_2) \in \X$, then
\begin{equation}\label{eqn: component convention}
(\Ub)_1 := p_1, 
\quad
(\Ub)_2 := p_2, 
\quad
(\Ub)_3 := \xi_1,
\quad
(\Ub)_4 := \xi_2,
\quad
(\Ub)_5 := P_1,
\quad\text{and}\quad
(\Ub)_6 := P_2.
\end{equation}

We are almost ready to express the traveling wave problem \eqref{eqn: position tw eqns} as a first-order equation in $\X$.
Define the evaluation operators $\delta^{\pm1}$ by
\begin{equation}\label{eqn: delta defn}
(\delta^{\pm1}P_j)(x,v)
:= P_j(x,\pm1)
= p_j(x\pm1).
\end{equation}
If $f = f(v)$ is a function of a single variable $v$, we will still write $\delta^{\pm1}f := f(\pm1)$.

The traveling wave problem \eqref{eqn: position tw eqns} is then equivalent to
\begin{equation}\label{eqn: IK system}
\Ub'(x)
= \F(\Ub(x);\V_1,\V_2,w,c),
\end{equation}
where, for $\Ub = (p_1,p_2,\xi_1,\xi_2,P_1,P_2) \in \X$, we define
\begin{equation}\label{eqn: Fc}
\F(\Ub;\V_1,\V_2,w,c)
:= \begin{pmatrix*}
\xi_1 \\
\xi_2 \\
c^{-2}\V_1'(\delta^1P_2-p_1)-c^{-2}\V_2'(p_1-\delta^{-1}P_2) \\
c^{-2}w\V_2'(\delta^1P_1-p_2)-c^{-2}w\V_1'(p_2-\delta^{-1}P_1) \\
\partial_v[P_1] \\
\partial_v[P_2]
\end{pmatrix*}.
\end{equation}
The parameter $w > 0$ is the (reciprocal of) the mass ratio, per \eqref{eqn: dimer masses}, and the functions $\V_1$ and $\V_2$ are the spring potentials from \eqref{eqn: dimer springs}.

We can expand $\F$ as the superposition of linear, quadratic, and superquadratic terms:
\begin{equation}\label{eqn: Fc L+Q0+Q1}
\F(\Ub;c)
= \L(\kappa,w,c)\Ub + c^{-2}\nl_0(\Ub,\Ub;\beta,w) + c^{-2}\nl_1(\Ub;\Vscr_1,\Vscr_2,w).
\end{equation}
We define these terms as follows.
First, the linear operator $\L(\kappa,w,c)$ is the block diagonal matrix
\begin{equation}\label{eqn: Lc defn}
\L(\kappa,w,c)
:= \begin{bmatrix*}
0 &\ind &0 \\[10pt]
-c^{-2}(1+\kappa)\diag(1,w) &0 &c^{-2}\Delta(\kappa,w) \\
0 &0 &\partial_v\ind
\end{bmatrix*},
\qquad
\ind := \begin{bmatrix*}
1 &0 \\
0 &1
\end{bmatrix*},
\end{equation}
and the component operator $\Delta(\kappa,w)$ is
\begin{equation}\label{eqn: Delta defn}
\Delta(\kappa,w)
:= \begin{bmatrix*}
0 &\big(\delta^1+\kappa\delta^{-1}\big) \\
w\big(\kappa\delta^1+\delta^{-1}\big) &0
\end{bmatrix*}.
\end{equation}
Next, the quadratic term $\nl_0$ is
\begin{equation}\label{eqn: nl0 defn}
\nl_0(\Ub,\grave{\Ub};\kappa,\beta,w)
:= \begin{pmatrix*}
0 \\
0 \\
(\delta^1P_2-p_1)(\delta^1\grave{P}_2-\grave{p}_1) - \beta(p_1-\delta^{-1}P_2)(\grave{p}_1-\delta^{-1}\grave{P}_2) \\
w\beta(\delta^1P_1-p_2)(\delta^1\grave{P}_1-\grave{p}_2) - w(p_2-\delta^{-1}P_1)(\grave{p}_2-\delta^{-1}\grave{P}_1) \\
0 \\
0
\end{pmatrix*}.
\end{equation}
Here we are writing $\grave{\Ub} = (\grave{p}_1,\grave{p}_2,\grave{\xi}_1,\grave{\xi}_2,\grave{P}_1,\grave{P}_2)$.
Finally, the superquadratic term is
\begin{equation}\label{eqn: nl1 defn}
\nl_1(\Ub;\Vscr_1,\Vscr_2,w)
= \begin{pmatrix*}
0 \\
0 \\
\Vscr_1(\delta^1P_2-p_1)-\Vscr_2(p_1-\delta^{-1}P_2) \\
w\Vscr_2(\delta^1P_1-p_2)-w\Vscr_1(p_2-\delta^{-1}P_1) \\
\partial_v[P_1] \\
\partial_v[P_2]
\end{pmatrix*}.
\end{equation}
The functions $\Vscr_1$ and $\Vscr_2$ represent the superquadratic terms in the spring forces, per \eqref{eqn: dimer springs}.

The problem \eqref{eqn: IK system} is not the one to which we will apply our abstract theory from Section \ref{sec: abstract}.
In particular, we have not yet introduced a small parameter into \eqref{eqn: IK system}; we will do so in Section \ref{sec: near-sonic} by taking the wave speed $c$ to be sufficiently close to the speed of sound.
Before that, however, we develop some general properties of \eqref{eqn: IK system} that are valid for arbitrary $c$.

\subsection{Lattice symmetries}\label{sec: symmetries}
We first observe a symmetry at the level of the original position coordinates. 
For a mass dimer ($m_{j+2} = m_j$ and $\V_j = \V_0$ for all $j$), one can check that if $\{u_j\}_{j \in \Z}$ is a solution set for \eqref{eqn: original equations of motion}, then so is $\{-u_{-j}\}_{j \in \Z}$.
For a spring dimer ($\V_{j+2} = \V_j$ and $m_j = m_0$ for all $j$), the set $\{-u_{-j+1}\}_{j \in \Z}$ is a solution whenever $\{u_j\}_{j \in \Z}$ is.
This mismatch in symmetries between the mass and spring dimers appears to be a perennial leitmotif of the respective traveling wave problems.
For example, at the level of relative displacement traveling waves, with the traveling wave profiles denoted by $\varrho_1$ and $\varrho_2$ as in \eqref{eqn: rel disp tw ansatz}, the articles \cite{faver-wright, hoffman-wright, faver-hupkes-equal-mass} on the mass dimer assume that $\varrho_1$ is even and $\varrho_2$ is odd, while for the spring dimer \cite{faver-spring-dimer} takes both $\varrho_1$ and $\varrho_2$ to be even.

We now define operators $\S_{\Mb}$, $\S_{\Kb} \in \b(\X)$ such that $\S_{\Mb}^2 = \S_{\Kb}^2 = \ind_{\X}$,  $\S_{\Mb}$ anticommutes with the mass dimer's system, and $\S_{\Kb}$ anticommutes with the spring dimer's system.
That is, for the mass dimer, we need
\begin{equation}\label{eqn: mass dimer anticommute}
\begin{cases}
\L(1,w,c)\S_{\Mb}\Ub = -\S_{\Mb}\L(1,w,c)\Ub \\
\nl_0(\S_{\Mb}\Ub,\S_{\Mb}\Ub;1,w) = -\S_{\Mb}\nl_0(\Ub,\Ub;1,w) \\
\nl_1(\S_{\Mb}\Ub;\Vscr,\Vscr,w) = -S_{\Mb}\nl_1(\Ub,\Vscr,\Vscr,w).
\end{cases}
\end{equation}
For the spring dimer, we need
\begin{equation}\label{eqn: spring dimer anticommute}
\begin{cases}
\L(\kappa,1,c)\S_{\Kb}\Ub = -\S_{\Kb}\L(\kappa,1,c)\Ub \\
\nl_0(\S_{\Kb}\Ub,\S_{\Kb}\Ub;\beta,1) = -\S_{\Kb}\nl_0(\Ub,\Ub;\beta,1) \\
\nl_1(\S_{\Kb}\Ub;\Vscr_1,\Vscr_2,1) = -\S_{\Kb}\nl_1(\Ub;\Vscr_1,\Vscr_2,1).
\end{cases}
\end{equation}

To construct these symmetries, first define a ``reflection'' operator by
\begin{equation}\label{eqn: R refl}
(Rf)(v)
:= f(-v),
\end{equation}
where $f$ is any function defined on $[-1,1]$.
The chain rule tells us that $R$ and $\partial_v$ anticommute.
With $(\delta^{\pm1}f)(v) := f(\pm1)$ as in \eqref{eqn: delta defn}, we have $\delta^{\pm}R = \delta^{\mp}$.
Then the operators 
\begin{equation}\label{eqn: md symm}
\S_{\Mb}
:= \begin{bmatrix*}
-\ind &0 &0 \\
0 &\ind &0 \\
0 &0 &-R\ind
\end{bmatrix*}
\end{equation}
and
\begin{equation}\label{eqn: sd symm}
\S_{\Kb}
:= \begin{bmatrix*}
-\Jbb &0 &0 \\
0 &\Jbb &0 \\
0 &0 &-R\Jbb
\end{bmatrix*},
\qquad \Jbb
:= \begin{bmatrix*}
0 &1 \\
1 &0
\end{bmatrix*}
\end{equation}
satisfy \eqref{eqn: mass dimer anticommute} and \eqref{eqn: spring dimer anticommute}, respectively.
It is straightforward to check that the operator norms of $\S_{\Mb}$ and $\S_{\Kb}$ are both 1.

The structures of these symmetries are related to our earlier observation that if $\{u_j\}_{j \in \Z}$ solves \eqref{eqn: original equations of motion}, then $\{-u_{-j}\}_{j \in \Z}$ is also a solution set for the mass dimer, while $\{-u_{-j-1}\}_{j \in \Z}$ is a solution set for the spring dimer.
We view the reflection operator $R$ in both cases as arising from the switching of $j$ to $-j$, while the various factors of $-1$ on the diagonal in both $\S_{\Mb}$ and $S_{\Kb}$ come from the prefactor of $-1$ on $u_{-j}$ in the mass dimer and $u_{-j-1}$ in the spring dimer.
Last, the ``flip'' operator $\Jbb$ for the spring dimer arises from the offset index $-j-1$.

We note with considerable interest, and frustration, that there does not appear to be a general symmetry that anticommutes with the terms of \eqref{eqn: Fc} when we are not specifically in the situation of a mass dimer or a spring dimer.
To be clear, we have {\it{not}} proved nonexistence of such a symmetry.
However, in light of the different even-odd and even-even symmetries for the prior relative displacement problems, and, indeed, the asymmetric appearance of the general dimer in Figure \ref{fig: general dimer}, we think it is unlikely that such a symmetry exists.
This absence of symmetry is the chief reason why we report results separately in Theorems \ref{thm: main theorem mass dimer} and \ref{thm: main theorem spring dimer} for mass and spring dimers.
Nonetheless, we will be able to verify all of the non-symmetry hypotheses for the general dimer, which suggests that if a substitute could be found for the role of the symmetry in Lombardi's methods, our methods would readily extend to the general dimer.

\subsection{Spectral analysis of the operator $\L(\kappa,w,c)$}\label{sec: spectral analysis}
We exhaustively analyze the operator $\L(\kappa,w,c)$, defined in \eqref{eqn: Lc defn} as an operator in the space $\X$ from \eqref{eqn: X defn} with domain $\D$ from \eqref{eqn: D defn}.
In particular, we calculate its spectrum and a family of generalized eigenvectors associated with the eigenvalue 0.
We summarize all of our spectral-theoretic conventions in Appendix \ref{app: spectral theory}.

\subsubsection{A characterization of the spectrum of $\L(\kappa,w,c)$}\label{sec: spectrum}
Let $z \in \C$ and suppose that $\Ub = (p_1,p_2,\xi_1,\xi_2,P_1,P_2) \in \D$ and $\grave{\Ub} = (\grave{p}_1,\grave{p}_2,\grave{\xi}_1,\grave{\xi}_2,\grave{P}_1,\grave{P}_2) \in \X$
satisfy
\[
z\Ub-\L(\kappa,w,c)\Ub = \grave{\Ub}.
\]
We will find a formula for $\Ub$ in terms of $z$ and $\grave{\Ub}$; that is, we calculate the resolvent of $\L(\kappa,w,c)$.

Using the definition of $\L(\kappa,w,c)$ in \eqref{eqn: Lc defn}, we first see that $P_j$, $\grave{P}_j$, and $p_j$ must satisfy the initial value problems
\[
\begin{cases}
\partial_v[P_j] -z{P}_j = -\grave{P}_j \\
P_j(0) = p_j.
\end{cases}
\]
We solve them with Duhamel's formula:
\begin{equation}\label{eqn: P-resolvent}
P_j(v) 
= e^{z{v}}p_j + \I_{z}[\grave{P}_j](v),
\end{equation}
where, for $P \in \Cal([-1,1])$, we put
\begin{equation}\label{eqn: Ilambda}
\I_{z}[P](v)
:= -\int_0^v e^{z(v-s)}P(s) \ds, \qquad P \in \Cal([-1,1]).
\end{equation}
Thus to find $P_j$ it suffices to discover $p_j$.

Again using the definition of $\L(\kappa,w,c)$ in \eqref{eqn: Lc defn}, we see that $p_j$, $\xi_j$, and $\grave{p}_j$ must satisfy
\[
z{p}_j-\xi_j 
= \grave{p}_j,
\]
and so 
\begin{equation}\label{eqn: xi-resolvent}
\xi_j
= z{p}_j - \grave{p}_j.
\end{equation}
Then $p_j$, $P_j$, and $\grave{\xi}_j$ must satisfy
\begin{equation}\label{eqn: xi P p}
\begin{cases}
z\xi_1 + \left(\frac{1+\kappa}{c^2}\right)p_1 -\left(\frac{\delta^1+\kappa\delta^{-1}}{c^2}\right)P_2 = \grave{\xi}_1 \\
\\
z\xi_2 + \left(\frac{w(1+\kappa)}{c^2}\right)p_2 - \left(\frac{w(\kappa\delta^1+\delta^{-1})}{c^2}\right)P_1 = \grave{\xi}_2.
\end{cases}
\end{equation}

We calculate
\[
\delta^{\pm1}P_j
= P_j(\pm1)
= e^{\pm{z}}p_j(x) + \I_{z}[\grave{P}_j](\pm1)
\]
and replace $P_j$ and $\xi_j$ in \eqref{eqn: xi P p} with their $p_j$-dependent formulas to find
\begin{equation}\label{eqn: spectrum pre prod}
\begin{cases}
z(z{p}_1 - \grave{p}_1)
+ \left(\frac{1+\kappa}{c^2}\right)p_1
-\left(\frac{e^{z}+\kappa{e}^{-z}}{c^2}\right)p_2
- \frac{\delta^1\I_{z}[\grave{P}_2]+\kappa\delta^{-1}\I_{z}[\grave{P}_2]}{c^2} = \grave{\xi}_1 \\
\\
z(z{p}_2 - \grave{p}_2)
+ \left(\frac{w(1+\kappa)}{c^2}\right)p_2
- \left(\frac{\kappa{w}{e}^{z}+we^{-z}}{c^2}\right)p_1
- \frac{\kappa{w}\delta^{1}\I_{z}[\grave{P}_1]+w\delta^{-1}\I_{z}[\grave{P}_1]}{c^2} = \grave{\xi}_2.
\end{cases}
\end{equation}

After some rearrangements, this will be equivalent to a matrix-vector equation in $\C^2$.
We define a matrix $\M(z;\kappa,w,c) \in \C^{2 \times 2}$ by 
\begin{equation}\label{eqn: Mc defn}
\M(z;\kappa,w,c)
:= \begin{bmatrix*}
\big(c^2z^2+1+\kappa\big) &-\big(e^{z}+\kappa{e}^{-z}\big) \\[5pt]
-w\big(\kappa{e}^{z}+e^{-z}\big) &\big(c^2z^2+w(1+\kappa)\big)
\end{bmatrix*}
\end{equation}
and an operator $\B(z;\kappa,w,c) \colon \X \to \C^2$ by
\begin{equation}\label{eqn: Bc defn}
\B(z;\kappa,w,c)
\grave{\Ub}
:= \begin{pmatrix*}
c^2z\grave{p}_1 + c^2\grave{\xi}_1 + \delta^1\I_{z}[\grave{P}_2]+\kappa\delta^{-1}\I_{z}[\grave{P}_2] \\[5pt]
c^2z\grave{p}_2 + c^2\grave{\xi}_2 + \kappa{w}\delta^{1}\I_{z}[\grave{P}_1]+w\delta^{-1}\I_{z}[\grave{P}_1]
\end{pmatrix*}.
\end{equation}
Then with $\pb = (p_1,p_2) \in \C^2$, the system \eqref{eqn: spectrum pre prod} becomes
\begin{equation}\label{eqn: matrix-vector product}
\M(z;\kappa,w,c)\pb
= \B(z;\kappa,w,c)\grave{\Ub}.
\end{equation}
We can solve \eqref{eqn: matrix-vector product} uniquely for $\pb$ given $\grave{\Ub}$ if and only if 
\begin{equation}\label{eqn: Mc det}
\det\big(\M(z;\kappa,w,c)\big)
= c^4z^4+c^2(1+\kappa)(1+w)z^2+2\kappa{w}(1-\cosh(2z))
\ne 0.
\end{equation}

Since we can always solve for $P_j$ and $\xi_j$ via \eqref{eqn: P-resolvent} and \eqref{eqn: xi-resolvent}, we can solve $z\Ub - \L(\kappa,w,c)\Ub = \grave{\Ub}$ uniquely for $\Ub \in \D$ given $\grave{\Ub} \in \X$ if and only if we can solve \eqref{eqn: matrix-vector product} for $\pb \in \C^2$ given $\grave{\Ub} \in \X$.
And so we have the following characterization of the spectrum.

\begin{proposition}\label{prop: eigenprops}
The spectrum of $\L(\kappa,w,c) \colon \D \to \X$ is the set
\begin{equation}\label{eqn: spectrum}
\sigma(\L(\kappa,w,c)) 
= \set{z \in \C}{\det\big(\M(z;\kappa,w,c)\big) = 0}.
\end{equation}
Suppose $|c| > 1$.
Then the following hold.

\begin{enumerate}[label={\bf(\roman*)}, ref={(\roman*)}]

\item
The spectrum consists entirely of eigenvalues.

\item
Each eigenvalue is an isolated point of the spectrum, and the algebraic multiplicity of an eigenvalue equals its multiplicity as a root of the map $\det\big(\M(\cdot;\kappa,w,c)\big)$.

\item\label{part: ew geo simple}
Each eigenvalue is geometrically simple.

\item
The spectrum is symmetric with respect to the real and complex axes in $\C$: if $z \in \sigma(\L(\kappa,w,c))$, then $\overline{z}, -z \in \sigma(\L(\kappa,w,c))$, too.

\item\label{part: resolvent}
Let $\Ub = (p_1,p_2,\xi_1,\xi_2,P_1,P_2) \in \X$ and $z \in \rho(\L(\kappa,w,c))$.
The six components of the resolvent operator $\rhs(z;\kappa,w,c)\Ub := \big(z\ind_{\D}-\L(\kappa,w,c)\big)^{-1}\Ub$ are
\begin{align}
\begin{split}
\big(\rhs(z;\kappa,w,c)\Ub\big)_1
&= \frac{c^2z^2+w(1+\kappa)}{\det\big(\M(z;\kappa,w,c)\big)}\big(c^2\xi_1+c^2z{p}_1 + \delta^1\I_{z}[P_2] + \kappa\delta^{-1}\I_{z}[P_2]\big) \\
&+\frac{e^{z}+\kappa{e}^{-z}}{\det\big(\M(z;\kappa,w,c)\big)}\big(c^2\xi_2 + c^2z{p}_2 + \kappa{w}\delta^1\I_{z}[P_1] + w\delta^{-1}\I_{z}[P_1]\big) 
\end{split} \label{eqn: resolv comp1} \\
\nonumber \\
\begin{split}
\big(\rhs(z;\kappa,w,c)\Ub\big)_2
&= \frac{w(\kappa{e}^{z}+e^{-z})}{\det\big(\M(z;\kappa,w,c)\big)}\big(c^2\xi_1+c^2z{p}_1 + \delta^1\I_{z}[P_2] + \kappa\delta^{-1}\I_{z}[P_2]\big) \\
&+\frac{c^2z^2+1+\kappa}{\det\big(\M(z;\kappa,w,c)\big)}\big(c^2\xi_2 + c^2z{p}_2 + \kappa{w}\delta^1\I_{z}[P_1] + w\delta^{-1}\I_{z}[P_1]\big)
\end{split} \label{eqn: resolv comp2} \\
\nonumber \\
\big(\rhs(z;\kappa,w,c)\Ub\big)_3
&= z\big(\rhs(z;\kappa,w,c)\Ub\big)_1-p_1 \\
\nonumber \\
\big(\rhs(z;\kappa,w,c)\Ub\big)_4
&= z\big(\rhs(z;\kappa,w,c)\Ub\big)_2-p_2 \\
\nonumber \\
\big(\rhs(z;\kappa,w,c)\Ub\big)_5(v)
&= e^{zv}\big(\rhs(z;\kappa,w,c)\Ub\big)_1+\I_z[P_1], \ -1 \le v \le 1 \\
\nonumber \\
\big(\rhs(z;\kappa,w,c)\Ub\big)_6(v)
&= e^{zv}\big(\rhs(z;\kappa,w,c)\Ub\big)_2+\I_z[P_2], \ -1 \le v \le 1.
\end{align}
\end{enumerate}
\end{proposition}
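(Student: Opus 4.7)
The spectrum characterization \eqref{eqn: spectrum} and the resolvent formulas in part \ref{part: resolvent} follow directly from the reduction performed in the paragraphs preceding the proposition. For $z \in \C$ and $\grave{\Ub} \in \X$, the equation $\bigl(z\ind_{\D} - \L(\kappa,w,c)\bigr)\Ub = \grave{\Ub}$ with $\Ub \in \D$ is equivalent, via \eqref{eqn: P-resolvent} and \eqref{eqn: xi-resolvent}, to the $\C^2$ system \eqref{eqn: matrix-vector product} for $\pb = (p_1,p_2)$. This system is uniquely solvable if and only if $\det\bigl(\M(z;\kappa,w,c)\bigr) \ne 0$, which establishes \eqref{eqn: spectrum}; applying Cramer's rule to \eqref{eqn: matrix-vector product} and substituting back through \eqref{eqn: P-resolvent} and \eqref{eqn: xi-resolvent} produces the six explicit resolvent formulas in part \ref{part: resolvent}.

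Parts (i), (iii), and (iv) reduce to elementary features of the scalar function $d(z) := \det\bigl(\M(z;\kappa,w,c)\bigr)$. From \eqref{eqn: Mc det}, $d$ is entire with leading term $c^4 z^4$ along the real axis when $|c| > 1$; hence it is not identically zero and its zeros form an isolated set, so every point of the spectrum is an isolated eigenvalue. At each such zero $z_0$, a nonzero $\pb \in \ker\M(z_0;\kappa,w,c)$ yields the explicit eigenvector $\Ub_0 := \bigl(p_1, p_2, z_0 p_1, z_0 p_2, e^{z_0 \cdot} p_1, e^{z_0 \cdot} p_2\bigr) \in \D$. For geometric simplicity it suffices to rule out $\M(z_0;\kappa,w,c)$ being the zero matrix: a short case analysis of its entries forces $w = \kappa = 1$ and $e^{2z_0} = -1$, which combined with $c^2 z_0^2 + 2 = 0$ yields $c^2 = 8/\bigl[\pi^2(2n+1)^2\bigr] \le 8/\pi^2 < 1$, contradicting $|c| > 1$. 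The symmetries (iv) are then immediate, since $d$ has real coefficients and is even in $z$ (both $z^2$ and $\cosh(2z)$ are even).

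The most delicate point is the algebraic multiplicity claim (ii). The integral operator $\I_z$ maps $\Cal([-1,1])$ compactly into $\Cal^1([-1,1])$, so in view of the resolvent formulas from part \ref{part: resolvent}, $\rhs(\cdot;\kappa,w,c)$ is a meromorphic compact-operator-valued function on $\C$. Standard Riesz spectral theory then identifies the algebraic multiplicity of $z_0$ with the rank of the spectral projection $P_0 := (2\pi i)^{-1}\oint_{|z-z_0|=\varepsilon} \rhs(z;\kappa,w,c)\,dz$, and under the geometric simplicity (iii) this rank equals the ascent, hence also the pole order of $\rhs(\cdot;\kappa,w,c)$ at $z_0$. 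The formulas in (v) make visible that this pole order is at most $m$, the root order of $d$ at $z_0$. The matching lower bound is the step I expect to require the most care: it amounts to producing $m$ linearly independent generalized eigenvectors in $\D$, which I would build via a Keldysh-type construction, expanding the relation $\M(z)\pb(z) = \B(z;\kappa,w,c)\grave{\Ub}(z)$ in Taylor series about $z_0$ up to order $m$ and extracting a bona fide Jordan chain $\Ub_0,\ldots,\Ub_{m-1}$ from the first $m$ Taylor coefficients, with nondegeneracy ensured by the left/right null-vector structure of $\M(z_0;\kappa,w,c)$ established in the proof of (iii).
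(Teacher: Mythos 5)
Your overall structure mirrors the paper's — derive the spectrum from the $\C^2$ reduction, read off the resolvent formulas, use evenness and real coefficients of $\det\M$ for (iv) — but you diverge in two places, with mixed results.

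For (i) and (iii), your route is a genuine and rather cleaner alternative. The paper establishes $c^2 z^2 + 1 + \kappa \ne 0$ for $z$ in the spectrum (via the impossibility \eqref{eqn: spectrum claim aux}) and then uses the explicit ratio $E(z;\kappa,c)$ from \eqref{eqn: Eb-c} to build its eigenvector and to deduce $p_1 = E(z;\kappa,c)p_2$ for simplicity. You instead take an arbitrary nonzero $\pb \in \ker\M(z_0;\kappa,w,c)$ — nonempty since $\det = 0$ — and then prove geometric simplicity by ruling out $\M(z_0;\kappa,w,c) = 0$. Your case analysis is correct: vanishing of entries (1,1) and (2,2) forces $w=1$; vanishing of (1,2) and (2,1) forces $e^{2z_0} = -\kappa = -1/\kappa$, so $\kappa = 1$ and $e^{2z_0} = -1$; then $c^2 z_0^2 + 2 = 0$ with $z_0 = i\pi(2n+1)/2$ gives $c^2 = 8/\bigl[\pi^2(2n+1)^2\bigr] < 1$, a contradiction. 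This sidesteps the paper's side calculation entirely and is easier to check.

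For (ii) there is a real gap and an error. The claim that $\I_z$ maps $\Cal([-1,1])$ compactly into $\Cal^1([-1,1])$ is false: since $\partial_v \I_z[P] = -P + z\,\I_z[P]$, the first term does not gain regularity, so a bounded sequence $P_n$ in $\Cal$ with no $\Cal$-convergent subsequence produces $\I_z[P_n]$ with no $\Cal^1$-convergent subsequence. The resolvent $\rhs(z;\kappa,w,c)$ \emph{is} compact as an operator on $\X$, but for the right reason: it maps $\X$ boundedly into $\D$, and the embedding $\D \hookrightarrow \X$ is compact (Arzel\`a--Ascoli on the $\Cal^1 \hookrightarrow \Cal$ factors, trivially on the finite-dimensional factors). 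You should repair the justification accordingly. Beyond that, the Keldysh-type lower bound on the pole order is only a plan, not an argument; constructing the Jordan chain $\Ub_0,\ldots,\Ub_{m-1}$ from Taylor coefficients of $\M(z)\pb(z) = \B(z;\kappa,w,c)\grave{\Ub}(z)$ and verifying nondegeneracy requires care that you have not carried out. To be fair, the paper itself does not prove this point from scratch: it writes $\rhs(z) = \det\bigl(\M(z;\kappa,w,c)\bigr)^{-1}\rhs_1(z) + \rhs_2(z)$ with $\rhs_1, \rhs_2$ analytic and defers to Lemma~\ref{lem: alg mult} (stated in Appendix~\ref{app: elementary spectral theory}, proof omitted), which already packages the conclusion that for geometrically simple eigenvalues the algebraic multiplicity equals the root order of the entire function. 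Since you have the resolvent in exactly this form, invoking Lemma~\ref{lem: alg mult} — rather than re-deriving it via Keldysh theory — would both close the gap and align with the paper's intended argument.
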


\begin{proof}
For simplicity of notation in this proof, we suppress the dependence of (almost) all operators and functions on $c$, $\kappa$, and $w$; in particular, we just write $\M(z;c)$ for the matrix in \eqref{eqn: Mc defn} and $\L(c)$ instead of $\L(\kappa,w,c)$.
The set equality \eqref{eqn: spectrum} was established by the remarks preceding the statement of this proposition.

\begin{enumerate}[label={\bf(\roman*)}]

\item
Let $z \in \sigma(\L)$.
We first claim that if $|c| > 1$ and $z \in \sigma(\L)$, then 
\begin{equation}\label{eqn: spectrum claim}
c^2z^2 + 1+ \kappa
\ne 0.
\end{equation}
Otherwise, if $c^2z^2+1+\kappa = 0$, then we can solve for $z$ as
\[
z
= \pm{i}\frac{\sqrt{1+\kappa}}{|c|}.
\]
Since $z \in \sigma(\L)$, we have
\[
\det\left(\M\left(\frac{\sqrt{1+\kappa}}{|c|}\right);c\right)
= 0.
\]
Some straightforward algebraic rearrangements then reveal that
\begin{equation}\label{eqn: spectrum claim aux}
\cos\left(\frac{2\sqrt{1+\kappa}}{|c|}\right)
= -\left(\frac{\kappa^2+1}{2}\right)
< -1,
\end{equation}
which is impossible.

So, we may put
\begin{equation}\label{eqn: Eb-c}
p_1
= E(z;\kappa,c)
:= \frac{e^{z}+\kappa{e}^{-z}}{c^2z^2+1+\kappa}
\quadword{and}
\pb := (E(z;\kappa,c),1).
\end{equation}
Then $\M(z;c)\pb = 0$, and so with 
\begin{equation}\label{eqn: Ub-kappa-lambda}
\Eb
= \Eb(z;\kappa,c)
:= (E(z;\kappa,c), 1, zE(z;\kappa,c),z, E(z;\kappa,c)e^{z\cdot}, e^{z\cdot}), 
\end{equation}
it is a direct calculation that $\L\Eb=z\Eb$.
That is, $z$ is an eigenvalue of $\L$.

\item
Suppose that $z \in \C\setminus\sigma(\L) = \rho(\L)$.
Using \eqref{eqn: P-resolvent}, \eqref{eqn: xi-resolvent}, and \eqref{eqn: matrix-vector product} we can write the resolvent operator of $\L$ at $z$ in the form
\begin{equation}\label{eqn: resolvent0}
\rhs(z)
= \frac{1}{\det(\M(z;c))}\rhs_1(z) + \rhs_2(z),
\end{equation}
where $\rhs_1$, $\rhs_2 \colon \rho(\L) \to \b(\X)$ are analytic.
From its definition in \eqref{eqn: Mc defn}, we see that the mapping $\M(\cdot,z) \colon \C \to \C^{2 \times 2}$ is entire, and so $\det(\M(\cdot;c))$ is also entire.
Lemma \ref{lem: alg mult} then guarantees that the eigenvalues of $\L$ are isolated and that the algebraic multiplicity of an eigenvalue equals its multiplicity as a root of $\det(\M(\cdot;c))$.

\item
Fix $z \in \sigma(\L)$ and suppose $\L\Ub = z\Ub$ for some $\Ub = (p_1,p_2,\xi_1,\xi_2,P_1,P_2) \in \X$.
We use \eqref{eqn: matrix-vector product} with $\grave{\Ub} = 0$ to obtain $p_1 = E(z;\kappa,c)p_2$, with $E$ defined in \eqref{eqn: Eb-c}.
Then we use \eqref{eqn: P-resolvent} and \eqref{eqn: xi-resolvent}, again with $\grave{\Ub} = 0$, to solve for the other components of $\Ub$ as
\[
\xi_1 = zE(z;\kappa,c)p_2,
\quad
\xi_2 = z{p}_2,
\quad
P_1(v) = zE(z;\kappa,c)p_2e^{z{v}},
\quad\text{and}\quad
P_2(v) = p_2e^{z{v}}.
\]
That is,
\begin{equation}\label{eqn: eigenvector}
\Ub
= p_2\big(E(z;\kappa,c), 1, zE(z;\kappa,c), z, E(z;\kappa,c)e^{z\cdot}, e^{z\cdot}\big)
= p_2\Eb(z;\kappa,c)
\end{equation}
as in \eqref{eqn: Ub-kappa-lambda}, and so the eigenspace corresponding to $z$ is 1-dimensional.

\item
The determinant \eqref{eqn: Mc det} is even in $z$, so if $z \in \sigma(\L)$, then $-z \in \sigma(\L)$.
The coefficients in \eqref{eqn: Mc det} are real and $\overline{\cosh(2z)} = \cosh(2\overline{z})$, so if $z \in \sigma(\L)$, then $\overline{z} \in \sigma(\L)$.

\item
This follows by unraveling the calculations in \eqref{eqn: P-resolvent}, \eqref{eqn: xi-resolvent}, and \eqref{eqn: matrix-vector product}.
\qedhere
\end{enumerate}
\end{proof}

\subsubsection{The center spectrum of $\L(\kappa,w,c)$}
We study $\sigma(\L(\kappa,w,c)) \cap i\R$.
Given $k \in \R$, we have $ik \in \sigma(\L(\kappa,w,c))$ if and only if $k$ satisfies the ``dispersion relation'' 
\begin{equation}\label{eqn: Lambdac defn}
0
= \det(\M(ik;\kappa,w,c)) 
= \bunderbrace{c^4k^4-c^2(1+w)(1+\kappa)k^2+2\kappa{w}(1-\cos(2k))}{\Lambda(k;\kappa,w,c)}.
\end{equation}
Clearly $\Lambda(0;\kappa,w,c) = 0$; we are interested in the multiplicity of the root 0, and in the behavior of other roots of $\Lambda(\cdot;\kappa,w,c)$.

\begin{proposition}\label{prop: Lambda props}
Let $\kappa$, $w > 0$ with $\max\{\kappa,w\} > 1$.

\begin{enumerate}[label={\bf(\roman*)}, ref={(\roman*)}]

\item\label{part: Lambda-1}
There exists $c_-(\kappa,w) \in (0,1)$ such that if $|c| \ge c_-(\kappa,w)$, then there exists a unique $\omega_c = \omega_c(\kappa,w) > 0$ such that $\Lambda(k;\kappa,w,c) = 0$ if and only if $k =0$ or $k = \pm\omega_c(\kappa,w)$.
 
\item\label{part: Lambda-2}
The numbers $\pm\omega_c(\kappa,w)$ from part \ref{part: Lambda-1} are both simple roots of $\Lambda(\cdot;\kappa,w,c)$.

\item\label{part: Lambda-3}
Set
\begin{equation}\label{eqn: cs defn}
\cs(\kappa,w)
:= \sqrt{\frac{4\kappa{w}}{(1+\kappa)(1+w)}}.
\end{equation}
Then 0 has multiplicity 4 as a root of $\Lambda(\cdot;\kappa,w,\cs(\kappa,w))$ and multiplicity 2 as a root of $\Lambda(\cdot;\kappa,w,c)$ when $c \ne \cs(\kappa,w)$.

\item\label{part: Lambda-4}
There exists $\lambda_0(\kappa,w) > 0$ such that if $\M(z;\kappa,w,\cs(\kappa,w)) = 0$ with $z \not\in i\R$, then $|\re(z)| \ge \lambda_0(\kappa,w)$.
\end{enumerate}
\end{proposition}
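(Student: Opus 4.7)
The plan is to work directly with $\Lambda(k;\kappa,w,c)$ from \eqref{eqn: Lambdac defn} for parts~\ref{part: Lambda-1}, \ref{part: Lambda-2}, and \ref{part: Lambda-3}, and to exploit entirety and polynomial growth of $\det(\M(\cdot;\kappa,w,\cs(\kappa,w)))$ on horizontal strips for part~\ref{part: Lambda-4}.

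Part~\ref{part: Lambda-3} is a direct Taylor calculation. Writing $1-\cos(2k) = 2k^2 - \tfrac{2}{3}k^4 + \tfrac{4}{45}k^6 + O(k^8)$ and collecting powers yields
\[
\Lambda(k;\kappa,w,c) = \bigl[4\kappa w - c^2(1+\kappa)(1+w)\bigr]k^2 + \bigl[c^4 - \tfrac{4\kappa w}{3}\bigr] k^4 + \tfrac{8\kappa w}{45}k^6 + O(k^8).
\]
The definition \eqref{eqn: cs defn} of $\cs(\kappa,w)$ makes the $k^2$ coefficient vanish exactly at $c = \cs(\kappa,w)$. At that value I would invoke AM--GM, $(1+\kappa)(1+w) \ge 4\sqrt{\kappa w}$ (strict because $\max\{\kappa,w\}>1$ excludes the equality case $\kappa=w=1$), to get $\cs(\kappa,w)^4 \le \kappa w < \tfrac{4\kappa w}{3}$, so the $k^4$ coefficient is strictly negative. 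This pins the multiplicity of $0$ at exactly $2$ for $c \ne \cs(\kappa,w)$ and at exactly $4$ for $c = \cs(\kappa,w)$.

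For parts~\ref{part: Lambda-1} and \ref{part: Lambda-2}, I exploit evenness to restrict to positive roots. Viewing $\Lambda(k;\kappa,w,c) = 0$ as a quadratic in $u = c^2$ produces two smooth branches $u_\pm \colon (0,\infty) \to \R$, real-valued everywhere because the discriminant $(1+w)^2(1+\kappa)^2 - 8\kappa w(1-\cos(2k))$ is bounded below by $(1+w)^2(1+\kappa)^2 - 16\kappa w > 0$ by the same strict AM--GM. Their boundary behavior is $u_+(0^+) = +\infty$, $u_-(0^+) = \cs(\kappa,w)^2$, and $u_\pm(k) \to 0$ as $k \to \infty$; a second-order expansion at $0$ shows $u_-(k) = \cs(\kappa,w)^2 - \cs(\kappa,w)^2\bigl[\tfrac{1}{3} - \kappa w/[(1+w)^2(1+\kappa)^2]\bigr]k^2 + O(k^4)$, with strictly negative $k^2$-coefficient, so $u_-$ initially decreases. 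Positive roots of $\Lambda(\cdot;\kappa,w,c)$ correspond to intersections of the horizontal line $u = c^2$ with the union of the graphs of $u_\pm$. The final step is to show that for $c$ above a suitable threshold $c_-(\kappa,w)$ this line meets $u_+$ transversally at exactly one positive $k$ and misses $u_-$ entirely, using monotonicity and decay estimates on $u_\pm$ plus a bound controlling the amplitude of the oscillation introduced by $\cos(2k)$. Transversality yields simplicity (part~\ref{part: Lambda-2}) and the single intersection yields uniqueness of $\omega_c$ (part~\ref{part: Lambda-1}). I expect this monotonicity/transversality step on $u_+$, ruling out the spurious crossings that the oscillatory term could create for smaller $c$, to be the principal obstacle.

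For part~\ref{part: Lambda-4}, I fix $c = \cs(\kappa,w)$ and study
\[
\det(\M(z;\kappa,w,\cs(\kappa,w))) = c^4 z^4 + c^2(1+\kappa)(1+w)z^2 + 2\kappa w\bigl(1-\cosh(2z)\bigr)
\]
on the strip $\{z \in \C : |\re(z)| \le 1\}$. There $|\cosh(2z)| \le \cosh(2)$ is uniformly bounded, so the polynomial term dominates: there is $M > 0$ such that $\det(\M(z;\kappa,w,\cs(\kappa,w))) \ne 0$ whenever $|\re(z)| \le 1$ and $|\im(z)| \ge M$. Since $\det(\M(\cdot;\kappa,w,\cs(\kappa,w)))$ is entire, only finitely many zeros lie in the compact rectangle $\{|\re(z)| \le 1,\ |\im(z)| \le M\}$. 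By parts~\ref{part: Lambda-1} and \ref{part: Lambda-3}, the only purely imaginary zeros anywhere are $0$ and $\pm i\omega_{\cs(\kappa,w)}$. Letting $\lambda_0(\kappa,w) > 0$ be the minimum of $|\re(z)|$ over the (finitely many) remaining zeros in the rectangle, further shrunk so that $\lambda_0(\kappa,w) \le 1$, any non-imaginary zero either lies in the rectangle (so $|\re(z)| \ge \lambda_0(\kappa,w)$ by construction) or outside it (so $|\re(z)| > 1 \ge \lambda_0(\kappa,w)$), completing the proof.
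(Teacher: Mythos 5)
The gap is in parts~(i) and (ii). Your branches $u_\pm(k)$, obtained by viewing $\Lambda$ as a quadratic in $c^2$, satisfy $u_\pm(k) = \tlambda_\pm(k)/k^2$ with $\tlambda_\pm$ as in \eqref{eqn: tlambda}, so this is the same factorization the paper uses, merely rescaled by $k^2$. The substantive work --- producing the threshold $c_-(\kappa,w) \in (0,1)$, showing that for $|c| \ge c_-$ the set $\{k > 0 : c^2 = u_+(k)\}$ is a single transversal crossing while $\{k > 0 : c^2 = u_-(k)\}$ is empty, and in the process ruling out spurious intersections created by the $\cos(2k)$ oscillation --- is exactly what you flag as the ``principal obstacle'' and leave undone, and this work is what parts (i) and (ii) actually assert. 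The paper does not do it either: it delegates both the uniqueness in (i) and the uniform derivative bound $\inf_{|c| > c_-}|\partial_k[\Lambda](\omega_c;\kappa,w,c)| > 0$ used for simplicity in (ii) to \cite[Prop.\@~2.2.1, part~(vii)]{faver-dissertation}. You would need to carry out that analysis, or cite it, for the proof to close.

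Parts~(iii) and (iv) are correct, and in places cleaner or more complete than the paper's own argument. For (iii), your AM--GM route $\cs(\kappa,w)^4 \le \kappa w < \tfrac{4}{3}\kappa w$ to $\partial_k^4[\Lambda](0;\kappa,w,\cs(\kappa,w)) < 0$ is more direct than the paper's, which rewrites the numerator in \eqref{eqn: Lambda-4th-deriv} as a quadratic in $w$ and checks its discriminant is negative. (Small slip: the coefficient in your expansion of $u_-$ should be $4\kappa w/[(1+\kappa)^2(1+w)^2]$ rather than $\kappa w/[(1+\kappa)^2(1+w)^2]$; since $4\kappa w/[(1+\kappa)^2(1+w)^2] \le 1/4 < 1/3$ by the same AM--GM, the conclusion that $u_-$ initially decreases is unaffected.) For (iv), the paper's one-sentence justification --- that $\det(\M(\cdot;\kappa,w,\cs(\kappa,w)))$ is analytic so its zeros are isolated --- is, as written, not sufficient: isolation alone does not preclude a sequence of zeros with $|\re(z)| \to 0$ as $|\im(z)| \to \infty$. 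Your strip argument (bound $|\cosh(2z)|$ on $|\re(z)| \le 1$, let the polynomial part dominate for $|\im(z)| \ge M$, then minimize $|\re(z)|$ over the finitely many off-axis zeros in the compact rectangle) supplies the compactness step that is actually needed for a uniform $\lambda_0(\kappa,w)$.
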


\begin{proof}
For simplicity throughout the proof, we will again suppress most notational dependencies of quantities on $c$, $\kappa$, and $w$.
In particular, we write $\Lambda(k;c)$ instead of $\Lambda(k;\kappa,w,c)$ and $\cs$ for $\cs(\kappa,w)$.

\begin{enumerate}[label={\bf(\roman*)}]

\item
The equation $\Lambda(k;c) = 0$ is really a quadratic equation in the unknown $c^2k^2$, and so by the quadratic formula we have $\Lambda(k;c) = 0$ if and only if 
\begin{equation}\label{post qf}
c^2k^2-\tlambda_{\pm}(k) = 0,
\end{equation}
where
\begin{equation}\label{eqn: tlambda}
\tlambda_{\pm}(k)
:= \frac{(1+\kappa)(1+w)}{2} \pm \frac{\sqrt{(1+w)^2(1-\kappa)^2+4\kappa((1-w)^2+4w\cos^2(k))}}{2}.
\end{equation}
It was shown in \cite[Prop.\@ 2.2.1, part (vii)]{faver-dissertation} that if at least one of $w$, $\kappa$ is greater than 1, then there exists $c_- = c_-(\kappa,w) \in (0,1)$ such that if $|c| > c_-$, then there is a unique positive solution $k = \omega_c =  \omega_c(\kappa,w)$ to \eqref{post qf}. 

\item
It was also shown in \cite[Prop.\@ 2.2.1, part (vii)]{faver-dissertation} that 
\[
\inf_{|c| > c_-} |\partial_k[\Lambda](\omega_c;c)|
= \inf_{|c| > c_-}  |2c^2\omega_c-\partial_k[\tlambda_{\pm}](\omega_c)| 
> 0.
\]
Thus the roots $\pm\omega_c$ are always simple.

\item
Since $\Lambda(\cdot;c)$ is even, we have $\partial_k[\Lambda](0;c) = \partial_k^3[\Lambda](0;c) = 0$.
And so we need to show $\partial_k^2[\Lambda](0;\cs)=0$, $\partial_k^4[\Lambda](0;\cs) \ne 0$, and $\partial_k^2[\Lambda](0;\cs) \ne 0$ for $c \ne \cs$.

We calculate
\[
\partial_k^2[\Lambda](0;c)
= 8\kappa{w}-2c^2(1+w)(1+\kappa)
\]
and
\[
\partial_k^4[\Lambda](0;c)
= 24c^4-32\kappa{w}.
\]
We see at once, then, that $\partial_k^2[\Lambda](0;c) = 0$ if and only if $|c|=\cs$.
In particular, 0 is a double root of $\Lambda(\cdot;c)$ when $|c| > \cs$.

Next, taking $c = \cs$, we have
\begin{equation}\label{eqn: Lambda-4th-deriv}
\partial_k^4[\Lambda](0;\cs)
= \frac{-32\kappa{w}\big((1+\kappa)^2w^2+(2\kappa^2-8\kappa+2)w+(1+\kappa)^2\big)}{(1+w)^2(1+\kappa)^2}
\end{equation}
Consider the factor
\begin{equation}\label{eqn: Lambda-c-star-4}
(1+\kappa)^2w^2+(2\kappa^2-8\kappa+2)w+(1+\kappa)^2
\end{equation}
as a quadratic polynomial in $w$.
Its discriminant is
\[
-48\kappa(\kappa^2-\kappa+1).
\]
In turn, the discriminant of the quadratic factor $\kappa^2-\kappa+1$ is $-3$.
It follows that 
\[
-48\kappa(\kappa^2-\kappa+1) 
< 0
\]
for all $\kappa > 0$.
That is, the discriminant of \eqref{eqn: Lambda-c-star-4} as a quadratic in $w$ is negative.
Hence the quantity in \eqref{eqn: Lambda-c-star-4} is nonzero for all $w \in \R$ and, in fact, positive.
Returning to \eqref{eqn: Lambda-4th-deriv}, we conclude $\partial_k^4[\Lambda](0;\cs) < 0$, and so 0 indeed has multiplicity 4 as a root of $\Lambda(\cdot;\cs)$.

\item
The function $\M(\cdot;\cs)$ is (real) analytic, and so its zeros are isolated.
\qedhere
\end{enumerate}
\end{proof}

For slightly greater notational simplicity, we will now write
\begin{equation}\label{eqn: star simplicity}
\L_0(\kappa,w)
:= \L(\kappa,w,\cs(\kappa,w))
\quadword{and}
\omega_*(\kappa,w)
:= \omega_{\cs(\kappa,w)}(\kappa,w).
\end{equation}

\begin{remark}\label{rem: omega-c and predecessors}
The value $\omega_c(1,w)$ from part \ref{part: Lambda-1} of Proposition \ref{prop: Lambda props} is called $k_c$ in the mass dimer paper \cite{faver-wright}, where it appears in part (vi) of Lemma 2.1.
The value $\omega_c(\kappa,1)$ is called $\Omega_c$ in the spring dimer paper \cite{faver-spring-dimer}, where it appears in part (v) of Proposition 2.1.
\end{remark}

Part \ref{part: Lambda-3} of Proposition \ref{prop: Lambda props} describes in part how the eigenvalue at 0 of $\L_0(\kappa,w)$ perturbs when $|c|$ is slightly greater than $\cs(\kappa,w)$: its multiplicity goes down by 2 from 4.
The next proposition completes that description: the multiplicity of the 0 eigenvalue goes down from 4 to 2 and two real eigenvalues of multiplicity 1 split off.
This behavior is sketched in Figure \ref{fig: eigenvalues} (inspired by graphics like \cite[Fig.\@ 1]{venney-zimmer}).

\begin{proposition}\label{prop: real det roots}
Let $\kappa$, $w > 0$ with $\max\{\kappa,w\} > 1$.
There exist $\tc_*(\kappa,w) > 0$ and $C_{\xsf} > 0$ such that if $\cs(\kappa,w) < |c| < \tc_*(\kappa,w)$, then the following hold.

\begin{enumerate}[label={\bf(\roman*)}]

\item
There exists a unique $\xsf_c(\kappa,w) > 0$ such that for $x \in \R$, $\det(\M(x;\kappa,w,c)) = 0$ if and only if $x = 0$ or $x = \pm\xsf_c(\kappa,w)$.

\item
The roots $\pm\xsf_c(\kappa,w)$ are simple roots of $\det(\M(\cdot;\kappa,w,c))$.

\item
The roots $\pm\xsf_c(\kappa,w)$ satisfy $|\xsf_c(\kappa,w)-\xsf_{\cs}| < C_{\xsf}|c^{-2}-\cs^{-2}|$.
\end{enumerate}
\end{proposition}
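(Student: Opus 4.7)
Let $D(x;c) := \det\bigl(\M(x;\kappa,w,c)\bigr)$, which by \eqref{eqn: Mc det} is an even real-analytic function of $x$. My plan is to convert the search for real zeros of $D(\cdot;c)$ into an implicit-function problem. Writing $D(x;c) = H(x^2;c)$, where
\[
H(y;c) := c^4 y^2 + c^2(1+\kappa)(1+w)y + 2\kappa w\sum_{n=1}^{\infty}\frac{-4^n}{(2n)!}y^n
\]
is jointly analytic on $\C\times\C$, and noting $H(0;c)\equiv 0$, I will factor $H(y;c) = y\,\tilde{H}(y;c)$ with $\tilde{H}$ also jointly analytic. A direct expansion from the power series for $\cosh$ gives
\[
\tilde{H}(0;c) = c^2(1+\kappa)(1+w)-4\kappa w = (1+\kappa)(1+w)(c^2-\cs^2),
\qquad
\partial_y\tilde{H}(0;\cs) = \cs^4-\tfrac{4\kappa w}{3} = \tfrac{1}{24}\partial_k^4\Lambda(0;\kappa,w,\cs).
\]
The latter is \emph{negative}, as established inside the proof of part~\ref{part: Lambda-3} of Proposition~\ref{prop: Lambda props}, while $\tilde{H}(0;c) > 0$ for $c > \cs$.

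I then invoke the analytic implicit function theorem for $\tilde{H}=0$ at $(y,c)=(0,\cs)$, producing an analytic $c\mapsto\Upsilon(c)$ defined in a neighborhood of $\cs$ with $\Upsilon(\cs)=0$ and $\tilde{H}(\Upsilon(c);c)\equiv 0$. The signs above force $\Upsilon(c) > 0$ for $c > \cs$ in a small right-neighborhood, so $\xsf_c(\kappa,w) := \sqrt{\Upsilon(c)}$ is a positive real root of $D(\cdot;c)$; moreover $\pm\xsf_c$ are \emph{simple} zeros, since the simple zero of $\tilde{H}(\cdot;c)$ at $\Upsilon(c)$ lifts to a simple zero of $H(\cdot;c)$, which transfers to simple zeros of $D(\cdot;c)$ at $\pm\xsf_c$. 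The standard IFT expansion $\Upsilon(c) = -\tilde{H}(0;c)/\partial_y\tilde{H}(0;\cs) + O((c-\cs)^2)$ combined with $c^2-\cs^2 = c^2\cs^2(\cs^{-2}-c^{-2})$ then yields $\xsf_c^2 = O(|c^{-2}-\cs^{-2}|)$, from which the estimate in~(iii) follows.

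To complete~(i) I must rule out additional real roots of $D(\cdot;c)$ far from $0$. The key step, which I expect to be the main obstacle, is the pointwise bound $D(x;\cs) < 0$ for all $x\in\R\setminus\{0\}$. Writing $D(x;\cs) = -2\kappa w\,\phi(x)$ with
\[
\phi(x) := \cosh(2x)-1-2x^2-\tfrac{\cs^4}{2\kappa w}x^4,
\]
I compute $\phi^{(4)}(x) = 16\cosh(2x)-\tfrac{12\cs^4}{\kappa w}$; since $\cs^2 = \tfrac{4\kappa w}{(1+\kappa)(1+w)}$ and $(1+\kappa)^2(1+w)^2\ge 16\kappa w$ by AM--GM, one checks $\tfrac{12\cs^4}{\kappa w} < 16$, so $\phi^{(4)} > 0$ on $\R$. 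Since $\phi(0)=\phi'(0)=\phi''(0)=\phi'''(0)=0$ by construction, integrating four times from $0$ gives $\phi > 0$ on $\R\setminus\{0\}$, hence the bound. By joint continuity of $D$ on compacta and the observation that $2\kappa w\cosh(2x)$ dominates $c^4x^4$ for $|x|$ large and $c$ bounded, for $c$ close enough to $\cs$ one obtains $D(x;c) < 0$ on every fixed annulus $\{\delta\le|x|\le R\}$ as well as for $|x|\ge R$; this leaves only roots near $0$, which have already been catalogued by the IFT. Modulo the global negativity argument for $\phi$, everything else reduces to a routine application of the analytic IFT to the factorization $D(x;c) = x^2\tilde{H}(x^2;c)$.
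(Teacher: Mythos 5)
Your strategy is sound and, for parts (i) and (ii), essentially complete; since the paper explicitly omits its own proof (describing it only as a "careful, highly quantitative application of the intermediate value theorem"), I can only compare against that brief gloss, and your route via the substitution $y = x^2$ and the analytic implicit function theorem is cleaner and yields more --- in particular the analyticity of $c \mapsto \xsf_c$ and an explicit leading-order expansion. The key global step, $D(x;\cs) < 0$ for real $x \ne 0$, is handled correctly: the identity $D(x;\cs) = -2\kappa w\,\phi(x)$ does require exactly $\cs^2 = \tfrac{4\kappa w}{(1+\kappa)(1+w)}$, and the fourth-derivative positivity argument followed by four integrations from the origin (using that $\phi(0)=\phi'(0)=\phi''(0)=\phi'''(0)=0$ and that $\phi$, $\phi''$, $\phi^{(4)}$ are even while $\phi'$, $\phi'''$ are odd) is correct. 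The AM--GM bound $\tfrac{12\cs^4}{\kappa w} < 16$ and the simplicity transfer from $\tilde{H}$ to $H$ to $D$ (using $\partial_y H(\Upsilon(c);c) = \Upsilon(c)\,\partial_y\tilde{H}(\Upsilon(c);c) \ne 0$ and $\partial_x D(\xsf_c;c) = 2\xsf_c\,\partial_y H(\Upsilon(c);c) \ne 0$) all check out.

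The gap is in part (iii). You correctly derive $\xsf_c^2 = \Upsilon(c) = O(|c^{-2} - \cs^{-2}|)$ and then assert ``from which the estimate in~(iii) follows,'' but as stated (iii) compares $\xsf_c$ to an object $\xsf_{\cs}$ that the proposition never defines, and under neither natural reading does your asymptotic give the claimed bound. If $\xsf_{\cs} := \lim_{c \downarrow \cs} \xsf_c = 0$, then (iii) would demand $\xsf_c \lesssim |c^{-2}-\cs^{-2}|$, whereas your analysis gives $\xsf_c \sim |c^{-2}-\cs^{-2}|^{1/2}$, which decays far more slowly. If instead $\xsf_{\cs}$ is the fixed nonzero coefficient appearing in Figure~\ref{fig: eigenvalues}'s label $\sqrt{\mu}\,\xsf_{\cs} + \O(\mu)$, then $|\xsf_c - \xsf_{\cs}| \to \xsf_{\cs} \ne 0$, again incompatible with the claimed $O(|c^{-2}-\cs^{-2}|)$ bound. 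You should either flag this apparent inconsistency in the proposition's statement or state explicitly which quantity you are bounding (the correct output of your IFT argument is $|\xsf_c^2 - K(\cs^{-2}-c^{-2})| \le C|\cs^{-2}-c^{-2}|^2$ with $K = -\tfrac{\tilde{H}(0;c)}{(\cs^{-2}-c^{-2})\,\partial_y\tilde{H}(0;\cs)}$ at leading order); the bare claim that the estimate ``follows'' is not justified.
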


The proof of this proposition is a careful, highly quantitative application of the intermediate value theorem, which we omit, since we do not use these eigenvalues $\pm\xsf_c(\kappa,w)$ any further.
In principle we should also be able to calculate the leading order behavior in the small value $|c-\cs(\kappa,w)|$ of $\xsf_c(\kappa,w)$ using the sort of perturbation argument in \cite[Sec.\@ 6.2]{kapitula-promislow}.

\begin{figure}
\[
\begin{tabular}{cc}
\adjustbox{valign=t}{\begin{tikzpicture}
\draw[very thick, <->] (-2.5,0)--(2.5,0)node[right]{$\R$};
\draw[very thick, <->] (0,-2)node[below]{$c = \cs$}--(0,2)node[right]{$i\R$};

\fill[blue] (0,1)node[black,right]{\ $+i\omega_{\cs}$} circle(.2);
\fill[blue] (0,-1)node[black,right]{\ $-i\omega_{\cs}$} circle(.2);

\fill[blue] (-.25,.25) rectangle (.25,-.25);

\end{tikzpicture}}
\qquad
&
\qquad
\adjustbox{valign=t}{\begin{tikzpicture}
\draw[very thick, <->] (-2.5,0)--(2.5,0)node[right]{$\R$};
\draw[very thick, <->] (0,-2)node[below]{$|c| \gtrsim \cs$} --(0,2)node[right]{$i\R$};

\fill[blue] (0,1.25)node[black,right]{\ $+i\big(\omega_{\cs} + \O(\mu)\big)$} circle(.2);
\fill[blue] (0,-1.25)node[black,right]{\ $-i\big(\omega_{\cs} + \O(\mu)\big)$} circle(.2);

\fill[blue] (1.5,0) circle(.2);
\node[below=2pt] at (2,0){$\sqrt{\mu}\mathsf{x}_{\cs} + \O(\mu)$};

\fill[blue] (-1.5,0) circle(.2);
\node[below=2pt] at (-2,0){$-\sqrt{\mu}\mathsf{x}_{\cs} + \O(\mu)$};

\fill[red] (-.25,-.25)--(0,.25)--(.25,-.25)--cycle;

\end{tikzpicture}
}
\end{tabular}
\]

\[
\begin{tabular}{lll lll lll}
\begin{tikzpicture}
\fill[blue] (-.15,.15) rectangle (.15,-.15);
\end{tikzpicture}
& Multiplicity 4 
&\qquad
\begin{tikzpicture}
\fill[red] (-.15,-.15)--(0,.15)--(.15,-.15)--cycle;
\end{tikzpicture}
& Multiplicity 2 
&\qquad
\begin{tikzpicture}
\fill[blue] (0,0) circle(.15);
\end{tikzpicture}
& Multiplicity 1
\end{tabular}
\]

\caption{Spectral behavior of $\L(\kappa,w,c)$ for $c = \cs(\kappa,w)$ and $|c| \gtrsim \cs(\kappa,w)$ with $\mu = \cs(\kappa,w)^{-2}-c^{-2}$.
For $|c| \gtrsim \cs(\kappa,w)$, the eigenvalue at 0 retains multiplicity 2 and two real eigenvalues of multiplicity 1 split off.}

\label{fig: eigenvalues}
\end{figure}
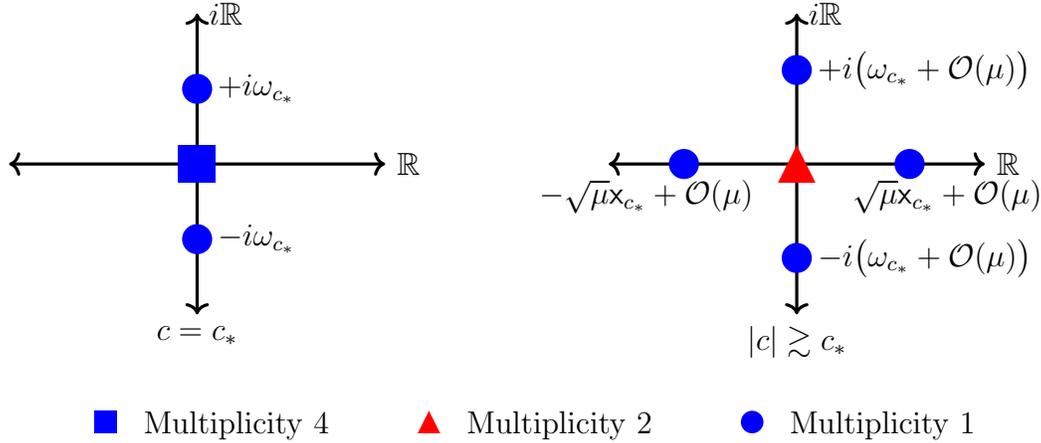

\subsubsection{The generalized eigenvectors of $\L_0(\kappa,w)$ corresponding to 0}\label{sec: gen ev}
Since the eigenvalue at 0 of $\L_0(\kappa,w)$ has algebraic multiplicity 4, we want four vectors $\chib_k(\kappa,w) \in \D$ such that 
\begin{equation}\label{eqn: gen ev prop1}
\L_0(\kappa,w)\chib_0(\kappa,w) = 0
\quadword{and}
\L_0(\kappa,w)\chib_{k+1}(\kappa,w) = \chib_k(\kappa,w), \ k = 0,1,2.
\end{equation}
Of course, such a Jordan chain always exists. 
However, in the case of a mass or spring dimer, we will also want our eigenvectors to enjoy a special interaction with the symmetries \eqref{eqn: md symm} and \eqref{eqn: sd symm}, namely
\begin{equation}\label{eqn: gen ev prop2}
\S_{\Mb}\chib_k(1,w) 
= (-1)^{k+1}\chib_k(1,w)
\quadword{and}
\S_{\Kb}\chib_k(\kappa,1)
= (-1)^{k+1}\chib_k(\kappa,1),
\end{equation}
and not every Jordan chain for 0 must satisfy \eqref{eqn: gen ev prop2}.

The components of these generalized eigenvectors are, unfortunately, rather overwhelming and depend in complicated ways on the parameters $\kappa$ and $w$. 
Recalling from \eqref{eqn: X defn} and \eqref{eqn: D defn} that the last two components of a vector in $\D$ are functions of the variable $v \in [-1,1]$, we specify what these functions are and then construct the other components out of them.
Broadly, these components resemble those of the generalized eigenvectors that our spatial dynamics predecessors have encountered, see, e.g., \cite[Eq.\@ (27)]{venney-zimmer}.
However, our generalized eigenvectors have quite a few more terms and depend wearingly on the parameters $\kappa$ and $w$.
They are as follows:
\begin{equation}\label{eqn: chib0}
\chib_0(\kappa,w)
:= (1,1, 0, 0, 1,1);
\end{equation}
\begin{equation}\label{eqn: chib1}
\chib_1(\kappa,w)
:= \big(\chi_{15}(0;\kappa,w), \chi_{16}(0;\kappa,w) , 1,1, \chi_{15}(\cdot;\kappa,w), \chi_{16}(\cdot;\kappa,w)\big),
\end{equation}
where
\[
\begin{cases}
\chi_{15}(v;\kappa,w)
:= v+\frac{1}{2}\left(\frac{1-\kappa}{1+\kappa}\right) \\
\\
\chi_{16}(v;\kappa,w)
:= v-\frac{1}{2}\left(\frac{1-\kappa}{1+\kappa}\right);
\end{cases}
\]
\begin{equation}\label{eqn: chib2}
\chib_2(\kappa,w)
:= \big(\chi_{25}(0;\kappa,w), \chi_{26}(0;\kappa,w), \chi_{15}(0;\kappa,w), \chi_{16}(0;\kappa,w), \chi_{25}(\cdot;\kappa,w), \chi_{35}(\cdot;\kappa,w)\big),
\end{equation}
where
\[
\begin{cases}
\chi_{25}(v;\kappa,w)
:= \frac{v^2}{2}+\frac{1}{2}\left(\frac{1-\kappa}{1+\kappa}\right)v + \frac{\kappa(1-w)}{(1+\kappa)^2(1+w)} \\
\\
\chi_{26}(v;\kappa,w)
:= \frac{v^2}{2} - \frac{1}{2}\left(\frac{1-\kappa}{1+\kappa}\right)v - \frac{\kappa(1-w)}{(1+\kappa)^2(1+w)};
\end{cases}
\]
and
\begin{equation}\label{eqn: chib3}
\chib_3(\kappa,w)
:= \big(\chi_{35}(0;\kappa,w), \chi_{36}(0;\kappa,w), \chi_{25}(0;\kappa,w), \chi_{26}(0;\kappa,w), \chi_{35}(\cdot;\kappa,w), \chi_{36}(\cdot;\kappa,w)\big),
\end{equation}
where
\[
\begin{cases}
\chi_{35}(v;\kappa,w)
= \frac{v^3}{6}+\frac{1}{4}\left(\frac{1-\kappa}{1+\kappa}\right)v^2 + \frac{\kappa(1-w)}{(1+\kappa)^2(1+w)}v + \frac{(\kappa-1)(\kappa^2+14\kappa+1)}{24(1+\kappa)^3} \\
\\
\chi_{36}(v;\kappa,w)
=  \frac{v^3}{6}-\frac{1}{4}\left(\frac{1-\kappa}{1+\kappa}\right)v^2 - \frac{\kappa(1-w)}{(1+\kappa)^2(1+w)}v - \left(\frac{(\kappa-1)(\kappa^2+14\kappa+1)}{24(1+\kappa)^3}\right).
\end{cases}
\]

Direct, and lengthy, calculations verify the identities \eqref{eqn: gen ev prop1} and \eqref{eqn: gen ev prop2}.
Similar, and also omitted, calculations establish the following lemma, which shows that $\big(\chib_0(\kappa,w),\chib_1(\kappa,w)\big)$ is always a Jordan chain for $\L(\kappa,w,c)$, regardless of the value of $c$, while the nonlinearity $\nl_0+\nl_1$ is invariant under translation by $\chib_0$.

\begin{lemma}\label{lemma: gen ev stuff}
Let $\kappa$, $w > 0$ and $\beta$, $c \in \R$.
Then

\begin{enumerate}[label={\bf(\roman*)}, ref={(\roman*)}]

\item
With $\L(\kappa,w,c)$ defined in \eqref{eqn: Lc defn}, we have
\[
\L(\kappa,w,c)\chib_0(\kappa,w) = 0
\quadword{and}
\L(\kappa,w,c)\chib_1(\kappa,w) = \chib_0(\kappa,w).
\]

\item
With $\nl_0$ defined in \eqref{eqn: nl0 defn} and $\nl_1$ in \eqref{eqn: nl1 defn}, we have
\[
\nl_0(\Ub+\gamma\chib_0(\kappa,w),\Ub+\gamma\chib_0(\kappa,w);\beta,w) = \nl_0(\Ub,\Ub;\beta,w)
\]
and
\[
\nl_1(\Ub+\gamma\chib_0(\kappa,w);\Vscr_1,\Vscr_2,w) = \nl_1(\Ub;\Vscr_1,\Vscr_2,w)
\]
for all $\Ub \in \X$ and $\gamma \in \R$.
\end{enumerate}
\end{lemma}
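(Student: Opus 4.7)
The proof is entirely computational, and my plan would be to organize it so that the block structure of $\L(\kappa,w,c)$ in \eqref{eqn: Lc defn} does as much of the bookkeeping as possible.

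For part (i), I would split each verification into three pieces corresponding to the three $2\times 2$ blocks of $\L(\kappa,w,c)$. First, for $\chib_0 = (1,1,0,0,1,1)$, the top block applied to $\chib_0$ returns the $\xi$-components $(0,0)$; the bottom block returns $(\partial_v 1, \partial_v 1) = (0,0)$; and in the middle block the only nontrivial check is
\[
-c^{-2}(1+\kappa)\diag(1,w)\begin{pmatrix}1\\1\end{pmatrix} + c^{-2}\Delta(\kappa,w)\begin{pmatrix}1\\1\end{pmatrix} = 0,
\]
which follows at once from $(\delta^1+\kappa\delta^{-1})(1) = 1+\kappa$ and $(\kappa\delta^1+\delta^{-1})(1)=1+\kappa$. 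For $\chib_1$, the top-block identity reduces to reading off the $\xi$-components $(1,1)$, which match the first two entries of $\chib_0$; the bottom block likewise reduces to $\partial_v\chi_{15} = \partial_v\chi_{16} = 1$. The content of the argument is entirely in the middle block. Here I would plug the explicit formulas for $\chi_{15}$ and $\chi_{16}$ into
\[
-c^{-2}(1+\kappa)\diag(1,w)\begin{pmatrix}\chi_{15}(0)\\\chi_{16}(0)\end{pmatrix} + c^{-2}\Delta(\kappa,w)\begin{pmatrix}\chi_{15}\\\chi_{16}\end{pmatrix}
\]
and verify the two resulting scalar identities. For the first one,
\[
(\delta^1+\kappa\delta^{-1})\chi_{16} = \bigl(1-\tfrac{1-\kappa}{2(1+\kappa)}\bigr) + \kappa\bigl(-1-\tfrac{1-\kappa}{2(1+\kappa)}\bigr) = \tfrac{1-\kappa}{2} = (1+\kappa)\chi_{15}(0);
\]
an entirely analogous collapse of telescoping $\kappa$-terms handles the second. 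This yields $\L(\kappa,w,c)\chib_1(\kappa,w) = \chib_0(\kappa,w)$ as required. The role played by the constants $\frac{1}{2}\frac{1-\kappa}{1+\kappa}$ is precisely to make these two identities close up simultaneously, which also explains their ``anti-symmetric'' appearance in $\chi_{15}$ and $\chi_{16}$.

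Part (ii) is conceptually easier and I would handle it with one observation. Adding $\gamma\chib_0(\kappa,w) = (\gamma,\gamma,0,0,\gamma,\gamma)$ to any $\Ub = (p_1,p_2,\xi_1,\xi_2,P_1,P_2)\in\X$ shifts every ``position-type'' component by the same scalar $\gamma$, i.e.\ it sends $p_j \mapsto p_j+\gamma$ and $P_j\mapsto P_j+\gamma$ while leaving $\xi_1,\xi_2$ unchanged. The nonlinear operators $\nl_0$ and $\nl_1$ defined in \eqref{eqn: nl0 defn} and \eqref{eqn: nl1 defn} depend on $\Ub$ only through the four spring-stretch combinations $\delta^1 P_2 - p_1$, $p_1 - \delta^{-1}P_2$, $\delta^1 P_1 - p_2$, and $p_2 - \delta^{-1}P_1$, each of which is manifestly invariant under such a uniform shift (since $\delta^{\pm 1}$ maps the constant function $\gamma$ to $\gamma$). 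Hence both $\nl_0$ and $\nl_1$ are unchanged under the translation, giving the claimed identities.

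The main obstacle is really nothing more than algebraic endurance in part (i), particularly in handling the $\kappa$-dependent offsets, and I would expect to double-check by noting that if $\kappa=1$ the offsets vanish and the identities reduce to the much simpler monatomic version, which serves as a useful consistency check for the general $\kappa$ computation.
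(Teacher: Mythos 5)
Your proof is correct and fills in precisely the direct computation that the paper itself omits (the paper simply asserts that ``Similar, and also omitted, calculations establish the following lemma''). The block-by-block verification in part (i), with the observation that the constant offsets $\pm\tfrac{1}{2}\tfrac{1-\kappa}{1+\kappa}$ in $\chi_{15},\chi_{16}$ are chosen exactly so the two middle-block scalar identities close simultaneously, matches what one obtains by hand; your displayed identity $(\delta^1+\kappa\delta^{-1})\chi_{16}=\tfrac{1-\kappa}{2}=(1+\kappa)\chi_{15}(0)$ is correct, and the companion identity $(\kappa\delta^1+\delta^{-1})\chi_{15}=-\tfrac{1}{2}(1-\kappa)=(1+\kappa)\chi_{16}(0)$ checks out in the same way.

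One small caveat in part (ii): the assertion that $\nl_0$ and $\nl_1$ depend on $\Ub$ ``only through the four spring-stretch combinations'' is literally true for $\nl_0$, but $\nl_1$ as written in \eqref{eqn: nl1 defn} also carries the components $\partial_v[P_1]$ and $\partial_v[P_2]$. (Comparing \eqref{eqn: Fc}, \eqref{eqn: Lc defn}, and the decomposition \eqref{eqn: Fc L+Q0+Q1}, those two entries of $\nl_1$ should in fact be zero to avoid double-counting the $\partial_v\ind$ block of $\L$ — so this is almost certainly a typo in the paper. But taking the displayed formula at face value, the invariance still holds because $\partial_v$ annihilates the constant function $\gamma$.) Your argument is therefore sound, but you should either note that $\partial_v(P_j+\gamma)=\partial_v P_j$ explicitly, or remark on the redundant $\partial_v$ terms in \eqref{eqn: nl1 defn}.
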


\begin{remark}\label{rem: pos trans invar}
We can view the high multiplicity of 0 as an eigenvalue and the structure of some of the generalized eigenvectors above as the inheritance of the system \eqref{eqn: IK system} of some properties of the original equations of motion \eqref{eqn: original equations of motion}.
Suppose that $\{u_j\}_{j \in \Z}$ is a solution set for the original equations of motion \eqref{eqn: original equations of motion}, not necessarily in the dimer or even polyatomic case.
Let $d_1$, $d_2 \in \R$ and $d_3 \in \Z$ and define $\tilde{u}_j(t) := u_{j+d_3}(t) + d_1t+d_2$.
Then $\{\tilde{u}_j\}_{j \in \Z}$ is also a solution set for \eqref{eqn: original equations of motion}.
In particular, the structure of $\chib_0$ and $\chib_1$ reflect the affine invariance of adding $d_1t+d_2$ here.
\end{remark}

\subsubsection{The projection onto the zero generalized eigenspace of $\L_0(\kappa,w)$}\label{sec: eigenproj}
Since 0 is an isolated eigenvalue of $\L_0(\kappa,w)$, the spectral projection $\Pi_0(\kappa,w)$ of $\L_0(\kappa,w)$ corresponding to 0 is defined on $\X$.
The image $\Pi_0(\kappa,w)(\X)$ is spanned by the four generalized eigenvectors $\chib_k(\kappa,w)$ from Section \ref{sec: gen ev}, so we can express this projection in the form
\begin{equation}\label{eqn: spectral proj FPUT}
\Pi_0(\kappa,w)\Ub 
= \sum_{k=0}^3 \chib_k^*[\Ub;\kappa,w]\chib_k(\kappa,w), 
\end{equation}
where $\chib_k^*[\cdot;\kappa,w] \colon \X \to \R$ are linear functionals satisfying 
\[
\chib_k^*[\chib_j(\kappa,w);\kappa,w]
= \begin{cases} 1, &j = k 
\\ 0 &j \ne k. 
\end{cases}
\]
We can find explicit, although complicated, formulas for the coefficient functionals $\chib_k^*$.

\begin{proposition}\label{prop: chik-star formulas}
Define 
\begin{equation}\label{eqn: a-4}
\asf_{-4}(\kappa,w)
:= \frac{4!}{\partial_k^4[\Lambda_{\cs(\kappa,w)}](0;\kappa,w)}
= -\frac{3(1+\kappa)^2(1+w)^2}{\kappa{w}\big((1+\kappa)^2w^2+2(\kappa^2-4\kappa+1)w + (1+\kappa)^2\big)}
\end{equation}
and
\begin{equation}\label{eqn: a-2}
\asf_{-2}(\kappa,w)
:= -\frac{4}{5}\left(\frac{\partial_k^6[\Lambda_{\cs(\kappa,w)}](0;\kappa,w)}{\big(\partial_k^4[\Lambda_{\cs(\kappa,w)}](0;\kappa,w)\big)^2}\right)
= -\frac{(1+\kappa)^4(1+w)^4}{10\kappa{w}((1+\kappa)^2w^2+2(\kappa^2-4\kappa+1)w+(1+\kappa)^2)}.
\end{equation}
For simplicity, write $\asf_{-4} = \asf_{-4}(\kappa,w)$, $\asf_{-2} = \asf_{-2}(\kappa,w)$, and $\cs = \cs(\kappa,w)$.

Then the functionals $\chib_k^*$ that control the zero eigenprojection \eqref{eqn: spectral proj FPUT} satisfy the following formulas for $\Ub = (p_1,p_2,\xi_1,\xi_2,P_1,P_2) \in \X$:
\begin{align}\label{eqn: chib0-star}
\chib_0^*[\Ub;\kappa,w]
&= \frac{\cs^2}{4}
\big(
(2\cs^2+w(1+\kappa))\asf_{-4}+4w(1+\kappa)\asf_{-2}
\big)
p_1 \\[5pt]\nonumber
&+ \frac{\cs^2}{4}\big(
(2\cs^2+1+\kappa)\asf_{-4}+4(1+\kappa)\asf_{-2}
\big)p_2 \\[5pt]\nonumber
&+ \frac{\cs^2w(\kappa-1)(\asf_{-4}+6\asf_{-2})}{12}\left(\xi_1-\frac{\xi_2}{w}\right) \\[5pt]\nonumber
&+ \frac{\asf_{-4}w}{2}\int_0^1 (1-s)^2\left[\frac{(1+\kappa)(1-s)}{3}\big(\kappa{P}_1(s)+P_2(s)\big) + \frac{\kappa-1}{2}\big(\kappa{P}_1(s)-P_2(s)\big)\right] \ds \\[5pt]\nonumber
&- \frac{\asf_{-4}w}{2}\int_{-1}^0 (1+s)^2\left[\frac{\kappa-1}{2}\big(P_1(s)-\kappa{P}_2(s)\big) + \frac{(1+\kappa)(1+s)}{3}\big(P_1(s)+\kappa{P}_2(s)\big)\right] \ds \\[5pt]\nonumber
&+ \frac{(\asf_{-4}+6\asf_{-2})(\kappa-1)w}{12}\left[\int_0^1 \big(\kappa{P}_1(s)-P_2(s)\big) \ds - \int_{-1}^0 \big(P_1(s)-\kappa{P}_2(s)\big) \ds\right] \\[5pt]\nonumber
&- \frac{w\big((2\cs^2+1+\kappa)\asf_{-4}+4(1+\kappa)\asf_{-2}\big)}{4}\left(\int_0^1 \kappa(1-s)P_1(s) \ds + \int_{-1}^0 (1+s)P_1(s) \ds \right) \\[5pt]\nonumber
&-\frac{\big((2\cs^2+w(1+\kappa))\asf_{-4}+4w(1+\kappa)\asf_{-2}\big)}{4}\left(\int_0^1 (1-s)P_2(s) \ds + \int_{-1}^0 \kappa(1+s)P_2(s) \ds\right);
\end{align}
\begin{align}\label{eqn: chib1-star}
\chib_1^*[\Ub;\kappa,w]
&= \frac{\asf_{-4}\cs^2(\kappa-1)}{2}\left(p_1-\frac{p_2}{w}\right) \\[5pt]\nonumber
&+ \frac{\cs^2\big((2\cs^2+w(1+\kappa))\asf_{-4} + 4w(1+\kappa)\asf_{-2}\big)}{4}\xi_1 \\[5pt]\nonumber
&+ \frac{\cs^2\big((2\cs^2+1+\kappa)\asf_{-4}+4(1+\kappa)\asf_{-2}\big)}{4}\xi_2 \\[5pt]\nonumber
&+ \frac{\asf_{-4}w}{2}\int_0^1 (1-s)\bigg[(\kappa-1)\big(\kappa{P}_1(s)-P_2(s)\big) + (1+\kappa)(s-1)\big(\kappa{P}_1(s)+P_2(s)\big)\bigg] \ds \\[5pt]\nonumber
&+ \frac{\asf_{-4}w}{2}\int_{-1}^0 (1+s)\bigg[(\kappa-1)\big(P_1(s)-\kappa{P}_2(s)\big) + (1+\kappa)(1+s)\big(P_1(s)+\kappa{P}_2(s)\big)\bigg] \ds \\[5pt]\nonumber
&-\frac{w(2\cs^2+1+\kappa)\asf_{-4} + 4(1+\kappa)\asf_{-2}}{4}\left(\int_0^1 \kappa{P}_1(s) \ds - \int_{-1}^0 P_1(s) \ds\right) \\[5pt]\nonumber
&- \frac{(2\cs^2+w(1+\kappa))\asf_{-4}+4w(1+\kappa)\asf_{-2}}{4}\left(\int_0^1 P_2(s) \ds - \int_{-1}^0 \kappa{P}_2(s) \ds \right);
\end{align}
\begin{align}\label{eqn: chib2-star}
\chib_2^*[\Ub;\kappa,w]
&= \asf_{-4}\cs^2w\left[(1+\kappa)\left(p_1+\frac{p_2}{w}\right) + \frac{\kappa-1}{2}\left(\xi_1-\frac{\xi_2}{w}\right)\right] \\[5pt]\nonumber
&+\asf_{-4}w\int_0^1 \left[(1+\kappa)(s-1)\big(\kappa{P}_1(s)+P_2(s)\big) + \frac{\kappa-1}{2}\big(\kappa{P}_1(s)-P_2(s)\big)\right] \ds \\[5pt]\nonumber
&-\asf_{-4}w\int_{-1}^0 \left[(1+\kappa)(1+s)\big(P_1(s)+\kappa{P}_2(s)\big) + \frac{\kappa-1}{2}\big(P_1(s)-\kappa{P}_2(s)\big)\right] \ds;
\end{align}
and
\begin{align}\label{eqn: chib3-star}
\chib_3^*[\Ub;\kappa,w]
&= \asf_{-4}w(1+\kappa)\cs^2\left(\xi_1 + \frac{\xi_2}{w}\right) \\[5pt]\nonumber
&+  \asf_{-4}w(1+\kappa)\left(\int_{-1}^0 \big(P_1(s)+\kappa{P}_2(s)\big) \ds - \int_0^1 \big(\kappa{P}_1(s)+P_2(s)\big) \ds\right).
\end{align}
\end{proposition}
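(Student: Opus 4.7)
The plan is to compute the spectral projection $\Pi_0(\kappa,w)$ as the residue at $z = 0$ of the resolvent operator $\rhs(z;\kappa,w,\cs(\kappa,w))$, whose explicit component formulas are already available from Proposition \ref{prop: eigenprops}\ref{part: resolvent}, and then to read off the four functionals $\chib_k^*$ by comparing coefficients against the Jordan chain.

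First I would establish the Laurent expansion
\[
\frac{1}{\det(\M(z;\kappa,w,\cs(\kappa,w)))} = \frac{\asf_{-4}(\kappa,w)}{z^{4}} + \frac{\asf_{-2}(\kappa,w)}{z^{2}} + \O(1)
\]
near $z=0$. No odd negative powers appear because $\det(\M(\cdot;\kappa,w,c))$ is an even polynomial in $z$ by \eqref{eqn: Mc det}, and the fourth-order zero at $z=0$ comes from part \ref{part: Lambda-3} of Proposition \ref{prop: Lambda props}. The explicit rational expressions in \eqref{eqn: a-4} and \eqref{eqn: a-2} follow by inverting the Taylor series of $\det(\M)$ using $\partial_{k}^{2}[\Lambda_{\cs}](0) = 0$ and computing $\partial_k^4[\Lambda_{\cs}](0)$ and $\partial_k^6[\Lambda_{\cs}](0)$ directly.

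Next I would Taylor-expand, through order $z^{3}$, the remaining $z$-dependent factors appearing in the resolvent: the entire polynomials $c^{2}z^{2} + w(1+\kappa)$ and $c^{2}z^{2} + 1+\kappa$, the exponential combinations $e^{z}+\kappa e^{-z}$ and $w(\kappa e^{z}+e^{-z})$, and the operator-valued functionals $\delta^{\pm 1}\I_{z}[P_{j}] = -\int_{0}^{\pm 1} e^{z(\pm 1 - s)}P_{j}(s)\,ds$. Multiplying these expansions against the $z^{-4}$ and $z^{-2}$ singularities from Step 1 and collecting the coefficient of $z^{-1}$ in each of the six components of $\rhs(z;\kappa,w,\cs(\kappa,w))\Ub$ yields the components of $\Pi_{0}\Ub$. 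The integrals against $(1\mp s)$ and $(1\mp s)^{2}$ arise as the $z^{1}$ and $z^{2}$ coefficients of $\delta^{\pm 1}\I_{z}[P_{j}]$, while the pointwise boundary values $P_{j}(\pm 1)$ do not appear because the required Taylor coefficients are obtained by integration over $[-1,0]$ or $[0,1]$.

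Once the residue is in hand, I would identify each $\chib_{k}^{*}$ by exploiting the Jordan structure: because $\L_{0}\chib_{0} = 0$ and $\L_{0}\chib_{k+1} = \chib_{k}$, the Laurent coefficients of $\rhs(z)\Ub$ satisfy
\[
\operatorname{Res}_{z=0}\bigl(z^{n-1}\rhs(z)\Ub\bigr) = \L_{0}^{n}\Pi_{0}\Ub \quad\text{for } n=0,1,2,3.
\]
Consequently the coefficient of $z^{-4}$ in $\rhs(z)\Ub$ equals $\chib_{3}^{*}[\Ub]\chib_{0}$, from which \eqref{eqn: chib3-star} can be read off immediately (since $\chib_{0}$ has a particularly simple form); the coefficient of $z^{-3}$ equals $\chib_{3}^{*}[\Ub]\chib_{1} + \chib_{2}^{*}[\Ub]\chib_{0}$, yielding \eqref{eqn: chib2-star} once $\chib_{3}^{*}$ is known; and the coefficients of $z^{-2}$ and $z^{-1}$ recover $\chib_{1}^{*}$ and $\chib_{0}^{*}$ in sequence.

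The principal obstacle is computational rather than conceptual: combining two distinct singular orders $z^{-4}$ and $z^{-2}$ against Taylor expansions of six operator-valued factors, each with both even and odd contributions in $z$, produces a lengthy bookkeeping task, and collapsing the resulting rational functions of $\kappa$ and $w$ into the specific factor $(1+\kappa)^{2}w^{2} + 2(\kappa^{2}-4\kappa+1)w + (1+\kappa)^{2}$ buried in $\asf_{-4}$ and $\asf_{-2}$ demands some care. A useful check at the end is the biorthogonality $\chib_{k}^{*}[\chib_{j};\kappa,w] = \delta_{jk}$, which can be verified directly against the explicit eigenvectors in \eqref{eqn: chib0}--\eqref{eqn: chib3}; a secondary check is the recurrence $\chib_{k}^{*}[\L_{0}\Ub;\kappa,w] = \chib_{k+1}^{*}[\Ub;\kappa,w]$ for $k=0,1,2$, which must hold on all of $\X$ since $\chib_{k}^{*}$ annihilates the range of $(\ind_{\X}-\Pi_{0})$.
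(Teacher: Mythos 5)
Your proposal follows the same essential route as the paper's proof: expand the resolvent in a Laurent series about $z=0$ using Kato's formula \eqref{eqn: resolvent Laurent series}, compute the negative-power coefficients from the explicit resolvent formulas in Proposition \ref{prop: eigenprops}\ref{part: resolvent}, and identify the $\chib_k^*$ by matching against the Jordan chain. The one genuine difference is in the final read-off step. The paper exploits the observation that $(\chib_0)_1 = (\chib_0)_2 = 1$ while $(\chib_j)_1 = -(\chib_j)_2$ for $j=1,2,3$, which immediately gives the direct formula $\chib_k^*[\Ub] = \tfrac{1}{2}\big((\P^k\Ub)_1 + (\P^k\Ub)_2\big)$; each functional is then obtained independently, with no back-substitution, from the Laurent coefficients of only components 1 and 2 of the resolvent. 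Your iterative scheme — read off $\chib_3^*$ from the $z^{-4}$ term, subtract its contribution from the $z^{-3}$ term to get $\chib_2^*$, and so on — works too, but in practice you would end up rediscovering the same component-sum trick to isolate $\chib_0$ from the other $\chib_j$, so the two methods converge.

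Two slips in your write-up are worth flagging. First, the displayed residue identity should read $\operatorname{Res}_{z=0}\big(z^{n}\rhs(z)\Ub\big) = \L_0^n\Pi_0\Ub$ rather than $z^{n-1}$; your subsequent prose (``the coefficient of $z^{-4}$ \ldots equals $\chib_3^*[\Ub]\chib_0$'') is consistent with the corrected version. Second, and more consequentially, your claimed expansion $\frac{1}{\det(\M(z;\cs))} = \frac{\asf_{-4}}{z^4} + \frac{\asf_{-2}}{z^2} + \O(1)$ has the wrong sign on the $z^{-2}$ coefficient. The scalars $\asf_{-4}, \asf_{-2}$ are defined in \eqref{eqn: a-4}--\eqref{eqn: a-2} via $k$-derivatives of $\Lambda$, but the resolvent is a function of $z = ik$; since $\det(\M(z;\cs)) = \Lambda(-iz;\cs) = \tfrac{\lambda_4}{4!}z^4 - \tfrac{\lambda_6}{6!}z^6 + \O(z^8)$, the factor $(-i)^6 = -1$ flips the sign of the $z^6$ term, and inverting gives $\frac{1}{\det(\M(z;\cs))} = \frac{\asf_{-4}}{z^4} - \frac{\asf_{-2}}{z^2} + \O(1)$. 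If you carried your stated expansion through the remaining bookkeeping you would get wrong signs in the formulas for $\chib_k^*$; the biorthogonality check $\chib_k^*[\chib_j] = \delta_{jk}$ that you suggest at the end would catch this, but it is better to get the bookkeeping right from the start.
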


\begin{proof}
As usual, we suppress most notational dependence on $\kappa$, $\beta$, and $w$ in this proof.
Since 0 has algebraic multiplicity 4 as an eigenvalue of $\L_0$, for $z \in \rho(\L_0)$ we have the Laurent series expansion at the pole $z=0$ \cite[eqn.\@ (6.32)]{kato} of the resolvent operator $\rhs$ for $\L_0$:
\begin{equation}\label{eqn: resolvent Laurent series}
\rhs(z)
= \frac{\P^3}{z^4} + \frac{\P^2}{z^3} + \frac{\P}{z^2} + \frac{\Pi_0}{z} + \sum_{s=0}^{\infty} z^s\tilde{\rhs}^s,
\end{equation}
where $\Pi_0$ is the spectral projection \eqref{eqn: spectral proj FPUT}, $\P := \L_0\Pi_0$, and $\tilde{\rhs} \in \b(\X)$.
The operators $\Pi_0$ and $\P$ commute.

Recall from \eqref{eqn: component convention} our convention of denoting the six components of $\Ub \in \X$ by $(\Ub)_s$ for $j=1,\ldots,6$.
The generalized eigenvectors $\chib_j$ from \eqref{eqn: chib0}, \eqref{eqn: chib1}, \eqref{eqn: chib2}, and \eqref{eqn: chib3} satisfy
\[
(\chib_0)_1
= (\chib_0)_2
= 1
\]
and
\[
(\chib_k)_1
= -(\chib_k)_2, \ k = 1,2,3.
\]
Consequently,
\[
(\Pi_0\Ub)_1
+ (\Pi_0\Ub)_2
= \sum_{k=0}^3 \chib_k^*[\Ub](\chib_k)_1+ \sum_{k=0}^3 \chib_k^*[\Ub](\chib_k)_2 
= 2\chib_0^*[\Ub],
\]
and thus
\begin{equation}\label{eqn: chib0-star id}
\chib_0^*[\Ub]
= \frac{(\Pi_0\Ub)_1 + (\Pi_0\Ub)_2}{2}.
\end{equation}

As in part \ref{part: chi-k+1-star on L0} of Lemma \ref{lem: odds and ends}, we have the identities
\begin{equation}\label{eqn: gen-ev+1}
\chib_k^*[\L_0\Ub] = \chib_{k+1}^*[\Ub], \ k = 0,1,2,
\quadword{and}
\chib_3^*[\L_0\Ub] = 0.
\end{equation}
Using the formula \eqref{eqn: chib0-star id} after iterating \eqref{eqn: gen-ev+1} several times, along with the commutativity of $\P$ and $\Pi_0$, yields
\begin{equation}\label{eqn: chibj-star formula}
\chib_k^*[\Ub]
= \chib_0^*[\P^k\Ub]
= \frac{(\P^k\Ub)_1 + (\P^k\Ub)_2}{2}, \ k = 1,2,3.
\end{equation}

For $k = 1$, $2$, $3$, the number $(\P^k\Ub)_1$ is the coefficient of $z^{-(k+1)}$ in the Laurent series \eqref{eqn: resolvent Laurent series} of $(\rhs(z)\Ub)_1$, while $(\Pi_0\Ub)_1$ is the coefficient of $z^{-1}$ in this series.
We can also calculate the Laurent series for $(\rhs(z)\Ub)_1$ directly from the formula in \eqref{eqn: resolv comp1}.
This formula is the sum of two $\Ub$-dependent terms, and each term is the quotient of an entire function of $z$ divided by $\det\big(\M(z;\cs)\big)$.
We then equate the explicit $\Ub$-dependent Laurent coefficients with $(\P^k\Ub)_1$ or $(\Pi_0\Ub)_1$, as appropriate, and use the formula \eqref{eqn: chibj-star formula} to conclude the values of $\chib_k^*$.
\end{proof}

\begin{remark}\label{rem: asf-4 is negative}
By the proof of part \ref{part: Lambda-4} of Proposition \ref{prop: Lambda props}, we have $\partial_k^4[\Lambda](0;\kappa,w,\cs(\kappa,w)) < 0$, and so, from its definition in \eqref{eqn: a-4}, we see that $\asf_{-4}(\kappa,w) < 0$.
We will need the negativity of $\asf_{-4}(\kappa,w)$ in Section \ref{sec: lin nondegen concrete}.
\end{remark}

\subsubsection{The eigenvectors of $\L_0(\kappa,w)$ corresponding to the eigenvalues $\pm{i}\omega_*(\kappa,w)$}\label{sec: omega ev}
These eigenvalues are simple by part \ref{part: Lambda-2} of Proposition \ref{prop: Lambda props} and \ref{part: ew geo simple} of Proposition \ref{prop: eigenprops}.
By part \ref{part: Lambda-1} of Proposition \ref{prop: Lambda props}, these are the only eigenvalues of $\L_0(\kappa,w)$ on the imaginary axis.
The one-dimensional eigenspaces corresponding to these eigenvalues are, by \eqref{eqn: eigenvector}, spanned by the vectors
\begin{equation}\label{eqn: chib-pm-omega}
\chib_{\pm}(\kappa,w)
:= \Eb(\pm{i}\omega_*(\kappa,w);\kappa,\cs(\kappa,w)),
\end{equation}
where $\Eb$ was defined in \eqref{eqn: Ub-kappa-lambda}.

The spectral projections of $\L_0(\kappa,w)$ corresponding to $\pm{i}\omega_*(\kappa,w)$ then have the form $\chib_{\pm}^*[\cdot;\kappa,w]\chib_{\pm}(\kappa,w)$ for certain functionals $\chib_{\pm}^*[\cdot;\kappa,w]$.
We could calculate formulas for these functionals like those in Proposition \ref{prop: chik-star formulas} using the Laurent series of the resolvent of $\L_0(\kappa,w)$ expanded at $\pm{i}\omega_*(\kappa,w)$ and the formulas for the resolvent in part \ref{part: resolvent} of Proposition \ref{prop: eigenprops}.
However, we will not explicitly employ the functionals $\chib_{\pm}^*[\cdot;\kappa,w]$ in our subsequent work, and so we omit further calculations.

\subsection{The near-sonic small parameter}\label{sec: near-sonic}
Now we are ready to introduce a small parameter $\mu$ into the differential equation \eqref{eqn: IK system}.
Because of the factor of $c^{-2}$ that appears in the linear and quadratic terms of our traveling wave system in Section \ref{sec: IK-cov}, we take $\mu$ to satisfy
\[
\frac{1}{c^2}
= \frac{1}{\cs(\kappa,w)^2} - \mu,
\qquad
0 \le \mu \le \frac{1}{2\cs(\kappa,w)^2} = \mu_0(\kappa,w).
\]
Equivalently, the wave speed is now
\begin{equation}\label{eqn: cnu}
\cmu(\kappa,w)^2
:= \frac{\cs(\kappa,w)^2}{1-\mu\cs(\kappa,w)^2}
= \cs(\kappa,w)^2 + \O(\mu). 
\end{equation}

From \eqref{eqn: Lc defn}, we can then express
\begin{equation}\label{eqn: Lsuper1}
\L(\kappa,w,\cmu(\kappa,w))
= \L_0(\kappa,w) + \mu\L_1(\kappa,w).
\end{equation}
The operator $\L_0(\kappa,w)$ was defined in \eqref{eqn: star simplicity}, while the new operator $\L_1(\kappa,w)$ is
\begin{equation}\label{eqn: L1}
\L_1(\kappa,w) := \begin{bmatrix*}
0 &0 &0 \\
(1+\kappa)\diag(1,w) &0 &-\Delta(\kappa,w) \\
0 &0 &0
\end{bmatrix*},
\end{equation}
The component operator $\Delta$ was defined in \eqref{eqn: Delta defn}.

For the nonlinear terms, first put
\begin{equation}\label{eqn: concrete nl0}
\nl_0(\Ub,\grave{\Ub};\kappa,\beta,w)
:= \cs(\kappa,w)^{-2}\nl_0(\Ub,\grave{\Ub};\beta,w),
\end{equation}
where $\nl_0(\Ub,\grave{\Ub};\beta,w)$ was defined in \eqref{eqn: nl0 defn}.
Then set
\begin{equation}\label{eqn: concrete nl}
\nl(\Ub,\mu;\kappa,\beta,\Vscr_1,\Vscr_2,w)
:= -\mu\nl_0(\Ub,\Ub;\beta,w) + \big(\cs(\kappa,w)^{-2}-\mu\big)\nl_1(\Ub;\Vscr_1,\Vscr_2,w),
\end{equation}
where $\nl_1$ was defined in \eqref{eqn: nl1 defn}.

With all of these definitions, our problem \eqref{eqn: IK system} is equivalent to
\begin{equation}\label{eqn: concrete final}
\Ub'(x) 
= \F(\Ub(x),\mu;\kappa,\beta,\Vscr_1,\Vscr_2,w),
\end{equation}
where 
\begin{equation}\label{eqn: F-ep-kappa-beta-w}
\F(\Ub,\mu;\kappa,\beta,\Vscr_1,\Vscr_2,w)
:=  \L_0(\kappa,w)\Ub + \mu\L_1(\kappa,w)\Ub + \nl(\Ub,\mu;\kappa,\beta,\Vscr_1,\Vscr_2,w).
\end{equation}

We will apply the abstract theory of Section \ref{sec: abstract} to the problem \eqref{eqn: F-ep-kappa-beta-w}.
To do so, we must check that Hypotheses \ref{hypo: F structure} through \ref{hypo: opt reg} hold.
By inspection, it is obvious that $\L_0(\kappa,w)$ and $\L_1(\kappa,w)$ are bounded operators from $\D$ to $\X$, where these Banach spaces were defined in \eqref{eqn: X defn} and \eqref{eqn: D defn}.
The analyticity of $\nl$ in $\Ub$ and $\mu$ is also obvious, as are the bilinearity of $\nl_0$ and the cubicity of $\nl-\nl_0$.
This verifies Hypothesis \ref{hypo: F structure} for the equation \eqref{eqn: concrete final}.

Recall that in Section \ref{sec: symmetries} we constructed symmetries $\S_{\Mb}$ and $\S_{\Kb}$ for the mass and spring dimer versions of \eqref{eqn: Fc L+Q0+Q1}, respectively.
The proofs that $\L_1(1,w)$ and $\nl(\cdot,\mu;1,1,\Vscr,\Vscr,w)$ anticommute with $\S_{\Mb}$ and that $\L_1(\kappa,1)$ and $\nl(\cdot,\mu;\kappa,\beta,\Vscr_1,\Vscr_2,1)$ anticommute with $\S_{\Kb}$ are direct calculations, which we omit.
This verifies Hypothesis \ref{hypo: symmetry} in the separate mass and spring dimer cases.

We checked Hypothesis \ref{hypo: spectrum} on the center spectrum of $\L_0(\kappa,w)$ in Section \ref{sec: spectral analysis}, specifically in Proposition \ref{prop: Lambda props}.
Finally, in Sections \ref{sec: gen ev} and \ref{sec: eigenproj} we proved Hypothesis \ref{hypo: eigenproj} on the properties of the projection onto the zero eigenspace for $\L_0(\kappa,w)$.

As for the remaining hypotheses, we will verify Hypothesis \ref{hypo: first int} on the first integral in Section \ref{sec: first int}, Hypothesis \ref{hypo: nondegen} on the nondegeneracies in Section \ref{sec: nondegen}, and Hypothesis \ref{hypo: opt reg} on optimal regularity in Section \ref{sec: opt reg}.

\begin{remark}\label{rem: mass dimer chi0-star}
The functional $\chib_0^*[\cdot;1,w]$, defined in \eqref{eqn: chib0-star}, vanishes on vectors $\Ub \in \X$ of the form $\Ub = (0,0,\xi_1,\xi_2,0,0)$.
The subspace of all vectors in $\X$ of this form, of course, contain the images of $\X$ under $\L_1(\kappa,w)$ and $\nl$.
This is exactly what happens in the corresponding situations in \cite{iooss-james, iooss-fpu, venney-zimmer}.
For the mass dimer, then, we might be tempted to add to Hypothesis \ref{hypo: eigenproj} the conditions (using the language of Section \ref{sec: abstract}) 
\begin{equation}\label{eqn: a hypo we do not add}
\chi_0^*[\L_1(\mu)U] 
= \chi_0^*[\nl(U,\mu)] 
= 0
\end{equation} 
for all $U$ and $\mu$.
Doing so would simplify the function $\Gamma_{\mu}$ in \eqref{eqn: Gamma-mu} considerably.
But the situation when $\kappa \ne 1$ is not as nice, since, from their definitions in \eqref{eqn: a-4} and \eqref{eqn: a-2}, it turns out that
\[
\asf_{-4}(\kappa,w) + 6\asf_{-2}(\kappa,w)
> 0.
\]
Thus $\chib_0^*[\Ub;\kappa,w]$ is not automatically 0 for $\Ub = (0,0,\xi_1,\xi_2,0,0)$ and $\kappa$, $w$ arbitrary, and so we do not add a condition like \eqref{eqn: a hypo we do not add} to Hypothesis \ref{hypo: eigenproj}.
\end{remark}

\subsection{First integrals: verification of Hypothesis \ref{hypo: first int}}\label{sec: first int}
The original equations of motion \eqref{eqn: original equations of motion} form a Hamiltonian system; this is a key property of FPUT lattices in many treatments, see, e.g., \cite[Ch.\@ 1]{pankov}, \cite[Sec.\@ 5.1]{gmwz}, and \cite{friesecke-pego2, friesecke-pego3, friesecke-pego4}.
Unsurprisingly, then, the problem \eqref{eqn: original equations of motion} possesses conserved quantities.

The precise first integral that we will need emerges at the level of the original position traveling wave problem \eqref{eqn: position tw eqns}.
To motivate how this first integral will behave in Iooss--Kirchg\"{a}ssner variables, we perform the following manipulations.
Divide the second equation in \eqref{eqn: position tw eqns} by $w$ and add this rescaled equation to the first to find that the position traveling wave profiles $p_1$ and $p_2$ must satisfy
\begin{equation}\label{eqn: first int initial}
c^2\left(p_1'' + \frac{p_2''}{w}\right)
- (1-S^{-1})\big[\V_1'(S^1p_2-p_1)+\V_2'(S^1p_1-p_2)\big].
\end{equation}
For $f \in \Cal^1(\R)$, define
\begin{equation}\label{eqn: int ops first int}
(\I_-f)(x) 
:= \int_{-1}^0 f(x+s) \ds.
\end{equation}
After differentiating under the integral and invoking the fundamental theorem of calculus, we have
\[
\partial_x\I_- 
= 1-S^{-1}.
\]
Then \eqref{eqn: first int initial} reads
\[
\partial_x\left[c^2\left(p_1' + \frac{p_2'}{w}\right)-\I_-\big[\V_1'(S^1p_2-p_1)+\V_2'(S^1p_1-p_2)\big]\right]
= 0,
\]
and so the map
\[
x
\mapsto
c^2\left(p_1'(x)+\frac{p_2'(x)}{w}\right) 
- \int_{-1}^0 \big[\V_1'(p_2(x+s+1)-p_1(x+s))+\V_2'(p_1(x+s+1)-p_2(x+s))\big] \ds
\]
is constant when $p_1$ and $p_2$ solve \eqref{eqn: position tw eqns}.

We can translate this into a first integral for the equation \eqref{eqn: IK system} using the Iooss--Kirchg\"{a}ssner variables from Section \ref{sec: IK-cov}.
The key observation is that $p_j(x+s+1) = P_j(x,s+1)$, per \eqref{eqn: Pj}.
We define this first integral precisely and present some of its fundamental properties.

\begin{proposition}\label{prop: first integral}
Given $\Ub = (p_1,p_2,\xi_1,\xi_2,P_1,P_2) \in \X$, set
\begin{equation}\label{eqn: first integral defn}
\J(\Ub;\V_1,\V_2,w,c)
:= c^2\left(\xi_1+\frac{\xi_2}{w}\right)
-\int_{-1}^0 \big[\V_1'(P_2(s+1)-P_1(s))+\V_2'(P_1(s+1)-P_2(s))\big] \ds.
\end{equation}

\begin{enumerate}[label={\bf(\roman*)}, ref={(\roman*)}]

\item\label{part: Jc deriv}
The map $\X \times \R \colon (\Ub,c) \mapsto \J(\Ub;\V_1,\V_2,w,c)$ is analytic and, with $\F(\Ub;\V_1,\V_2,w,c)$ defined in \eqref{eqn: Fc}, 
\[
D\J(\Ub;\V_1,\V_2,w,c)\F(\Ub;\V_1,\V_2,w,c)
= 0
\]
for all $\Ub \in \X$.

\item\label{part: Jc deriv 0}
$D\J(0;\V_1,\V_2,w,c)\Ub 
= c^2\xi_1
+ \frac{c^2}{w}\xi_2
+\int_{-1}^0 \big(P_1(s) + \kappa{P}_2(s)\big) \ds
-\int_0^1 \big(\kappa{P}_1(s)+P_2(s)\big) \ds$

\item\label{part: Jc and S}
In the special cases of the mass and spring dimers, $\J$ is invariant under the symmetries from Section \ref{sec: symmetries}:
\[
\J(\S_{\Mb}\Ub;\V,\V,w,c) = \J(\Ub;\V,\V,w,c)
\quadword{and}
\J(\S_{\Kb}\Ub;\V_1,\V_2,1,c) = \J(\Ub;\V_1,\V_2,1,c),
\]
where $\S_{\Mb}$ was defined in \eqref{eqn: md symm} and $\S_{\Kb}$ in \eqref{eqn: sd symm}.

\item\label{part: Jc on chib0}
With $\chib_0(\kappa,w)$ defined in \eqref{eqn: chib0}, we have
\[
\J(\Ub + \mu\chib_0(\kappa,w);\V_1,\V_2,w,c) 
= \J(\Ub;\V_1,\V_2,w,c)
\]
for all $\Ub \in \X$ and $\mu \in \R$.

\item\label{part: Jscr-w}
Let
\begin{equation}\label{eqn: Jscr-w defn}
\Jscr_w\Ub
:= \xi_1 + \frac{\xi_2}{w}.
\end{equation}
Then $\Jscr_w$ is a bounded linear functional on $\X$ and 
\begin{equation}\label{eqn: Jscr-w id}
\J(\Ub;\V_1,\V_2,w,c)-\J(\Ub;\V_1,\V_2,w,\grave{c})
= (c^2-\grave{c}^2)\Jscr_w\Ub.
\end{equation}
\end{enumerate}
\end{proposition}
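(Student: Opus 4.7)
My plan is to dispatch the five parts in the order (iv), (v), (ii), (i), (iii), since the easy computational parts clarify the structure needed for the harder ones. Part (iv) is immediate by inspection: translation by $\chib_0(\kappa,w) = (1,1,0,0,1,1)$ adds a constant $\mu$ to every $p_j$ and every $P_j$ while leaving $\xi_1,\xi_2$ alone, so every difference $P_i(s+1) - P_j(s)$ appearing inside a $\V_k'$ is unchanged and the scalar term $c^2(\xi_1 + \xi_2/w)$ is unchanged as well. Part (v) is equally immediate: $c$ enters \eqref{eqn: first integral defn} only through the prefactor $c^2(\xi_1+\xi_2/w)$, so $\J(\Ub;\V_1,\V_2,w,c) - \J(\Ub;\V_1,\V_2,w,\grave{c}) = (c^2 - \grave{c}^2)(\xi_1 + \xi_2/w) = (c^2-\grave c^2)\Jscr_w\Ub$. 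For part (ii) I will linearize: differentiate \eqref{eqn: first integral defn} at $\Ub = 0$, use $\V_1''(0) = 1$ and $\V_2''(0) = \kappa$ (from the normalization \eqref{eqn: dimer springs}), then change variables $s \mapsto s + 1$ in the integrals containing $P_1(s+1)$ and $P_2(s+1)$ to match the stated formula.

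The substantive content of the proposition is part (i). Analyticity of $(\Ub,c) \mapsto \J(\Ub;\V_1,\V_2,w,c)$ follows from the real analyticity of $\V_1,\V_2$ together with the fact that differentiation under the integral sign over the compact interval $[-1,0]$ preserves analyticity. For the first-integral identity, I will compute $D\J(\Ub;\V_1,\V_2,w,c)\grave{\Ub}$ for arbitrary $\grave{\Ub}$ and then substitute $\grave{\Ub} = \F(\Ub;\V_1,\V_2,w,c)$ from \eqref{eqn: Fc}. The crucial structural observation is that when the last two components of $\F$ are $\partial_v[P_1]$ and $\partial_v[P_2]$, the integrand of $D\J\cdot\F$ becomes exactly
\[
-\partial_s\!\bigl[\V_1'(P_2(s+1) - P_1(s)) + \V_2'(P_1(s+1) - P_2(s))\bigr].
\]
Applying the fundamental theorem of calculus on $[-1,0]$ gives boundary terms at $s = 0$ and $s = -1$, which, using $P_j(0) = p_j$ and $\delta^{\pm 1}P_j = P_j(\pm 1)$, read $\V_1'(\delta^1 P_2 - p_1) - \V_1'(p_2 - \delta^{-1}P_1) + \V_2'(\delta^1 P_1 - p_2) - \V_2'(p_1 - \delta^{-1} P_2)$. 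These cancel precisely against $c^2(\grave\xi_1 + \grave\xi_2/w)$ after substituting the explicit $\grave\xi_j$-components of $\F$.

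The genuine obstacle is part (iii), which requires careful bookkeeping of signs and reflections. For the mass dimer ($\kappa = 1$, $\V_1 = \V_2 = \V$), I will compute $\S_{\Mb}\Ub = (-p_1,-p_2,\xi_1,\xi_2,-RP_1,-RP_2)$ and plug into \eqref{eqn: first integral defn}; each argument $P_j(-s-1) \pm P_i(-s)$ inside a $\V'$ is then massaged by the substitution $s \mapsto -s$, followed by the shift $u \mapsto u-1$, which returns the domain of integration to $[-1,0]$ and reproduces the original integrand (the sign flips cancel inside each $\V'$ once we rewrite $\V'(A - B) = \V'(-(B-A))$ with the fact that the two integrand terms get exchanged). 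For the spring dimer ($w = 1$), $\S_{\Kb}\Ub = (-p_2,-p_1,\xi_2,\xi_1,-RP_2,-RP_1)$, and the same pair of variable changes works, but now the swap induced by $\Jbb$ exchanges the roles of $\V_1'$ and $\V_2'$ in the integrand in exactly the way required for the two $\V'$-terms in $\J(\S_{\Kb}\Ub;\V_1,\V_2,1,c)$ to line up with those of $\J(\Ub;\V_1,\V_2,1,c)$. The main risk is arithmetic: a single misplaced sign or a mismatched index shift in either integrand derails the identification, so I will tabulate the arguments of $\V_1'$ and $\V_2'$ explicitly at each stage before collapsing the calculation.
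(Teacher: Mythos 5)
Your proposal is correct and follows essentially the same route as the paper: compute $D\J$, recognize the integrand as an exact $\partial_s$-derivative when $\grave{P}_j = \partial_v[P_j]$, and cancel the resulting boundary terms against $c^2(\grave\xi_1 + \grave\xi_2/w)$ for part (i); substitute $u = -s-1$ (your two-step $s\mapsto -s$ then shift composes to exactly this) for part (iii); and direct computations for (ii), (iv), (v). The reordering of parts and the slightly roundabout phrasing of the sign bookkeeping in (iii) are cosmetic; the substance matches the paper's proof.
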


\begin{proof}

\begin{enumerate}[label={\bf(\roman*)}]

\item
For $\Ub = (p_1,p_2,\xi_1,\xi_2,P_1,P_2)$, $\grave{\Ub} = (\grave{p}_1,\grave{p}_2,\grave{\xi}_1,\grave{\xi}_2,\grave{P}_1,\grave{P}_2) \in \X$, it is straightforward to calculate
\begin{multline}\label{eqn: DJ}
D\J(\Ub;\V_1,\V_2,w,c)\grave{\Ub}
= c^2\left(\grave{\xi}_1+\frac{\grave{\xi}_2}{w}\right)
- \int_{-1}^0 \V_1''(P_2(s+1)-P_1(s))(\grave{P}_2(s+1)-\grave{P}_1(s)) \ds \\
- \int_{-1}^0 \V_2''(P_1(s+1)-P_2(s))(\grave{P}_1(s+1)-\grave{P}_2(s)) \ds.
\end{multline}
Now we take $\grave{\Ub} = \F(\Ub;\V_1,\V_2,w,c)$.
Per the definition of $\F$ in \eqref{eqn: Fc}, set 
\[
\grave{\xi}_1 = c^{-2}\V_1(P_2(1)-p_1)-c^{-2}\V_2'(p_1-P_2(-1)),
\]
\[
\grave{\xi}_2 = c^{-2}w\V_2'(P_1(1)-p_2)-c^{-2}w\V_1'(p_2-P_1(-1)),
\]
and $\grave{P}_j = P_j'$.
Then the first integral in \eqref{eqn: DJ} becomes
\begin{multline*}
\int_{-1}^0 \V_1''(P_2(s+1)-P_1(s))(\grave{P}_2(s+1)-\grave{P}_1(s)) \ds
= \int_{-1}^0 \partial_s [\V_1'(S^1P_2-P_1)] \ds \\
= \V_1'(P_2(1)-P_1(0))-\V_1'(P_2(0)-P_1(-1))
= \V_1'(P_2(1)-p_1)-\V_1'(p_2-P_1(-1)),
\end{multline*}
where we have used the definition of the space $\X$ to find $P_j(0) = p_j$.
Consequently,
\[
c^2\grave{\xi}_1-\int_{-1}^0 \V_1''(P_2(s+1)-P_1(s))(\grave{P}_2(s+1)-\grave{P}_1(s)) \ds
= 0.
\]
A similar cancelation occurs when we add $c^2\grave{\xi}_2/w$ to the second integral in \eqref{eqn: DJ}.

\item
This is a direct calculation using \eqref{eqn: DJ} and the identities $\V_1''(0) = 1$ and $\V_2''(0) = \kappa$, per \eqref{eqn: dimer springs}.
Substitute $u=s+1$ in the second integral to obtain the desired formula.

\item
For the mass dimer, using the definition of $\S_{\Mb}$ in \eqref{eqn: md symm}, we have
\[
\J(\S_{\Mb}\Ub;\V,\V,w,c)
= c^2\left(\xi+\frac{\xi_2}{w}\right)
+ \int_{-1}^0 \big[\V'(-P_2(-s-1)+P_1(-s))+\V'(-P_1(-s-1)+P_2(-s))\big] \ds.
\]
Substitute $u = -s-1$ in the integral to conclude $\J(\S_{\Mb}\Ub;\V,\V,w,c) = \J(\Ub;\V,\V,w,c)$.
The proof for the spring dimer uses exactly the same substitution.

\item
This is a direct calculation.

\item
This is a direct calculation.
\qedhere
\end{enumerate}
\end{proof}

This first integral $\J$ is not quite the right one to use in conjunction with Hypothesis \ref{hypo: first int}, since we want our first integral to interact in a special way with the functional $\chib_3^*[\cdot;\kappa,w]$.
Instead, we simultaneously rescale $\J$ by $\asf_{-4}(\kappa,w)$, as defined in \eqref{eqn: a-4}, and select the wave speed to be $c=\cmu(\kappa,w)$.
Define
\begin{equation}\label{eqn: concrete first int}
\J_{\mu}(\Ub;\V_1,\V_2,\kappa,w)
:= w(1+\kappa)\asf_{-4}(\kappa,w)\J(\Ub;\V_1,\V_2,w,\cmu(\kappa,w)).
\end{equation}
It is then straightforward to calculate that the rescaled first integral $\J_{\mu}$ satisfies all the conditions of Hypothesis \ref{hypo: first int} for the near-sonic problem \eqref{eqn: concrete final}.
In particular, we may use the expression for $D\J(0;\V_1,\V_2,w,c)$ given in part \ref{part: Jc deriv 0} above and the formula for $\chib_3^*[\cdot;\kappa,w]$ given in \eqref{eqn: chib3-star} to find
\[
D\J_0(0;\V_1,\V_2,\kappa,w)\Ub
= w(1+\kappa)\asf_{-4}(\kappa,w)D\J(0;\V_1,\V_2,w,\cs(\kappa,w))\Ub
= \chib_3^*[\Ub;\kappa,w].
\]
Also, put
\begin{equation}\label{eqn: Jscr concrete}
\Jscr_*(\mu;\kappa,w)\Ub
:=w(1+\kappa)\asf_{-4}(\kappa,w)\cs(\kappa,w)^4\left(\frac{1}{1-\cs(\kappa,w)^2\mu}\right)\Jscr_w\Ub.
\end{equation}
Then the identity \eqref{eqn: Jscr-w id} reads
\[
\J_{\mu}(\Ub;\V_1,\V_2,\kappa,w) -\J_0(\Ub;\V_1,\V_2,\kappa,w)
= \mu\Jscr_*(\mu;\kappa,w)\Ub.
\]

\subsection{The nondegeneracy conditions: verification of Hypothesis \ref{hypo: nondegen}}\label{sec: nondegen}

\subsubsection{The linear nondegeneracy condition}\label{sec: lin nondegen concrete}
Put
\begin{equation}\label{eqn: lin nondegen concrete}
\Lfrak_0(\kappa,w)
:= \chib_2^*\big[\L_1(\kappa,w)\chib_1(\kappa,w);\kappa,w\big]
- \Jscr_*(0;\kappa,w)\chib_1(\kappa,w).
\end{equation}
We need to show $\Lfrak_0(\kappa,w) > 0$.
Using the definition of $\chib_1$ in \eqref{eqn: chib1} and $\L_1$ in \eqref{eqn: L1}, it is easy to show that 
\[
\L_1(\kappa,w)\chib_1(\kappa,w)
= 0.
\]
Next, the definition of $\Jscr_*$ in \eqref{eqn: Jscr concrete} implies
\[
\Jscr_*(0;\kappa,w)\chib_1(\kappa,w)
= w(1+\kappa)\asf_{-4}(\kappa,w)\cs(\kappa,w)^4\left(1+\frac{1}{w}\right)
< 0
\]
since $\asf_{-4}(\kappa,w) < 0$ by its definition in \eqref{eqn: a-4} and Remark \ref{rem: asf-4 is negative}.
Hence
\begin{equation}\label{eqn: Lfrak0-concrete}
\Lfrak_0(\kappa,w)
= -(1+\kappa)(1+w)\asf_{-4}(\kappa,w)\cs(\kappa,w)^4
> 0.
\end{equation}

\subsubsection{The quadratic nondegeneracy condition}
Put
\begin{equation}\label{eqn: quad nondegen concrete}
\Qfrak_0(\kappa,\beta,w)
:= 2\chib_2^*\big[\nl_0\big(\chib_1(\kappa,w),\chib_1(\kappa,w);\kappa,\beta,w\big);\kappa,w\big] -D^2\J_0(0;\V_1,\V_2,\kappa,w)\big[\chib_1(\kappa,w),\chib_1(\kappa,w)\big].
\end{equation}
We need to show $\Qfrak_0(\kappa,\beta,w) \ne 0$.

We recall that $\nl_0$ is defined in \eqref{eqn: concrete nl0}, $\J_0$ in \eqref{eqn: concrete first int}, and $\chib_1(\kappa,w)$ in \eqref{eqn: chib1}.
It is then straightforward to calculate
\[
\nl_0\big(\chib_1(\kappa,w),\chib_1(\kappa,w);\kappa,\beta,w\big)
= \frac{(\beta-\kappa^2)(1+w)}{(1+\kappa)\kappa{w}}
\begin{pmatrix*}
0 \\
0 \\
-1 \\
w \\
0 \\
0
\end{pmatrix*}.
\]
Next, the definition of $\chib_2^*[\cdot;\kappa,w]$ in \eqref{eqn: chib2-star} gives
\begin{equation}\label{eqn: chi2 on Q0}
2\chib_2^*\big[\nl_0\big(\chib_1(\kappa,w),\chib_1(\kappa,w);\kappa,\beta,w\big);\kappa,w\big]
= -\frac{8(\beta-\kappa^2)\asf_{-4}(\kappa,w)(\kappa-1)w}{(1+\kappa)^2},
\end{equation}
where $\asf_{-4}(\kappa,w) \ne 0$ was defined in \eqref{eqn: a-4}.

Finally, we can calculate $D^2\J_{\mu}$ using \eqref{eqn: concrete first int} and \eqref{eqn: DJ}.
For $\Ub = (p_1,p_2,\xi_1,\xi_2,P_1,P_2) \in \X$ in general, we find
\begin{multline*}
D^2\J(0;\V_1,\V_2,w,c)[\Ub,\Ub]
= 4\beta\int_{-1}^0 P_1(s+1)P_2(s) \ds
- 2\beta\int_{-1}^0 \big(P_1(s+1)^2 + P_2(s)^2\big) \ds \\
+ 4\int_0^1 P_1(s-1)P_2(s) \ds
- 2\int_0^1 \big(P_1(s-1)^2 + P_2(s)^2\big) \ds.
\end{multline*}
Then we evaluate this second derivative at the generalized eigenvector $\Ub = \chib_1(\kappa,w)$ defined in \eqref{eqn: chib1} and rescaling the result per \eqref{eqn: concrete first int} to find
\begin{equation}\label{eqn: D2J value}
D^2\J_0(0;\kappa,\beta,w)\chib_1(\kappa,w),\chib_1(\kappa,w)]
= -\frac{8(\beta+\kappa^2)w\asf_{-4}(\kappa,w)}{1+\kappa}.
\end{equation}
Subtracting the value in \ref{eqn: D2J value} from \eqref{eqn: chi2 on Q0}, we conclude
\begin{equation}\label{eqn: final quad nondegen}
\Qfrak_0(\kappa,\beta,w)
= \frac{16w\asf_{-4}(\kappa,w)(\beta+\kappa^3)}{(1+\kappa)^2}
\ne 0,
\end{equation}
provided that $\beta + \kappa^3 \ne 0$.
This, of course, leads to our requirement that $\beta\ne\kappa^3$ in Theorem~\ref{thm: main theorem spring dimer}.

\subsection{Optimal regularity: verification of Hypothesis \ref{hypo: opt reg}}\label{sec: opt reg}
We presume familiarity here with our nomenclature in Appendix \ref{app: opt reg}.
The central difficulty of verifying this hypotheses is that the operator $\L_0(\kappa,w)$ from \eqref{eqn: star simplicity} does not have the good resolvent estimate \eqref{eqn: Lombardi resolvent est} and so we cannot just apply Theorem \ref{thm: Lombardi opt reg} to conclude the hypothesis.
Indeed, for $k \in \Z$ such that $|k| > \omega_*(\kappa,w)$, define $\Ub^{(k)} = \big(p_1^{(k)},p_2^{(k)},\xi_1^{(k)},\xi_2^{(k)},P_1^{(k)},P_2^{(k)}\big) \in \X$ via $p_1^{(k)} = 1/2$, $p_2^{(k)} = \xi_1^{(k)} = \xi_2^{(k)} = 0$, $P_1^{(k)}(v) = 0$, and $P_2^{(k)}(v) = e^{ikv}/2$.
Then with the resolvent $\rhs(ik;\kappa,w,\cs(\kappa,w))$ defined in part \ref{part: resolvent} of Proposition \ref{prop: eigenprops}, we have
\[
\norm{\Ub^{(k)}}_{\X} = 1
\quadword{but}
\lim_{k \to \infty} \norm{\rhs(ik;\kappa,w,\cs(\kappa,w))\Ub^{(k)}}_{\X} \ne 0.
\]

Consequently, we must work much more explicitly with the (sub)optimal regularity problems.
Let
\[
\Y
:= \set{(0,0,\xi_1,\xi_2,0,0) \in \X}{\xi_1,\xi_2 \in \R},
\]
so $\Y \subseteq \D$ and also
\[
\L_1(\kappa,w)\Ub \in \Y
\quadword{and}
\nl(\Ub,\mu;\kappa,\beta,\Vscr_1,\Vscr_2,w) \in \Y
\]
for all $\Ub \in \X$, per the structure of these operators given in \eqref{eqn: L1} and \eqref{eqn: concrete nl}.
Thus if 
\begin{equation}\label{eqn: Y-hsf}
\Y_{\hsf}
:= (\ind_{\X}-\Pi)\Y,
\end{equation}
then
\[
(\ind_{\X}-\Pi)\L_1(\kappa,w)\Ub \in \Y_{\hsf}
\quadword{and}
(\ind_{\X}-\Pi)\nl(\Ub,\mu;\kappa,\beta,\Vscr_1,\Vscr_2,w) \in \Y_{\hsf}.
\]

The localized optimal, periodic optimal, and suboptimal regularity problems of Hypothesis \ref{hypo: opt reg} all have the following broad structure in their lattice incarnations.
Given a map $\Gb \colon \R \to \Y_{\hsf}$ and a scalar $\msf \in \R$, we need to solve a differential equation of the form
\begin{equation}\label{eqn: ur opt reg dimer}
\Ub'(x)
= \msf\L_0(\kappa,w)\Ub(x) + \Gb(x),
\end{equation}
where $\L_0(\kappa,w)$ was defined in \eqref{eqn: star simplicity}.
Of course, for each kind of problem we will have some more stringent requirements on $\Gb$, $\Ub$, and $\msf$.
In particular, for the localized problem we will need to extend the solution $\Ub$ from the real line to a small complex strip.
For now, we just sketch the common method of how one starts to solve \eqref{eqn: ur opt reg dimer}.

Write, as usual,
\[
\Ub(x) 
= \big(p_1(x),p_2(x),\xi_1(x),\xi_2(x),P_1(x,\cdot),P_2(x,\cdot)\big)
\]
and
\[
\Gb(x) 
= \big(g_1(x),g_2(x),h_1(x),h_2(x),G_1(x,\cdot),G_2(x,\cdot)\big).
\]
Then \eqref{eqn: ur opt reg dimer} requires $P_1$ and $P_2$ to satisfy the forced transport equations
\[
\begin{cases}
\partial_x[P_j]-\msf\partial_v[P_j] = G_j \\
P_j(x,0) = p_j(x).
\end{cases}
\]
The unique solution to this problem is
\begin{equation}\label{eqn: forced transport}
P_j(x,v)
= p_j(x+\msf^{-1}v)-\msf^{-1}\int_0^v G_j(x+\msf^{-1}(v-s),s) \ds.
\end{equation}

Abbreviate
\[
\pbit
:= (p_1,p_2,\xi_1,\xi_2),
\]
\[
\Deltab_{\msf}(\kappa,w)\pbit
:= \begin{pmatrix*}
\xi_1 \\
\xi_2 \\
-\cs(\kappa,w)^{-2}(1+\kappa)p_1+\cs(\kappa,w)^{-2}(S^{1/\msf}+\kappa{S}^{-1/\msf})p_2 \\
-\cs(\kappa,w)^{-2}w(1+\kappa)p_2 + \cs(\kappa,w)^{-2}w(\kappa{S}^{1/\msf}+S^{-1/\msf})p_1
\end{pmatrix*},
\]
\[
\Rfrak_3^{\msf}(\Gb;\kappa,w)(x)
:= -\frac{1}{\cs(\kappa,w)^2\msf}\left(\int_0^1 G_2(x+\msf^{-1}(1-s),s) \ds - \kappa\int_{-1}^0 G_2(x+\msf^{-1}(-1-s),s) \ds\right),
\]
\[
\Rfrak_4^{\msf}(\Gb;\kappa,w)(x)
:= -\frac{w}{\cs(\kappa,w)^2\msf}\left(\kappa\int_0^1 G_1(x+\msf^{-1}(1-s),s) \ds - \int_{-1}^0 G_1(x+\msf^{-1}(1-s),s) \ds\right),
\]
and
\[
\Rfrakb_{\msf}(\Gb;\kappa,w)
:= \big(g_1,g_2,\Rfrak_3^{\msf}(\Gb;\kappa,w)+h_1,\Rfrak_4^{\msf}(\Gb;\kappa,w)+h_2\big).
\]
Then our remaining unknowns $\pbit$ must satisfy
\begin{equation}\label{eqn: pbit}
\pbit'
= \msf\Deltab_{\msf}(\kappa,w)\pbit + \Rfrakb_{\msf}(\Gb;\kappa,w).
\end{equation}
This is really an advance-delay differential equation, due to the shift operators in $\Deltab_{\msf}(\kappa,w)$.

We sketch in the following sections how we solve \eqref{eqn: pbit} for each regularity problem.
We do not provide the full details here.
Our foremost hesitation to do so arises from the burdensome calculation that gives the precise form of $\Gb =(\ind_{\X}-\Pi)\tGb$, where $\tGb = (0,0,h_1,h_2,0,0)$.
A glance at the formulas for the coordinate functionals of $\Pi_0$ alone, as given in Proposition \ref{prop: chik-star formulas}, indicates just how detailed this enterprise is.

Given the precise information that we have presented elsewhere, however, a difficult calculation alone is not a reason for omission.
Rather, we contend that, thanks to the detailed work of our predecessors, we would not learn anything truly new by presenting all the details here.
We feel that our {\it{distillation}} and {\it{phrasing}} of the optimal regularity problems in Appendix \ref{app: opt reg} is one of our novel contributions to the spatial dynamics community; the ideas behind their {\it{verifications}} follow the established lattice literature.
Conversely, the prior detailed calculations of the generalized eigenvectors and nondegeneracy coefficients really are different in the context of our FPUT dimers from any other lattice results, and they are absolutely essential to determining both the precise leading order structure of our position fronts and to comparing our results with the Beale's method relative displacement nanopterons.

Finally, we believe that the optimal regularity properties of the system \eqref{eqn: pbit} could be generalized substantially to similar linear affine problems with $\Deltab_{\msf}(\kappa,w)$ replaced by a much more abstract operator of an advance-delay or Fourier multiplier type, along with a suitable substitute for the affine term $\Rfrakb_{\msf}(\Gb;\kappa,w)$.
Working out the right hypotheses to encompass our problem \eqref{eqn: pbit}, along with the analogues in the lattice papers \cite{iooss-kirchgassner, calleja-sire, james-sire, hilder-derijk-schneider, iooss-james, venney-zimmer} is far beyond the scope of this project, but we hope to treat it in detail in the future.
Such a treatment would shed further light on the underlying similarities among the generalized Fourier techniques of \cite{iooss-kirchgassner} and the classical Fourier methods of \cite{faver-wright, faver-spring-dimer}, between which we outline an interpolation in Section \ref{sec: opt reg soln1} to address the localized problem, and the Laplace transform techniques of \cite{hvl}, which we mention in Section \ref{sec: opt reg soln3} for tackling the suboptimal problem.

\subsubsection{A sketch of the solution to the localized problem}\label{sec: opt reg soln1}
We can show that the optimal regularity hypothesis holds whenever $q \in (0,\Lfrak_0(\kappa,w)^{1/2})$ and $b_0 \in (0,\pi)$.
Here we assume that $\pbit$ and $\Gb$ are exponentially localized, and so they have Fourier transforms.
Also, $\msf = \mu$.
On the Fourier side, \eqref{eqn: pbit} reads
\[
\big(i\mu{k}\ind_4-\tilde{\Deltab}(\mu{k}; \kappa,w)\big)\hat{\pbit}(k)
= \mu\ft[\Rfrakb_{\mu^{-1}}(\Gb;\kappa,w)](k),
\]
where
\[
\ind_4
:= \diag(1,1,1,1)
\]
and, for $K \in \R$,
\[
\tDeltab(K;\kappa,w)
:= \begin{bmatrix*}
0 &0 &1 &0 \\
0 &0 &0 &1 \\
-\cs(\kappa,w)^{-2}(1+\kappa) &\cs(\kappa,w)^{-2}(e^{iK}+\kappa{e}^{-iK}) &0 &0 \\
\cs(\kappa,w)^{-2}w(\kappa{e}^{iK}+e^{-iK}) &-\cs(\kappa,w)^{-2}w(1+\kappa) &0 &0
\end{bmatrix*}.
\]
We find
\begin{equation}\label{eqn: opt reg det}
\det\big(iK\ind_4-\tDeltab(K;\kappa,w)\big)
= \Lambda(K;\kappa,w,\cs(\kappa,w)),
\end{equation}
where $\Lambda$ was defined in \eqref{eqn: Lambdac defn}.
Part \ref{part: Lambda-1} of Proposition \ref{prop: Lambda props} thereby imposes ``solvability conditions'' on the problem \eqref{eqn: pbit}: we need
\begin{equation}\label{eqn: opt reg solv}
\ft[\Rfrakb_{\mu^{-1}}(\Gb;\kappa,w)](0)
= \ft[\Rfrakb_{\mu^{-1}}(\Gb;\kappa,w)](\pm\mu\omega_*(\kappa,w))
= 0.
\end{equation}
Here $\ft[f]$ is the Fourier transform of $f$, i.e.,
\[
\ft[f](k)
:= \frac{1}{\sqrt{2\pi}}\int_{-\infty}^{\infty} f(x)e^{-ikx} \dx.
\]
The condition \eqref{eqn: opt reg solv} can be met since $\Gb(x) \in \Y_{\hsf}$, with $\Y_{\hsf}$ from \eqref{eqn: Y-hsf}.
It is then possible to recover
\begin{equation}\label{eqn: opt reg pre-conv}
\hat{\pbit}(k)
= \mu\big(i\mu{k}\ind_4-\tDeltab(\mu{k};\kappa,w)\big)^{-1}\ft[\Rfrakb_{\mu^{-1}}(\Gb;\kappa,w)](k).
\end{equation}
Thereafter one defines $\pbit$ as a convolution; extends, for $\Gb$ defined on a complex strip, this convolution to the strip; and last obtains many uniform estimates in $\mu$.
The Lombardi-amenable map needed in the definition of localized optimal regularity will be $\Mfrak = \norm{\cdot}_{L^{\infty}}$.
Defining the solution as a convolution and then extending that convolution from the real line is essentially what Lombardi does for the water wave problem in \cite[Lem.\@ 8.2.1, 8.2.2]{Lombardi}, except his convolutions arise in a more straightforward way from a ``quasi-semigroup'' generated by the resolvent.

This is essentially the program carried out by Iooss and Kirchg\"{a}ssner in \cite[App.\@ 1]{iooss-kirchgassner}.
They, however, do not incorporate the small parameter $\mu$ and, due to their center manifold theory preferences, work in spaces of exponentially {\it{growing}} functions.
This motivates their masterful manipulation of Fourier theory in growing spaces, which retains all the familiar properties of the Fourier transform in $L^p$-spaces, e.g., with respect to derivatives and convolutions.
See the discussion in Appendix \ref{app: opt reg intro} for references to subsequent optimal regularity problems treated along the lines of \cite{iooss-kirchgassner}.

\subsubsection{A sketch of the solution to the periodic problem}
The set-up of this problem is largely the same as that in Section \ref{sec: opt reg soln1}, except now we are working with $2\pi$-periodic $\pbit$ and $\Gb$, and $\msf = (\omega_*(\kappa,w)/\mu + \grave{\omega})^{-1}$, where $\grave{\omega} = \O(\mu)$.
In lieu of the Fourier transform, we use Fourier coefficients with $k \in \Z$.
Again, the crux of the matter is that the determinant of the matrix that we would like to invert is \eqref{eqn: opt reg det}, with $K = (\omega/\mu+\grave{\omega})k$, and so we have solvability conditions like \eqref{eqn: opt reg pre-conv}.
Venney and Zimmer provide an illustrative overview of the problem in their discussion after \cite[Eq.\@ (52)]{venney-zimmer}.

\subsubsection{A sketch of the solution to the suboptimal problem}\label{sec: opt reg soln3}
Now we will allow $\pbit$ and $\Gb$ to grow exponentially, and we take $\msf=1$.
The system \eqref{eqn: pbit} is suited to the Laplace transform-based techniques of Hupkes and Verduyn Lunel for linear nonhomogeneous mixed-type functional differential equations (MFDE); see \cite[Sec.\@ 5, Prop.\@ 5.1]{hvl}.
Although the generalized Fourier methods of Iooss and Kirchg\"{a}ssner also apply to this growing problem, our prior success in adapting the Hupkes--Verduyn Lunel methods to the FPUT equal mass limit in \cite{faver-hupkes-equal-mass} leads us to favor the latter approach.
The MFDE approach is also mentioned as a alternative strategy in \cite[Sec.\@ II.B]{iooss-james}.

\subsection{Spatial dynamics in relative displacement coordinates}\label{sec: spat dyn in rel disp}
It is wholly possible to make the Iooss--Kirchg\"{a}ssner change of variables from Section \ref{sec: IK-cov} on the relative displacement traveling wave system \eqref{eqn: rel disp tw eqns}.
For simplicity, here we assume that the spring forces have no superquadratic terms, i.e., $\Vscr_1=\Vscr_2=0$ in \eqref{eqn: dimer springs}.
Put $\xi_j := \varrho_j'$ and $\Rho_j(x,v) := \varrho_j(x+v)$.
Doing so yields a problem like \eqref{eqn: IK system}, which we write in the intentionally similar notation
\begin{equation}\label{eqn: rel disp IK syst}
\Ub'
= \Lscr(\kappa,w,c)\Ub + c^{-2}\Qscr(\Ub;\beta,w),
\end{equation}
where $\Ub = (\varrho_1,\varrho_2,\xi_1,\xi_2,\Rho_1,\Rho_2)$, and the operators above are
\[
\Lscr(\kappa,w,c)
:= \begin{bmatrix*}
0 &\ind &0 \\
-(1+w)c^{-2}\diag(1,\kappa) &0 &c^{-2}\Delta^{\star}(\kappa,w) \\
0 &0 &\partial_v\ind
\end{bmatrix*},
\]
\[
\Delta^{\star}(\kappa,w)
:= \begin{bmatrix*}
0 &\kappa(w\delta^1+\delta^{-1}) \\
(\delta^1+w\delta^{-1}) &0
\end{bmatrix*},
\]
and
\[
\Qscr(\Ub;\beta,w)
:= \begin{pmatrix*}
0 \\
0 \\
-(1+w)\varrho_1^2 + \beta(w\delta^1+\delta^{-1})\Rho_2^2 \\
-\beta(1+w)\varrho_2^2 + (\delta^1+w\delta^{-1})\Rho_1^2 \\
0 \\
0
\end{pmatrix*}
\]

This problem is reversible in the mass and spring dimer cases with the respective symmetries
\[
\Sscr_{\Mb} := \begin{bmatrix*}
\Jbb &0 &0 \\
0 &-\Jbb &0 \\
0 &0 &R\Jbb
\end{bmatrix*}
\quadword{and}
\Sscr_{\Kb} := \begin{bmatrix*}
\ind &0 &0 \\
0 &-\ind &0 \\
0 &0 &R\ind
\end{bmatrix*}
\]
with $R$ defined in \eqref{eqn: R refl}, $\Jbb$ defined in \eqref{eqn: sd symm}, and $\ind$ defined in \eqref{eqn: Lc defn}.
It is also possible to construct a first integral very much like the one in \eqref{eqn: first integral defn}.

An analysis exactly along the lines of Section \ref{sec: spectrum} shows that $\Lscr(\kappa,w,c)$ has the same spectrum as $\L(\kappa,w,c)$, the linearization of the position problem.
That is,
\[
\sigma(\Lscr(\kappa,w,c)) 
= \set{z \in \C}{\det\big(\M(z;\kappa,w,c)\big) = 0},
\]
with the determinant calculated in \eqref{eqn: Mc det}.
Moreover, when $c=\cs(\kappa,w)$ from \eqref{eqn: cs defn}, the center spectrum of $\Lscr(\kappa,w,\cs(\kappa,w))$ is the same; it consists of the eigenvalues 0 and $\pm{i}\omega_c(\kappa,w)$, as in Proposition \ref{prop: Lambda props}.
These eigenvalues have the same algebraic and geometric multiplicities as in that proposition, too.

This should not be surprising.
At the {\it{linear}} level, the position and relative displacement traveling wave problems are closely related.
The linear operators governing \eqref{eqn: position tw eqns} and \eqref{eqn: rel disp tw eqns} are, respectively,
\[
\begin{bmatrix*}
-(1+\kappa) &(S^1+\kappa{S}^{-1}) \\
w(\kappa{S}^1+S^{-1}) &-w(1+\kappa)
\end{bmatrix*}
\quadword{and}
\begin{bmatrix*}
-(1+w) &\kappa(wS^1+S^{-1}) \\
(S^1+wS^{-1}) &-\kappa(1+w)
\end{bmatrix*}.
\]
We can view these operators as formal adjoints of each other, provided that the parameters $\kappa$ and $w$ are swapped.
In particular, if we consider them as Fourier multipliers, their symbols are the conjugate transposes of each other, albeit with $\kappa$ and $w$ interchanged.
This ``formal adjoint'' structure is certainly replicated in the components of $\Lscr(\kappa,w,c)$ as compared to those of $\L(\kappa,w,c)$ in \eqref{eqn: Lc defn}.

Because 0 has algebraic multiplicity 4 as an eigenvalue of $\Lscr(\kappa,w,\cs(\kappa,w))$, we cannot, as in position coordinates, directly apply Lombardi's results to \eqref{eqn: rel disp IK syst}.
The obstacle to a reduction procedure like that of Section \ref{sec: abstract} appears to be that \eqref{eqn: rel disp IK syst} is no longer translation invariant; this is a divergence of \eqref{eqn: rel disp IK syst} and the position problem \eqref{eqn: IK system} at the {\it{nonlinear}} level.
More precisely, while the one-dimensional eigenspace of $\Lscr(\kappa,w,c)$ corresponding to 0 is spanned by the eigenvector
\[
\Eb_0(\kappa,w) 
:= (\kappa,1,0,0,\kappa,1),
\]
in general we have
\[
\Qscr(\Ub+\mu\Eb_0(\kappa,w);\beta,w)
\ne \Qscr(\Ub;\beta,w)
\]
Physically, the loss of translation invariance is unsurprising, as the original relative displacement problem \eqref{eqn: rel disp eqns} does not retain the affine invariance that the position problem does, as we outlined in Remark \ref{rem: pos trans invar}.

Additionally, in the separate mass and spring dimer cases, we have
\begin{equation}\label{eqn: syms and ev for rel disp}
\Sscr_{\Mb}\Eb_0(1,w) = \Eb_0(1,w)
\quadword{and}
\Sscr_{\Kb}\Eb_0(\kappa,1) = \Eb_0(\kappa,1),
\end{equation}
and so \eqref{eqn: rel disp IK syst} has a ``$0^{4+}i\omega$'' bifurcation at the origin, which contrasts with the $0^{4-}i\omega$ bifurcation in position coordinates; see Remark \ref{rem: 04+iomega}.
Given these subtle, but substantial, distinctions between the position and relative displacement problems, and considering our success with running spatial dynamics in position coordinates, we do not pursue further an attempt at spatial dynamics in relative displacement coordinates.
Neither, however, are we willing to rule out the possibility that Lombardi's methods could apply to some reduced version of \eqref{eqn: rel disp IK syst} as they do for position.
\section{The Proofs of Theorems \ref{thm: main theorem mass dimer} and \ref{thm: main theorem spring dimer}}\label{sec: proofs of main lattice theorems}

We prove Theorem \ref{thm: main theorem mass dimer} in detail in Section \ref{sec: proof of main thm mass dimer}.
The proof of Theorem \ref{thm: main theorem spring dimer}, which we give in Section \ref{sec: proof of main thm spring dimer}, will then be quite similar, so we only discuss the notable points of departure.
These chiefly hinge on the very different structures of the generalized eigenvectors $\chib_1(1,w)$ and $\chib_1(\kappa,1)$, which can be seen from their definitions in \eqref{eqn: chib1}.

Since the mass and spring dimers separately satisfy Hypotheses \ref{hypo: F structure} through \ref{hypo: opt reg}, Theorem \ref{thm: main abstract} provides solutions to the problem \eqref{eqn: concrete final} when $\kappa=\beta=1$ or when $w=1$.
These solutions are given in the Iooss--Kirchg\"{a}ssner variables $\Ub$, as defined in Section \ref{sec: IK-cov}, and so our main task is to convert them back to position and relative displacement coordinates.
The recovery of position coordinates is fairly straightforward, but the relative displacement construction requires rather more detail.
In particular, we put significant effort into establishing that the periodic profiles for relative displacement cannot vanish identically and into extracting just the right explicit and abstract exponentially localized terms in the nanopteron profile, so that our results match those from Beale's method in \cite{faver-wright, faver-spring-dimer}.

\subsection{The proof of Theorem \ref{thm: main theorem mass dimer}}\label{sec: proof of main thm mass dimer}

\subsubsection{Solutions to the near-sonic traveling wave problem \eqref{eqn: concrete final} in Iooss--Kirchg\"{a}ssner variables}
The work in Section \ref{sec: tw prob redux} shows that the near-sonic traveling wave problem \eqref{eqn: concrete final} satisfies Hypotheses \ref{hypo: F structure} through \ref{hypo: opt reg}, and therefore by Theorem \ref{thm: main abstract} this problem has solutions $\Ub = \Usf_{\mu}^{\alpha}$ and $\Ub = \Psf_{\mu}^{\alpha}$, where $\Usf_{\mu}^{\alpha}$ has the form \eqref{eqn: abstract drifting nanopteron} and $\Psf_{\mu}^{\alpha}$ has the form \eqref{eqn: Psf-mu-alpha}.
Here $0 < \mu < \mu_*$.
To bring the small parameter in line with the classical long wave scaling, we take $\mu = \ep^2$ and restrict $0 < \ep < \mu_*^{1/2}$.

With $\mu = \ep^2$, our solutions are
\begin{multline}\label{eqn: Usf mass dimer}
\Usf_{\ep^2}^{\alpha}(x)
= \ep\left[
\left(-\frac{3\Lfrak_0(1,w)^{1/2}}{\Qfrak_0(1,1,w)}+\ep\Lup_{\ep^2}^{\alpha}\right)
\tanh\left(\frac{\Lfrak_0(1,w)^{1/2}\ep{x}}{2}\right)
+ \ep\Upsilon_{\ep^2}^{\alpha,0}(\ep{x})
\right]\chib_0(1,w) \\
- \frac{3\Lfrak_0(1,w)}{2\Qfrak_0(1,1,w)}\ep^2\sech^2\left(\frac{\Lfrak_0(1,w)^{1/2}\ep{x}}{2}\right)\chib_1(1,w) 
+ \ep^3\Upsilon_{\ep^2}^{\alpha,*}(\ep{x}) \\
+\alpha\ep^2\Phi_{\ep^2}^{\alpha}(\Tup_{\ep^2}(\ep{x}))
+ \alpha\ep(\alpha+\ep)\left(\int_0^{\Tup_{\ep^2}(\ep{x})} \Phi_{\ep^2}^{\alpha,\int}(s) \ds\right)\chib_0(1,w)
\end{multline}
and
\begin{equation}\label{eqn: Psf mass dimer}
\Psf_{\ep^2}^{\alpha}(x)
=\alpha\ep^2\Phi_{\ep^2}^{\alpha}(\ep{x})
+ \alpha\ep(\alpha+\ep^2)\left(\int_0^{\ep{x}} \Phi_{\ep^2}^{\alpha,\int}(s)\ds\right)\chib_0(1,w)
\end{equation}
with
\[
\Tup_{\ep^2}^{\alpha}(X)
:= X + \ep^2\vartheta_{\ep^2}^{\alpha}\tanh\left(\frac{\Lfrak_0(1,w)^{1/2}X}{2}\right).
\]
The eigenvectors $\chib_0(1,w)$ and $\chib_1(1,w)$ were defined in \eqref{eqn: chib0} and \eqref{eqn: chib1}, respectively.
The maps $\Upsilon_{\ep^2}^{\alpha,0}$, $\Upsilon_{\ep^2}^{\alpha,*}$, $\Phi_{\ep^2}^{\alpha}$, and $\Phi_{\ep^2}^{\alpha,\int}$ and the scalars $\Lup_{\ep^2}^{\alpha}$ and $\vartheta_{\ep^2}^{\alpha}$ have all the same properties as their counterparts in Theorem \ref{thm: main abstract}, using the identity $\mu=\ep^2$ throughout.
The scalar $\Qfrak_0(1,1,w)$ was computed in \eqref{eqn: final quad nondegen}.
To eliminate unnecessary instances of $\ep^2$, put
\begin{equation}\label{eqn: ep2 to ep}
\theta_{\ep}^{\alpha} := \vartheta_{\ep^2}^{\alpha},
\qquad
\Lsf_{\ep}^{\alpha} := \Lup_{\ep^2}^{\alpha},
\quadword{and}
\Tsf_{\ep}^{\alpha} := \Tup_{\ep^2}^{\alpha}.
\end{equation}

We conclude this introduction by selecting the values of $b_0$ and $q$ in the statement of Theorem \ref{thm: main abstract}.
We fix an arbitrary $q \in (0,\Lfrak_0(1,w)^{1/2})$, where $\Lfrak_0(1,w)$ was defined in \eqref{eqn: lin nondegen concrete}.
Purely for simplicity, we take $b_0 = \pi/2$ and set
\begin{equation}\label{eqn: what b really is}
\Alpha_{\infty}
:= \left(\frac{\pi}{2}\right)\omega_*(1,w)\Lfrak_0(1,w)^{1/2},
\end{equation}
where $\omega_*(1,w)$ was defined in \eqref{eqn: star simplicity}.
Theorem \ref{thm: main abstract} then tells us that the solutions $\Usf_{\ep^2}^{\alpha}$ and $\Psf_{\ep^2}^{\alpha}$ exist over the ranges 
\begin{equation}\label{eqn: initial intervals of existence}
0 < \ep < \tep_*
\quadword{and}
\Alpha_0\ep{e}^{-\Alpha_{\infty}/\ep} \le \alpha \le \tAlpha_1
\end{equation}
for some $\Alpha_0$, $\tAlpha_1$, $\tep_* > 0$.

\subsubsection{	The proof of part \ref{part: main theorem mass dimer position per} of Theorem \ref{thm: main theorem mass dimer}}\label{sec: mass dimer position per construction}
With $\Psf_{\ep^2}^{\alpha}$ defined in \eqref{eqn: Psf mass dimer}, put
\begin{equation}\label{eqn: pper}
p_{\per,j,\ep}^{\alpha}(x)
:= (\Psf_{\ep^2}^{\alpha}(x))_j, \ j = 1,2,
\end{equation}
so that by the Iooss--Kirchg\"{a}ssner change of variables in Section \ref{sec: IK-cov}, the pair $(p_{\per,1,\ep}^{\alpha},p_{\per,2,\ep}^{\alpha})$ solves the position traveling wave problem \eqref{eqn: position tw eqns}.
Define
\begin{equation}\label{eqn: varphi-j-ep-alpha}
\varphi_{j,\ep}^{\alpha}(X)
:= \begin{cases}
(\Phi_{\ep^2}^{\alpha}(X))_1, \ j \text{ is odd} \\
(\Phi_{\ep^2}^{\alpha}(X))_2, \ j \text{ is even}
\end{cases}
\end{equation}
and
\begin{equation}\label{eqn: Gsf-ep-alpha}
\Gsf_{\ep}^{\alpha}(X) 
:= \int_0^X \Phi_{\ep^2}^{\alpha,\int}(s) \ds.
\end{equation}
The estimate \ref{eqn: main periodic est} from Theorem \ref{thm: main abstract} guarantees that these function satisfy the estimate \eqref{eqn: varphi G est md} in Theorem \ref{thm: main theorem mass dimer}.
By \eqref{eqn: Lombardi periodic frequency}, the leading order term of the frequency of $\varphi_{j,\ep}^{\alpha}$ is  $\omega_w := \omega_*(1,w)$, which was defined in \eqref{eqn: star simplicity}.

We then have
\begin{equation}\label{eqn: p-per-j-ep-alpha}
p_{\per,j,\ep}^{\alpha}(x)
= \alpha\ep^2\varphi_{j,\ep}^{\alpha}(\ep{x}) + \alpha\ep(\alpha+\ep^2)\Gsf_{\ep}^{\alpha}(\ep{x}),
\end{equation}
and so taking
\[
u_j(t)
= \begin{cases}
p_{\per,1,\ep}^{\alpha}(j-\cep{t}), \ j \text{ is odd} \\
p_{\per, 2,\ep}^{\alpha}(j-\cep{t}), \ j \text{ is even}
\end{cases}
\]
produces the solutions \eqref{eqn: md pos per soln} to the position system \eqref{eqn: original equations of motion}.

\subsubsection{	The proof of part \ref{part: main theorem mass dimer position} of Theorem \ref{thm: main theorem mass dimer}}\label{sec: mass dimer position full construction}
With $\Usf_{\ep^2}^{\alpha}$ defined in \eqref{eqn: Usf mass dimer}, put
\begin{equation}\label{eqn: p-j-ep-alpha}
p_{j,\ep}^{\alpha}(x)
= (\Usf_{\ep^2}^{\alpha}(x))_j, \ j = 1,2.
\end{equation}
By the Iooss--Kirchg\"{a}ssner change of variables, this solves the position traveling wave profile system \eqref{eqn: position tw eqns}, and thus taking
\[
u_j(t)
= \begin{cases}
p_{1,\ep}^{\alpha}(j-\csf_{\ep^2}(1,w){t}), \ j \text{ is odd} \\
p_{2,\ep}^{\alpha}(j-\csf_{\ep^2}(1,w){t}), \ j \text{ is even}
\end{cases}
\]
solves the original equations of motion \eqref{eqn: original equations of motion}.
The wave speed $\csf_{\ep^2}(1,w)$ was defined in \eqref{eqn: cnu}.

Now we rewrite these solutions into the form promised by part \ref{part: main theorem mass dimer position} of Theorem \ref{thm: main theorem mass dimer}.
First, we will need the identities
\begin{equation}\label{eqn: chib0-chib1-1-2}
(\chib_0(1,w))_1 = (\chib_0(1,w))_2 = 1
\quadword{and}
(\chib_1(1,w))_1 = (\chib_1(1,w))_2 = 0.
\end{equation}
Here we are using the componentwise notation from \eqref{eqn: component convention}.
Next, abbreviate
\begin{equation}\label{eqn: eta-j-ep mass dimer}
\eta_{j,\ep}^{\alpha}(X)
:= \begin{cases}
\Upsilon_{\ep^2}^{\alpha,0}(X) 
+ \ep(\Upsilon_{\ep^2}^{\alpha,*}(X))_1, \ j \text{ is odd} \\
\Upsilon_{\ep^2}^{\alpha,0}(X) 
+ \ep(\Upsilon_{\ep^2}^{\alpha,*}(X))_2, \ j \text{ is even}.
\end{cases} 
\end{equation}
The estimates \ref{eqn: main expn loc} from Theorem \ref{thm: main abstract} then guarantee that these three functions satisfy the estimate \eqref{eqn: md pos est} of Theorem \ref{thm: main theorem mass dimer}.

Last, define
\begin{multline}\label{eqn: Fsf-j-ep-alpha}
\Fsf_{j,\ep}^{\alpha}(X)
:= \left(-\frac{3\Lfrak_0(1,w)^{1/2}}{\Qfrak_0(1,1,w)}+\ep\Lsf_{\ep}^{\alpha}\right)
\tanh\left(\frac{\Lfrak_0(1,w)^{1/2}X}{2}\right)
+ \ep\eta_{j,\ep}^{\alpha}(X)
+ \alpha\ep\varphi_{j,\ep}^{\alpha}(\Tsf_{\ep}^{\alpha}(X)) \\
+ \alpha(\alpha+\ep^2)\Gsf_{\ep}^{\alpha}(\Tsf_{\ep}^{\alpha}(X)),
\end{multline}
where $\varphi_{j,\ep}^{\alpha}$ was defined in \eqref{eqn: varphi-j-ep-alpha} and $\Gsf_{\ep}^{\alpha}$ in \eqref{eqn: Gsf-ep-alpha}.
We conclude that $p_{j,\ep}^{\alpha}(x) = \ep\Fsf_{j,\ep}^{\alpha}(\ep{x})$ and thus $u_j(t) = \ep\Fsf_{j,\ep}^{\alpha}(\ep(j-\cep{t}))$.
Using the definition of $\Lfrak_0(1,w)$ in \eqref{eqn: Lfrak0-concrete} and the definition of $\Qfrak_0(1,1,w)$ in \eqref{eqn: final quad nondegen}, we calculate
\[
-\frac{3\Lfrak_0(1,w)^{1/2}}{\Qfrak_0(1,1,w)} = \left(\frac{6w(w^2-w+1)}{(1+w)^3}\right)^{1/2}
\quadword{and}
\frac{\Lfrak_0(1,w)^{1/2}}{2} = \frac{1}{2}\left(\frac{6w(1+w)}{w^2-w+1}\right)^{1/2}.
\]
This leads to the identities \eqref{eqn: md pos F} and \eqref{eqn: md pos tw final}.

\subsubsection{The proof of part \ref{part: main theorem mass dimer relative disp per} of Theorem \ref{thm: main theorem mass dimer}}\label{sec: mass dimer periodic construction}
With $p_{\per,j,\ep}^{\alpha}$ defined in \eqref{eqn: p-per-j-ep-alpha}, the identities \eqref{eqn: tw profile rels} tell us that the pair $(\varrho_{\per,1,\ep}^{\alpha},\varrho_{\per,2,\ep}^{\alpha})$ defined by
\begin{equation}\label{eqn: rhoper}
\varrho_{\per,1,\ep}^{\alpha}(x) := p_{\per,2,\ep}^{\alpha}(x+1)-p_{\per,1,\ep}^{\alpha}(x)
\quadword{and}
\varrho_{\per,2,\ep}^{\alpha}(x) := p_{\per,1,\ep}^{\alpha}(x+1)-p_{\per,2,\ep}^{\alpha}(x)
\end{equation}
solves the relative displacement traveling wave profile system \eqref{eqn: rel disp tw eqns}.
Consequently, taking
\[
r_j(t)
= \begin{cases}
\varrho_{\per,1,\ep}^{\alpha}(j-\csf_{\ep^2}(1,w){t}), \ j \text{ is odd} \\
\varrho_{\per,2,\ep}^{\alpha}(j-\csf_{\ep^2}(1,w){t}), \ j \text{ is even}
\end{cases}
\]
solves the original traveling wave problem.

Now we rewrite these solutions into the form \eqref{eqn: md rel disp per soln} stated in Theorem \ref{thm: main theorem mass dimer}.
The definitions of $\varrho_{\per,j,\ep}^{\alpha}$ and $p_{\per,j,\ep}^{\alpha}$ motivate us to set
\begin{equation}\label{eqn: tvarphi}
\tvarphi_{j,\ep}^{\alpha}(X)
:= \begin{cases}
[\varphi_{2,\ep}^{\alpha}(X+\ep) - \varphi_{1,\ep}^{\alpha}(X)] + \ep^{-1}(\alpha+\ep^2)\int_X^{X+\ep} \Phi_{\ep^2}^{\alpha,\int}(s) \ds, \ j \text{ is odd} \\[10pt]
[\varphi_{1,\ep}^{\alpha}(X+\ep) - \varphi_{2,\ep}^{\alpha}(X)] +  \ep^{-1}(\alpha+\ep^2)\int_X^{X+\ep} \Phi_{\ep^2}^{\alpha,\int}(s) \ds, \ j \text{ is even},
\end{cases}
\end{equation}
where $\varphi_{j,\ep}^{\alpha}$ was defined in \eqref{eqn: varphi-j-ep-alpha}.
Then
\[
\varrho_{\per,j,\ep}^{\alpha}(x)
= \alpha\ep^2\tvarphi_{j,\ep}^{\alpha}(\ep{x}), \ j = 1,2.
\]
and so putting
\[
r_j(t)
= \alpha\ep^2\tvarphi_{j,\ep}^{\alpha}(\ep(j-\cep{t})),
\]
solves the relative displacement problem \eqref{eqn: rel disp eqns}.

It follows immediately from \eqref{eqn: tvarphi} that $\tvarphi_{j,\ep}^{\alpha}$ is periodic in $X$ with periodic independent of $j$ and uniformly bounded in $\alpha$, $\ep$, and $X$.
Last, we verify that $\tvarphi_{1,\ep}^{\alpha}$ is not identically zero; the proof for $\tvarphi_{2,\ep}^{\alpha}$ is the same.
Our strategy is to write $\varphi_{2,\ep}^{\alpha}(\cdot+\ep) - \varphi_{1,\ep}^{\alpha}$ as the sum of a leading order function whose value at 0 is nonzero and uniformly bounded away from 0 and a ``higher-order'' function.
Then we use the fact that the integral terms in $\tvarphi_{1,\ep}^{\alpha}$ come with prefactors of $\alpha$ and $\ep$ that force them to be uniformly small, at least if the ranges of $\alpha$ and $\ep$ are suitably controlled.

By \eqref{eqn: Lombardi periodic1}, \eqref{eqn: Lombardi periodic frequency}, and \eqref{eqn: Lombardi periodic2}, we can expand
\begin{multline}\label{eqn: varphi-j-ep-alpha expansion}
\frac{\varphi_{j,\ep}^{\alpha}(X)}{\Lfrak_0(1,w)}
= \cos(\Omega_{\ep}^{\alpha}X)\left(\frac{(\chib_{\omega}(1,w))_j + (\chib_{-\omega}(1,w))_j}{2}\right) \\
+ \sin(\Omega_{\ep}^{\alpha}X)\left(\frac{(\chib_{\omega}(1,w))_j - (\chib_{-\omega}(1,w))_j}{2i}\right)
+ \alpha\psi_{j,\ep}^{\alpha}(\Omega_{\ep}^{\alpha}X),
\end{multline}
where 
\begin{equation}\label{eqn: Xi-bound}
\sup_{\substack{0 < \ep < \tep_* \\ 0 \le \alpha \le \tAlpha_1 \\ X \in \R}} 
|\psi_{j,\ep}^{\alpha}(X)|
< \infty
\end{equation}
and
\begin{equation}\label{eqn: tOmega bound}
\Omega_{\ep}^{\alpha}
= \frac{\omega_*(1,w)}{\ep} + \ep\tOmega_{\ep}^{\alpha},
\quadword{with}
\sup_{\substack{0 < \ep < \tep_* \\ 0 \le \alpha \le \tAlpha_1 \\ X \in \R}}
|\tOmega_{\ep}^{\alpha}| 
< \infty.
\end{equation}
The vectors $\chib_{\pm\omega}(1,w)$ were defined in \eqref{eqn: chib-pm-omega}, while the scalar $\omega_*(1,w)$ was defined in \eqref{eqn: star simplicity}.

Put
\begin{equation}\label{eqn: Esf-w}
\Esf_w
:= \frac{\cos(\omega_*(1,w))}{1-\dfrac{w}{1+w}(\omega_*(1,w))^2}.
\end{equation}
Then the definition of $\chib_{\pm\omega}(1,w)$ in \eqref{eqn: chib-pm-omega}, we find
\[
\frac{\chib_{\omega}(1,w))_j + (\chib_{-\omega}(1,w))_j}{2}
= \begin{cases}
\Esf_w, \ j = 1 \\
1, \ j = 2
\end{cases}
\]
and
\[
\frac{\chib_{\omega}(1,w))_j - (\chib_{-\omega}(1,w))_j}{2i} = 0, \ j = 1,2.
\]
Abbreviate
\[
f_{\ep}^{\alpha}(X)
:= \frac{\varphi_{2,\ep}^{\alpha}(X+\ep)-\varphi_{1,\ep}^{\alpha}(X)}{\Lfrak_0(1,w)^{1/2}}
\]
to find
\begin{equation}\label{eqn: f-ep-alpha aux}
f_{\ep}^{\alpha}(X)
= \cos(\Omega_{\ep}^{\alpha}(X+\ep))-\Esf_w\cos(\Omega_{\ep}^{\alpha}X)
+ \alpha(\psi_{2,\ep}^{\alpha}(\Omega_{\ep}^{\alpha}(X+\ep))-\psi_{1,\ep}^{\alpha}(\Omega_{\ep}^{\alpha}X)).
\end{equation}

We consider two cases.

\begin{enumerate}[label={\it{Case \arabic*.}}]

\item
$\Esf_w \ne 0$.
Then, per \eqref{eqn: Esf-w}, $\cos(\omega_*(1,w)) \ne 0$.
We claim for the moment that $\cos(\omega_*(1,w))-\Esf_w \ne 0$ and we calculate
\begin{multline*}
f_{\ep}^{\alpha}(0)
= \cos(\Omega_{\ep}^{\alpha}\ep)-\Esf_w + \alpha(\psi_{2,\ep}^{\alpha}(\Omega_{\ep}^{\alpha}\ep) - \psi_{1,\ep}^{\alpha}(0)) \\
= \cos(\omega_*(1,w)+\ep^2\tOmega_{\ep}^{\alpha}) - \Esf_w + \alpha(\psi_{2,\ep}^{\alpha}(\omega_*(1,w)+\ep^2\tOmega_{\ep}^{\alpha})-\psi_{1,\ep}^{\alpha}(0)).
\end{multline*}
We use the uniform bounds on $\tOmega_{\ep}^{\alpha}$ and $\psi_{j,\ep}^{\alpha}$ to find $\bep_* \in (0,\tep_*)$ and $\bAlpha_1 \in (0,\tAlpha_1)$ such that if $0 < \ep < \bep_*$, then $\Alpha_0\ep{e}^{-\Alpha_{\infty}/\ep} < \bAlpha_1$ and if $\Alpha_0\ep{e}^{-\Alpha_{\infty}/\ep} \le \alpha \le \bAlpha_1$, then
\[
|f_{\ep}^{\alpha}(0)| 
\ge \frac{|\cos(\omega_*(1,w))-\Esf_w|}{2}
> 0.
\]
Then we use the uniform bound \eqref{eqn: main periodic est} on $\Phi_{\ep^2}^{\int,\alpha}$ to find $\ep_* \in (0,\bep_*)$ and $\Alpha_1 \in (0,\bAlpha_1)$ such that if $0 < \ep < \ep_*$, then $\Alpha_0\ep{e}^{-\Alpha_{\infty}/\ep} < \Alpha_1$ and
\[
\inf_{\substack{0 < \ep < \ep_* \\ 0 \alpha \le \Alpha_1}}
|\tvarphi_{1,\ep}^{\alpha}(0)|
\ge \frac{|\cos(\omega_*(1,w))-\Esf_w|}{4}
> 0.
\]

Finally, we check that $\cos(\omega_*(1,w))-\Esf_w \ne 0$ as claimed above.
Otherwise, if $\cos(\omega_*(1,w))-\Esf_w = 0$, then the definition of $\Esf_w$ in \eqref{eqn: Esf-w} gives
\[
1 - \frac{1}{1-\dfrac{w}{1+w}(\omega_*(1,w))^2}
= 0.
\]
But this rearranges to $\omega_*(1,w) = 0$, which is impossible by part \ref{part: Lambda-1} of Proposition \ref{prop: Lambda props}, so $\cos(\omega_*(1,w))-\Esf_w \ne 0$.

\item
$\Esf_w = 0$.
Then from \eqref{eqn: f-ep-alpha aux} we calculate
\[
f_{\ep}^{\alpha}(-\ep)
= 1 + \alpha(\psi_{2,\ep}^{\alpha}(0)-\psi_{1,\ep}^{\alpha}(-\Omega_{\ep}^{\alpha}\ep)),
\]
and exactly the same type of restrictions on $\ep$ and $\alpha$ as above guarantee that for some $\ep_* \in (0,\ep_0)$ and $\Alpha_1 \in (0,\tAlpha_1)$ we have
\[
\inf_{\substack{0 < \ep < \ep_* \\ 0 \le \alpha \le \Alpha_1}}
|\tvarphi_{1,\ep}^{\alpha}(-\ep)|
\ge \frac{1}{2}.
\]
\end{enumerate}

\subsubsection{The proof of part \ref{part: main theorem mass dimer relative disp} of Theorem \ref{thm: main theorem mass dimer}}\label{sec: md rel disp nano constr}
With $p_{j,\ep}^{\alpha}$ defined in \eqref{eqn: p-j-ep-alpha}, put
\begin{equation}\label{eqn: rho defn}
\varrho_{1,\ep}^{\alpha}(x) := p_{2,\ep}^{\alpha}(x+1)-p_{1,\ep}^{\alpha}(x)
\quadword{and}
\varrho_{2,\ep}^{\alpha}(x) := p_{1,\ep}^{\alpha}(x+1)-p_{2,\ep}^{\alpha}(x)
\end{equation}
to see that the pair $(\varrho_{1,\ep}^{\alpha},\varrho_{2,\ep}^{\alpha})$ solves the relative displacement traveling wave profile system \eqref{eqn: rel disp tw eqns}.
Thus taking
\begin{equation}\label{eqn: final rj defn}
r_j(t) 
= \begin{cases}
\varrho_{1,\ep}^{\alpha}(j-\cep{t}), \ j \text{ is odd} \\
\varrho_{2,\ep}^{\alpha}(j-\cep{t}), \ j \text{ is even}
\end{cases}
\end{equation}
solves the original equations of motion \eqref{eqn: rel disp eqns} in relative displacement coordinates.

Now we rewrite these solutions into the nanopteron form of part \ref{part: main theorem mass dimer relative disp} in Theorem \ref{thm: main theorem mass dimer}.
Since $p_{j,\ep}^{\alpha}(x) = \ep\Fsf_{j,\ep}^{\alpha}(\ep{x})$, we are led to put
\begin{equation}\label{eqn: Nsf defn}
\Nsf_{j,\ep}^{\alpha}(X)
:= \begin{cases}
\ep^{-1}[\Fsf_{2,\ep}^{\alpha}(X+\ep)-\Fsf_{1,\ep}^{\alpha}(X)], \ j \text{ is odd} \\
\ep^{-1}[\Fsf_{1,\ep}^{\alpha}(X+\ep)-\Fsf_{2,\ep}^{\alpha}(X)], \ j \text{ is even}
\end{cases}
\end{equation}
and unravel the definitions \eqref{eqn: rho defn} and \eqref{eqn: final rj defn} to see that defining $r_j$ by \eqref{eqn: final rj defn} implies 
\begin{equation}\label{eqn: final final rj defn}
r_j(t)
= \ep^2\Nsf_{j,\ep}^{\alpha}(\ep(j-\cep{t})).
\end{equation}
We just need to show that $\Nsf_{j,\ep}^{\alpha}$ has the nanopteron form \eqref{eqn: md rel disp N}.
We only do this for $j$ odd, as the work for $j$ even is identical.

We use the definitions of $\Nsf_{1,\ep}^{\alpha}$ in \eqref{eqn: Nsf defn} and $\Fsf_{j,\ep}^{\alpha}$ in \eqref{eqn: Fsf-j-ep-alpha}  to write
\begin{equation}\label{eqn: Nsf1-ep-alpha}
\Nsf_{1,\ep}^{\alpha}(X)
= -\frac{3\Lfrak_0(1,w)}{2\Qfrak_0(1,1,w)}\sech^2\left(\frac{\Lfrak_0(1,w)^{1/2}X}{2}\right)
+ \ep\teta_{1,\ep}^{\alpha}(X)
+ \alpha\tvarphi_{1,\ep}^{\alpha}(\Tsf_{\ep}^{\alpha}(X)),
\end{equation}
where $\tvarphi_{1,\ep}^{\alpha}$ was defined in \eqref{eqn: tvarphi} and where we put
\begin{equation}\label{eqn: teta mass dimer}
\teta_{1,\ep}^{\alpha}
:= \sum_{k=1}^6 \teta_{1,k,\ep}^{\alpha},
\end{equation}
with
\begin{multline*}
\teta_{1,1,\ep}^{\alpha}(X)
:= -\ep^{-2}\frac{3\Lfrak_0(1,w)^{1/2}}{\Qfrak_0(1,1,w)}
\left[
\tanh\left(\frac{\Lfrak_0(1,w)^{1/2}(X+\ep)}{2}\right)-\tanh\left(\frac{\Lfrak_0(1,w)^{1/2}X}{2}\right)
\right] \\
+ \frac{3\Lfrak_0(1,w)}{2\Qfrak_0(1,1,w)}\sech^2\left(\frac{\Lfrak_0(1,w)^{1/2}X}{2}\right),
\end{multline*}
\[
\teta_{1,2,\ep}^{\alpha}(X)
:= \ep^{-1}\Lsf_{\ep}^{\alpha}
\left[
\tanh\left(\frac{\Lfrak_0(1,w)^{1/2}(X+\ep)}{2}\right)-\tanh\left(\frac{\Lfrak_0(1,w)^{1/2}X}{2}\right)
\right],
\]
\[
\teta_{1,3,\ep}^{\alpha}(X)
:= \ep^{-1}[\Upsilon_{\ep^2}^{\alpha,0}(X+\ep)-\Upsilon_{\ep^2}^{\alpha,0}(X)],
\]
\[
\teta_{1,4,\ep}^{\alpha}(X)
:= (\Upsilon_{\ep^2}^{\alpha,*}(X+\ep))_2-(\Upsilon_{\ep^2}^{\alpha,*}(X))_1,
\]
\[
\teta_{1,5,\ep}^{\alpha}(X)
:= \alpha(\alpha+\ep^2)\ep^{-2}
\left(\int_{\Tsf_{\ep}^{\alpha}(X)}^{\Tsf_{\ep}^{\alpha}(X+\ep)} \Phi_{\ep^2}^{\alpha,\int}(s) \ds
- \int_{\Tsf_{\ep}^{\alpha}(X)}^{\Tsf_{\ep}^{\alpha}(X)+\ep} \Phi_{\ep^2}^{\alpha,\int}(s) \ds\right),
\]
and
\[
\teta_{1,6,\ep}^{\alpha}(X)
:= \alpha\ep^{-1}[\varphi_{2,\ep}^{\alpha}(\Tsf_{\ep}^{\alpha}(X+\ep))-\varphi_{2,\ep}^{\alpha}(\Tsf_{\ep}^{\alpha}(X)+\ep)].
\]

We emphasize that the leading-order $\sech^2$-term in \eqref{eqn: Nsf1-ep-alpha} is the same as the coefficient on $\chib_1(1,w)$ in \eqref{eqn: Usf mass dimer}, except for the now missing factor of $\ep^2$.
While the zero components of $\chib_1(1,w)$ eliminate this $\sech^2$-term in the position profiles \eqref{eqn: Fsf-j-ep-alpha}, it reappears here due to the subtraction of the $\ep$-shifted $\tanh$-terms.

We recall that the terms $\Upsilon_{\ep^2}^{\alpha,0}$ and $\Upsilon_{\ep^2}^{\alpha,*}$ appeared in the expansion \eqref{eqn: Usf mass dimer} of our original solution $\Usf_{\ep^2}^{\alpha}$ and satisfy the same properties as their counterparts in Theorem \ref{thm: main abstract}.
We need to show
\[
\sup_{\substack{0 < \ep < \ep_* \\ \Alpha_0\ep{e}^{-\Alpha_{\infty}/\ep} \le \alpha \le \Alpha_1}}
e^{q|X|}|\teta_{1,k,\ep}^{\alpha}(X)|
< \infty, \ k=1,\ldots,6.
\]
For $k=1$, $2$, this follows from Taylor's theorem.
For $k=3$, we use the estimate \eqref{eqn: main expn loc} on the derivative of $\Upsilon_{\ep^2}^{\alpha,0}$ to bound
\[
|\teta_{1,3,\ep}^{\alpha}(X)|
\le \ep^{-1}\int_X^{X+\ep} |\partial_s[\Upsilon_{\ep^2}^{\alpha,0}](s)| \ds
\le C\ep^{-1}\int_X^{X+\ep} e^{-q|s|} \ds
\le Ce^{-q|X|}.
\]
The estimate for $k=4$ also follows from \eqref{eqn: main expn loc} applied to the components of $\Upsilon_{\ep^2}^{\alpha,*}$.
The estimate for $k=5$ follows from part \ref{part: tanh1} of Lemma \ref{lem: the big tanh lemma} and for $k=6$ from part \ref{part: tanh4} of Lemma~\ref{lem: the other big tanh lemma}.

Last, the definitions of $\Lfrak_0(1,w)$ in \eqref{eqn: Lfrak0-concrete} and $\Qfrak_0(1,1,w)$ in  \eqref{eqn: final quad nondegen} give
\[
-\frac{3\Lfrak_0(1,w)}{2\Qfrak_0(1,1,w)} 
= \frac{3w}{1+w}.
\]

\subsection{The proof of Theorem \ref{thm: main theorem spring dimer}}\label{sec: proof of main thm spring dimer}
As we indicated earlier, we just point out the few ways in which the spring dimer proof differs from the mass dimer proof given above.
First, instead of defining the solution $\Usf_{\ep^2}^{\alpha}$ via \eqref{eqn: Usf mass dimer}, we have
\begin{multline}\label{eqn: Usf spring dimer}
\Usf_{\ep^2}^{\alpha}(x)
= \ep\left[
\left(-\frac{3\Lfrak_0(\kappa,1)^{1/2}}{\Qfrak_0(\kappa,\beta,1)}+\ep\Lup_{\ep^2}^{\alpha}\right)
\tanh\left(\frac{\Lfrak_0(\kappa,1)^{1/2}\ep{x}}{2}\right)
+ \ep\Upsilon_{\ep^2}^{\alpha,0}(\ep{x})
\right]\chib_0(\kappa,1) \\
- \frac{3\Lfrak_0(\kappa,1)}{2\Qfrak_0(\kappa,\beta,1)}\ep^2\sech^2\left(\frac{\Lfrak_0(\kappa,1)^{1/2}\ep{x}}{2}\right)\chib_1(\kappa,1) 
+ \ep^3\Upsilon_{\ep^2}^{\alpha,*}(\ep{x})
+ \Psf_{\ep^2}^{\alpha}(\ep^{-1}\Tup_{\ep^2}^{\alpha}(\ep{x})).
\end{multline}
Here $\Psf_{\ep^2}^{\alpha}$ has the same form as \eqref{eqn: Psf mass dimer}.
The key difference now is that instead of generalized eigenvector component identities \eqref{eqn: chib0-chib1-1-2}, we have
\[
(\chib_1(\kappa,1))_1 = \frac{1}{2}\left(\frac{1-\kappa}{1+\kappa}\right)
\quadword{and} 
(\chib_0(\kappa,1))_2 = -\frac{1}{2}\left(\frac{1-\kappa}{1+\kappa}\right).
\]

This only requires one adjustment to the construction of position solutions: instead of defining the localized term $\eta_{j,\ep}^{\alpha}$ via \eqref{eqn: eta-j-ep mass dimer}, include an additional $\sech^2$-term:
\begin{equation}\label{eqn: eta-j-ep-alpha sd pos}
\eta_{j,\ep}^{\alpha}(X)
= (-1)^j\frac{3\Lfrak_0(\kappa,1)(\kappa-1)}{2\Qfrak_0(\kappa,\beta,1)(1+\kappa)}
\sech^2\left(\frac{\Lfrak_0(\kappa,1)^{1/2}X}{2}\right)
+ \Upsilon_{\ep^2}^{\alpha,0}(X)
+ \ep(\Upsilon_{\ep^2}^{\alpha,*}(X))_j.
\end{equation}
With $\varphi_{j,\ep}^{\alpha}$ defined as in \eqref{eqn: varphi-j-ep-alpha} and $\Gsf_{\ep}^{\alpha}$ as in \eqref{eqn: Gsf-ep-alpha}, and using the notation of \eqref{eqn: ep2 to ep}, we find that the spring dimer position solutions $u_j$ satisfy $u_j(t) = \ep\Fsf_{j,\ep}^{\alpha}(\ep(j-\cep{t}))$, where
\begin{multline*}
\Fsf_{j,\ep}^{\alpha}(X)
= \left(-\frac{3\Lfrak_0(\kappa,1)^{1/2}}{\Qfrak_0(\kappa,\beta,1)}+\ep\Lsf_{\ep}^{\alpha}\right)
\tanh\left(\frac{\Lfrak_0(\kappa,1)^{1/2}\ep{x}}{2}\right)
+ \ep\eta_{j,\ep}^{\alpha}(X)
+ \alpha\ep\varphi_{j,\ep}^{\alpha}(\Tsf_{\ep}^{\alpha}(X)) \\
+ \alpha(\alpha+\ep^2)\Gsf_{\ep}^{\alpha}(\Tsf_{\ep}^{\alpha}(X)).
\end{multline*}
The leading order frequency term of $\varphi_{j,\ep}^{\alpha}$ is $\omega_{\kappa} := \omega_*(\kappa,1)$ from \eqref{eqn: star simplicity}.
We use the formula \eqref{eqn: Lfrak0-concrete} for $\Lfrak_0(\kappa,1)$ and \eqref{eqn: final quad nondegen} for $\Qfrak_0(\kappa,\beta,1)$ to calculate
\[
-\frac{3\Lfrak_0(\kappa,1)^{1/2}}{\Qfrak_0(\kappa,\beta,1)} = \frac{[6\kappa^3(1+\kappa)(\kappa^2-\kappa+1)]^{1/2}}{2(\beta+\kappa^3)}
\quadword{and}
\frac{\Lfrak_0(\kappa,1)^{1/2}}{2} = \frac{1}{2}\left(\frac{6\kappa(1+\kappa)}{\kappa^2-\kappa+1}\right)^{1/2}.
\]

Next, the verification in Section \ref{sec: mass dimer periodic construction} that $\tvarphi_{1,\ep}^{\alpha}$ is nonvanishing must proceed with some different constants and functions.
Instead of $\Esf_w$ as in \eqref{eqn: Esf-w}, put
\begin{equation}\label{eqn: Esf-kappa}
\Esf_{\kappa}
:= \frac{e^{i\omega} + \kappa{e}^{-i\omega}}{1+\kappa-\dfrac{2\kappa}{1+\kappa}\omega^2}.
\end{equation}
Then in  \eqref{eqn: varphi-j-ep-alpha expansion}, replace $\chib_{\pm\omega}(1,w)$ with $\chib_{\pm\omega}(\kappa,1)$, again defined via \eqref{eqn: chib-pm-omega}, to find
\[
\frac{(\chib_{\omega}(\kappa,1))_j + (\chib_{-\omega}(\kappa,1))_j}{2}
= \begin{cases}
2\re(\Esf_{\kappa}), &j = 1\\ 
1, &j = 2
\end{cases}
\]
and
\[
\frac{(\chib_{\omega}(\kappa,1))_j - (\chib_{-\omega}(\kappa,1))_j}{2}
= \begin{cases}
2\im(\Esf_{\kappa}), &j = 1\\ 
0, &j = 2.
\end{cases}
\]
Then the function $f_{\ep}^{\alpha}$ defined in \eqref{eqn: f-ep-alpha aux} reads
\[
f_{\ep}^{\alpha}(X)
= \breve{f}_{\ep}^{\alpha}(X)
+ \alpha(\psi_{2,\ep}^{\alpha}(\Omega_{\ep}^{\alpha}(X+\ep))-\psi_{1,\ep}^{\alpha}(\Omega_{\ep}^{\alpha}X)),
\]
where 
\[
\breve{f}_{\ep}^{\alpha}(X)
:= \cos(\Omega_{\ep}^{\alpha}(X+\ep)) 
- \re(\Esf_{\kappa})\cos(\Omega_{\ep}^{\alpha}X)
+ i\im(\Esf_{\kappa})\sin(\Omega_{\ep}^{\alpha}X)
\]
and where $\psi_{j,\ep}^{\alpha}$ satisfies the estimate of \eqref{eqn: Xi-bound} and $\Omega_{\ep}^{\alpha}$ satisfies \eqref{eqn: tOmega bound}.

We just show that the leading order part $\breve{f}_{\ep}^{\alpha}$ is always uniformly bounded away from 0, as then the remainder of the work in Section \ref{sec: mass dimer periodic construction} proceeds identically.
We calculate
\[
\breve{f}_{\ep}^{\alpha}(0)
= \cos(\Omega_{\ep}^{\alpha}\ep) - \re(\Esf_{\kappa})
= \cos(\omega_*(\kappa,1) + \ep^2\tOmega_{\ep}^{\alpha}) - \re(\Esf_{\kappa}),
\]
where, from \eqref{eqn: Esf-kappa}
\[
\re(\Esf_{\kappa})
= \frac{\cos(\omega_*(\kappa,1))(1+\kappa)}{1+\kappa-\dfrac{2\kappa}{1+\kappa}\omega_*(\kappa,1)^2}.
\]

First suppose $\cos(\omega_*(\kappa,1)) \ne 0$.
If $\cos(\omega_*(\kappa,1)) - \re(\Esf_{\kappa}) = 0$, then after some algebraic rearrangements, it follows that $\omega_*(\kappa,1) = 0$, a contradiction.
So, we must have $\cos(\omega_*(\kappa,1)) - \re(\Esf_{\kappa}) \ne 0$, and then taking $\ep$ and $\alpha$ sufficiently small, we obtain
\[
|\breve{f}_{\ep}^{\alpha}(0)|
\ge \frac{|\cos(\omega_*(\kappa,1)) - \re(\Esf_{\kappa})|}{2}.
\]
In the case that $\cos(\omega_*(\kappa,1)) = 0$, we find $\re(\Esf_{\kappa}) = 0$, too, and so $\breve{f}_{\ep}^{\alpha}(-\ep) = 1$ for all $\ep$ and $\alpha$.
This is a sufficient uniform lower bound on $\breve{f}_{\ep}^{\alpha}$ and thus $f_{\ep}^{\alpha}$.

Last, in constructing the relative displacement nanopterons as in Section \ref{sec: md rel disp nano constr}, we need to take into account the alternating $\sech^2$-term in the spring dimer's localized $\eta_{j,\ep}^{\alpha}$ term defined in \eqref{eqn: eta-j-ep-alpha sd pos}.
This leads us to express
\[
\Nsf_{1,\ep}^{\alpha}(X)
= -\frac{6\kappa\Lfrak_0(\kappa,1)}{\Qfrak_0(\kappa,\beta,1)(1+\kappa)}\sech^2\left(\frac{\Lfrak_0(\kappa,1)^{1/2}X}{2}\right)
+ \ep\teta_{1,\ep}^{\alpha}(X)
+ \alpha\tvarphi_{1,\ep}^{\alpha}(\Tsf_{\ep}^{\alpha}(X)),
\]
instead of \eqref{eqn: Nsf1-ep-alpha}, where now
\begin{equation}\label{eqn: teta-ep-alpha1 sd}
\teta_{1,\ep}^{\alpha}
:= \sum_{k=1}^7 \teta_{1,k,\ep}^{\alpha},
\end{equation}
with the first six terms defined analogously to the above in \eqref{eqn: teta mass dimer} and the new seventh term defined by
\[
\teta_{1,7,\ep}^{\alpha}(X)
:= 
\frac{3\Lfrak_0(\kappa,1)(\kappa-1)}{2\Qfrak_0(\kappa,\beta,1)(1+\kappa)}
\left[\sech^2\left(\frac{\Lfrak_0(\kappa,1)^{1/2}(X+\ep)}{2}\right)
- \sech^2\left(\frac{\Lfrak_0(\kappa,1)^{1/2}X}{2}\right)\right].
\]
We also find
\[
\Nsf_{2,\ep}^{\alpha}(X)
= -\frac{3\Lfrak_0(\kappa,1)}{\Qfrak_0(\kappa,\beta,1)(1+\kappa)}\sech^2\left(\frac{\Lfrak_0(\kappa,1)^{1/2}X}{2}\right)
+ \ep\teta_{2,\ep}^{\alpha}(X)
+ \alpha\tvarphi_{2,\ep}^{\alpha}(\Tsf_{\ep}^{\alpha}(X)),
\]
with $\teta_{2,\ep}^{\alpha}$ also defined as a sum of seven terms like \eqref{eqn: teta-ep-alpha1 sd}, where in particular $\teta_{2,7,\ep}^{\alpha} := -\teta_{1,7,\ep}^{\alpha}$.
The important distinction is that $\Nsf_{1,\ep}^{\alpha}$ has an extra factor of $\kappa$ on its leading-order $\sech^2$-term, while $\Nsf_{2,\ep}^{\alpha}$ does not.

Finally, we calculate 
\[
-\frac{3\Lfrak_0(\kappa,1)}{\Qfrak_0(\kappa,\beta,1)(1+\kappa)}
= \frac{3\kappa^2}{\beta+\kappa^3}
\quadword{and}
\frac{\Lfrak_0(\kappa,1)^{1/2}}{2}
= \frac{1}{2}\left(\frac{6\kappa(1+\kappa)}{\kappa^2-\kappa+1}\right)^{1/2}.
\]
\section{Future Directions}\label{sec: future directions}

We suggest several directions of future research that are natural successors to our work and the related results in \cite{faver-wright, faver-spring-dimer}.

\begin{enumerate}[label={\bf\arabic*.}]

\item
{\it{Traveling waves in lattices with more complicated material heterogeneities.}}
We are forced to specialize, ultimately, to mass and spring dimers so that we can employ symmetries.
We did not disprove the existence of a symmetry for the general dimer, but nor were we able to construct one, as we lamented in Section \ref{sec: symmetries}.
However, the KdV approximation result \eqref{eqn: gmwz est} of \cite{gmwz} is valid for ``most'' polyatomic lattices, not just mass and spring dimers.
What, then, does this KdV approximation mean for exact solutions in the long wave limit?
Do they exist, and, if so, are they nanopterons?

To extend our results to more complicated polyatomic lattices, we will either need to rethink seriously the role of symmetry or work with specialized lattices that do retain some sort of symmetry.
One such candidate could be the ``$1:n$ dimer,'' in which $n$ particles of mass 1 alternate with a particle of mass $m > 1$; such a lattice is $(n+1)$-polyatomic in the sense of \eqref{eqn: N-periodic data}.
There already exists a body of results concerning solitary waves under a Hertzian potential \cite{jvs-pulse, jvs-solitary0, jvs-solitary1}.

Assuming that we are able to meet the challenge of symmetry, another large difference from the dimers looms.
We expect in $N$-polyatomic lattices there will be $N-1$ critical frequencies, and so we would be in the case of a $0^{4-}(i\omega_1)\cdots(i\omega_{N-1})$ bifurcation.
(We allow that the precise multiplicity of the 0 eigenvalue, and its interaction with the putative symmetry, may not be ``$0^{4-}$.'')
Such a center spectrum will require a substantial generalization of Lombardi's results if we wish to purse the spatial dynamics approach.
See \cite[Sec.\@ 5.3]{james-sire}, \cite[Sec.\@ 6.3]{sire} for similar discussions of these complications in problems with analogously more complicated ``heterogeneities,'' as well as \cite{iooss-lombardi-nf-expn-small-rem} for a conjecture on an extension of Lombardi-type results to higher-order resonances.

\item
{\it{Existence of solitary waves in dimers.}}
We have constructed decidedly {\it{nonlocal}} or {\it{generalized}} solitary wave solutions in position and relative displacement coordinates.
In each case, the periodic ripples have nonzero amplitudes; in position coordinates, we must also contend with the possibly growing term.
We still do not yet know if there exists a purely exponentially localized solution for either position or relative displacement, despite the abundance of evidence, from different perspectives, as we discussed in Question \ref{ques: per amp is 0?}.
While Lombardi's results (see Appendix \ref{app: Lombardi solitary nonexistence}) encourage us to conjecture that for almost all mass ratios $w$ or linear spring constants $\kappa$, there are no long wave-scaled solitary traveling waves, much more rigorous analysis is needed.

\item
{\it{Additional applications of our abstract theory.}}
It would be an interesting exercise to check and rederive the results of \cite{venney-zimmer, iooss-james} in the language of our hypotheses and Theorem \ref{thm: main abstract}.
Despite the value of their inspiration to us, these papers used substantially different language and calculations, particularly when ``factoring out'' their first integrals and deriving the analogues of their linear and quadratic nondegeneracies.
We are also curious if our spatial dynamics method can provide another perspective on the Whitham problem for which long wave nanopterons and micropterons exist \cite{johnson-wright, johnson-truong-wheeler}.
A challenge here is that in these Whitham papers, the derivative is integrated out of the traveling wave equation, leaving a strictly nonlocal equation that does not appear to have the form of our general problem \eqref{eqn: abstract ode}.

Somewhat contemporaneously with our research, Hilder, de Rijk, and Schneider \cite{hilder-derijk-schneider} used a lucid modernization of these spatial dynamics techniques to construct supersonic fronts in a monatomic lattice with nearest+next-to-nearest-neighbor interactions.
Their precise choice of the lattice potentials leads the nonlinear Schr\"{o}dinger equation to be the natural continuum limit, unlike KdV as in the dimers.
The center spectrum of their version of $\L_0$ from the abstract system \eqref{eqn: sd intro} consists of $0$ and $\pm{i}\omega$ with each of these eigenvalues being geometrically simple and and algebraically double.
Taking into account their precise symmetry, this is a $0^{2-}(i\omega)^2$ bifurcation.
Due to these physical and theoretical differences, the authors did not elect to apply Lombardi's nanopteron theory but instead worked more directly with a normal form system and center manifold reduction.
For a related {\it{subsonic}} traveling wave problem in this lattice, they conjectured \cite[Conj. 6.1]{hilder-derijk-schneider} the existence of nanopterons.
We expect that a modification of our procedure in Section \ref{sec: abstract} from the $0^{4-}i\omega$ bifurcation to the $0^{2-}(i\omega)^2$ case could allow their conjecture to be solved by Lombardi's methods.

\item
{\it{Stability of nanopterons.}}
There are several stability results known for solitary waves in monatomic lattices.
Friesecke and Pego \cite{friesecke-pego2, friesecke-pego3, friesecke-pego4} proved that their long wave solutions are stable in very precise detail.
Herrmann and Matthies \cite{herrmann-matthies-asymptotic, herrmann-matthies-stability, herrmann-matthies-uniqueness} have studied solitary waves in a ``high-energy'' limit in monatomic lattices, where for certain convex potentials with singularities (unlike our smooth potentials), the energy of solutions becomes infinite in an appropriate limit.
Numerical evidence \cite{co-ops, faver-hupkes-numerics} indicates that the lattice nanopterons are metastable, but the precise details require much further exploration.
\end{enumerate}

\appendix

\section{Spectral Theory}\label{app: spectral theory}

\subsection{Elementary spectral theory}\label{app: elementary spectral theory}
This standard material is discussed at length in Section III.6 of \cite{kato} and nicely summarized in Appendix A.2 of \cite{haragus-iooss}; we include it for completeness.

Let $\X$ be a Banach space and $\L \colon \D \subseteq \X \to \X$ be a linear operator in $\X$.
We denote the spectrum of $\L$ by $\sigma(\L)$, the point spectrum or eigenvalues of $\L$ by $\sigma_{\pt}(\L)$, and the resolvent set of $\L$ by $\rho(\L)$.
If $z \in \rho(\L)$, we denote the resolvent of $\L$ at $\lambda$ by $\rhs(z) := (z\ind_{\D}-\L)^{-1}$.

Suppose that $\lambda \in \sigma_{\pt}(\L)$ is an isolated point of $\sigma(\L)$.
Choose $\ep > 0$ so small that if $0 < |z-\lambda| < 2\ep$, then $z \not\in \sigma(\L)$.
The spectral projection of $\L$ corresponding to $\lambda$ is the operator
\begin{equation}\label{eqn: spectral proj}
\Pi_{\lambda}
:= \frac{1}{2\pi{i}}\int_{|z-\lambda| = \ep} \rhs(z) \dz.
\end{equation}
The generalized eigenspace of $\L$ corresponding to $\lambda$ is the space $\Pi_{\lambda}(\X)$, and the dimension of $\Pi_{\lambda}(\X)$ is the algebraic multiplicity of $\lambda$ as an eigenvalue of $\L$.

The operator $\Pi_{\lambda}$ is a projection, and $\Pi_{\lambda}$ and $\L$ commute on $\D$.
Moreover,
\[
\sigma(\restr{\L}{\Pi_{\lambda}(\X)}) = \{\lambda\},
\quadword{and}
\sigma(\restr{\L}{(\ind_{\D}-\Pi_{\lambda})(\X)}) = \sigma(\L)\setminus\{\lambda\},
\]
and $\Pi_{\lambda}$ is the unique projection $\X \to \Pi_{\lambda}(\X)$ that commutes with $\L$.

More generally, if $\lambda_1,\ldots,\lambda_n \in \sigma_{\pt}(\L)$ are isolated points in $\sigma(\L)$, then the operator $\Pi := \medsum_{k=1}^n \Pi_{\lambda_k}$, with $\Pi_{\lambda_k}$ defined in \eqref{eqn: spectral proj}, is the spectral projection of $\L$ corresponding to $\{\lambda_1,\ldots,\lambda_n\}$.
This operator $\Pi$ satisfies
\[
\sigma(\restr{\L}{\Pi(\X)}) = \{\lambda_1,\ldots,\lambda_n\}
\quadword{and}
\sigma(\restr{\L}{(\ind_{\D}-\Pi)(\X)}) = \sigma(\L)\setminus\{\lambda_1,\ldots,\lambda_n\},
\]
and $\Pi$ is the unique projection $\X \to \Pi(\X)$ that commutes with $\L$.

\subsection{Jordan chains and algebraic multiplicities of eigenvalues}
If $\lambda$ is an isolated eigenvalue of $\L \colon \D \subseteq \X \to \X$ with algebraic multiplicity $\mfrak \ge 2$, a Jordan chain for $\lambda$ is an $\mfrak$-tuple $(\chi_0,\chi_1,\ldots,\chi_{\mfrak-1}) \in \D^{\mfrak}$ such that $(\lambda\ind_{\D}-\L)\chi_0 = 0$ and $(\lambda\ind_{\D}-\L)\chi_{k+1} = \chi_k$ for $0 \le k \le \mfrak-2$.
If $\chi_0 \ne 0$, the set $\{\chi_k\}_{k=0}^{\mfrak-1}$ is necessarily linearly independent.

For completeness, we state several results in detail but do not prove them.

\begin{lemma}\label{lem: alg mult}
Let $\X$ be a Banach space and $\L \colon \D \subseteq \X \to \X$ be a linear operator whose spectrum satisfies
\begin{equation}\label{abstr spec}
\sigma(\L)
= \set{\lambda \in \C}{f(\lambda) = 0}
\end{equation}
for some entire function $f \colon \C \to \C$.
Suppose as well that the resolvent of $\L$ has the form
\begin{equation}\label{abstr resolv}
\rhs(z)
= (z\ind_{\D} - \L)^{-1}
= \frac{1}{f(z)}\rhs_1(z) + \rhs_2(z), \ z \in \rho(\L),
\end{equation}
where $\rhs_1$, $\rhs_2 \colon \rho(\L) \to \b(\X)$ are analytic.
Then 

\begin{enumerate}[label={\bf(\roman*)}]

\item
$\sigma(\L)$ consists entirely of isolated points;

\item
If $\lambda \in \sigma(\L)$ is a geometrically simple eigenvalue of $\L$, then the algebraic multiplicity of $\lambda$ equals the multiplicity of $\lambda$ as a root of $f$.
\end{enumerate}
\end{lemma}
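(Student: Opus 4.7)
My plan is to deduce both claims from complex-analytic properties of the resolvent, reading off the algebraic multiplicity from pole orders of $\rhs$ at the eigenvalues. For (i), I would first note that $\rho(\L)$ is nonempty, so $f$ cannot vanish identically on $\C$; being entire and nonzero, $f$ has only isolated zeros, and hence $\sigma(\L) = f^{-1}(0)$ consists of isolated points.

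For (ii), let $\lambda$ be a geometrically simple eigenvalue with algebraic multiplicity $\mfrak$, and write $m$ for its order as a root of $f$. The strategy is to compute the pole of $\rhs(z)$ at $\lambda$ two ways and match them. First, because $\lambda$ is isolated in $\sigma(\L)$, $\rhs$ admits the standard Laurent expansion
\begin{equation*}
\rhs(z) = \sum_{k=0}^{\mfrak-1} \frac{(\L-\lambda\ind_{\D})^k \Pi_{\lambda}}{(z-\lambda)^{k+1}} + \widetilde{\rhs}(z),
\end{equation*}
with $\widetilde{\rhs}$ analytic at $\lambda$. Geometric simplicity forces $\Pi_{\lambda}(\X)$ to be spanned by a single Jordan chain of length $\mfrak$, so $(\L-\lambda\ind_{\D})^{\mfrak-1}\Pi_{\lambda} \ne 0$ while $(\L-\lambda\ind_{\D})^{\mfrak}\Pi_{\lambda} = 0$, and thus $\rhs$ has a pole of order exactly $\mfrak$ at $\lambda$. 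On the other hand, writing $f(z) = (z-\lambda)^m f_0(z)$ with $f_0$ entire and $f_0(\lambda) \ne 0$, the hypothesized decomposition rewrites as
\begin{equation*}
\rhs(z) = \frac{f_0(z)^{-1}\rhs_1(z)}{(z-\lambda)^m} + \rhs_2(z),
\end{equation*}
and since $\rhs_1, \rhs_2$ are analytic at $\lambda$, the pole of $\rhs$ at $\lambda$ has order at most $m$. Matching with the Laurent order immediately yields $\mfrak \le m$. For the reverse inequality, I would match principal parts term-by-term: the leading Laurent coefficient $(\L-\lambda\ind_{\D})^{\mfrak-1}\Pi_{\lambda}$ is nonzero, and it must equal $f_0(\lambda)^{-1}$ times the Taylor coefficient of $\rhs_1$ at $\lambda$ of order $m-\mfrak$, so $\rhs_1$ cannot vanish to order exceeding $m-\mfrak$ at $\lambda$; combined with $\rhs_1(\lambda) \ne 0$ this forces $m = \mfrak$.

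The main obstacle is ensuring this last nonvanishing of $\rhs_1$ at $\lambda$. At the abstract level one can always absorb extra factors of $(z-\lambda)$ between $f$ and $\rhs_1$, so the claim tacitly presumes the decomposition to be in ``lowest terms'' at $\lambda$. In applications like Proposition \ref{prop: eigenprops}, this is automatic: $f = \det(\M(\cdot;\kappa,w,c))$ and $\rhs_1$ is assembled from the adjugate of the $2\times 2$ matrix $\M$, and at a geometrically simple root $\lambda$ of $\det(\M)$ the matrix $\M(\lambda;\kappa,w,c)$ has rank exactly one, so its adjugate is a nonzero rank-one matrix, supplying the required $\rhs_1(\lambda) \ne 0$.
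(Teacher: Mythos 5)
The paper does not actually prove this lemma (it appears after the sentence ``For completeness, we state several results in detail but do not prove them''), so there is no argument of its own to compare against; your pole-order-matching strategy is the natural and correct one. Your most useful observation is that the lemma as literally stated is slightly over-strong. Replacing $f$ by $(z-\lambda)f$ and $\rhs_1$ by $(z-\lambda)\rhs_1$ preserves both the zero set $\sigma(\L) = f^{-1}(0)$ and the decomposition \eqref{abstr resolv} while inflating the root order $m$, so $m \le \mfrak$ cannot hold without an implicit ``lowest terms'' hypothesis $\rhs_1(\lambda) \neq 0$; only $\mfrak \le m$ is unconditional. You supply the right fix by verifying this in the intended application: at a geometrically simple root of $\det(\M(\cdot;\kappa,w,c))$, the matrix $\M(\lambda;\kappa,w,c)$ has a one-dimensional kernel, so its adjugate is a nonzero rank-one matrix, and this propagates to $\rhs_1(\lambda) \neq 0$ through the explicit resolvent formulas \eqref{eqn: resolv comp1}--\eqref{eqn: resolv comp2}.

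Two smaller points are worth flagging explicitly rather than using silently. First, for the direction $\mfrak \le m$ you also need $\rhs_1$ and $\rhs_2$ to extend analytically across $\lambda$; the stated hypothesis gives analyticity only on $\rho(\L)$, which does not contain $\lambda$. This is again automatic in the paper's application, where $\rhs_1$ and $\rhs_2$ are assembled from entire functions of $z$, but it is part of what the lemma really assumes. Second, geometric simplicity is exactly what lets you identify the pole order of $\rhs$ at $\lambda$ with the algebraic multiplicity $\mfrak$: it forces the nilpotent part of $\L - \lambda\ind_{\D}$ on $\Pi_{\lambda}(\X)$ to have index exactly $\mfrak$, so $(\L-\lambda\ind_{\D})^{\mfrak-1}\Pi_{\lambda} \neq 0$. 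Without that assumption several Jordan blocks could share $\lambda$, the pole order could be strictly smaller than $\mfrak$, and the first inequality would already fail. You use both facts correctly; naming them makes the scope of the lemma precise.
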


\begin{lemma}\label{lem: main spectral theory lemma}
Let $\X$ be a Banach space and let $\L \colon \D \subseteq \X \to \X$ be a linear operator whose spectrum consists entirely of eigenvalues: $\sigma(\L) = \sigma_{\pt}(\L)$.
Suppose that 0 is an isolated eigenvalue of geometric multiplicity 1 and algebraic multiplicity $\mfrak \ge 2$ for $\L$.
Write the spectral projection for $\L$ onto the generalized eigenspace corresponding to 0 as
\[
\Pi{U}
= \sum_{k=0}^{\mfrak-1} \chi_k^*(U)\chi_k,
\]
where $\chi_k^* \in \X^*$, $\L\chi_0 = 0$, $\chi_0 \ne 0$, $\L\chi_{k+1} = \chi_k$ for $0 \le k \le m-2$, and $\chi_k^*(\chi_j) = \delta_{j,k}$.

Now set
\[
\tX
:= \set{W \in \X}{\chi_0^*(W) = \chi_{m-1}^*(W) = 0}
\quadword{and}
\tD := \D \cap \tX.
\]
Define the operator $\tL$ in $\tX$ with domain $\tD$ by
\[
\tL{W}
:= \L{W} -\chi_1^*(W)\chi_0,
\ W \in \tD.
\]
Then 

\begin{enumerate}[label={\bf(\roman*)}]

\item
$\sigma(\tL) = \sigma(\L)$, and every point in $\sigma(\tL)$ is an eigenvalue of $\tL$.

\item\label{part: alg mult for tL}
If $\lambda \in \sigma(\tL)\setminus\{0\}$ is geometrically simple, then the algebraic multiplicity of $\lambda$ as an eigenvalue for $\tL$ equals its algebraic multiplicity as an eigenvalue of $\L$.

\item
If $\mfrak \ge 3$, then 0 is an eigenvalue of algebraic multiplicity $\mfrak-2$ for $\tL$ and the operator
\begin{equation}\label{eqn: abstract 0 spectral proj reduced}
\tPi{W}
:= \sum_{k=1}^{\mfrak-2} \chi_k^*(W)\chi_k
\end{equation}
is the spectral projection for $\tL$ onto the generalized eigenspace corresponding to 0.
\end{enumerate}
\end{lemma}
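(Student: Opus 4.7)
The plan is to exploit the fact that $\tL$ is a rank-one perturbation of $\L$ that splits cleanly with respect to the spectral decomposition of $\L$. I will show that $\tX$ decomposes as a direct sum of a finite-dimensional piece (on which $\tL$ is a single Jordan block for the eigenvalue $0$) and a complementary piece (on which $\tL$ agrees with $\L$ and avoids $0$), and then read off (i)--(iii) from this decomposition.

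First I would verify that $\tL$ actually maps $\tD$ into $\tX$. The crucial preliminary fact is the dual Jordan-chain identity $\chi_k^*[\L U] = \chi_{k+1}^*(U)$ for $k=0,\dots,\mfrak-2$ and $\chi_{\mfrak-1}^*[\L U] = 0$ for every $U \in \D$, which one obtains by splitting $U = \Pi U + (\ind_\D - \Pi)U$, using $\chi_k^* \circ (\ind_\D-\Pi) = 0$, and evaluating $\chi_k^*[\L\chi_j]$ via the duality $\chi_k^*(\chi_j) = \delta_{j,k}$. From this, $\chi_0^*(\tL W) = \chi_1^*(W) - \chi_1^*(W)\chi_0^*(\chi_0) = 0$ and $\chi_{\mfrak-1}^*(\tL W) = 0$ for $W \in \tD$, so $\tL \colon \tD \to \tX$.

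Next I would set up the decomposition
\begin{equation*}
\tX = \E_0 \oplus \E_{\hsf}, \qquad \E_0 := \mathrm{span}\{\chi_1,\ldots,\chi_{\mfrak-2}\}, \qquad \E_{\hsf} := (\ind_\X - \Pi)\X,
\end{equation*}
(understanding $\E_0 = \{0\}$ when $\mfrak = 2$). The inclusion $\E_0 + \E_{\hsf} \subseteq \tX$ follows from the duality relations, and given any $W \in \tX$ the identity $W = \Pi W + (\ind_\X-\Pi)W$ together with $\chi_0^*(W) = \chi_{\mfrak-1}^*(W) = 0$ puts $\Pi W$ in $\E_0$. The sum is direct because $\E_0 \subseteq \Pi \X$ while $\E_{\hsf} = \ker \Pi$. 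A direct computation using $\tL \chi_j = \L \chi_j - \chi_1^*(\chi_j)\chi_0$ yields
\begin{equation*}
\tL \chi_1 = 0, \qquad \tL \chi_j = \chi_{j-1} \text{ for } 2 \le j \le \mfrak-2,
\end{equation*}
so $\tL$ preserves $\E_0$ and acts there as a nilpotent Jordan block of size $\mfrak-2$. On $\E_{\hsf} \cap \D$, the functional $\chi_1^*$ vanishes, so $\tL = \L$ and $\E_{\hsf}$ is invariant under $\tL$ because $\Pi$ commutes with $\L$.

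Once this splitting is in hand, parts (i) and (ii) follow quickly. The spectrum $\sigma(\restr{\L}{\E_{\hsf} \cap \D}) = \sigma(\L)\setminus\{0\}$ by the standard invariance property of spectral projections; since $\sigma(\L) = \sigma_\pt(\L)$ and any $\lambda \ne 0$ eigenvector $U$ of $\L$ satisfies $\Pi U = 0$ (because $\restr{\L}{\Pi\X}$ is nilpotent), every such $\lambda$ remains an eigenvalue of $\tL$ with the same generalized eigenspace, giving preservation of algebraic multiplicity when $\lambda$ is geometrically simple. Combining with the contribution $\{0\}$ from $\E_0$ yields $\sigma(\tL) = \sigma(\L)$, all points being eigenvalues. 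For (iii), the projection $\tPi$ in \eqref{eqn: abstract 0 spectral proj reduced} is precisely the projection onto $\E_0$ along $\E_{\hsf}$, and I would verify directly that $\tL\tPi = \tPi\tL$ on $\tD$ using the Jordan-chain identity applied to $\tL$; since $\tPi$ is a projection onto the generalized $0$-eigenspace of $\tL$ that commutes with $\tL$, the uniqueness statement recalled in Appendix \ref{app: elementary spectral theory} identifies it with the spectral projection. The main technical obstacle, though a mild one, is verifying the dual Jordan-chain identity and checking the commutation $\tPi \tL = \tL \tPi$ carefully, where the shift in indices from $\chi_k^*[\L\cdot] = \chi_{k+1}^*[\cdot]$ must conspire with the truncation $\chi_{\mfrak-1}^* \equiv 0$ on $\tX$ to eliminate the boundary terms.
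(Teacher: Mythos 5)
Your argument is correct, and since the paper states this lemma without proof (``For completeness, we state several results in detail but do not prove them''), there is no internal proof to compare against. The decomposition $\tX = \E_0 \oplus \E_{\hsf}$ with $\E_0 = \mathrm{span}\{\chi_1,\ldots,\chi_{\mfrak-2}\}$ and $\E_{\hsf} = (\ind_{\X}-\Pi)\X$ is the natural route: the dual identity $\chi_k^*(\L U) = \chi_{k+1}^*(U)$, $\chi_{\mfrak-1}^*(\L U) = 0$ (cleanest via commutativity of $\Pi$ and $\L$ and linear independence of the $\chi_k$, though your decomposition argument also works) shows $\tL$ maps $\tD$ into $\tX$ and preserves both summands, the direct computation of $\tL\chi_j$ exhibits a nilpotent Jordan block of size $\mfrak-2$ on $\E_0$, and $\tL=\L$ on $\E_{\hsf}$; parts (i)--(iii) then read off the spectrum from this splitting together with the identity $\sigma(\restr{\L}{(\ind_{\X}-\Pi)\X}) = \sigma(\L)\setminus\{0\}$ recalled in Appendix~\ref{app: elementary spectral theory}. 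One small point worth recording: you note parenthetically that $\E_0 = \{0\}$ when $\mfrak = 2$, but your later step ``Combining with the contribution $\{0\}$ from $\E_0$ yields $\sigma(\tL) = \sigma(\L)$'' tacitly requires $\mfrak \ge 3$. For $\mfrak = 2$ your own decomposition shows $\tL = \restr{\L}{(\ind_{\X}-\Pi)\X}$, so $\sigma(\tL) = \sigma(\L)\setminus\{0\} \ne \sigma(\L)$, and part (i) as stated in the paper is in fact false in that case. This is a defect of the paper's statement rather than of your argument (and is irrelevant to the paper's application, where $\mfrak = 4$), but part (i) should carry the hypothesis $\mfrak \ge 3$ along with part (iii). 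It would also be slightly cleaner to extend the remark about $\lambda\ne 0$ eigenvectors of $\L$ lying in $\ker\Pi$ explicitly to generalized eigenvectors, via the invertibility of $(\L-\lambda)|_{\Pi\X}$ for $\lambda \ne 0$, since that is what is really used in part~(ii); the argument is the same one you gave, just applied to powers.
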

\section{Properties of the Hyperbolic Tangent}

The lemmas in this appendix are all variations on the common, and familiar, theme that if a function asymptotes exponentially fast at $\pm\infty$ to constant values, then that function is an exponentially localized perturbation of a suitably scaled hyperbolic tangent, plus a constant.
We just need, unsurprisingly, certain sharp estimates.

\begin{lemma}\label{lem: tanh approx}
Suppose that $f \colon \R \to \R$ is a function that satisfies the estimates
\[
\sup_{X \ge X_0} e^{q_*X}|f(X)-L_+| \le C_+ ,
\quad
\sup_{-X_0 \le X \le X_0} |f(X)| \le C_0,
\quad
\text{and}
\quad
\sup_{X \le -X_0} e^{-q_*X}|f(X)-L_-| \le C_-
\]
for some $q_*$, $X_0$, $C_{\pm}$, $C_0 > 0$ and $L_{\pm} \in \R$.
Then
\begin{multline*}
\sup_{X \in \R} e^{q_*|X|}\left|f(X) - \left[\left(\frac{L_+-L_-}{2}\right)\tanh(q_*X) + \frac{L_++L_-}{2}\right]\right| \\
< \max\left\{C_+ + \frac{|L_--L_+|}{2}, C_- + \frac{|L_--L_+|}{2}, e^{q_*|X_0|}\big(C_0 + \max\{|L_+|,|L_-|\}\big)\right\}.
\end{multline*}
\end{lemma}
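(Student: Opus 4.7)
Write $G(X) := \frac{L_+-L_-}{2}\tanh(q_*X) + \frac{L_++L_-}{2}$ for the proposed reference function, and split $\R$ into the three regions suggested by the hypotheses: $X \ge X_0$, $X \le -X_0$, and $|X| \le X_0$. In each region I will produce a pointwise bound on $e^{q_*|X|}|f(X)-G(X)|$ that matches one of the three terms inside the $\max$, and then take the supremum.

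The key identity is $G(X) - L_+ = \frac{L_+-L_-}{2}\bigl(\tanh(q_*X) - 1\bigr)$, together with
\[
e^{q_*X}\bigl(1-\tanh(q_*X)\bigr) \;=\; \frac{2}{e^{q_*X}+e^{-q_*X}} \;=\; \frac{1}{\cosh(q_*X)} \;\le\; 1, \qquad X \ge 0.
\]
For $X \ge X_0$, the triangle inequality gives
\[
|f(X)-G(X)| \;\le\; |f(X)-L_+| \;+\; \tfrac{|L_+-L_-|}{2}\bigl(1-\tanh(q_*X)\bigr),
\]
and multiplying by $e^{q_*X} = e^{q_*|X|}$ yields the bound $C_+ + \tfrac{|L_+-L_-|}{2}$. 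The region $X \le -X_0$ is entirely symmetric, using $G(X) - L_- = \tfrac{L_+-L_-}{2}(\tanh(q_*X)+1)$ and the analogous identity $e^{-q_*X}(1+\tanh(q_*X)) = 1/\cosh(q_*X) \le 1$; this gives the bound $C_- + \tfrac{|L_+-L_-|}{2}$.

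For the middle region $|X| \le X_0$, the trivial estimate $|\tanh(q_*X)| \le 1$ gives $|G(X)| \le \tfrac{|L_+-L_-|}{2} + \tfrac{|L_++L_-|}{2}$, and the elementary identity
\[
\frac{|L_+-L_-|+|L_++L_-|}{2} \;=\; \max\bigl\{|L_+|,|L_-|\bigr\}
\]
(which is straightforward by case analysis on the signs of $L_\pm$) combines with $|f(X)| \le C_0$ and $e^{q_*|X|} \le e^{q_*X_0}$ to give the final bound $e^{q_*X_0}\bigl(C_0 + \max\{|L_+|,|L_-|\}\bigr)$. Taking the supremum over $\R$ and combining the three regional bounds yields the stated estimate.

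There is no real obstacle here; the only step worth flagging is recognizing the algebraic identity $e^{q_*X}(1-\tanh(q_*X)) = 1/\cosh(q_*X)$, which is exactly what converts the exponential weight on the $\tanh$-remainder into a uniformly bounded quantity and thereby allows the decay rate $q_*$ coming from $f$ to be preserved by the $\tanh$-approximant. The strict inequality in the conclusion follows because $1/\cosh(q_*X_0) < 1$ and $\tanh(q_*X_0) < 1$, so all three regional estimates are in fact strict perturbations of the corresponding summands in the $\max$.
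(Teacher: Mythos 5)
Your proof is correct and follows essentially the same three-region decomposition as the paper, with the same algebraic rewriting $G(X)-L_\pm = \frac{L_+-L_-}{2}(\tanh(q_*X)\mp 1)$ in each tail and the same crude triangle-inequality estimate on $|X|\le X_0$. One small improvement worth noting: you explicitly invoke the identity $e^{q_*X}(1-\tanh(q_*X)) = 1/\cosh(q_*X) \le 1$, which is the genuinely required fact, whereas the paper cites only $|\tanh(q_*X)-1|\le 1$ (which is, by itself, not enough to absorb the exponential weight), so your version is slightly more careful at exactly the point the paper glosses over.
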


\begin{proof}
Rewrite
\[
\left(\frac{L_+-L_-}{2}\right)\tanh(q_*X) + \frac{L_++L_-}{2}
= \frac{L_+}{2}\big(\tanh(q_*X)+1\big)-\frac{L_-}{2}\big(\tanh(q_*X)-1\big).
\]
Then
\[
f(X) - \left[\left(\frac{L_+-L_-}{2}\right)\tanh(q_*X) + \frac{L_++L_-}{2}\right]
=  \big(f(X) - L_+\big) + \left(\frac{L_--L_+}{2}\right)\big(\tanh(q_*X)-1\big),
\]
and so
\[
\sup_{X \ge X_0} e^{q_*X}\left|f(X) - \left[\left(\frac{L_+-L_-}{2}\right)\tanh(q_*X) + \frac{L_++L_-}{2}\right]\right|
\le C_+ + \frac{|L_--L_+|}{2}.
\]
by the hypothesis on $f$ and the estimate $|\tanh(q_*X)-1| \le 1$, valid for $X \ge 0$.

Similarly, we may rewrite
\[
\left(\frac{L_+-L_-}{2}\right)\tanh(q_*X) + \frac{L_++L_-}{2}
= (f(X)-L_-) + \left(\frac{L_--L_+}{2}\right)\big(\tanh(q_*X)+1\big)
\]
to see that 
\[
\sup_{X \le -X_0} e^{-q_*X}\left|f(X) - \left[\left(\frac{L_+-L_-}{2}\right)\tanh(q_*X) + \frac{L_++L_-}{2}\right]\right|
\le C_- + \frac{|L_--L_+|}{2}
\]
by the hypothesis on $f$ and the estimate $|\tanh(q_*X)+1| \le 1$, valid for $X \le 0$.

Finally, a naive estimate with the triangle inequality gives
\begin{multline*}
\sup_{-X_0 \le X \le X_0} e^{q_*|X|}\left|f(X) - \left[\left(\frac{L_+-L_-}{2}\right)\tanh(q_*X) + \frac{L_++L_-}{2}\right]\right| \\
\le e^{q_*|X_0|}\big(C_0 + \max\{|L_+|,|L_-|\}\big). 
\qedhere
\end{multline*}
\end{proof}

\begin{lemma}\label{lem: tanh diff}
Let $0 < q_1 < q_2$.
Then
\[
\sup_{X \in \R} e^{2q_1|X|}|\tanh(q_1X)-\tanh(q_2X)| 
< \infty.
\]
\end{lemma}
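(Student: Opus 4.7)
The plan is to exploit the oddness of $\tanh$ to reduce to $X \geq 0$ and then use the explicit exponential decay rate of $1 - \tanh(qX)$ as $X \to +\infty$. Since $\tanh$ is odd, the map $X \mapsto \tanh(q_1 X) - \tanh(q_2 X)$ is odd, so $|\tanh(q_1 X) - \tanh(q_2 X)|$ is an even function of $X$; it therefore suffices to show the sup over $X \geq 0$ is finite.

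For $X \geq 0$ and any $q > 0$, the standard identity $1 - \tanh(qX) = 2/(e^{2qX}+1)$ gives $0 \leq 1 - \tanh(qX) \leq 2 e^{-2qX}$. Writing
\begin{equation*}
\tanh(q_1 X) - \tanh(q_2 X) = \bigl(1 - \tanh(q_2 X)\bigr) - \bigl(1 - \tanh(q_1 X)\bigr)
\end{equation*}
and applying the triangle inequality yields $|\tanh(q_1 X) - \tanh(q_2 X)| \leq 2 e^{-2q_1 X} + 2 e^{-2q_2 X}$. Multiplying by $e^{2 q_1 X}$ gives the uniform bound
\begin{equation*}
e^{2 q_1 X} |\tanh(q_1 X) - \tanh(q_2 X)| \leq 2 + 2 e^{-2(q_2 - q_1) X} \leq 4
\end{equation*}
for all $X \geq 0$, where the hypothesis $q_1 < q_2$ ensures the second exponential is bounded by $1$.

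There is no real obstacle here; the hypothesis $q_1 < q_2$ is exactly what is needed so that the slower-decaying term $1 - \tanh(q_1 X)$ dictates the envelope, and the faster-decaying term $1 - \tanh(q_2 X)$ is absorbed. One could alternatively deduce the estimate from Lemma \ref{lem: tanh approx} applied to $f(X) := \tanh(q_2 X)$ with $L_\pm = \pm 1$ and decay rate $2 q_2$, then compare to the leading $\tanh(q_1 X)$, but the direct computation above is cleaner and sharper.
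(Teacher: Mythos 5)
Your proof is correct. You take a genuinely different (though comparably elementary) route from the paper: the paper writes $\tanh(q_1X)-\tanh(q_2X) = \int_{q_1X}^{q_2X}\sech^2(s)\,ds$ and bounds $\sech^2(s)\le 4e^{-2s}$ for $s>0$, whereas you instead subtract each term from its limit at $+\infty$ and invoke the closed-form identity $1-\tanh(qX)=2/(e^{2qX}+1)$. Both exploit precisely the same fact — that $\tanh(qX)$ approaches its asymptote at rate $e^{-2qX}$ — but the paper reads the rate off the derivative while you read it off the function itself. Your version is arguably more self-contained (no integral needed), at the cost of a slightly looser constant ($4$ vs.\ the paper's $2$), which is immaterial here. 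Your observation that only the parity of the difference (not of each $\tanh$ separately) is needed to reduce to $X\ge 0$ is a clean way to phrase the reduction, and your aside noting that one could also route through Lemma \ref{lem: tanh approx} is accurate.
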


\begin{proof}
Without loss of generality, let $0 < q_1 < q_2$.
Suppose $X > 0$.
Then
\[
|\tanh(q_1X)-\tanh(q_2X)|
= \left|\int_{q_1X}^{q_2X} \sech^2(s) \ds\right|
\le 4\int_{q_1X}^{q_2X} e^{-2s} \ds
= 2e^{-2q_1X}\left(\frac{1-e^{2(q_1-q_2)X}}{2}\right),
\]
and so
\[
\sup_{X > 0} e^{2q_1|X|}|\tanh(q_1X)-\tanh(q_2X)|
< \infty.
\]
The same estimate over $X < 0$ follows from the oddness of the hyperbolic tangent.
\end{proof}

\begin{lemma}\label{lem: the big tanh lemma}
Suppose that $f_{\nu}^{\alpha} \colon \R \to \R$ is a family of even, contimuous functions defined for $\nu \in \mathscr{I}_1 \subseteq (0,1)$ and $\alpha \in \mathscr{I}_2 \subseteq \R$.
Let $\{\vartheta_{\nu}^{\alpha}\}_{\nu \in \mathscr{I}_1, \alpha \in \mathscr{I}_2} \subseteq [0,\infty)$, and suppose there is $C > 0$ such that 
\[
\norm{f_{\nu}^{\alpha}}_{L^{\infty}} \le C,
\quadword{and}
\vartheta_{\nu}^{\alpha} \le C\nu^2
\]
for all $\nu \in \mathscr{I}_1$ and $\alpha \in \mathscr{I}_2$.
Fix $q_* > 0$.

\begin{enumerate}[label={\bf(\roman*)}, ref={(\roman*)}]

\item\label{part: tanh1}
Define
\[
F_{\nu}^{\alpha}(X)
:= \int_{X+\vartheta_{\nu}^{\alpha}\tanh(q_*X)}^{X+\nu+\vartheta_{\nu}^{\alpha}\tanh(q_*X+q_*\nu)} f_{\nu}^{\alpha}(s) \ds
- \int_{X+\vartheta_{\nu}^{\alpha}\tanh(q_*X)}^{X+\nu+\vartheta_{\nu}^{\alpha}\tanh(q_*X)} f_{\nu}^{\alpha}(s) \ds.
\]
Then
\begin{equation}\label{eqn: tanh int est2}
\sup_{\substack{\nu \in \mathscr{I}_1 \\ \alpha \in \mathscr{I}_2 \\ X \in \R}} \nu^{-3}e^{2q_*|X|}|F_{\nu}^{\alpha}(X)|
< \infty.
\end{equation}

\item\label{part: tanh3}
Suppose that $0 < q < q_*$ and
\[
\sup_{\substack{\nu \in \mathscr{I}_1 \\ \alpha \in \mathscr{I}_2 \\ X \in \R}} e^{q|X|}|f_{\nu}^{\alpha}(X)|
< \infty.
\]
Then there exists $L_{\nu}^{\alpha,\int} \in \R$ such that 
\[
\sup_{\substack{\nu \in \mathscr{I}_1 \\ \alpha \in \mathscr{I}_2}}
\left(\sup_{X \in \R}
e^{q|X|}\left|\int_0^X f_{\nu}^{\alpha}(s) \ds - L_{\nu}^{\alpha,\int}\tanh\left(\frac{q_*X}{2}\right)\right|\right)
+ |L_{\nu}^{\alpha,\int}|
< \infty.
\]
\end{enumerate}
\end{lemma}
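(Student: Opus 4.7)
For part \ref{part: tanh1}, I would first observe that the two integrals defining $F_{\nu}^{\alpha}(X)$ share a lower limit, so they collapse to the single integral $F_{\nu}^{\alpha}(X) = \int_{X+\nu+\vartheta_{\nu}^{\alpha}\tanh(q_*X)}^{X+\nu+\vartheta_{\nu}^{\alpha}\tanh(q_*X+q_*\nu)} f_{\nu}^{\alpha}(s)\,\ds$. The length of the interval of integration is $\vartheta_{\nu}^{\alpha}\bigl[\tanh(q_*X+q_*\nu)-\tanh(q_*X)\bigr]$, which by the fundamental theorem of calculus equals $\vartheta_{\nu}^{\alpha}\int_0^{q_*\nu}\sech^2(q_*X+s)\,\ds$. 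The hypothesis $\vartheta_{\nu}^{\alpha} \le C\nu^2$ supplies two of the three powers of $\nu$; the $\sech^2$ integral will deliver the remaining factor of $\nu$ from the length of integration and the exponential weight $e^{-2q_*|X|}$.

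To unpack that $\sech^2$ integral I would split on $|X|$. For $X \ge 0$ the argument $q_*X+s$ is nonnegative, so $\sech^2(q_*X+s)\le 4e^{-2q_*X}$ and the integral is bounded by $4q_*\nu e^{-2q_*X}$. For $X \le -\nu$ the argument is nonpositive with $|q_*X+s| \ge q_*|X|-q_*\nu$, which gives a bound of the form $4q_*\nu e^{2q_*\nu}e^{-2q_*|X|}$, the prefactor being harmless since $\nu\in(0,1)$. In the thin transition zone $-\nu \le X \le 0$ the trivial bounds $\sech^2 \le 1$ and $e^{-2q_*|X|}\ge e^{-2q_*}$ suffice. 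Combining these with $\|f_{\nu}^{\alpha}\|_{L^\infty}\le C$ yields \eqref{eqn: tanh int est2}. The main obstacle here is capturing the doubled weight $e^{2q_*|X|}$ rather than the $e^{q_*|X|}$ that would fall out of a naive Lipschitz bound on $\tanh$; this is precisely what the $\sech^2$-integral identity makes possible, and the case split at $X=-\nu$ is needed because for $X$ slightly to the right of $-\nu$ the integration variable $q_*X+s$ straddles zero and neither one-sided exponential bound applies directly.

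For part \ref{part: tanh3}, I would set $L_{\nu}^{\alpha,\int} := \int_0^{\infty} f_{\nu}^{\alpha}(s)\,\ds$, which is uniformly bounded by the $q$-localization hypothesis. The evenness of $f_{\nu}^{\alpha}$ forces $\int_0^X f_{\nu}^{\alpha}(s)\,\ds \to \pm L_{\nu}^{\alpha,\int}$ as $X\to\pm\infty$, matching the asymptotics of $L_{\nu}^{\alpha,\int}\tanh(q_*X/2)$. For $X \ne 0$ I would split the difference as
\[
\int_0^X f_{\nu}^{\alpha}(s)\,\ds - L_{\nu}^{\alpha,\int}\tanh\!\left(\frac{q_*X}{2}\right) = \Bigl[\int_0^X f_{\nu}^{\alpha}(s)\,\ds - L_{\nu}^{\alpha,\int}\mathrm{sgn}(X)\Bigr] + L_{\nu}^{\alpha,\int}\Bigl[\mathrm{sgn}(X)-\tanh\!\left(\frac{q_*X}{2}\right)\Bigr].
\]
The first bracket is, up to sign, the tail $\int_{|X|}^{\infty}f_{\nu}^{\alpha}(s)\,\ds$ (using evenness to handle $X<0$) and so decays at rate $e^{-q|X|}$, while the second bracket decays at the strictly faster rate $e^{-q_*|X|}$ via $|1-\tanh(q_*X/2)|\le 2e^{-q_*|X|}$. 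Since $q<q_*$, both pieces are uniformly $q$-localized; the $X = 0$ case is trivial because the full difference vanishes there.
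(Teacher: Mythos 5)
Both parts are correct.

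Part \ref{part: tanh1} is essentially the paper's argument, but you have done the paper the service of unpacking its terse final sentence (``follows from a Lipschitz estimate on the hyperbolic tangent''). As you note, a naive global Lipschitz bound $|\tanh(a)-\tanh(b)| \le |a-b|$ would not produce the weight $e^{2q_*|X|}$; what is really needed is $\tanh(q_*X+q_*\nu)-\tanh(q_*X)=\int_0^{q_*\nu}\sech^2(q_*X+s)\,\ds$ together with $\sech^2(y)\le 4e^{-2|y|}$, which is exactly what you supply, including the careful three-way case split so that the argument $q_*X+s$ does not straddle the origin unnoticed.

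For part \ref{part: tanh3}, your route is genuinely different and strictly more elementary than the paper's. The paper proceeds in two stages: it invokes Lemma \ref{lem: tanh approx} to write $\int_0^X f_\nu^\alpha\,\ds = L_\nu^{\alpha,\int}\tanh(qX) + R_1$ with $R_1$ decaying at rate $q$, and then invokes Lemma \ref{lem: tanh diff} to swap $\tanh(qX)$ for $\tanh(q_*X/2)$ at the price of an additional error $R_2$ decaying at rate $\min\{2q,q_*\}>q$. You instead telescope directly through $L_\nu^{\alpha,\int}\sgn(X)$, observing that $\int_0^X f_\nu^\alpha\,\ds - L_\nu^{\alpha,\int}\sgn(X)$ is a tail integral decaying at rate $q$ (using evenness), while $L_\nu^{\alpha,\int}[\sgn(X)-\tanh(q_*X/2)]$ decays at the faster rate $q_*$. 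This bypasses both auxiliary lemmas and gets the same conclusion in one step. The trade-off is that the paper's route is modular (Lemma \ref{lem: tanh approx} is reused in the proof of Lemma \ref{lem: the other big tanh lemma}), whereas yours is purpose-built; but for this part alone your argument is cleaner. One minor point worth saying aloud: the two brackets in your decomposition are each discontinuous at $X=0$ (they carry opposing jumps of size $2L_\nu^{\alpha,\int}$), though their sum is continuous and vanishes there; this causes no trouble because the bounds $Ce^{-q|X|}$ and $2|L_\nu^{\alpha,\int}|e^{-q_*|X|}$ are merely boundedness statements near the origin, not smallness statements.
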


\begin{proof}

\begin{enumerate}[label={\bf(\roman*)}]

\item
We estimate
\[
|F_{\nu}^{\alpha}(X)|
= \left|\int_{X+\nu+\vartheta_{\nu}^{\alpha}\tanh(q_*X)}^{X+\nu+\vartheta_{\nu}^{\alpha}\tanh(q_*X+q_*\nu)} f_{\nu}^{\alpha}(s) \ds\right|
\le \norm{f_{\nu}^{\alpha}}_{L^{\infty}}\vartheta_{\nu}^{\alpha}|\tanh(q_*X+q_*\nu)-\tanh(q_*X)|.
\]
Then \eqref{eqn: tanh int est2} follows from a Lipschitz estimate on the hyperbolic tangent.

\item
Let
\[
\I_{\nu}^{\alpha}(X) := \int_0^X f_{\nu}^{\alpha}(s) \ds
\quadword{and}
L_{\nu}^{\alpha,\int} := \int_0^{\infty} f_{\nu}^{\alpha}(s) \ds.
\]
Then $\I_{\nu}^{\alpha}$ satisfies the hypotheses of Lemma \ref{lem: tanh approx} with $L_+ = L_{\nu}^{\alpha,\int}$ and $L_- = -L_{\nu}^{\alpha,\int}$, thanks to the evenness of $f_{\nu}^{\alpha}$.
This allows us to write
\[
\I_{\nu}^{\alpha}(X)
= L_{\nu}^{\alpha,\int}\tanh(qX) + R_{\nu,1}^{\alpha}(X),
\]
where there is $C > 0$ such that $|R_{\nu,1}^{\alpha}(X)| \le Ce^{-q|X|}$ for all $\alpha$, $\nu$, and $X$.

Now set
\[
R_{\nu,2}^{\alpha}(X)
:= L_{\nu}^{\alpha,\int}\left[\tanh(qX)-\tanh\left(\frac{q_*X}{2}\right)\right],
\]
so that 
\[
\I_{\nu}^{\alpha}(X)
= L_{\nu}^{\alpha,\int}\tanh\left(\frac{q_*X}{2}\right) + R_{\nu,1}^{\alpha}(X) + R_{\nu,2}^{\alpha}(X).
\]
Lemma \ref{lem: tanh diff} implies
\[ 
|R_{\nu,2}^{\alpha}(X)| 
\le C\exp\left(-2\min\left\{q,\frac{q_*}{2}\right\}|X|\right)
\] 
for all $\alpha$, $\nu$, and $X$.
We know $q < q_*$ and certainly $q < 2q$.
Thus
\[
2\min\left\{q,\frac{q_*}{2}\right\}
= \min\{2q,q_*\}
> q,
\]
and so $|R_{\nu,2}^{\alpha}(X)| \le C\exp(-2\min\{q,q_*/2\}|X|) < e^{-q|X|}$.
\qedhere
\end{enumerate}
\end{proof}

\begin{lemma}\label{lem: the other big tanh lemma}
Assume the notation and hypotheses of Lemma \ref{lem: the big tanh lemma}.
Suppose that, as well, there is $C > 0$ such that $\Lip(f_{\nu}^{\alpha}) \le C\nu^{-1}$ for all $\nu \in \mathscr{I}_1$ and $\alpha \in \mathscr{I}_2$.

\begin{enumerate}[label={\bf(\roman*)},ref={(\roman*)}]

\item\label{part: tanh2}
Define
\[
G_{\nu}^{\alpha}(X)
:= \int_0^X f_{\nu}^{\alpha}(s+\vartheta_{\nu}^{\alpha}\tanh(q_*s)) \ds
- \int_0^{X+\vartheta_{\nu}^{\alpha}\tanh(q_*X)} f_{\nu}^{\alpha}(s) \ds.
\]
Then there exists $L_{\nu}^{\alpha,\infty} \in \R$ such that 
\begin{equation}\label{eqn: tanh int est}
\sup_{\substack{\nu \in \mathscr{I}_1 \\ \alpha \in \mathscr{I}_2 \\}} \left[\nu^{-1}\left(\sup_{X \in \R} e^{q_*|X|}|G_{\nu}^{\alpha}(X) - \nu{L}_{\nu}^{\alpha,\infty}\tanh(q_*X)|\right) + |L_{\nu}^{\alpha,\infty}|\right]
< \infty.
\end{equation}

\item\label{part: tanh4}
Define
\[
H_{\nu}^{\alpha}(X)
:= f_{\nu}^{\alpha}(X+\nu+\vartheta_{\nu}^{\alpha}\tanh(q_*X+q_*\nu))
- f_{\nu}^{\alpha}(X+\nu+\vartheta_{\nu}^{\alpha}\tanh(q_*X)).
\]
Then
\[
\sup_{\substack{\nu \in \mathscr{I}_1 \\ \alpha \in \mathscr{I}_2 \\ X \in \R}}
\nu^{-2}e^{-2q_*|X|}|H_{\nu}^{\alpha}(X)|
< \infty.
\]
\end{enumerate}
\end{lemma}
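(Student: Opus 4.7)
The plan is to treat part (ii) first because it is short and purely Lipschitz-based, and then spend the remainder of the argument on part (i), which will hinge on a single change-of-variables identity that collapses the difference of integrals into one integral carrying an exponentially localized $\sech^2$ weight.

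For part (ii), I would simply bound
\[
|H_{\nu}^{\alpha}(X)|
\le \Lip(f_{\nu}^{\alpha}) \cdot \vartheta_{\nu}^{\alpha} \cdot \bigl|\tanh(q_*X+q_*\nu)-\tanh(q_*X)\bigr|,
\]
apply $\Lip(f_{\nu}^{\alpha}) \le C\nu^{-1}$, $\vartheta_{\nu}^{\alpha} \le C\nu^{2}$, and the mean value theorem on $\tanh$:
\[
\bigl|\tanh(q_*X+q_*\nu)-\tanh(q_*X)\bigr|
\le q_*\nu \sup_{t \in [q_*X,q_*X+q_*\nu]} \sech^2(t).
\]
Using $\sech^2(t) \le 4e^{-2|t|}$ and $|t| \ge q_*|X| - q_*\nu$ for $t$ in the above interval, together with $\nu \in (0,1)$, this sup is controlled by $Ce^{-2q_*|X|}$, so $|H_{\nu}^{\alpha}(X)| \le C\nu^{2}e^{-2q_*|X|}$, which gives the stated bound (in fact with exponential decay in $|X|$).

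The heart of part (i) is the identity
\[
G_{\nu}^{\alpha}(X)
= -q_*\vartheta_{\nu}^{\alpha}\int_0^X f_{\nu}^{\alpha}\!\bigl(s+\vartheta_{\nu}^{\alpha}\tanh(q_*s)\bigr)\sech^2(q_*s)\, ds,
\]
which I would derive by substituting $u = s+\vartheta_{\nu}^{\alpha}\tanh(q_*s)$ in the first integral defining $G_{\nu}^{\alpha}$: the endpoints $0$ and $X$ are carried to $0$ and $X+\vartheta_{\nu}^{\alpha}\tanh(q_*X)$, matching the bounds of the second integral, and the Jacobian contributes the factor $1/(1+q_*\vartheta_{\nu}^{\alpha}\sech^2(q_*s))$. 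Subtracting the second integral cancels the ``$1$'' and undoing the substitution converts the remaining $\phi'/(1+\phi')$ weight cleanly into $\phi'(s)\, ds = q_*\vartheta_{\nu}^{\alpha}\sech^2(q_*s)\, ds$. Because $f_{\nu}^{\alpha}$ is even and $s \mapsto s+\vartheta_{\nu}^{\alpha}\tanh(q_*s)$ is odd, the integrand above is even in $s$; thus $G_{\nu}^{\alpha}$ is odd in $X$.

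With this identity in hand, I would define
\[
L_{\nu}^{\alpha,\infty}
:= -\frac{q_*\vartheta_{\nu}^{\alpha}}{\nu}\int_0^\infty f_{\nu}^{\alpha}\!\bigl(s+\vartheta_{\nu}^{\alpha}\tanh(q_*s)\bigr)\sech^2(q_*s)\, ds,
\]
which is uniformly bounded since $\vartheta_{\nu}^{\alpha}/\nu \le C\nu$, $\|f_{\nu}^{\alpha}\|_{L^\infty} \le C$, and $\sech^2$ is integrable. For $X \ge 0$, the tail estimate
\[
|G_{\nu}^{\alpha}(X) - \nu L_{\nu}^{\alpha,\infty}|
= q_*\vartheta_{\nu}^{\alpha}\left|\int_X^\infty f_{\nu}^{\alpha}(\cdots)\sech^2(q_*s)\, ds\right|
\le C\nu^2 e^{-2q_*X}
\]
follows from $\sech^2(q_*s) \le 4e^{-2q_*s}$; combining with $|1-\tanh(q_*X)| \le 2e^{-2q_*X}$ gives $|G_{\nu}^{\alpha}(X) - \nu L_{\nu}^{\alpha,\infty}\tanh(q_*X)| \le C\nu e^{-2q_*X}$, and oddness extends the bound to $X < 0$. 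This yields the required estimate \eqref{eqn: tanh int est} with exponential rate $2q_* > q_*$ to spare. The only moderately delicate point is checking the evenness of the integrand after substitution (which secures oddness of $G_{\nu}^{\alpha}$ and hence the $\tanh$-shaped leading asymptotics); once that is in place the estimates are straightforward from $\sech^2$ decay.
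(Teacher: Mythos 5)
Your proof is correct, and for part (i) it takes a genuinely different and cleaner route than the paper. The paper splits $G_{\nu}^{\alpha}$ into two pieces $\I_{1,\nu}^{\alpha} + \I_{2,\nu}^{\alpha}$ by inserting a smooth cutoff $\Xi$ with $\Xi(X) = \sgn(X)$ for $|X|\ge 1$, estimates each piece on three subintervals of $\R$, and then invokes the $\tanh$-approximation lemma; this needs the Lipschitz bound on $f_\nu^\alpha$ to control $\I_{1,\nu}^{\alpha}$. Your observation that
\[
\frac{d}{dX}G_{\nu}^{\alpha}(X) = -q_*\vartheta_{\nu}^{\alpha}\,\sech^2(q_*X)\, f_{\nu}^{\alpha}\bigl(X+\vartheta_{\nu}^{\alpha}\tanh(q_*X)\bigr)
\]
(obtained by your substitution, or directly by differentiating, since the two boundary terms cancel the "1" in $\partial_X[X+\vartheta\tanh(q_*X)]$) collapses $G_{\nu}^{\alpha}$ into the antiderivative of a single even, $\O(\nu^2)$, exponentially localized integrand, from which the $\tanh$ asymptotics and estimate fall out immediately, with the sharper rate $2q_*$ and without any use of the Lipschitz hypothesis for this part. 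For part (ii) your argument is the natural one and matches the paper's terse indication; you establish the bound $|H_{\nu}^{\alpha}(X)|\le C\nu^2 e^{-2q_*|X|}$, i.e.\ exponential decay, which is what the application to $\teta_{1,6,\ep}^{\alpha}$ actually requires — note that the exponent's sign in the lemma's statement as printed, $e^{-2q_*|X|}$, appears to be a misprint for $e^{2q_*|X|}$.
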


\begin{proof}

\begin{enumerate}[label={\bf(\roman*)}]

\item
Let $\Xi \in \Cal^{\infty}$ be a function such that $\Xi(X) = \sgn(X)$ for $|X| \ge 1$.
Define
\[
\I_{1,\nu}^{\alpha}(X)
:= \int_0^X \big[f_{\nu}^{\alpha}(s+\vartheta_{\nu}^{\alpha}\tanh(q_*s))-f_{\nu}^{\alpha}(s+\Xi(X)\vartheta_{\nu}^{\alpha})\big] \ds
\]
and
\[
\I_{1,\nu}^{\alpha}(X)
:= \int_0^X f_{\nu}^{\alpha}(s+\Xi(X)\vartheta_{\nu}^{\alpha}) \ds - \int_0^{X+\vartheta_{\nu}^{\alpha}\tanh(q_*X)} f_{\nu}^{\alpha}(s) \ds.
\]
Then $G_{\nu}^{\alpha}(X) = \I_{1,\nu}^{\alpha}(X) + \I_{1,\nu}^{\alpha}(X)$.
We will estimate $\I_{1,\nu}^{\alpha}$ and $\I_{1,\nu}^{\alpha}$ separately using Lemma \ref{lem: tanh approx}.

\begin{enumerate}[label={\it{Estimates on}}]

\item
$\I_{1,\nu}^{\alpha}$.
First suppose $X \le -1$.
For $X \le s \le 0$, we have
\[
\big|f_{\nu}^{\alpha}(s+\vartheta_{\nu}^{\alpha}\tanh(q_*s))-f_{\nu}^{\alpha}(s+\Xi(X)\vartheta_{\nu}^{\alpha})\big| 
\le \Lip(f_{\nu}^{\alpha})\vartheta_{\nu}^{\alpha}|1+\tanh(q_*s)|
\le \Lip(f_{\nu}^{\alpha})\vartheta_{\nu}^{\alpha}e^{q_*s}.
\]
It follows that the improper integral
\[
L_{\nu,-}^{\alpha}
:= \lim_{X \to -\infty} \I_{1,\nu}^{\alpha}(X)
\]
exists, with 
\[
|L_{\nu,-}^{\alpha}| 
\le \Lip(f_{\nu}^{\alpha})\vartheta_{\nu}^{\alpha}q_*^{-1}
\le C\nu, 
\]
and also that
\[
|\I_{1,\nu}^{\alpha}(X)-L_{\nu,-}^{\alpha}|
\le \frac{\Lip(f_{\nu}^{\alpha})\vartheta_{\nu}^{\alpha}}{q_*}e^{q_*X}
\le C\nu{e}^{q_*X}.
\]

Now suppose $-1 \le X \le 1$.
Then
\[
|\I_{1,\nu}^{\alpha}(X)|
\le \Lip(f_{\nu}^{\alpha})\vartheta_{\nu}^{\alpha}\int_{-1}^1 |\Xi(X)-\tanh(q_*s)| \ds
\le C\nu,
\]
since the integral is of course finite.

Last, suppose $X \ge 1$.
For $0 \le s \le X$, we have
\[
|f_{\nu}^{\alpha}(s+\vartheta_{\nu}^{\alpha}\tanh(q_*s))-f_{\nu}^{\alpha}(s+\Xi(X)\vartheta_{\nu}^{\alpha})\big| 
\le \Lip(f_{\nu}^{\alpha})\vartheta_{\nu}^{\alpha}|1-\tanh(q_*s)| 
\le \Lip(f_{\nu}^{\alpha})\vartheta_{\nu}^{\alpha}e^{-q_*s}.
\]
It follows that the improper integral 
\[
L_{\nu,+}^{\alpha} := \lim_{X \to \infty} \I_{1,\nu}^{\alpha}(X)
\]
exists, with $|L_{\nu,+}^{\alpha}| \le \Lip(f_{\nu}^{\alpha})\vartheta_{\nu}^{\alpha}q_*^{-1}$, and also that
\[
|\I_{1,\nu}^{\alpha}(X)-L_{\nu,+}^{\alpha}|
\le \frac{\Lip(f_{\nu}^{\alpha})\vartheta_{\nu}^{\alpha}}{q_*}e^{-q_*X}
\le C\nu{e}^{-q_*X}.
\]

Furthermore, since $f_{\nu}^{\alpha}$ is even, a lengthy, but straightforward, calculation shows that $\I_{1,\nu}^{\alpha}$ is odd, and so $L_{\nu,+}^{\alpha} = -L_{\nu,-}^{\alpha}$.
Lemma \ref{lem: tanh approx} then gives the estimate
\begin{equation}\label{eqn: tanhI est}
|\I_{1,\nu}^{\alpha}(X)- L_{\nu,+}^{\alpha}\tanh(q_*X)|
\le C\nu{e}^{-q_*|X|}.
\end{equation}

\item
$\I_{1,\nu}^{\alpha}$.
First suppose $X \le -1$.
Then
\[
\I_{1,\nu}^{\alpha}(X)
= \int_{-\vartheta_{\nu}^{\alpha}}^0 f_{\nu}^{\alpha}(s) \ds
- \int_{X-\vartheta_{\nu}^{\alpha}}^{X+\vartheta_{\nu}^{\alpha}\tanh(q_*X)}
f_{\nu}^{\alpha}(s) \ds,
\]
and so
\[
\left|\I_{1,\nu}^{\alpha}(X)-\int_{-\vartheta_{\nu}^{\alpha}}^0 f_{\nu}^{\alpha}(s) \ds\right|
\le \norm{f_{\nu}^{\alpha}}_{L^{\infty}}\vartheta_{\nu}^{\alpha}|\tanh(q_*X)+1|
\le C\nu^2e^{q_*X}.
\]
Also,
\[
\left|\int_{-\vartheta_{\nu}^{\alpha}}^0 f_{\nu}^{\alpha}(s)\ds\right|
\le \norm{f_{\nu}^{\alpha}}_{L^{\infty}}\vartheta_{\nu}^{\alpha}
\le C\nu^2.
\]

Next, if $-1 \le X \le 1$, then
\[
\I_{1,\nu}^{\alpha}(X)
= \int_0^X f_{\nu}^{\alpha}(s+\Xi(X)\vartheta_{\nu}^{\alpha})-f_{\nu}^{\alpha}(s) \ds
+ \int_{X+\vartheta_{\nu}^{\alpha}\tanh(q_*X)}^X f_{\nu}^{\alpha}(s) \ds.
\]
We estimate
\[
\left|\int_0^X f_{\nu}^{\alpha}(s+\Xi(X)\vartheta_{\nu}^{\alpha})-f_{\nu}^{\alpha}(s) \ds\right|
\le \Lip(f_{\nu}^{\alpha})\vartheta_{\nu}^{\alpha}|\Xi(X)|\int_{-1}^1 \ds
= 2\Lip(f_{\nu}^{\alpha})\vartheta_{\nu}^{\alpha}
\le C\nu.
\]
We bound the second integral by 
\[
\left|\int_{X+\vartheta_{\nu}^{\alpha}\tanh(q_*X)}^X f_{\nu}^{\alpha}(s) \ds\right|
\le \vartheta_{\nu}^{\alpha}|\tanh(q_*X)|\norm{f_{\nu}^{\alpha}}_{L^{\infty}}
\le \vartheta_{\nu}^{\alpha}\norm{f_{\nu}^{\alpha}}_{L^{\infty}}
\le C\nu^2.
\]
Thus
\[
\sup_{-1 \le X \le 1} |\I_{1,\nu}^{\alpha}(X)|
\le C\nu^2+C\nu
\le C\nu.
\]

Finally suppose $X \ge 1$.
Then
\[
\I_{1,\nu}^{\alpha}(X)
= -\int_0^{\vartheta_{\nu}^{\alpha}} f_{\nu}^{\alpha}(s) \ds
+ \int_{X+\vartheta_{\nu}^{\alpha}\tanh(q_*X)}^{X+\vartheta_{\nu}^{\alpha}} f_{\nu}^{\alpha}(s) \ds,
\]
and so
\[
\left|\I_{1,\nu}^{\alpha}(X)
- \left(-\int_0^{\vartheta_{\nu}^{\alpha}} f_{\nu}^{\alpha}(s) \ds\right)\right|
\le \norm{f_{\nu}^{\alpha}}_{L^{\infty}}\vartheta_{\nu}^{\alpha}|1-\tanh(q_*X)|
\le \norm{f_{\nu}^{\alpha}}_{L^{\infty}}\vartheta_{\nu}^{\alpha}e^{-q_*X}
\le C\nu^2e^{-q_*X}
\]
with
\[
\left|\int_0^{\vartheta_{\nu}^{\alpha}} f_{\nu}^{\alpha}(s) \ds\right|
\le \norm{f_{\nu}^{\alpha}}_{L^{\infty}}\vartheta_{\nu}^{\alpha}
\le C\nu^2.
\]
Lemma \ref{lem: tanh approx} then gives the estimate
\begin{equation}\label{eqn: tanhII est}
\left|\I_{1,\nu}^{\alpha}(X) - \left(-\int_0^{\vartheta_{\nu}^{\alpha}} f_{\nu}^{\alpha}(s) \ds\right)\tanh(q_*X)\right|
\le C\nu{e}^{-q_*|X|}.
\end{equation}
\end{enumerate}

We put 
\[
L_{\nu}^{\alpha,\infty} 
:= \nu^{-1}\left(L_{\nu,+}^{\alpha}-\int_0^{\vartheta_{\nu}^{\alpha}} f_{\nu}^{\alpha}(s) \ds\right)
\]
and use the decomposition $G_{\nu}^{\alpha} = \I_{1,\nu}^{\alpha} + \I_{1,\nu}^{\alpha}$ and the estimates \eqref{eqn: tanhI est} and \eqref{eqn: tanhII est} to conclude the desired estimate \eqref{eqn: tanh int est}.

\item
This follows from Lipschitz estimateson $f_{\nu}^{\alpha}$ and the hyperbolic tangent, and the estimate on $\vartheta_{\nu}^{\alpha}$.
\qedhere
\end{enumerate}
\end{proof}
\section{Optimal Regularity}\label{app: opt reg}

\subsection{Overview of the optimal regularity problems and prior results}\label{app: opt reg intro}
An essential part of Lombardi's nanopteron program involves solving equations of the form
\begin{equation}\label{eqn: opt reg ur-eqn}
f'(z)
= \frac{\A}{\mu}f(z) + g(z), \qquad z \in \U_b := \set{z \in \C}{|\im(z) < b}, \ b > 0
\end{equation}
and
\begin{equation}\label{eqn: subopt reg ur-eqn}
f'(x)
= \A{f}(x) + g(x), \qquad x \in \R,
\end{equation}
with different conditions on the affine term $g$ and the desired solution $f$ in each case.
In both equations, $\A \colon \D \subseteq \X \to \X$ is a linear operator in the Banach space $\X$, and a solution to the equation is a map $f \in \Cal(\R,\D) \cap \Cal^1(\R,\X)$ satisfying the requisite equality pointwise.
For \eqref{eqn: opt reg ur-eqn}, one must contend with the small singular perturbation parameter $\mu \gtrsim 0$ and work with functions $f$ and $g$ that are holomorphic and exponentially localized on the strip $\U_b$.
One seeks an ``optimal'' regularity result for \eqref{eqn: opt reg ur-eqn} in the sense that if $g$ and its first $r$ derivatives belong to a certain function space $\W$, then $f$ and its first $r+1$ derivatives also belong to $\W$.
In contrast, for \eqref{eqn: subopt reg ur-eqn} $f$ and $g$ are merely $r$-times continuously differentiable on $\R$ and may grow at $\pm\infty$, and here one is content with a ``suboptimal'' regularity result where the solution $f$ need only be as differentiable as $g$.

The optimal regularity problem rears its head in the development of Lombardi's fixed point problem for the nanopteron's exponentially localized tails \cite[Sec.\@ 8.2, 8.4.1]{Lombardi}.
The suboptimal regularity problem is buried in the proof of Lombardi's infinite-dimensional normal form transformation \cite[Thm.\@ 8.1.10, App.\@ 8.A]{Lombardi}, which provides the essential change of variables that transforms his original problem into the system on which he actually runs his nanopteron program.
A third, somewhat tamer, optimal regularity problem in periodic Sobolev spaces must be solved for the construction of the exact periodic solutions \cite[App.\@ 8.B]{Lombardi}; we do not dwell on this more straightforward problem in this introduction.

Both regularity problems have the flavor of a fundamental concern for invocations of the center manifold theorem, see \cite[(H)--(ii), p.\@ 127]{VI} and \cite[Hypo.\@ 2.7]{haragus-iooss}.
In those situations, one wants to solve \eqref{eqn: subopt reg ur-eqn} in a space of growing functions with an optimal regularity result, i.e., $g$ should satisfy a condition like
\[
\sup_{x \in \R} \norm{e^{-q|x|}\partial_x^j[g](x)}_{\X} < \infty, \ j = 0,\ldots,r
\]
with $q > 0$, and $f$ should satisfy this estimate for $j=0,\ldots,r+1$.
Moreover, the mapping $g \mapsto f$ is bounded in these norms.
One common way to obtain this result is to show it explicitly for $q = 0$ and then use a more abstract perturbation-theoretic argument in $q$ to extend it to small $q > 0$.
See \cite[Lem.\@ 4.4]{james-sire} and \cite[Lem.\@ 2.3]{mielke-uber} for such perturbations involving essentially arbitrary operators and Banach spaces.
This has the flavor of the ``operator conjugation'' techniques used in such diverse contexts as, for example, \cite[Eqn.\@ (1.15)]{pego-weinstein}, \cite[Prop.\@ 3.4]{hvl}, and \cite[App.\@ D.4]{faver-dissertation}.
In the context of lattice problems, this ``optimal regularity'' step has been treated in \cite[Sec.\@ 4]{iooss-kirchgassner}, the progenitor of these techniques, as well as \cite[Sec.\@ 4]{iooss-fpu}, \cite[Sec.\@ 5]{calleja-sire}, \cite[Sec.\@ 4]{james-sire}, \cite[Sec.\@ 4]{sire}, and \cite[Sec.\@ 3]{hilder-derijk-schneider}; it has been mentioned, but not considered in full detail, in \cite[Sec.\@ 2.B]{iooss-james} and \cite[Sec.\@ 3.4]{venney-zimmer}.

However, a perturbation-in-the-decay-parameter approach to \eqref{eqn: opt reg ur-eqn} could be fraught with difficulty in a singularly perburbed problem like \eqref{eqn: opt reg ur-eqn}, since it is not clear how nicely the perturbations would depend on the delicate small parameter $\mu$.
Furthermore, our reading of Lombardi leads us to desire not an optimal regularity result in a space of {\it{growing}} functions but rather an optimal regularity result in a space of {\it{decaying}} functions.
We will be content with a ``suboptimal'' regularity result in a space of growing functions, i.e., one where the solution $f$ is not necessarily smoother than the forcing function $g$.
Finally, we have no intention of applying the center manifold theorem in the first place, since we are adopting and adapting Lombardi's methods, and he explicitly avoids the center manifold theorem so that he can work in spaces of analytic functions, for the purposes of invoking his highly precise integral estimates.

For these reasons, we pose separate hypotheses for the successful resolution of the optimal and suboptimal regularity problems in the general Banach space framework of Section \ref{sec: abstract}, and the language of these hypotheses are quite different from the optimal regularity discussions of our predecessors.
Our sketched verification of these hypotheses for our concrete lattice problems in Section \ref{sec: opt reg}, however, ultimately follows, at least formally, the general strategies of these antecedents, and we are indebted to them for their insights and labors.

\subsection{Function spaces for optimal regularity}\label{app: opt reg function spaces}
To achieve a healthy level of generality that encompasses both the lattice and water wave problems, we need a somewhat intricate family of function spaces.

Let $\X$ be a Banach space, let $q \in \R$, and let $b > 0$.
Let $\U_b$ be the horizontal complex strip
\[
\U_b
:= \set{z \in \C}{|\im(z)| < b}.
\]
Take $\H_{q,b}(\X)$ to be the space of holomorphic functions $f \colon \U_b \to \X$ such that 
\[
\norm{f}_{\H_{q,b}(\X)}
:= \sup_{z \in \U_b} e^{q|\re(z)|}\norm{f(z)}_{\X}
< \infty.
\]
If $f \in \H_{q,b}(\X)$, we write
\[
\restr{f}{y}(x)
:= f(x+iy)
\]
for $x \in \R$ and $y \in (-b,b)$.
Throughout, $q$ will remain fixed, and we will not be making any assumptions on the $q$-dependence of various estimates and existence regimes; in general, this dependence could be quite bad as $q$ approaches some extreme limit.

Nanopteron constructions using spatial dynamics tend to involve one of two kinds of subspaces of $\H_{q,b}(\X)$.
First, for an integer $r \ge 0$, define $\Cal_{q,b}^r(\X)$ to be the space of all $f \in \H_{q,b}(\X)$ such that 
\[
\norm{f}_{\Cal_{q,b}^r(\X)}
:= \max_{0 \le j \le r} \sup_{z \in \U_b} e^{q|\re(z)|}\norm{\partial_z^j[f](z)}_{\X} 
< \infty.
\]
Versions of this space have appeared in all of the lattice spatial dynamics papers \cite{iooss-kirchgassner, iooss-fpu, calleja-sire, james-sire, hilder-derijk-schneider,iooss-james,venney-zimmer} due to their close relation to the natural function spaces used in center manifold theory \cite[p.\@ 127]{VI}, \cite[p.\@ 29]{haragus-iooss}.

Lombardi, however, uses the following vector-valued ``localized Sobolev space'' in his abstract treatment of the water wave problem \cite[p.\@ 341]{Lombardi}.
Let $\E_{q,b}^r(\X)$ be the space of all holomorphic functions $f \colon \U_b \to \X$ such that 
\[
\norm{f}_{\E_{q,b}^r(\X)}
:= \max_{0 \le j \le r} \sup_{-b < y < b} \left(\int_{-\infty}^{\infty} e^{2q|x|}\norm{\partial_z^j[f](x+iy)}_{\X}^2\dx\right)^{1/2}
< \infty.
\]

Both $\Cal_{q,b}^r(\X)$ and $\E_{q,b}^r(\X)$ are Banach spaces, and, more than that, they really have the same underlying structure.
We claim that the properties in the following definition and the subsequent Lemma \ref{lem: Lombardi amenable} are the features that Lombardi really needs in his proofs; otherwise, the algebraic structure of $\E_{q,b}^r(\X)$ is incidental.

\begin{definition}\label{defn: Lombardi amenable}
Let $\Cal^+(\R)$ denote the cone of all continuous functions $f \colon \R \to [0,\infty)$ and let $\Mfrak \colon \Cal^+(\R) \to [0,\infty]$ satisfy 

\begin{enumerate}[label={\bf(M\arabic*)}, ref={M\arabic*}]

\item\label{part: M1}
$\Mfrak(f+g) \le \Mfrak(f) + \Mfrak(g)$ and $\Mfrak(\alpha{f}) = \alpha\Mfrak(f)$ for all $f$, $g \in \Cal^+(\R)$ and $\alpha \ge 0$;

\item\label{part: M2}
If $\Mfrak(f) = 0$ for some $f \in \Cal^+(\R)$, then $f(x) = 0$ for all $x \in \R$;

\item\label{part: M3}
If $f$, $g \in \Cal^+(\R)$ with $f(x) \le g(x)$ for all $x \in \R$, then $\Mfrak(f) \le \Mfrak(g)$.
\end{enumerate}

Denote by $\W_{q,b}^r(\X;\Mfrak)$ the space of all holomorphic functions $f \colon \U_b \to \X$ such that 
\[
\norm{f}_{\W_{q,b}^r(\X;\Mfrak)}
:= \max_{0 \le j \le r} \sup_{-b < y < b} \Mfrak(e^{q|\cdot|}\norm{\partial_z^j[\restr{f}{y}]}_{\X})
< \infty.
\]
Suppose that the following hold for all $q \in \R$, all integers $r \ge 0$, and all Banach spaces $\X$.

\begin{enumerate}[label={\bf(W\arabic*)}]

\item
Each $\W_{q,b}^r(\X;\Mfrak)$ is a Banach space.

\item
At least one of $\W_{q,b}^0(\X;\Mfrak)$ or $\W_{q,b}^1(\X;\Mfrak)$ embeds continuously into $\H_{q,b}^0(\X)$.
\end{enumerate}
Then we say that the map $\Mfrak$ is \defn{Lombardi-amenable}.
\end{definition}

Certainly each of the $L^p$-norms, $1 \le p \le \infty$, is Lombardi-amenable.
Since $\Cal_{q,b}^r(\X) = \W_{q,b}^r(\X;\norm{\cdot}_{L^{\infty}})$ and $\E_{q,b}^r(\X) = \W_{q,b}^r(\X;\norm{\cdot}_{L^2})$, both $\norm{\cdot}_{L^{\infty}}$ and $\norm{\cdot}_{L^2}$ are Lombardi-amenable.
Since $\Cal_{q,b}^0(\X) = \H_{q,b}^0(\X)$, we have the embedding for $\Cal_{q,b}^r(\X)$ with $r = 0$, while for $\E_{q,b}^r(\X)$, we need to take $r \ge 1$ to invoke the Sobolev embedding.

Our notation elides the fact that the definition of $\W_{q,b}^r(\X;\Mfrak)$ really depends on the norm chosen for $\X$.
A more precise notation would be something like $\W_{q,b}^r(\X;\norm{\cdot}_{\X},\Mfrak)$, but this is too baroque, even for us.

Here are two useful, and unsurprising, properties of the spaces $\W_{q,b}^r(\X;\Mfrak)$.
The first relates membership of a function in $\W_{q,b}^r(\X;\Mfrak)$ to membership of its derivatives in $\W_{q,b}^0(\X;\Mfrak)$.
The second gives a comparison test for the purpose of establishing membership in $\W_{q,b}^r(\X;\Mfrak)$ and inheriting estimates in $\W_{q,b}^r(\X;\Mfrak)$ from ``pointwise'' estimates that in particular relates results involving $\norm{\cdot}_{\X}$ to conclusions involving $\norm{\cdot}_{\W_{q,b}^r(\X;\Mfrak)}$.

\begin{lemma}\label{lem: Lombardi amenable}
Let $\Mfrak \colon \Cal^+(\R) \to [0,\infty]$ be Lombardi-amenable; let $q \in \R$ and $r \ge 0$ be an integer; and let $\X$ be a Banach space.

\begin{enumerate}[label={\bf(\roman*)}]

\item
Let $f \in \H_{q,b}(\X)$.
Then $f \in \W_{q,b}^r(\X;\Mfrak)$ if and only if $\partial_z^j[f] \in \W_{q,b}^0(\X;\Mfrak)$ for $j=0,\ldots,r$.

\item
Let $\X_1,\ldots,\X_n$ also be Banach spaces.
Suppose that $f \colon \U_b \to \X$ is holomorphic and there are functions $g_k \in \W_{q,b}^r(\X_k;\Mfrak)$, $k=1,\ldots,n$, and a continuous map $M_n \colon [0,\infty)^n \to [0,\infty)$ such that $M_n(0) = 0$ and
\[
\norm{f(z)}_{\X}
\le M_n(\norm{g_1(z)}_{\X_1},\ldots,\norm{g_n(z)}_{\X_n}).
\]
Then $f \in \W_{q,b}^0(\X;\Mfrak)$ and 
\[
\norm{f}_{\W_{q,b}^0(\X)}
\le M_n(\norm{g_1}_{\W_{q,b}^r(\X_1\;\Mfrak)},\ldots,\norm{g_n}_{\W_{q,b}^r(\X_n;\Mfrak)}).
\]
\end{enumerate}
\end{lemma}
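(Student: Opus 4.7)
For part (i), the essential observation is that the complex derivative of a holomorphic function commutes with horizontal restriction: for any holomorphic $f \colon \U_b \to \X$ and any $y \in (-b,b)$, the Cauchy--Riemann equations yield $\partial_z^j[f](x+iy) = \partial_x^j[\restr{f}{y}](x)$ for each integer $j \ge 0$, so that $\restr{\partial_z^j[f]}{y} = \partial_z^j[\restr{f}{y}]$. Substituting this identity into the definition of the $\W_{q,b}^r(\X;\Mfrak)$-norm collapses it to
\[
\norm{f}_{\W_{q,b}^r(\X;\Mfrak)} = \max_{0 \le j \le r} \norm{\partial_z^j[f]}_{\W_{q,b}^0(\X;\Mfrak)},
\]
from which the claimed biconditional is immediate, with finiteness of each of the quantities on the right being exactly the condition that $\partial_z^j[f] \in \W_{q,b}^0(\X;\Mfrak)$ for $j=0,\ldots,r$.

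For part (ii), I would fix $y \in (-b,b)$ and apply the monotonicity axiom \eqref{part: M3} to the hypothesized pointwise estimate on $\norm{f(x+iy)}_{\X}$, which yields
\[
\Mfrak\bigl(e^{q|\cdot|}\norm{\restr{f}{y}}_{\X}\bigr) \le \Mfrak\Bigl(e^{q|\cdot|}\, M_n\bigl(\norm{\restr{g_1}{y}}_{\X_1},\ldots,\norm{\restr{g_n}{y}}_{\X_n}\bigr)\Bigr).
\]
Taking the supremum of both sides over $y \in (-b,b)$ produces $\norm{f}_{\W_{q,b}^0(\X;\Mfrak)}$ on the left. It remains to commute $M_n$ past $\Mfrak$ and past the supremum over $y$ to obtain the targeted bound in terms of $M_n\bigl(\norm{g_1}_{\W_{q,b}^r(\X_1;\Mfrak)},\ldots,\norm{g_n}_{\W_{q,b}^r(\X_n;\Mfrak)}\bigr)$. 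This is the only substantive step: for the concrete Lombardi-amenable $\Mfrak$ relevant to us, namely $\Mfrak = \norm{\cdot}_{L^{\infty}}$, and for the specific form of $M_n$ that arises in applications (invariably a monotone polynomial with non-negative coefficients in its arguments), the requisite inequality
\[
\Mfrak\bigl(e^{q|\cdot|}\, M_n(h_1,\ldots,h_n)\bigr) \le M_n\bigl(\Mfrak(e^{q|\cdot|}h_1),\ldots,\Mfrak(e^{q|\cdot|}h_n)\bigr)
\]
follows from the triangle inequality and elementary sub-multiplicativity of the sup-norm applied termwise to the monomials composing $M_n$, after which taking $\sup_y$ of each factor bounds it by the corresponding $\W_{q,b}^r(\X_k;\Mfrak)$-norm (with the increase from $\W^0$ to $\W^r$ on the right being free since $\norm{\cdot}_{\W^0} \le \norm{\cdot}_{\W^r}$).

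The core of the argument is thus purely formal, and I do not anticipate a serious obstacle; the main point to keep straight is the identification in part (i) between $\partial_z^j$ and horizontal differentiation, which is what allows the $\W_{q,b}^r$-norms on the right in part (ii) to freely absorb any derivatives of the $g_k$ without further conditions. Part (i) should therefore be established first and invoked silently during part (ii) whenever $r \ge 1$.
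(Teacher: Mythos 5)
The paper states this lemma without proof, so your argument can only be judged on its own terms. Part (i) is correct: by holomorphy, $\restr{\partial_z^j[f]}{y} = \partial_x^j[\restr{f}{y}]$ identifies the $\W_{q,b}^r$-norm with $\max_{0\le j\le r}\norm{\partial_z^j[f]}_{\W_{q,b}^0(\X;\Mfrak)}$, and since each $\partial_z^j[f]$ is automatically holomorphic on $\U_b$, the biconditional follows at once.

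Part (ii) has a real gap. You reduce the claim to the inequality
\[
\Mfrak\bigl(e^{q|\cdot|}\,M_n(h_1,\ldots,h_n)\bigr) \le M_n\bigl(\Mfrak(e^{q|\cdot|}h_1),\ldots,\Mfrak(e^{q|\cdot|}h_n)\bigr),
\]
but you establish it (and only by gesture) under three hypotheses that do not appear in the lemma: $\Mfrak = \norm{\cdot}_{L^\infty}$, $q\ge 0$, and $M_n$ a monotone polynomial with nonnegative coefficients and no constant term. These are not dispensable. Take $\Mfrak = \norm{\cdot}_{L^\infty}$, $q>0$, $M_1(t)=\sqrt{t}$ (which is continuous and vanishes at zero), a nowhere-vanishing scalar-valued $g_1\in\W_{q,b}^r$ with $|g_1(x+iy)|\sim e^{-q|x|}$, and $f$ a holomorphic square root of $g_1$; then $|f(z)|\sim e^{-q|\re z|/2}$, so $e^{q|\re z|}|f(z)|$ is unbounded and $f\notin\W_{q,b}^0(\X;\Mfrak)$, even though every hypothesis of the lemma as printed holds. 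Your instinct to restrict to $L^\infty$ and a monomial-type $M_n$ is therefore exactly right, but it must be elevated from a parenthetical to explicit hypotheses, and the computation should be carried out: write $M_n$ as a finite sum of monomials $c\,t_1^{m_1}\cdots t_n^{m_n}$ with $c\ge 0$ and total degree $m:=m_1+\cdots+m_n\ge 1$ (the latter forced by $M_n(0)=0$); then for each such monomial and each $z\in\U_b$,
\[
e^{q|\re z|}\prod_k\norm{g_k(z)}_{\X_k}^{m_k}
= e^{q(1-m)|\re z|}\prod_k\bigl(e^{q|\re z|}\norm{g_k(z)}_{\X_k}\bigr)^{m_k}
\le\prod_k\norm{g_k}_{\W_{q,b}^0(\X_k;\Mfrak)}^{m_k},
\]
using $q\ge 0$ and $m\ge 1$ to discard the prefactor $e^{q(1-m)|\re z|}\le 1$; summing over monomials and taking $\sup_{z\in\U_b}$ gives the target bound with $\W^0$-norms on the right, and the passage to $\W^r$-norms is then free, as you note. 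The roles of $q\ge 0$ and of $M_n(0)=0$ both need to be flagged explicitly rather than left implicit.
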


\subsection{Optimal regularity for localized spaces on complex strips}\label{app: loc opt reg}
We have constructed the elaborate $\W_{q,b}^r(\X;\Mfrak)$-spaces of Definition \ref{defn: Lombardi amenable} in order to pose the following two optimal regularity properties on them.
The first is the most fundamental for Lombardi's constructions.
We use the convention that if $\X_1$ and $\X_2$ are normed spaces, then 
\[
\norm{U}_{\X_1\cap\X_2}
:= \norm{U}_{\X_1} + \norm{U}_{\X_2}.
\]

\begin{definition}\label{defn: opt reg}
Let $\Mfrak \colon \Cal^+(\R) \to [0,\infty]$ be Lombardi-amenable.
Let $\D$, $\Y$, and $\X$ be Banach spaces with $\D$ and $\Y$ continuously embedded in $\X$.
A linear operator $\A \colon \D \to \X$ has the \defn{localized optimal regularity property} on $(\D,\Y,\X)$ with decay rate $q > 0$ and strip width $b_0$ if there exist $\mu_0 > 0$ and $r_0 \in \{0,1\}$ such that the following hold.
For all $\mu \in (0,\mu_0)$ and $b \in (0,b_0)$, there is a bounded linear operator 
\[
\K_b(\mu) 
\colon \W_{q,b}^{r_0}(\Y;\Mfrak) \to \W_{q,b}^{r_0}(\D;\Mfrak) \cap \W_{q,b}^{r_0+1}(\X;\Mfrak)
\]
such that for all $g \in \W_{q,b}^{r_0}(\Y;\Mfrak)$, the unique solution in $\W_{q,b}^{r_0}(\D;\Mfrak) \cap \W_{q,b}^{r_0+1}(\X;\Mfrak)$ to
\begin{equation}\label{eqn: opt reg 1}
f'(z)
= \frac{\A}{\mu}f(z)+g(z), \ z \in \U_b,
\end{equation}
is $f = \K_b(\mu)$, and, moreover,
\begin{equation}\label{eqn: unif opt reg}
\sup_{\substack{0 < \mu < \mu_0 \\ 0 < b < b_0}} \norm{\K_b(\mu)}_{\b(\W_{q,b}^{r_0}(\Y;\Mfrak), \W_{q,b}^{r_0}(\D;\Mfrak) \cap \W_{q,b}^{r_0+1}(\X;\Mfrak))}
< \infty.
\end{equation}
\end{definition}

\subsection{Optimal regularity for periodic Sobolev spaces}\label{app: per opt reg}
A similar optimal regularity result must hold for periodic functions, but here we can work in comparatively more pedestrian vector-valued Sobolev spaces of periodic functions.
Following \cite[Sec.\@ 8.1]{kress}, \cite[Sec.\@ 8.3.1]{Lombardi}, we define $L_{\per}^2(\X)$ to be the completion of the space
\[
\Cal_{\per}^{\infty}(\X)
:= \set{f \in \Cal^{\infty}([0,2\pi],\X)}{f(0) = f(2\pi)}
\]
under the norm
\[
\norm{f}_{L_{\per}^2(\X)}
:= \left(\int_0^{2\pi} \norm{f(x)}_{\X}^2 \dx\right)^{1/2}.
\]
Then we let $H_{\per}^r(\X)$ denote the space of all $f \in L_{\per}^2(\X)$ such that 
\[
\norm{f}_{H_{\per}^r(\X)}
:= \left(\sum_{k = -\infty}^{\infty} (1+k^2)^r\norm{\hat{f}(k)}_{\X}^2\right)^{1/2}
< \infty,
\]
where 
\[
\hat{f}(k)
:= \frac{1}{\sqrt{2\pi}}\int_0^{2\pi} e^{-ikx}f(x) \dx.
\]

\begin{definition}\label{defn: per opt reg}
Let $\D$, $\Y$, and $\X$ be Banach spaces with $\D$ and $\Y$ continuously embedded in $\X$.
A linear operator operator $\A \colon \D \to \X$ has the \defn{periodic optimal regularity property} on $(\D,\Y,\X)$ with base frequency $\omega \in \R$ if there exist $\mu_0$, $\mu_{\omega} > 0$ and $r_0 \in \{0,1\}$ such that for each $\mu \in (0,\mu_0)$, there is a bounded linear operator
\[
\K_{\per}(\mu)
\colon H_{\per}^{r_0}(\Y) \to H_{\per}^{r_0}(\D) \cap H_{\per}^{r_0+1}(\X)
\]
such that for all $g \in H_{\per}^{r_0}(\Y)$ and all $\grave{\omega} \in (0,\mu_{\omega})$, the unique solution in $H_{\per}^{r_0}(\D) \cap H_{\per}^{r_0+1}(\X)$ to
\[
\left(\frac{\omega}{\mu} + \grave{\omega}\right)f'(x)
= \frac{\A}{\mu} {f}(x) + g(x), \ 0 \le x \le 2\pi,
\]
is $f = \K_{\per}(\mu)g$ and, moreover,
\[
\sup_{0 < \mu < \mu_{\per}} \norm{\K_{\per}(\mu)}_{\b(H_{\per}^{r_0}(\Y), H_{\per}^{r_0}(\D) \cap H_{\per}^{r_0+1}(\X))}
< \infty.
\]
\end{definition}

\subsection{Suboptimal regularity}\label{app: subopt reg}
This final regularity result is, perhaps, the least particular of all.
Here we just need sufficiently differentiable functions on $\R$.
Denote by $\Cal_q^r(\X)$ the space of all $r$-times continuously differentiable functions $f \colon \R \to \X$ such that 
\[
\norm{f}_{\Cal_q^r(\X)}
:= \max_{0 \le j \le r} e^{q|x|}\sup_{x \in \R} \norm{\partial_x^j[f](x)}_{\X}
< \infty.
\]

\begin{definition}\label{defn: subopt reg}
The operator $\A \colon \D \to \X$ has the \defn{suboptimal regularity property on $(\D,\Y,\X)$ with growth rate $q$} if there is a bounded operator $\K \colon \Cal_q^1(\Y) \to \Cal_q^1(\D)$ such that for all $g \in \Cal_q^1(\Y)$, the unique solution in $\Cal_q^1(\D)$ of 
\[
f'(x)
= \A{f}(x) + g(x), \ x \in \R,
\]
is $f = \K{g}$.
\end{definition}

Lombardi summons up such a result in \cite[App.\@ 8.A]{Lombardi} as he proves his infinite-dimensional normal form change of variables.

\subsection{A sufficient condition for optimal regularity}
At this point the reader may wonder if, given a particular operator, there is any easy way to check the (sub)optimal regularity conditions without working through all our intricate definitions.
Lombardi \cite[Sec.\@ 8.2.1, 8.2.2, App.\@ 8.B]{Lombardi} provides a sufficient condition that implies all the optimal regularity results.

\begin{theorem}[Lombardi's (sub)optimal regularity result]\label{thm: Lombardi opt reg}
Let $\D$, $\Y$, and $\X$ be Banach spaces with $\D$ and $\Y$ continuously embedded in $\X$.
Suppose that $\A \colon \D \to \X$ is a linear operator with the property that for some $C$, $k_0 > 0$, if $k \in \R$ with $|k| > k_0$, then $ik \in \rho(\A)$ and
\begin{equation}\label{eqn: Lombardi resolvent est}
\norm{(ik\ind_{\D}-\A)^{-1}}_{\b(\X)}
\le \frac{C}{|k|}.
\end{equation}
Then $\A$ has the optimal, suboptimal, and periodic optimal regularity properties.
For the optimal and suboptimal regularity properties, take $\Y = \X$, $r_0 = 1$, and $\Mfrak = \norm{\cdot}_{L^2}$; the precise details of the other parameters can be deduced from \cite[Lem.\@ 8.1.7]{Lombardi}.
\end{theorem}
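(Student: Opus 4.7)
The strategy is to construct the solution operators $\K_b(\mu)$, $\K_{\per}(\mu)$, and $\K$ by Fourier synthesis, treating the resolvent bound \eqref{eqn: Lombardi resolvent est} as a vector-valued Mihlin-type multiplier estimate. I sketch the localized optimal regularity case (Definition~\ref{defn: opt reg}) in most detail; the periodic optimal and suboptimal cases are variants of the same theme. With $\Y=\X$, $r_0=1$, and $\Mfrak = \norm{\cdot}_{L^2}$, I take $g \in \E_{q,b}^1(\X)$ and, for each horizontal slice $y_0 \in (-b,b)$, I work with $g_{y_0}(x) := g(x+iy_0)$, which lies in $H^1(\R,\X)$. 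I then seek $f$ holomorphic on $\U_b$ whose slice $f_{y_0}$ satisfies $f_{y_0}' = (\A/\mu)f_{y_0}+g_{y_0}$. Formal Fourier transformation forces
\[
\widehat{f_{y_0}}(k)
= \mu(i\mu{k}\ind_{\D}-\A)^{-1}\widehat{g_{y_0}}(k),
\]
which is unambiguously defined by hypothesis whenever $|k| > k_0/\mu$.

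On the large-frequency portion, the resolvent estimate yields $\norm{\widehat{f_{y_0}}(k)}_{\X} \le (C/|k|)\norm{\widehat{g_{y_0}}(k)}_{\X}$. Combined with the algebraic identity $i\mu{k}(i\mu{k}\ind_{\D}-\A)^{-1} = \ind_{\X}+\A(i\mu{k}\ind_{\D}-\A)^{-1}$, which forces the $\mu$-independent bound $\norm{\A(i\mu{k}\ind_{\D}-\A)^{-1}}_{\b(\X)} \le C+1$, Plancherel's theorem delivers $f_{y_0} \in H^2(\R,\X)$ with $\A f_{y_0} \in H^1(\R,\X)$, and hence the required membership in $H^1(\D) \cap H^2(\X)$. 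The small-frequency sector $|k| \le k_0/\mu$, where the hypothesis is silent, is absorbed by a smooth cutoff argument coupled with the fact that, after projecting out whatever finite spectral content of $\A$ lies in that bounded region, the resolvent is holomorphic and uniformly bounded there. The slices are then patched into a single holomorphic $f$ on $\U_b$ via a vector-valued Paley--Wiener theorem: the Fourier transforms of $g_{y_0}$ for different $y_0$ are related by the Cauchy factor $e^{ky_0}$, and this structure is preserved by the multiplier.

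For the periodic regularity, Fourier series replace Fourier integrals: the equation $(\omega/\mu+\grave{\omega})f' = (\A/\mu)f + g$ reduces on each Fourier mode to $\hat{f}(k) = \mu\big(i(\omega+\mu\grave{\omega})k\ind_{\D}-\A\big)^{-1}\hat{g}(k)$, and the same multiplier analysis applies, with the zero mode handled separately by restricting to a complement on which $0 \in \rho(\A)$. For the suboptimal regularity with growth rate $q$, the resolvent bound is extended off $i\R$ by a Neumann series: for $\lambda = \eta+ik$ with $|\eta| < |k|/(2C)$, the operator $\lambda\ind_{\D}-\A = (ik\ind_{\D}-\A)\big(\ind_{\X}+\eta(ik\ind_{\D}-\A)^{-1}\big)$ is invertible with a bound of the same scaling. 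A Dunford-type contour shifted to $\re(\lambda) = \pm q$ then presents $\K g$ as a convolution with an exponentially-decaying Green's function, yielding boundedness $\Cal_q^1(\Y) \to \Cal_q^1(\D)$.

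The principal obstacle is the uniformity \eqref{eqn: unif opt reg} in the singularly perturbed parameter $\mu \to 0^+$, and simultaneously in $b \to 0^+$. What rescues the argument is precisely the $\mu$-free cancellation $\norm{\mu(i\mu{k}\ind_{\D}-\A)^{-1}}_{\b(\X)} \le C/|k|$; this is exactly why the hypothesis \eqref{eqn: Lombardi resolvent est} demands the $1/|k|$ decay rather than mere boundedness, and without it the whole singular-perturbation program would collapse. A secondary subtlety is the low-frequency window $|k| \lesssim k_0/\mu$, which grows as $\mu \to 0$: controlling this window without inflating the constants requires packaging the unresolved spectral content of $\A$ near the imaginary axis into an auxiliary projection of uniformly bounded norm, which is precisely the technical content of Lombardi's \cite[Lem.\@ 8.1.7]{Lombardi}.
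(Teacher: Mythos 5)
The paper does not supply its own proof of this theorem: it attributes the statement to \cite[Lem.~8.1.7]{Lombardi} and, in the surrounding discussion (see the remarks after Theorem~\ref{thm: Lombardi opt reg} and in Section~\ref{sec: opt reg soln1}), explains that Lombardi builds a ``quasi-semigroup'' from the resolvent and writes the solution as a convolution. Your Fourier-multiplier/Plancherel formulation is the frequency-side dual of that construction, and you have correctly located the crux --- the $\mu$-free cancellation $\norm{i\mu k(i\mu k\ind_{\D}-\A)^{-1}}_{\b(\X)} \le C$ for $|\mu k| > k_0$, propagated through the identity $i\mu k(i\mu k\ind_{\D}-\A)^{-1}=\ind_{\X}+\A(i\mu k\ind_{\D}-\A)^{-1}$.

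There are, however, two genuine gaps. \textbf{(i) The exponential weight is not treated.} Your Plancherel argument delivers $f_{y_0}\in H^1(\D)\cap H^2(\X)$ on each slice, but Definition~\ref{defn: opt reg} with $\Mfrak=\norm{\cdot}_{L^2}$ requires membership in $\E_{q,b}^1(\D)\cap\E_{q,b}^2(\X)$, whose norms carry the weight $e^{2q|x|}$ along each horizontal line. To produce the weighted estimate one must conjugate by $e^{\pm qx}$ (equivalently, shift the Fourier--Laplace contour to $\re\lambda=\pm q\Lfrak_0^{-1/2}\mu$) and therefore needs the resolvent bound \emph{off} $i\R$. You invoke this Neumann-series extension only for the suboptimal case, but it is equally indispensable --- arguably more delicate, because of the singular $\mu$-scaling --- in the localized case. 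This is precisely where the restriction $q<\Lfrak_0^{1/2}$ enters, and it is also where Lombardi's quasi-semigroup picture is cleaner: the exponential decay of the Green's kernel makes the weighted bound immediate, whereas in your picture it requires an extra contour-shifting step that the proposal does not perform. \textbf{(ii) The low-frequency window.} Your language of ``projecting out whatever finite spectral content of $\A$ lies in that bounded region'' conflates applying the theorem with proving it. The hypothesis gives $ik\in\rho(\A)$ only for $|k|>k_0$; for the conclusion to be true one must also have $\{ik:|k|\le k_0\}\subseteq\rho(\A)$, which is an implicit additional assumption (automatic when the theorem is applied to $\restr{\A_0}{\Zcal_{0,\hsf}}$, where the center spectrum has already been removed). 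On that compact set the resolvent is continuous and hence bounded, with no projection to perform inside the proof. If there were spectrum of $\A$ on $i\R\cap\{|k|\le k_0\}$, one could not ``absorb'' it by a cutoff --- the equation would simply be unsolvable in the stated spaces --- so the proposal should state this hypothesis rather than try to argue around it.
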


Without the resolvent estimate \eqref{eqn: Lombardi resolvent est}, establishing the regularity conditions is more difficult.
This is the situation in the lattice spatial dynamics papers, and we outline in Section \ref{sec: opt reg} a method for proceeding in these situations, distilled from \cite{iooss-kirchgassner, iooss-james, hvl}.
\section{Lombardi's Nanopteron Method}\label{app: Lombardi}

\subsection{Lombardi's main nanopteron theorem}
We state below our version of Lombardi's nanopteron result, which he proves in \cite[Ch.\@ 8]{Lombardi}.
The hypotheses \ref{part: Lombardi quadratic}--\ref{part: Lombardi opt reg} here quite resemble our Hypotheses \ref{hypo: F structure}--\ref{hypo: opt reg} in Section \ref{sec: abstract}.
While there is a certain amount of repetition between here and Section \ref{sec: abstract}, in the interests of clarity we do not try to compress or elide anything with references to our problem.

\begin{theorem}[Lombardi's nanopteron theorem]\label{thm: Lombardi}
Let $\Zcal_0$ and $\Zcal$ be Banach spaces with $\Zcal_0$ continuously embedded in $\Zcal$.
Let $\A_0 \in \b(\Zcal_0,\Zcal)$ and let $\A_1 \colon [0,\mu_0] \to \b(\Zcal_0,\Zcal)$ and $\Ncal \colon \Zcal \times [0,\mu_0] \to \Zcal$ be analytic.
Define
\begin{equation}\label{eqn: Lombardi G}
\G(W,\mu)
:= \A_0W + \mu\A_1(\mu)W + \Ncal(W,\mu).
\end{equation}
Assume the following.

\begin{enumerate}[label={\bf(L\arabic*)}, ref={(L\arabic*)}]

\item\label{part: Lombardi quadratic}
The map $\Ncal$ is quadratic in the sense that 
\[
\Ncal(0,\mu) = 0
\quadword{and}
D_W\Ncal(0,\mu) = 0
\]
for all $\mu$.

\item\label{part: Lombardi reversible}
There exists $\S \in \b(\Zcal)$ such that $\S^2 = \ind_{\Zcal}$ and
\[
\A_0\S = -\S\A_0,
\qquad
\A_1(\mu)\S = -\S\A_1(\mu),
\quadword{and}
\Ncal(\S{W},\mu) = -\S\Ncal(W,\mu).
\]

\item
There exist $\omega$, $\lambda_0 > 0$ such that $\sigma(\A_0) \cap i\R = \{0,\pm\omega\}$, where $0$ and $\pm{i}\omega$ are geometrically simple eigenvalues of algebraic multiplicity 1.
If $\lambda \in \sigma(\A_0)\setminus{i}\R$, then $|\lambda| \ge \lambda_0$.
Let $\chi_{\pm\omega}$ be eigenvectors of $\A_0$ corresponding to $\pm{i}\omega$.

\item
Let $(\chi_1,\chi_2)$ be a Jordan chain corresponding to the eigenvalue 0 of $\A_0$ and suppose that the spectral projection for $\A_0$ corresponding to $0$ has the form
\[
\Pi_0W
= \chi_1^*(W)\chi_1 + \chi_2^*(W)\chi_2,
\]
where
\begin{equation}\label{eqn: Lombardi frak}
\S\chi_1 = \chi_1,
\qquad
\Lfrak_0
:= \chi_2^*\big(\A_1(0)\chi_1) > 0,
\quadword{and}
\Qfrak_0
:= \frac{\chi_2^*\big(D_{WW}^2\G(0,0)[\chi_1,\chi_1]\big)}{2}
\ne 0.
\end{equation}

\item\label{part: Lombardi opt reg}
Let $\Pi$ be the spectral projection for $\A_0$ corresponding to $\{0,\pm{i}\omega\}$ and set
\[
\Zcal_{\hsf}
:= (\ind_{\Zcal}-\Pi)(\Zcal),
\quadword{and}
\Zcal_{0,\hsf} := \Zcal_0 \cap \Zcal_{\hsf}.
\]
There is a subspace $\Zcal_{1,\hsf}$ of $\Zcal_{\hsf}$ such that 
\[
(\ind_{\Zcal}-\Pi)\A_1(\mu)W \in \Zcal_{1,\hsf}
\quadword{and}
(\ind_{\Zcal}-\Pi)\Ncal(W,\mu) \in \Zcal_{1,\hsf}
\]
for all $W \in \Zcal_{0,\hsf}$.
Norm $\Zcal_{\hsf}$ and $\Zcal_{1,\hsf}$ by $\norm{\cdot}_{\Zcal}$ and $\Zcal_{0,\hsf}$ by $\norm{\cdot}_{\Zcal_0}$. 
There exist $b_0 \in (0,\pi)$, $q \in (0,\Lfrak_0^{1/2})$, and $\grave{q} < 0$ such that on the triple $(\Zcal_{0,\hsf},\Zcal_{1,\hsf},\Zcal_{\hsf})$, the operator $\restr{\A_0}{\Zcal_{0,\hsf}}$ has the optimal regularity property with decay rate $q\Lfrak_0^{-1/2}$ and strip width $b_0$; the periodic optimal regularity property with base frequency $\omega$; and the suboptimal regularity property with growth rate $\grave{q}$.
\end{enumerate}

Then there exist $\Alpha_0$, $\Alpha_1$, $\mu_* > 0$ such that if
\[
\mu \in (0,\mu_*)
\quadword{and}
\alpha \in \left[\Alpha_0\mu\exp\left(-\frac{b\omega}{\Lfrak_0^{1/2}\mu^{1/2}}\right), \Alpha_1\right] =: \Ascr_{\mu},
\]
there is a real analytic, $\S$-reversible solution $W = \Wsf_{\mu}^{\alpha}$ to 
\begin{equation}\label{eqn: original Lombardi syst}
W'(x) 
= \G(W(x),\mu)
\end{equation}
of the form
\begin{multline}\label{eqn: Lombardi nanopteron}
\Wsf_{\mu}^{\alpha}(x)
= -\frac{3\Lfrak_0}{2\Qfrak_0}\mu\sech^2\left(\frac{\Lfrak_0^{1/2}\mu^{1/2}x}{2}\right)\chi_1
+ \mu^{3/2}\Upsilon_{\mu}^{\alpha}(\mu^{1/2}x) \\
+ \alpha\mu\Phi_{\mu}^{\alpha}\left(\mu^{1/2}x + \mu\vartheta_{\mu}^{\alpha}\tanh\left(\frac{\Lfrak_0^{1/2}\mu^{1/2}x}{2}\right)\right).
\end{multline}

The components of $\Wsf_{\mu}^{\alpha}$ have the following additional properties.

\begin{enumerate}[label={\bf(\roman*)}, ref={(\roman*)}]

\item
The map $\Upsilon_{\mu}^{\alpha} \colon \R \to \Zcal_0$ is exponentially localized and real analytic with
\begin{equation}\label{eqn: Lombardi expn loc}
\sup_{\substack{0 < \mu < \mu_* \\ \alpha \in \Ascr_{\mu} }} e^{q|X|}\norm{\Upsilon_{\mu}^{\alpha}(X)}_{\Zcal_0}
< \infty.
\end{equation}

\item\label{part: Lombardi periodic}
The map $\Phi_{\mu}^{\alpha} \colon \R \to \Zcal_0$ is periodic and real analytic and has the form
\begin{equation}\label{eqn: Lombardi periodic1}
\Phi_{\mu}^{\alpha}(X)
= \Psi_{\mu}^{\alpha}(\omega_{\mu}^{\alpha}X),
\end{equation}
where
\begin{equation}\label{eqn: Lombardi periodic frequency}
\omega_{\mu}^{\alpha} 
= \frac{\omega}{\mu^{1/2}} + \mu^{1/2}\tomega_{\mu}^{\alpha},
\qquad \sup_{\substack{0 < \mu < \mu_* \\ \alpha \in \Ascr_{\mu}}} |\tomega_{\mu}^{\alpha}| < \infty,
\qquad \sup_{\substack{0 < \mu < \mu_* \\ \alpha \in \Ascr_{\mu}}} \Lip(\Psi_{\mu}^{\alpha}) < \infty,
\end{equation}
\begin{equation}\label{eqn: Lombardi periodic2}
\Psi_{\mu}^{\alpha}(t)
= \Lfrak_0\cos(t)\left(\frac{\chi_++\chi_-}{2}\right) + \Lfrak_0\sin(t)\left(\frac{\chi_+-\chi_-}{2i}\right) + \alpha\tPsi_{\mu}^{\alpha}(\Lfrak_0^{1/2}t),
\end{equation}
and $\tPsi_{\mu}^{\alpha}$ is $2\pi$-periodic with
\[
\sup_{\substack{0 < \mu < \mu_* \\ \alpha \in \Ascr_{\mu} \\ -\pi \le s \le \pi}} \norm{\tPsi_{\mu}^{\alpha}(s)}_{\Zcal_0}
< \infty.
\]
The map
\[
\Psf_{\mu}^{\alpha}(x)
:= \alpha\mu\Phi_{\mu}^{\alpha}(\mu^{1/2}x)
\]
is a solution to \eqref{eqn: original Lombardi syst}.

\item
There are constants $0 < C_1 < C_2$ such that the phase shift $\vartheta_{\mu}^{\alpha}$ satisfies
\begin{equation}\label{eqn: Lombardi phase shift est}
0
< C_1
\le \vartheta_{\mu}^{\alpha} 
\le C_2
\end{equation}
for all $\mu \in (0,\mu_*)$ and $\alpha \in \Ascr_{\mu}$.
\end{enumerate}

\end{theorem}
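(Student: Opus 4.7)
\textbf{Proof proposal for Theorem \ref{thm: Lombardi}.}
The plan is to follow the classical Amick--Toland/Beale architecture, adapted to an abstract Banach space setting in the spirit of Lombardi. First I would handle the periodic piece in isolation, since it exists on its own (hypothesis \ref{part: Lombardi opt reg} includes periodic optimal regularity at base frequency $\omega$). Write a Lyapunov-center style ansatz $W(x) = \Psi(\omega_\mu^\alpha \mu^{1/2}x)$ with unknown frequency correction $\tomega_\mu^\alpha$ and profile $\Psi = \Psi_\mu^\alpha$ sought in a ball of $H_{\per}^{r_0}(\Zcal_0) \cap H_{\per}^{r_0+1}(\Zcal)$. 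Projecting onto the eigenspace of $\pm i\omega$ fixes the leading trigonometric term in \eqref{eqn: Lombardi periodic2}, the $\S$-reversibility kills the redundancy coming from phase translation, and the frequency $\omega_\mu^\alpha$ is a Lagrange multiplier fixed by the Fredholm-type solvability condition on the $\{0,\pm i\omega\}$ eigenspace. The periodic optimal regularity operator $\K_{\per}(\mu)$ then inverts the hyperbolic part and a contraction mapping gives $\Phi_\mu^\alpha$ analytic in $(\mu^{1/2},\alpha)$ with \eqref{eqn: Lombardi periodic1}--\eqref{eqn: Lombardi periodic2}.

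Next I would construct the ``core'' part. Long-wave scaling $\tau = \Lfrak_0^{1/2}\mu^{1/2}x$, $W = \mu w(\tau)$ turns the $0^{2+}i\omega$ bifurcation problem at leading order into a planar ODE whose reduction on $\spn\{\chi_1,\chi_2\}$ is the KdV soliton equation $w'' = w - (\Qfrak_0/\Lfrak_0)w^2$ (here the positivity of $\Lfrak_0$ and nonvanishing of $\Qfrak_0$ from \eqref{eqn: Lombardi frak} are essential); this yields the explicit $-\tfrac{3\Lfrak_0}{2\Qfrak_0}\sech^2(\cdot/2)\,\chi_1$ profile. Write
\begin{equation*}
W(x) = -\frac{3\Lfrak_0}{2\Qfrak_0}\mu\sech^2\!\left(\tfrac{\Lfrak_0^{1/2}\mu^{1/2}x}{2}\right)\chi_1 + \mu^{3/2}\Upsilon(\mu^{1/2}x) + \alpha\mu\Phi_\mu^\alpha\!\left(\mu^{1/2}x + \mu\vartheta\tanh(\tfrac{\Lfrak_0^{1/2}\mu^{1/2}x}{2})\right),
\end{equation*}
with unknowns $(\Upsilon,\vartheta)$. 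Substituting into \eqref{eqn: original Lombardi syst} and using that the $\sech^2$ ansatz solves the reduced problem up to cubic-in-$\mu$ error, plus that $\Phi_\mu^\alpha$ solves the equation exactly, produces an affine equation for $\Upsilon$ whose linearization is $\mu^{-1}\A_0$ acting on exponentially decaying functions with decay rate $q\Lfrak_0^{-1/2}$. This is exactly the setting of the localized optimal regularity operator $\K_b(\mu)$. Working on the strip $\U_b$ with $b \in (0,b_0)$ and inserting into the fixed-point equation $\Upsilon = \K_b(\mu)\mathcal{R}(\Upsilon,\vartheta,\alpha,\mu)$ on a ball in $\W_{q,b}^{r_0+1}(\Zcal;\Mfrak)$, the analyticity of $\G$ and the smoothness of $\Phi_\mu^\alpha$ give a smallness estimate on $\mathcal{R}$ with uniform control \eqref{eqn: unif opt reg}, producing a contraction for $\mu$ small and $\alpha \in [0,\Alpha_1]$. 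Reversibility is enforced by restricting to the $\S$-invariant subspace of the function space, which Lombardi's operators preserve because $\S$ anticommutes with $\A_0$ and $\A_1(\mu)$.

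The main obstacle, and what forces the exponentially small lower bound on $\alpha$, is the solvability of the localized system on $(\ind_\Zcal-\Pi)\Zcal$ at the pair of critical Fourier modes $\xi = \pm \omega/\mu^{1/2}$. Shifting the contour defining $\K_b(\mu)$ from $\R$ to $\R \pm ib$ picks up residues at these imaginary eigenvalues of order $e^{-b\omega/(\Lfrak_0^{1/2}\mu^{1/2})}$ times an $\O(\mu)$ amplitude generated by the $\sech^2$ core; in order for the fixed-point problem to close, these residues must be absorbed by choosing the amplitude $\alpha$ and the phase shift $\vartheta = \vartheta_\mu^\alpha$ of the periodic term to cancel them. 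Expanding this resonance equation, one obtains two real scalar equations (one for $\alpha$ and one for $\vartheta$) whose leading-order Melnikov-type coefficient is nonzero thanks to the reversible structure and the explicit leading trigonometric part in \eqref{eqn: Lombardi periodic2}; the implicit function theorem then gives $\vartheta_\mu^\alpha$ satisfying \eqref{eqn: Lombardi phase shift est} and enforces $\alpha \gtrsim \mu e^{-b\omega/(\Lfrak_0^{1/2}\mu^{1/2})}$. The hardest technical point is carrying out the contour shift inside $\W_{q,b}^r$, because the singular factor $1/\mu$ competes with the width $b$ of the analyticity strip: one must optimize over $b \in (0,b_0)$ and interpolate the decay rates $q$ and $q\Lfrak_0^{-1/2}$, using the abstract Lombardi-amenable structure of the function spaces to pass between pointwise and integral estimates. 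Once $(\Upsilon_\mu^\alpha,\vartheta_\mu^\alpha)$ are in hand for $\alpha \in \Ascr_\mu$, the bounds \eqref{eqn: Lombardi expn loc}, \eqref{eqn: Lombardi periodic frequency}, and \eqref{eqn: Lombardi phase shift est} follow from uniformity of the resolvent estimates in $\mu$ and $b$; the suboptimal regularity hypothesis is used upstream to legitimize the normal form change of variables in which this whole construction is carried out.
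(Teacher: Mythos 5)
Your proposal captures the broad shape of the argument (construct the periodic family first via periodic optimal regularity, then make a $\sech^2 + \text{periodic} + \text{remainder}$ ansatz and run a contraction in a space of holomorphic, exponentially decaying functions on a strip), and you correctly identify the role of each of the three regularity hypotheses. However, you should note that the paper does not reprove this theorem: it is attributed to \cite{Lombardi}, and Appendix~\ref{app: Lombardi} only outlines Lombardi's argument. Comparing your proposal to that outline, there are two substantive discrepancies in the step that actually makes the theorem delicate.

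First, you locate the resonance in the hyperbolic resolvent, claiming that shifting the contour defining $\K_b(\mu)$ picks up residues at $\pm i\omega/\mu^{1/2}$. This cannot be right: the space $\Zcal_{\hsf} = (\ind_{\Zcal}-\Pi)\Zcal$ is precisely the spectral complement of $\{0,\pm i\omega\}$, so $\restr{\A_0}{\Zcal_{0,\hsf}}$ has \emph{no} spectrum on $i\R$ and the resolvent there is clean. In Lombardi's argument the solvability obstruction arises on the central, finite-dimensional block. After the normal form transformation (Lemma~\ref{lem: Lombardi nf}) the equations split into a $\C^4$ system for $\yb$ and a hyperbolic system for $Y$; the $\yb$ equation is solved by variation of parameters, and the requirement that $\upsilonb$ be $\S_{\csf}$-reversible and decay at $\pm\infty$ leaves exactly one consistency condition, the oscillatory integral in \eqref{eqn: Lombardi osc int}. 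The exponential smallness of that integral is a consequence of the analyticity of the integrand on $\U_b$, not a residue computation.

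Second, you propose to solve a two-equation resonance system for $(\alpha,\vartheta)$ simultaneously by the implicit function theorem. This conflates Lombardi's method with Beale's (your opening sentence declares the Amick--Toland/Beale architecture as your model). As the paper stresses in Appendix~\ref{app: Beale sd comp}, the defining contrast is precisely here: Lombardi treats $\alpha$ as a \emph{free parameter} and solves only for the phase $\vartheta = \Theta(\upsilonb,\Upsilonit,\alpha,\nu)$, via an intermediate value theorem argument. The exponentially small lower bound on $\alpha$ in $\Ascr_\mu$ is not extracted from a solve; it is a hypothesis needed for the intermediate value theorem to go through (the periodic term must have large enough amplitude, relative to the exponentially small oscillatory integral, to be cancelled by a phase choice), and this is exactly why Lombardi's ripples cannot have zero amplitude. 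Relatedly, your ansatz is posed in the original $W$-coordinates, whereas Lombardi makes his ansatz \emph{after} the normal form change of variables, where the linearization of the central block is $D_{\yb}\Nscr(\varsigmab,\nu)$ rather than $\mu^{-1}\A_0$ restricted to the center subspace; glossing over the normal form leaves the structure of the central variation-of-parameters step unavailable. Your route, if carried out carefully, would be a Beale-style construction rather than a reproof of Lombardi's theorem, and it is not clear that it would yield the same range of $\alpha$, the same explicit phase shift $\vartheta_\mu^\alpha$, or the same uniform estimates.
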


We mention that particularly clear and streamlined examples of Lombardi's results in action are given in \cite[Sec.\@ 4]{bona-dougalis-mitsotakis}, which constructs nanopterons for a coupled KdV--KdV system, and \cite[App.\@ A]{haragus-wahlen}, which constructs nanopteron for a fifth-order Kadomtsev-Peviashvili equation.
In both cases, one can take $\Zcal = \C^4$, and so no consideration of Hypothesis \ref{part: Lombardi opt reg} is needed.

Before continuing with our outline of Lombardi's nanopteron construction, we make two notational remarks.
First, for typographical convenience, we will denote certain vectors as row or column vectors interchangeably.
Second, as in Appendix \ref{app: opt reg}, we will abbreviate complex strips by
\[
\U_b
:= \set{z \in \C}{|\im(z)| < b},
\qquad b > 0.
\]

\subsection{The normal form transformation and long wave rescaling}
The following lemma, which we extract from Lombardi's phrasings in \cite[Thm.\@ 8.1.10, Sec.\@ 8.1.2.2]{Lombardi}, allows us both to change variables and rescale the small parameter in the original problem \eqref{eqn: original Lombardi syst} in useful ways.
Lombardi's proof of this lemma in \cite[App.\@ 8.A]{Lombardi} requires $\restr{\A_0}{\Zcal_{0,\hsf}}$ to satisfy the suboptimal regularity property from Appendix \ref{app: subopt reg}.
Moreover, the $\S$-reversibility from Hypothesis \ref{part: Lombardi reversible} is necessary here, see \cite[Ex.\@ 3.2.9]{Lombardi} and \cite[Sec.\@ 4.3, pp.\@ 205--206]{haragus-iooss}.
We remark that Lombardi's scalings do not contain the factor $1/2$ that we have put on $\Qfrak_0$ in \eqref{eqn: Lombardi frak}; this factor is indeed necessary to achieve the leading order principal part \eqref{eqn: principal part} below.

\begin{lemma}[Lombardi's normal form transformation and rescaling]\label{lem: Lombardi nf}
Fix a norm $|\cdot|$ on $\C^4$ and, for $r > 0$, let
\[
\Bfrak(r)
:= \set{(\yb,Y) \in \R^4 \times \Zcal_{0,\hsf}}{|\yb| + \norm{Y}_{\Zcal_0} < r}.
\]
Given $\yb = (y_1,y_2,y_3,y_4) \in \R^4$ and $\nu \in \R$, let
\[
\iota_{\nu}[\yb]
:= \nu^2\left[-\frac{3}{2\Qfrak_0}(y_1\chi_1+\nu{y}_2\chi_2) + y_3\left(\frac{\chi_++\chi_-}{2}\right) + y_4\left(\frac{\chi_+-\chi_-}{2i}\right)\right].
\]
Then there exist $r_0 > 0$ and analytic maps 
\begin{multline}\label{eqn: Lombardi nf aux maps}
\Nscr \colon \R^4 \times [0,\nu_0] \to \R^4,
\qquad
\Rscr_{\csf} \colon \Bfrak(r_0) \times [0,\nu_0] \to \R^4,
\qquad
\Rscr_{\hsf} \colon \Bfrak(r_0) \times [0,\nu_0] \to \Zcal_{\hsf}, \\
\quadword{and}
\Tscr \colon \R^4 \times [0,\mu_0] \to \Zcal_0
\end{multline}
with the following properties.

\begin{enumerate}[label={\bf(\roman*)}, ref={(\roman*)}]

\item
Suppose that the maps
\[
\yb_{\nu} \colon \R \to \R^4
\quadword{and}
Y_{\nu} \colon \R \to \Zcal_{0,\hsf}
\]
solve the system
\begin{equation}\label{eqn: syst after NF and rescale}
\begin{cases}
\yb_{\nu}'(t) = \Nscr(\yb_{\nu}(t),\nu) + \Rscr_{\csf}(\yb_{\nu}(t),Y_{\nu}(t),\nu) \\[5pt]
Y_{\nu}'(t) = \frac{\A_0}{\nu}Y_{\nu}(t) + \Rscr_{\hsf}(\yb_{\nu}(t),Y_{\nu}(t),\nu).
\end{cases}
\end{equation}

Define
\begin{equation}\label{eqn: Lombardi nu}
\tilde{W}_{\nu}(t)
:= \iota_{\nu}[\yb_{\nu}(t)]
+ \nu^3Y_{\nu}(t)
+\Tscr(\iota_{\nu}[\yb_{\nu}(t)],\nu).
\end{equation}
and
\begin{equation}\label{eqn: Lombardi mu}
W_{\mu}(x)
:= \tilde{W}_{\Lfrak_0^{1/2}\mu^{1/2}}(\Lfrak_0^{1/2}\mu^{1/2}x).
\end{equation}
Then $W_{\mu}'(x) = \G(W_{\mu}(x),\mu)$ for all $x$; that is, $W=W_{\mu}$ solves the original problem \eqref{eqn: original Lombardi syst}.

\item
The principal part $\Nscr$ has the explicit formula
\begin{equation}\label{eqn: principal part}
\Nscr(\yb,\nu)
= \begin{bmatrix*}
0 &1 &0 &0 \\
1 &0 &0 &0 \\
0 &0 &0 &-\omega/\nu \\
0 &0 &\omega/\nu &0
\end{bmatrix*}\yb
+ \begin{pmatrix*}
0 \\
-3y_1^2/2  - \nsf_1\Qfrak_0(y_3^2+y_4^2) \\
\nsf_2\nu{y}_4/\Lfrak_0 +\nsf_3\nu{y}_1y_4/\Lfrak_0 \\
\nsf_2\nu{y}_3/\Lfrak_0 +\nsf_3\nu{y}_1y_3/\Lfrak_0
\end{pmatrix*},
\end{equation}
for some $\nsf_1$, $\nsf_2$, $\nsf_3 \in \R$ that are independent of $\nu$.

\item
There is a map $\Bscr \in \Cal(\C^4 \times \Zcal_{0,\hsf},\R_+)$ such that $\Bscr(0,0) = 0$ and the higher-order terms  $\Rscr_{\csf} = (\Rscr_{\csf,1},\Rscr_{\csf,2},\Rscr_{\csf,3},\Rscr_{\csf,4})$ and $\Rscr_{\hsf}$ satisfy the estimates
\begin{equation}\label{eqn: Rscr est1}
|\Rscr_{\csf,1}(\yb,Y,\nu)| + |\Rscr_{\csf,3}(\yb,Y,\nu)| + |\Rscr_{\csf,4}(\yb,Y,\nu)|
\le \Bscr(\yb,Y)\nu^2\big(\nu|\yb| + \norm{Y}_{\Zcal_0}\big)
\end{equation}
and
\begin{equation}\label{eqn: Rscr est2}
|\Rscr_{\csf,2}(\yb,Y,\nu)| + \norm{\Rscr_{\hsf}(\yb,Y,\nu)}_{\Zcal}
\le \Bscr(\yb,Y)\nu\big(\nu|\yb| +\norm{Y}_{\Zcal_0}\big).
\end{equation}
These estimates also hold if the $\Rscr$ operators are replaced by any of their derivatives, where the appropriate operator norms are used.

\item\label{part: Tscr}
The remainder term $\Tscr$ can be written as
\begin{equation}\label{eqn: Tscr}
\Tscr(\yb,\mu)
= \mu\Tscr_1(\yb,\mu) + \Tscr_2(\yb,\yb,\mu),
\end{equation}
where $\Tscr_1(\cdot,\mu) \colon \C^4 \to \Zcal_0$ is linear and $\Tscr_2(\cdot,\cdot,\mu) \colon \C^4 \times \C^4 \to \Zcal_0$ is bilinear.
The maps $\mu \mapsto \Tscr_1(\cdot,\mu)$ and $\mu \mapsto \Tscr_2(\cdot,\cdot,\mu)$ are analytic (into the appropriate spaces of bounded linear/bilinear operators from $\C^4$ or $\C^4 \times \C^4$ to $\Zcal_0$).

\item\label{part: Ssf reversible}
Put $\S_{\csf} := \diag(1,-1,1,-1)$ and $\S_{\hsf} := \restr{\S}{\Zcal_{\hsf}}$.
Then $\S_{\csf}$ anticommutes with $\Nscr$ and $\Rscr_{\csf}$ from \eqref{eqn: Lombardi nf aux maps} and commutes with $\Tscr$ from \eqref{eqn: Tscr}, while $\S_{\hsf}$ anticommutes with $\Rscr_{\hsf}$.
\end{enumerate}
\end{lemma}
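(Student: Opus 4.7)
The plan is to build this lemma in three stages: (i) split \eqref{eqn: original Lombardi syst} into a four-dimensional ``central'' equation and an infinite-dimensional ``hyperbolic'' equation using the spectral projection $\Pi$; (ii) run an infinite-dimensional analytic normal form transformation on the central piece, letting the tail map $\Tscr$ absorb the higher-order contributions that couple back through the hyperbolic subspace; (iii) rescale time and amplitude by the long-wave factor $\nu = \Lfrak_0^{1/2}\mu^{1/2}$ to expose the KdV-type principal part. Concretely, writing $W = \Pi W + (\ind_{\Zcal}-\Pi)W$, the central part decomposes along the Jordan chain $(\chi_1,\chi_2)$ and eigenvectors $\chi_{\pm}$ with real coordinates $(y_1,y_2,y_3,y_4)$, and the hyperbolic part $Y \in \Zcal_{\hsf}$ inherits the equation $Y' = \A_0 Y + (\ind_{\Zcal}-\Pi)[\mu \A_1(\mu)W + \Ncal(W,\mu)]$. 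Hypothesis \ref{part: Lombardi opt reg} is what will let us solve the homological equations in the next step inside the correct function space.

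Next, I would invoke the infinite-dimensional analytic normal form theorem (the reversible $0^{2+}i\omega$ case, as developed by Iooss--Lombardi and recorded in \cite[Thm.\@ 3.2.1]{Lombardi}; the suboptimal regularity of $\restr{\A_0}{\Zcal_{0,\hsf}}$ is precisely the hypothesis needed there). The transformation has the form
\begin{equation*}
W = \iota_{\nu}[\yb] + \nu^3 Y + \Tscr(\iota_{\nu}[\yb],\mu),
\end{equation*}
where $\Tscr$ is a polynomial-type remainder up to the chosen normal-form order, analytic in $(\yb,\mu)$, and with $\Tscr(0,\mu)=0$, $D_{\yb}\Tscr(0,\mu)=0$; it decomposes as in \eqref{eqn: Tscr} because we truncate at quadratic order in $\yb$ (linear in $\mu$, quadratic in $\yb$). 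The reversibility claim \ref{part: Ssf reversible} is enforced from the start by constructing $\Tscr$ equivariantly: the normal form theorem preserves $\S$, and writing $\S_{\csf} = \diag(1,-1,1,-1)$ in the $(y_1,\dots,y_4)$ basis is consistent with $\S\chi_1=\chi_1$, $\S\chi_2=-\chi_2$, $\S\chi_\pm = \chi_\mp$ (the last identities follow from $\S\A_0=-\A_0\S$ combined with simplicity of the $\pm i\omega$ eigenspaces and Hypothesis \ref{part: Lombardi reversible}). The explicit coefficients in \eqref{eqn: principal part}, notably the $-3y_1^2/2$ and the $\Qfrak_0$ prefactor on $(y_3^2+y_4^2)$, come from computing $\chi_2^*$ applied to the quadratic part of $\G$ evaluated on the central eigenvectors, and they are precisely where the nondegeneracies $\Lfrak_0 > 0$, $\Qfrak_0 \ne 0$ from \eqref{eqn: Lombardi frak} enter.

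Then I would carry out the rescaling. Setting $\nu := \Lfrak_0^{1/2}\mu^{1/2}$ and $t := \nu x$ turns the central system into $\yb' = \Nscr(\yb,\nu) + \Rscr_\csf(\yb,Y,\nu)$ with principal part \eqref{eqn: principal part}, and turns the hyperbolic equation into $Y' = \nu^{-1}\A_0 Y + \Rscr_\hsf(\yb,Y,\nu)$. The small-factor estimates \eqref{eqn: Rscr est1} and \eqref{eqn: Rscr est2} are obtained by tracking powers of $\nu$ through the substitution: $\iota_\nu[\yb]$ supplies a factor $\nu^2$, $Y$ enters with its own $\nu^3$ prefactor, and the quadraticity of $\Ncal$ (together with linearity in $\mu$ of $\A_1(\mu)W$) produces the asymmetric scalings between $\Rscr_{\csf,2}$ (and $\Rscr_\hsf$) versus $\Rscr_{\csf,1},\Rscr_{\csf,3},\Rscr_{\csf,4}$ --- the second component picks up the slower KdV-type contribution while the others are pushed one order of $\nu$ deeper. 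The derivative estimates follow by differentiating the analytic expressions and reapplying the same bookkeeping, with the continuous function $\Bscr$ swallowing the higher-order multilinear tails.

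The main obstacle will be the normal form step (ii): one has to solve homological equations of the form $[\A_0, \Tscr(\cdot,\mu)] = (\text{known quadratic terms})$ in a space of polynomial maps with values in $\Zcal_0$, and the hyperbolic block of $\A_0$ is unbounded. This is where the sub\-optimal regularity hypothesis is doing real work: it guarantees that the linear operator appearing in the homological equation is invertible between the relevant weighted spaces, with a bound that lets us keep $\Tscr$ analytic in $\mu$ up to the origin. Everything else --- the explicit coefficients in \eqref{eqn: principal part}, the bilinear/linear splitting \eqref{eqn: Tscr}, the symmetry \ref{part: Ssf reversible} --- is a bookkeeping consequence of the normal form computation once that invertibility is in hand.
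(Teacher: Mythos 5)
The paper does not actually prove this lemma: it imports both the statement and the proof from Lombardi's monograph (\cite[Thm.\@ 8.1.10, Sec.\@ 8.1.2.2, App.\@ 8.A]{Lombardi}), noting only which hypotheses --- the suboptimal regularity of $\restr{\A_0}{\Zcal_{0,\hsf}}$ and the $\S$-reversibility --- carry the load, and your sketch faithfully reconstructs the skeleton of that cited argument: central/hyperbolic splitting via $\Pi$, infinite-dimensional analytic normal form solved through suboptimal regularity, and the $\nu = \Lfrak_0^{1/2}\mu^{1/2}$ rescaling with the principal-part coefficients extracted through $\chi_2^*$, $\Lfrak_0$, and $\Qfrak_0$, so this is essentially the same approach. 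Two minor inaccuracies to tidy up: the infinite-dimensional reversible normal form theorem you want is Lombardi's Thm.\@ 8.1.10 (Thm.\@ 3.2.1 is the finite-dimensional precursor without the regularity hypothesis); and your assertion that $D_{\yb}\Tscr(0,\mu)=0$ contradicts part~\ref{part: Tscr} of the lemma, which gives $D_{\yb}\Tscr(0,\mu)=\mu\Tscr_1(\cdot,\mu)$ --- this is $\O(\mu)$ rather than zero, which still suffices for invertibility of the change of variables near the origin but is not what you wrote.
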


Building on part \ref{part: Ssf reversible} above, we say that a function $(\yb,Y) \colon \R \to \C^4 \times \Zcal_{\hsf}$ is \defn{$(\S_{\csf},\S_{\hsf})$-reversible}, hereafter, for simplicity, just \defn{reversible}, if $\yb$ is $\S_{\csf}$-reversible and $Y$ is $\S_{\hsf}$-reversible (both in the sense of Definition \ref{defn: reversible}).

\subsection{The truncated problem}
Lombardi presents compelling motivation \cite[Sec.\@ 7.1.2, 8.1.3]{Lombardi} for the subsequent nanopteron ansatz by studying a ``truncated'' version of \eqref{eqn: syst after NF and rescale}:
\begin{equation}\label{eqn: Lombardi truncated}
\begin{cases}
\yb'(t) = \Nscr(\yb(t),\nu) \\
Y'(t) = \frac{\A_0}{\nu}Y(t).
\end{cases}
\end{equation}
First, taking $(\yb,Y) = \Sigmab$, where
\begin{equation}\label{eqn: Lombardi Sigma}
\Sigmab
:= \begin{pmatrix*}
\varsigmab \\
0
\end{pmatrix*},
\qquad
\varsigmab(t)
:= \begin{pmatrix*}
\sech^2\left(\frac{t}{2}\right), -\sech^2\left(\frac{t}{2}\right)\tanh\left(\frac{t}{2}\right), 0, 0
\end{pmatrix*},
\end{equation}
solves \eqref{eqn: Lombardi truncated}.

From the definition of $\Nscr$ in \eqref{eqn: principal part}, it is straightforward to check that $\varsigmab'(z) = \Nscr(\varsigmab(z),\nu)$ for all $\nu$.
Next, the explicit structure of the truncated problem leads one to develop reversible periodic solutions $(\yb,Y) = (\phib_{\star,\nu}^{\Alpha},0)$ to \eqref{eqn: Lombardi truncated}, valid for $\Alpha$ and $\nu$ suitably small.
In particular, 
\begin{equation}\label{eqn: phib-0}
\phib_{\star,\nu}^{\Alpha}(t) = A\phib_0(\Omega_{\star,\nu}^{\Alpha}t) + \O(\Alpha^2),
\qquad
\Omega_{\star,\nu}^{\Alpha} = \frac{\omega}{\nu} + \O(\nu),
\qquad
\phib_0(s) := \big(0,0,\cos(s),\sin(s)\big).
\end{equation}
Finally, there are reversible asymptotically periodic solutions $(\yb,Y) = (\yb_{\star,\nu}^{\Alpha},0)$ to \eqref{eqn: Lombardi truncated} that satisfy
\[
\lim_{t \to \infty} \big|\yb_{\star,\nu}^{\Alpha}(t) - \phib_{\star,\nu}^{\Alpha}(t + \nu\vartheta_{\star,\nu})\big|
= 0
\]
exponentially fast.
In a further limiting sense, $\yb_{\star,0}^0 = \varsigmab$.
In the rest of this appendix, we summarize how Lombardi extends these truncated results to the full problem \eqref{eqn: syst after NF and rescale}.

\subsection{Periodic solutions}\label{app: Lombardi periodic}
Lombardi first constructs periodic solutions of the following form.
This construction \cite[Thm.\@ 8.1.12, Sec.\@ 8.3, App.\@ 8.B]{Lombardi} requires $\restr{\A_0}{\Zcal_{0,\hsf}}$ to satisfy the periodic optimal regularity property as discussed in Appendix \ref{app: per opt reg}.
The construction also uses the symmetry in several key steps, specifically to prevent an overdetermined system and for a dimension-counting argument involved in inverting an operator between finite-dimensional spaces \cite[App.\@ 4.A, eq.\@ (4.13); Lem.\@ 4.A.9]{Lombardi}.
The analytic continuation in part \ref{part: Lombardi periodic analytic continuation} below is proved in \cite[Rem.\@ 7.2.2, Sec.\@ 8.3.2]{Lombardi}.

\begin{lemma}[Lombardi's periodic solutions]\label{lem: Lombardi periodics}
There exist $\Alpha_{\per}$, $\nu_{\per} > 0$ such that if $0 < \nu < \nu_{\per}$ and $0 \le \Alpha \le \Alpha_{\per}$, there are $2\pi$-periodic, reversible, real analytic maps $\phib_{\nu}^{\Alpha} \colon \R \to \R^4$ and $\Phiit_{\nu}^{\Alpha} \colon \R \to \Zcal_{0,\hsf}$ and a scalar $\Omega_{\nu}^{\Alpha}$ with the following properties.

\begin{enumerate}[label={\bf(\roman*)}, ref={(\roman*)}]

\item
Taking
\begin{equation}\label{eqn: Lombardi periodic full}
\begin{pmatrix*}
\yb(t) \\
Y(t)
\end{pmatrix*}
= \Alpha\!\Phib_{\nu}^{\Alpha}(t)
:= \Alpha\begin{pmatrix*}
\phib_0(\Omega_{\nu}^{\Alpha}t) + \Alpha\phib_{\nu}^{\Alpha}(\Omega_{\nu}^{\Alpha}t) \\
\Alpha\Phiit_{\nu}^{\Alpha}(\Omega_{\nu}^{\Alpha}t)
\end{pmatrix*}
\end{equation}
solves \eqref{eqn: syst after NF and rescale}.

\item
The maps $\phib_{\nu}^{\Alpha}$ and $\Phiit_{\nu}^{\Alpha}$ and the ``frequency'' $\Omega_{\nu}^{\Alpha}$ satisfy
\[
\sup_{\substack{0 < \nu < \nu_{\per} \\ 0 \le \Alpha \le \Alpha_{\per}}}
\left(\sup_{0 \le s \le 2\pi} |\phib_{\nu}^{\Alpha}(s)| + \norm{\Phiit_{\nu}^{\Alpha}(s)}_{\Zcal_0}\right) 
+ \Lip(\phib_{\nu}^{\Alpha}) + \Lip(\Phiit_{\nu}^{\Alpha})
+ \nu^{-1}\left|\Omega_{\nu}^{\Alpha} - \frac{\omega}{\nu}\right|
< \infty.
\]

\item\label{part: Lombardi periodic analytic continuation}
If $0 < b < b_0$ and $\Alpha = \alpha{e}^{-b\omega/\nu}$ for $\alpha \in [0,\Alpha_{\per}]$, then the maps $\phib_{\nu}^{\Alpha}$ and $\Phiit_{\nu}^{\Alpha}$ have analytic continuations on the strip $\U_b$.
\end{enumerate}
\end{lemma}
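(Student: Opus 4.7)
The plan is a Lyapunov--Schmidt reduction combined with a contraction mapping in periodic Sobolev spaces. First, I would rescale time by $s := \Omega t$ so that the sought profile has fixed period $2\pi$ and the unknown frequency $\Omega$ becomes one of the unknowns. Substituting the ansatz \eqref{eqn: Lombardi periodic full} into \eqref{eqn: syst after NF and rescale} and subtracting the contribution of the leading term $\phib_0$ (which solves the truncated system \eqref{eqn: Lombardi truncated} exactly when $\Omega = \omega/\nu$) yields coupled equations of the form
\begin{equation*}
\bigl(\Omega\partial_s - D_\yb\Nscr(0,\nu)\bigr)\phib = F_\csf(\phib,\Phiit,\Omega,\Alpha,\nu),
\qquad
\bigl(\nu\Omega\partial_s - \A_0\bigr)\Phiit = \nu F_\hsf(\phib,\Phiit,\Omega,\Alpha,\nu),
\end{equation*}
where $F_\csf,F_\hsf$ are $\O(\Alpha)$ by the quadraticity of $\Nscr$ and the estimates \eqref{eqn: Rscr est1}--\eqref{eqn: Rscr est2}.

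Next, I would invert these linearized operators. The hyperbolic equation is handled immediately by applying Hypothesis \ref{part: Lombardi opt reg} with base frequency $\omega$ and perturbation $\nu\Omega - \omega$; the periodic optimal regularity property produces a uniformly bounded solution operator for $\Phiit$ from $H_{\per}^{r_0}(\Zcal_{1,\hsf})$ into $H_{\per}^{r_0}(\Zcal_{0,\hsf}) \cap H_{\per}^{r_0+1}(\Zcal_\hsf)$. The central equation is more delicate: the operator $\Omega\partial_s - D_\yb\Nscr(0,\nu)$ is resonant at $\Omega = \omega/\nu$ with a four-dimensional kernel consisting of the $\cos s$ and $\sin s$ Fourier modes in the $(y_3,y_4)$ components of $\phib$. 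I would therefore perform a Lyapunov--Schmidt split, inverting on the orthogonal complement of this kernel and using two scalar parameters --- an amplitude normalization fixing the $\phib_0$-component of $\phib$ and the frequency correction $\Omega - \omega/\nu$ --- to solve the remaining solvability conditions via a finite-dimensional implicit function theorem. Reversibility is essential here (part \ref{part: Ssf reversible} of Lemma \ref{lem: Lombardi nf}): it cuts the relevant kernel in half so that the number of solvability conditions exactly matches the number of free parameters, preventing an overdetermined system.

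With both inverses assembled, the problem reduces to a fixed-point equation $U = \mathcal{T}_{\nu,\Alpha}(U)$ for $U := (\phib,\Phiit,\Omega - \omega/\nu)$. The quadratic and cubic structure of the nonlinearities, together with the estimates \eqref{eqn: Rscr est1}--\eqref{eqn: Rscr est2}, show that $\mathcal{T}_{\nu,\Alpha}$ is a contraction on a closed ball of fixed radius in $H_{\per}^{r_0+1}(\R^4) \times \bigl(H_{\per}^{r_0}(\Zcal_{0,\hsf}) \cap H_{\per}^{r_0+1}(\Zcal_\hsf)\bigr) \times \R$ whenever $\Alpha$ and $\nu$ are sufficiently small. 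Its fixed point produces $\phib_\nu^\Alpha$, $\Phiit_\nu^\Alpha$, and $\Omega_\nu^\Alpha$ with the uniform Lipschitz and sup-norm bounds, and reversibility of the solution is automatic because $\mathcal{T}_{\nu,\Alpha}$ is equivariant under $\S_\csf$ and $\S_\hsf$.

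For the analytic continuation in part \ref{part: Lombardi periodic analytic continuation}, I would work in the Banach scale of periodic maps whose Fourier coefficients decay at rate $e^{-b|k|}$, which is equivalent to holomorphic extension to $\U_b$. The hardest step --- and the main obstacle --- is that the central-part inverse incurs a loss of exponential decay of order $e^{b\omega/\nu}$ when inverted at the near-resonant scale. The scaling $\Alpha = \alpha e^{-b\omega/\nu}$ is chosen precisely so that the nonlinear terms in $\mathcal{T}_{\nu,\Alpha}$, which carry at least one power of $\Alpha$, absorb exactly this loss, and the contraction estimates then persist in the stronger analytic norm. Tracking this compensation uniformly in $\nu$ as $\nu \to 0$ is the most delicate part of the argument and is exactly why the exponentially small amplitude regime is not merely technically convenient but necessary to the construction.
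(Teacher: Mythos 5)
The paper does not itself prove this lemma; it is imported from Lombardi's monograph, with citations to [Thm.\@ 8.1.12, Sec.\@ 8.3, App.\@ 8.B] for the existence and estimates and [Rem.\@ 7.2.2, Sec.\@ 8.3.2] for the analytic continuation. Your sketch correctly identifies the two ingredients the paper highlights when citing Lombardi: the periodic optimal regularity hypothesis (Definition \ref{defn: per opt reg}) for inverting the hyperbolic component, and the reversibility symmetry for balancing the number of solvability conditions against the number of free parameters. As a reconstruction of the cited argument, your Lyapunov--Schmidt-plus-contraction structure is on the right track.

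One detail needs correcting, however. You assert that the kernel of $\Omega\partial_s - D_\yb\Nscr(0,\nu)$ at resonance is four-dimensional, ``consisting of the $\cos s$ and $\sin s$ Fourier modes in the $(y_3,y_4)$ components.'' In fact the kernel conditions couple $y_3$ and $y_4$: from the linear part of \eqref{eqn: principal part} with $\Omega = \omega/\nu$, one has $y_3' = -y_4$ and $y_4' = y_3$, whose $2\pi$-periodic solutions are spanned by $(0,0,\cos s,\sin s)$ and $(0,0,\sin s,-\cos s)$ --- a two-dimensional real kernel, not four. Restricting to $\S_{\csf}$-reversible profiles (so that $y_3$ is even and $y_4$ is odd) leaves only the first of these. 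It is this $2 \to 1$ reduction via reversibility, rather than $4 \to 2$, that makes the number of solvability conditions match the single free parameter $\Omega$ once the amplitude of the $\phib_0$-component is normalized. Your account of the analytic continuation captures the right intuition --- that the exponentially small amplitude $\Alpha = \alpha e^{-b\omega/\nu}$ is calibrated to a quantity that grows like $e^{b\omega/\nu}$ --- though Lombardi arrives at the holomorphic extension by first constructing the periodic profile on $\R$ for a full $\O(1)$ range of $\Alpha$ and then establishing analytic dependence on the amplitude, rather than by running the fixed point directly in exponentially-weighted analytic norms; the exponential restriction on $\Alpha$ then determines the width of the strip of holomorphy.
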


\subsection{Lombardi's nanopteron ansatz}
With the periodic solutions $\Alpha\!\Phib_{\nu}^{\Alpha}$ from \eqref{eqn: Lombardi periodic full} in hand, Lombardi can make his nanopteron ansatz for \eqref{eqn: syst after NF and rescale}:
\begin{equation}\label{eqn: Lombardi nanopteron ansatz}
\begin{pmatrix*}
\yb(z) \\
Y(z)
\end{pmatrix*}
= \Sigmab(z)
+ \Alpha\Phib_{\nu}^{\Alpha}\left(z+\vartheta\tanh\left(\frac{z}{2}\right)\right)
+ \Upsilonb(z).
\end{equation}
This ansatz is posed for $z \in \U_b$ with $b \in (0,b_0)$.
The leading order term $\Sigmab$ in the ansatz \eqref{eqn: Lombardi nanopteron ansatz} was defined above in \eqref{eqn: Lombardi Sigma}, and $\Sigmab$ extends to be analytic on any such $\U_b$.
The oddness of the $\tanh$-coefficient helps to preserve the $(\S_{\csf},\S_{\hsf})$-reversibility of the ansatz if $\Upsilonb$ is $(\S_{\csf},\S_{\hsf})$-reversible.

Inserting this ansatz into the normal form change of variables \eqref{eqn: Lombardi nu} and \eqref{eqn: Lombardi mu} and using the properties of $\Tscr$ in part \ref{part: Tscr} of Lemma \ref{lem: Lombardi nf} leads to the nanopteron \eqref{eqn: Lombardi nanopteron}.
Replacing $\nu$ in these results with $\Lfrak_0^{1/2}\mu^{1/2}$ and putting $\omega_{\mu}^{\alpha} := \Lfrak_0^{1/2}\Omega_{\Lfrak_0^{1/2}\mu^{1/2}}^{\alpha}$, per Lemma \ref{lem: Lombardi periodics}, leads to the other conclusions of Theorem \ref{thm: Lombardi}; the details do require some algebraic care, but there are no surprises.

To ensure that $\Phi_{\nu}^{\Alpha}(z+\vartheta\tanh(z/2))$ is defined for $z \in \U_b$, Lombardi takes $\Alpha = \alpha{e}^{-b\omega/\nu}$ with $\alpha \in [0,\Alpha_{\per}]$, as in part \ref{part: Lombardi periodic analytic continuation} of Lemma \ref{lem: Lombardi periodics} and assumes $\vartheta=\O(\nu)$, with a more precise bound obtained later.
Lombardi eventually pushes the upper bound on $\alpha$ to an $\O(1)$ constant independent of $\nu$ using certain $b$-independent estimates that underly the following work \cite[Sec.\@ 7.3.7]{Lombardi}; among these estimates is the bound \eqref{eqn: unif opt reg} from the localized optimal regularity hypothesis.

The unknown remainder is
\[
\Upsilonb
= \begin{pmatrix*}
\upsilonb \\
\Upsilonit
\end{pmatrix*},
\qquad \upsilonb \colon \U_b \to \C^4,
\qquad \Upsilonit \colon \U_b \to \Zcal_{0,\hsf}.
\]
We will specify the precise function spaces for $\upsilonb$ and $\Upsilonit$ momentarily and for now just mention that both functions should vanish exponentially fast at $\pm\infty$.

Under the ansatz \eqref{eqn: Lombardi nanopteron ansatz}, Lombardi then converts the problem \eqref{eqn: syst after NF and rescale} into the system
\begin{equation}\label{eqn: upsilon system}
\begin{cases}
\upsilonb'(z) - D_{\yb}\mathscr{N}(\varsigmab,\nu)\upsilonb(z) = \Gscr_{\csf}[\upsilonb(z),\Upsilonit(z),\alpha,\vartheta,\nu] \\[5pt]
\Upsilonit'(z) - \frac{\A_0}{\nu}\Upsilonit(z) = \Gscr_{\hsf}[\upsilonb(z),\Upsilonit(z),\alpha,\vartheta,\nu]
\end{cases}
\end{equation}
for appropriate operators $\Gscr_{\csf}$ and $\Gscr_{\hsf}$, which are $\C^4$- and $\Zcal_{1,\hsf}$-valued, respectively.

\subsection{Phase shift selection and error term construction}\label{app: Lombardi final fp syst}
The equation for $\upsilonb$ above is really just an ordinary differential equation in $\C^4$, and Lombardi solves this with variation of parameters \cite[Sec.\@ 7.3.4, 8.4.2]{Lombardi} to write
\begin{equation}\label{eqn: upsilonb fp eqn}
\upsilonb
= \Fscr[\upsilonb,\Upsilonit,\alpha,\vartheta,\nu],
\end{equation}
where $\Fscr$ is a superposition of integral operators.
By requiring $\upsilonb$ to be $\S_{\csf}$-reversible and $\Upsilonit$ to be $\S_{\hsf}$-reversible and using the assumption that $\upsilonb$ and $\Upsilonit$ vanish at $\pm\infty$, Lombardi eliminates three of the four ``free constants'' that naturally arise from variation of parameters.
However, in the process a ``solvability condition'' arises: given reversible $\upsilonb$ and $\Upsilonit$, the solution $\Fscr[\upsilonb,\Upsilonit,\alpha,\vartheta,\nu]$ is reversible if and only if 
\begin{equation}\label{eqn: Lombardi osc int}
\int_0^{\infty} e^{i\omega{t}/\nu}\mathcal{I}[\upsilonb,\Upsilonit,\alpha,\vartheta,\nu](t)\dt
= 0
\end{equation}
for a certain operator $\I$.

The vanishing of this integral can be forced \cite[Sec.\@ 7.3.5, 8.4.3, p.\@ 226]{Lombardi} by taking $\vartheta$ to have a special value of the form
\begin{equation}\label{eqn: Theta fp}
\vartheta
= \Theta(\upsilonb,\Upsilonit,\alpha,\nu)
\end{equation}
for a certain operator $\vartheta$.
This also eliminates $\vartheta$ as an unknown in the system \eqref{eqn: upsilon system}.
The heart of the construction of $\vartheta$ is an intermediate value theorem argument \cite[Prop.\@ 7.3.20, Thm\@ 8.4.2]{Lombardi} that requires $\alpha$ to be $\O(\nu^2)$ and nonzero; this is one technical reason why Lombardi's periodic amplitudes are never 0.

\begin{remark}\label{rem: why Lombardi real analytic}
Lombardi achieves very precise control over the oscillatory integral \eqref{eqn: Lombardi osc int} via estimates that depend on the analyticity of the integrand \cite[Lem.\@ 2.1.1]{Lombardi}.
This is why Lombardi demands that the original maps $\A_1$ and $\Ncal$ in \eqref{eqn: Lombardi G} be analytic and why, in our application of spatial dynamics to FPUT lattices, we require that the spring potentials be real analytic.
\end{remark}

Lombardi then turns to the equation for $\Upsilonit$ in \eqref{eqn: upsilon system}.
For fixed $\alpha$, $\vartheta$, and $\nu$, the operator $\Gscr_{\hsf}[\cdot,\cdot,\alpha,\vartheta,\nu]$ maps $\C^4 \times \Zcal_{0,\hsf}$ into $\Zcal_{1,\hsf}$; in order to stay within the same function space and to solve the $\Upsilonit$ equation, Lombardi deploys optimal regularity.
(A more detailed motivation for the use of optimal regularity here, with references to the exact formula for $\Gscr_{\hsf}$, appears in the first two paragraphs of \cite[Sec.\@ 8.4.1]{Lombardi}.)
Using the localized optimal regularity language of Appendix \ref{app: loc opt reg}, he takes
\[
\upsilonb \in \W_{q\Lfrak_0^{-1/2},b}^{r_0}(\C^4;\Mfrak)
\quadword{and}
\Upsilonit \in \W_{q\Lfrak_0^{-1/2},b}^{r_0}(\Zcal_{0,\hsf};\Mfrak) \cap \W_{q\Lfrak_0^{-1/2},b}^{r_0+1}(\Zcal_{\hsf};\Mfrak)
\]
and restricts these functions to a ball of radius $O(\nu)$.
Lombardi then applies the optimal regularity property that $\restr{\A_0}{\Zcal_{0,\hsf}}$ satisfies to solve for $\Upsilonit$ as
\begin{equation}\label{eqn: Upsilonit fp eqn}
\Upsilonit 
= \K_b(\nu)\Gscr_{\hsf}[\upsilonb,\Upsilonit,\alpha,\vartheta,\nu].
\end{equation}
This equation and \eqref{eqn: upsilonb fp eqn} constitute a fixed point system for $(\upsilonb,\Upsilonit)$ in the auxiliary small parameters $\alpha$ and $\nu$, once we have replaced $\vartheta$ by \eqref{eqn: Theta fp}.

In order to obtain good contraction mapping estimates \cite[Sec.\@ 8.4.4]{Lombardi} on the system \eqref{eqn: upsilonb fp eqn}--\eqref{eqn: Upsilonit fp eqn}, it is important to take $r_0 \in \{0,1\}$; while the system is well-defined on spaces with $r_0 > 1$, certain higher derivatives on the periodic functions from Lemma \ref{lem: Lombardi periodics} introduce pernicious powers of $\nu^{-1}$ into the estimates.
Otherwise, now that all the players are at last on the stage, the analysis of the fixed point problem given by \eqref{eqn: upsilonb fp eqn} and \eqref{eqn: Upsilonit fp eqn} reduces to a fairly routine quantitative contraction mapping argument.

\subsection{Toward the nonexistence of solitary waves}\label{app: Lombardi solitary nonexistence}
Suppose that instead of making the nanopteron ansatz \eqref{eqn: Lombardi nanopteron ansatz} for the problem \eqref{eqn: syst after NF and rescale}, we look for purely localized perturbations of $\Sigmab$ from \eqref{eqn: Lombardi Sigma} and posit instead
\[
(\yb,Y)
= \Sigmab + \Upsilonb
\] 
with $\Upsilonb$ exponentially localized, as before.
Lombardi shows \cite[Sec.\@ 7.4.1.1]{Lombardi} that if such an ansatz solves \eqref{eqn: syst after NF and rescale}, then 
\[
\int_{-\infty}^{\infty} e^{i\omega{t}/\nu}\tilde{\I}_{\nu}[\Upsilonb](t) \dt
= 0
\]
for a certain operator $\tilde{\I}_{\nu}$.
Just as in \eqref{eqn: Lombardi osc int}, another oscillatory integral must vanish.
Lombardi achieves the detailed, delicate expansion of this integral as
\[
\int_{-\infty}^{\infty} e^{i\omega{t}/\nu}\tilde{\I}_{\nu}[\Upsilonb](t) \dt
= \nu^{-2}e^{-\omega\pi/\nu}\big(\Ifrak_1 + \O(\nu^{1/4}\big)
\]
for a certain coefficient $\Ifrak_1$.
So, if $\Ifrak_1 \ne 0$, then, for $\nu$ suitably small, the integral cannot vanish, and there are no purely localized solutions.
Although Lombardi does obtain a formula for $\Ifrak_1$ in terms of numerous other quantities associated with the normal form transformation above \cite[Eq. (7.17), Prop.\@ 7.4.8]{Lombardi}, whether $\Ifrak_1$ vanishes is not at all obvious.
As Lombardi himself writes, ``we have a generic non existence [{\it{sic}}] result, but from a theoretical point of view, we are not able in general to determine whether [the coefficient $\Ifrak_1$] vanishes or not'' \cite[Rem.\@ 7.1.17]{Lombardi}.

Lombardi expects, but does not prove, that ``generically'' $\Ifrak_1 \ne 0$.
The adverb ``generically'' refers to the original nonlinear operator $\G$ from \eqref{eqn: Lombardi G} governing the whole problem.
It is not precisely clear how to interpret ``generically'' when referring to a fairly arbitrary operator.
One possibility, suggested by the language in \cite[Rem.\@ 8.5.1]{Lombardi}, \cite[Sec.\@ 5.3.1]{james-sire}, \cite[Sec.\@ 6.3.1]{sire} is that if $\G$ depends on a fixed parameter --- for example, the Bond number in the water wave problem, or the mass ratio $w$ in the mass dimer long wave problem --- then for almost all values of that parameter, there are no localized solutions, at least when the problems natural  small parameter is sufficiently small.
\section{Beale's Nanopteron Method}\label{app: Beale}
As we discussed in Section \ref{sec: prior nanopteron results}, the papers \cite{faver-wright, faver-spring-dimer} use a functional-analytic method originally due to Beale \cite{beale} to construct the mass and spring dimer nanopterons in relative displacement coordinates.
Here we sketch this method and point out how certain particular steps relate to our spatial dynamics program.
For simplicity, as in Section \ref{sec: spat dyn in rel disp}, we assume that the spring forces contain linear and quadratic terms only, so $\V_1(r) = r+r^2$ and $\V_2(r) = \kappa{r}+\beta{r}^2$.
Then the original relative displacement traveling wave problem \eqref{eqn: rel disp tw eqns} becomes
\begin{equation}\label{eqn: rel disp tw eqns Beale}
\begin{cases}
c^2\varrho_1'' = -(1+w)\varrho_1 + \kappa(wS^1+S^{-1})\varrho_2 - (1+w)\varrho_1^2 + \beta(wS^1+S^{-1})\varrho_2^2 \\
c^2\varrho_2'' = (S^1+wS^{-1})\varrho_1 - \kappa(1+w)\varrho_2 + (S^1+wS^{-1})\varrho_1^2 - \beta(1+w)\varrho_2^2.
\end{cases}
\end{equation}

\subsection{Diagonalization}
The problem \eqref{eqn: rel disp tw eqns Beale} is not stated in the ideal coordinates for Beale's method.
First consider the linear operator 
\begin{equation}\label{eqn: rel disp fourier mult}
L
:= \begin{bmatrix*}
-(1+w) &\kappa(wS^1+S^{-1}) \\
(S^1+wS^{-1}) &-\kappa(1+w)
\end{bmatrix*}
\end{equation}
as a Fourier multiplier with a matrix-valued symbol.
That is, for each $k \in \R$, there is a matrix $\tilde{L}(k) \in \C^{2\times2}$ such that 
\[
L[e^{ik\cdot}\vb](x)
= e^{ikx}\tilde{L}(k)\vb, \ x \in \R, \vb \in \C^2.
\]
The eigenvalues of $\tilde{L}(k)$ are the distinct numbers $\tlambda_{\pm}(k)$ from \eqref{eqn: tlambda}, so $\tilde{L}(k)$ is diagonalizable.
More precisely, we can write
\begin{equation}\label{eqn: diagonalized}
\tilde{L}(k)
= \tilde{J}(k)\begin{bmatrix*}
\tlambda_-(k) &0 \\
0 &\tlambda_+(k)
\end{bmatrix*}
\tilde{J}(k)^{-1},
\end{equation}
where $\tilde{J} \colon \R \to \C^{2\times2}$ is $2\pi$-periodic and real analytic.
The columns of $\tilde{J}(k)$ consist of eigenvectors of $\tilde{L}(k)$, and so there are many such $\tilde{J}(k)$ that we might use; in the mass and spring dimer cases, we choose particular scalings of the eigenvectors to preserve the parities of the traveling wave profiles later.
This is one instance of the use of (the different) symmetries for mass and spring dimers.

A careful examination of the formula for $\tlambda_-$ in \eqref{eqn: tlambda} allows us to factor
\[
\tlambda_-(k)
= 2(1-\cos(k))\tLambda_-(k),
\]
where $\tLambda_-$ is $2\pi$-periodic and real analytic.
Likewise, $\tlambda_+$ is $2\pi$-periodic and real analytic, and so we have the Fourier series expansions
\[
\tilde{J}(k) = \sum_{n=-\infty}^{\infty} e^{ikn}J_n,
\qquad
\tLambda_-(k) = \sum_{n=-\infty}^{\infty} c_n^-e^{ikn},
\quadword{and}
\tlambda_+(k) = \sum_{n=-\infty}^{\infty} c_n^+e^{ikn}.
\]
Here $J_n \in \C^{2 \times 2}$ and $c_n^{\pm} \in \C$.
Since $k \mapsto e^{ikn}$ is the symbol of the shift operator $S^n$, we are motivated to define
\[
J := \sum_{n=-\infty}^{\infty} S^nJ_n,
\quadword{and}
\Lambda_{\pm} := \sum_{n=-\infty}^{\infty} c_n^{\pm}S^n
\]
as bounded operators on $L^{\infty} \times L^{\infty}$ and $L^{\infty}$, respectively.
Since $k \mapsto 2(1-\cos(k))$ is the symbol of $-(S^1-2+S^{-1})$, it follows that
\[
L
= J\begin{bmatrix*}
-(S^1-2+S^{-1})\Lambda_- &0 \\
0 &\Lambda_+
\end{bmatrix*}
J^{-1}.
\]

Now we are ready to diagonalize.
Put 
\[
\begin{pmatrix*}
\pfrak_1 \\
\pfrak_2
\end{pmatrix*}
= J\begin{pmatrix*}
\varrho_1 \\
\varrho_2
\end{pmatrix*}
\]
to see that, if the profiles $\varrho_1$ and $\varrho_2$ are assumed to be differentiable and bounded, then \eqref{eqn: rel disp tw eqns} is equivalent to 
\begin{equation}\label{eqn: beale diag}
\begin{cases}
c^2\pfrak_1'' -(S^1-2+S^{-1})\Lambda_-[\pfrak_1+\Qsf_1(\pfrak_1,\pfrak_2)] = 0 \\
c^2\pfrak_2'' + \Lambda_+[\pfrak_2+\Qsf_2(\pfrak_1,\pfrak_2)] = 0.
\end{cases}
\end{equation}

For $f \in \Cal^1(\R)$, put 
\[
(\I_+f)(x) := \int_0^1 f(x+s) \ds
\quadword{and}
(\I_-f)(x) := \int_{-1}^0 f(x+s);
\]
we met $\I_-$ in \eqref{eqn: int ops first int}.
Then
\[
S^1-2+S^{-1}
= \partial_x^2\I_+\I_-.
\]
Consequently, we may integrate the first equation in \eqref{eqn: beale diag} twice to find that 
\[
c^2\pfrak_1-\I_+\I_-\Lambda_-[\pfrak_1+\Qsf_1(\pfrak_1,\pfrak_2)]
\]
is constant for any solution $(\pfrak_1,\pfrak_2)$ of \eqref{eqn: beale diag}.
Thus this quantity is a first integral for the relative displacement traveling wave problem.
We took pains above to diagonalize using shift operators, not Fourier multipliers, so that this first integral is valid for as broad a class of profiles as possible, like the first integral in Iooss--Kirchg\"{a}ssner coordinates from Section \ref{sec: first int}.

Suppose we choose the value of this first integral to be zero, very much along the lines of the remarks preceding Lemma \ref{lem: chi3 reduction consistency}.
It turns out that the operator $c^2-\I_+\I_-\Lambda_-$ is invertible on the range of $\I_+\I_-$.
Put
\[
\Msf_-(c)
:= (c^2-\I_+\I_-\Lambda_-)^{-1}\I_+\I_-\Lambda_-
\]
to convert the first equation in \eqref{eqn: beale diag} to
\begin{equation}\label{eqn: beale diag first comp}
\pfrak_1 + \Msf_-(c)\Qsf_1(\pfrak_1,\pfrak_2)
= 0.
\end{equation}
This sort of ``cancelation'' is precisely the method of Friesecke and Pego \cite[Sec.\@ 2, Eq.\@ (2.3)]{friesecke-pego1} for rearranging the monatomic traveling wave problem into a friendly form.
In the dimer, the more complicated structure of the equations occludes the first integral until after diagonalization.

\subsection{Long wave coordinates}
Now introduce the long wave ansatz
\begin{equation}\label{eqn: beale lw ansatz}
\pfrak_1(x) = \nu^2\Rho_1(\nu{x}),
\qquad
\pfrak_2(x) = \nu^2\Rho_2(\nu{x}),
\quadword{and}
c^2 = c_*^2 + \nu^2,
\end{equation}
where $c_*$ is the dimer's speed of sound, defined precisely in \eqref{eqn: cs defn}.
Following our notation in Section \ref{sec: faver wright comparisons}, we use $\nu$ for the long wave parameter here, not $\ep$.
After rewriting, relabeling, and condensing the terms and operators in \eqref{eqn: beale diag} and \eqref{eqn: beale diag first comp}, we arrive at
\begin{equation}\label{eqn: beale1}
\begin{cases}
\Rho_1 + \Qsf_1^{\nu}(\Rho_1,\Rho_2) = 0 \\
\nu^2(\cs^2+\nu^2)\Rho_2'' + \Msf_{\nu}\Rho_2 +  \Qsf_2^{\nu}(\Rho_1,\Rho_2) = 0.
\end{cases}
\end{equation}
The $\nu$-dependencies are all delicate, and we cheerfully omit the details here.

It turns out that \eqref{eqn: beale1} has the solution
\begin{equation}\label{eqn: Beale ep=0 soln}
\Rho_1(X) = \Csf_1\sech^2(\Csf_2X)
\quadword{and}
\Rho_2(X) = 0
\end{equation}
at $\nu = 0$.
The constants $\Csf_1$ and $\Csf_2$ depend on $\kappa$, $\beta$, and $w$; moreover, we need $\beta+\kappa^3 \ne 0$.
Due to that $\nu$-dependent linear change of variables, $\Csf_1$ and $\Csf_2$ are not quite the constants that appear in the $\sech^2$-type leading order solutions discussed in Section \ref{sec: faver wright comparisons}.

\subsection{The naive perturbation problem}
The natural instinct is to perturb from the $\nu=0$ solution in \eqref{eqn: Beale ep=0 soln} by setting
\begin{equation}\label{eqn: beale naive ansatz}
\Rho_1(X) = \Csf_1\sech^2(\Csf_2X) + \zeta_1(X)
\quadword{and}
\Rho_2(X) = \zeta_2(X),
\end{equation}
where $\zeta_1$ and $\zeta_2$ are ``small'' and exponentially localized.
Evaluating \eqref{eqn: beale1} at this ansatz leads to a system of two equations for $\zeta_1$ and $\zeta_2$.

It is fairly easy to convert the first of these equations into a fixed point equation for $\zeta_1$ as a nonlinear function of itself, $\zeta_2$, and $\nu$, but the second equation in \eqref{eqn: beale1} of course requires $\zeta_2$ to satisfy
\begin{equation}\label{eqn: beale2}
\nu^2(\cs^2+\nu^2)\zeta_2'' + \Msf_{\nu}\zeta_2
= -\Qsf_2^{\nu}(\Csf_1\sech^2(\Csf_2\cdot)+\zeta_1,\zeta_2).
\end{equation}
A careful analysis of the roots of the symbol of the Fourier multiplier $\nu^2(\cs^2+\nu^2)\partial_X^2+\Msf_{\nu}$ shows that if 
\begin{equation}\label{eqn: Omega-nu}
\Omegait_{\nu} 
:= \frac{\omega_{\sqrt{\cs(\kappa,w)^2+\nu^2}}}{\nu},
\end{equation}
where $\omega_{\sqrt{\cs(\kappa,w)^2+\nu^2}}$ was defined in Proposition \ref{prop: Lambda props}, then whenever $f$ and $g$ are any exponentially localized functions meeting
\begin{equation}\label{eqn: beale2.5}
\nu^2(\cs^2+\nu^2)f''+\Msf_{\nu}f
= g,
\end{equation}
the Fourier transform $\hat{g} = \ft[g]$ of $g$ must also satisfy
\begin{equation}\label{eqn: beale solv}
\hat{g}(\pm\Omegait_{\nu})
= 0.
\end{equation}
This critical frequency $\Omegait_{\nu} = \O(\nu^{-1})$ is closely tied to the ``acoustic'' and ``optical'' band structure of the dispersion relation for \eqref{eqn: rel disp tw eqns}; we do not enter into a discussion of these terms here but refer the reader to \cite[Rem.\@ 2.2]{faver-wright} and \cite[Ch.\@ IV, Sec.\@ 13, 15]{brill}.

Thus it appears that the {\it{two}} unknown perturbation terms $\zeta_1$ and $\zeta_2$ must really meet {\it{four}} equations: the fixed point equation for $\zeta_1$, the equation \eqref{eqn: beale2} for $\zeta_2$, and the Fourier transform conditions
\begin{equation}\label{eqn: beale3}
\ft[\Qsf_2^{\nu}(\Csf_1\sech^2(\Csf_2\cdot) + \zeta_1,\zeta_2)](\pm\Omegait_{\nu})
= 0.
\end{equation}
Consequently, the long wave problem now seems quite overdetermined.
However, by first restricting our analysis to mass or spring dimers, not the general dimer, we can reduce the number of equations by one.
Symmetry properties, which really are inherited from the precise choice of the eigenvectors in $\tilde{J}(k)$ back in \eqref{eqn: diagonalized}, allow us to assume that $\zeta_1$ is even and $\zeta_2$ is odd in the case of the mass dimer and that both perturbation terms are even for the spring dimer.
This allows us to replace the two equations in \eqref{eqn: beale3} by just the Fourier transform at $+\Omegait_{\nu}$.

\subsection{Beale's nanopteron ansatz}
We emphasize that the following work holds equally well for the mass dimer or the spring dimer, but not (to date) the general dimer.

Despite the reduction from symmetry, we still have three equations and two unknowns, and here we turn to Beale's insights.
One does not need to view the system \eqref{eqn: beale1} strictly within a universe of exponentially localized functions.
Rather, the same symbol properties that gave the solvability condition \eqref{eqn: beale solv} for \eqref{eqn: beale2.5} mean that the exact sinusoidal functions
\[
\Rho_1(X) = 0
\quadword{and}
\Rho_2(X) = \psi(\Omegait_{\nu}X)
\]
solve the linearization (at $\Rho_1=\Rho_2=0$) of \eqref{eqn: beale1}.
Here $\psi(s) = \sin(s)$ for the mass dimer and $\psi(s) = \sin(s)$ for the spring dimer.
It is possible to use a quantitative version of the classical Crandall--Rabinowitz--Zeidler ``bifurcation from a simple eigenvalue'' technique \cite{crandall-rabinowitz, zeidler} to extend these solutions into a family of periodic solutions to the full nonlinear problem \eqref{eqn: beale1} of the form
\begin{equation}\label{eqn: beale periodic}
\Rho_1(X) = a\phi_{1,\nu}^a(X)
\quadword{and}
\Rho_2(X) = a\phi_{2,\nu}^a(X) = a\psi(\Omegait_{\nu}^aX) + \O(a^2)
\end{equation}
for $a \in \R$ sufficiently small.
The profiles $\phi_{k,\nu}^a$ are $2\pi$-periodic with frequency $\Omegait_{\nu}^a = \Omegait_{\nu} + \O(a^2)$; it then follows from \eqref{eqn: Omega-nu} and \eqref{eqn: Lombardi periodic frequency} that $\Omegait_{\nu}^a$ agrees to leading order with the frequency of the spatial dynamics-derived periodic profiles.
In a subtle way, symmetry nicely reduces the number of equations that must be solved in the Crandall--Rabinowitz--Zeidler-type bifurcation that constructs the periodic solutions, very much along the lines of what occurs in Lombardi's periodic construction (Appendix \ref{app: Lombardi periodic}).

Beale's success in overcoming the analogous difficulties in the water wave problem \cite{beale} hinged on incorporating the periodic solutions \eqref{eqn: beale periodic} into a revision of the original exponentially localized ansatz \eqref{eqn: beale naive ansatz} by positing instead
\[
\Rho_1(X) = \Csf_1\sech^2(\Csf_2X) + \zeta_1(X) + a\phi_{1,\nu}^a(X)
\quadword{and}
\Rho_2(X) = \zeta_2(X)+ a\phi_{2,\nu}^a(X).
\]
Now there are three unknowns: the two perturbation terms $\zeta_1$ and $\zeta_2$ and the ``amplitude'' coefficient $a$.
Careful manipulations of the new versions of \eqref{eqn: beale2} and \eqref{eqn: beale3} that result from this ansatz lead to fixed point equations for $\zeta_2$ and $a$, whereas previously we only had a good equation for $\zeta_1$.
A quantitative contraction mapping argument, taking into account an unusually bad Lipschitz constant, produces, for $\nu$ sufficiently small, the solutions that ultimately lead to \eqref{eqn: intro dimer nano}.

\subsection{The periodic amplitude coefficient}
In particular, if $\zeta_1^{\nu}$, $\zeta_2^{\nu}$, and $a_{\nu}$ are the solutions, then $a_{\nu}$ satisfies
\begin{equation}\label{eqn: beale method a eqn}
a_{\nu}
= \int_{-\infty}^{\infty} \I_{\nu}[\eta_1^{\nu},\eta_2^{\nu},a_{\nu}](X)\sin(\Omegait_{\nu}X) \dX.
\end{equation}

The function $\I_{\nu}[\eta_1,\eta_2,a]$ is as smooth as the dimer's potentials $\V_1$ and $\V_2$ and the input functions $\eta_1$ and $\eta_2$ are; a bootstrapping argument shows that the solutions $\eta_1^{\nu}$ and $\eta_2^{\nu}$ are as smooth as $\V_1$ and $\V_2$.
When $\V_1$ and $\V_2$ are $\Cal^{\infty}$, so is $\I_{\nu}[\eta_1^{\nu},\eta_2^{\nu},a_{\nu}]$, and so a Riemann--Lebesgue estimate on the oscillatory integral \eqref{eqn: beale method a eqn} shows that $a_{\nu}$ is small beyond all algebraic orders of $\nu$, which is the estimate \eqref{eqn: small BAAO} above.

\begin{remark}\label{rem: why Cinfty potentials}
More precisely, this Riemann--Lebesgue estimate states that if $f$ and its first $r$ derivatives vanish exponentially fast and if $\Omegait_{\nu} = \O(\nu^{-1})$, then 
\[
\left|\int_{-\infty}^{\infty} f(X)e^{-i\Omegait_{\nu}X} \dX\right|
\le C_r\nu^r.
\]
See Lemma A.5 in \cite{faver-wright}.
The fixed point construction of $\eta_1^{\nu}$ and $\eta_2^{\nu}$ above does not require the spring potentials to be $\Cal^{\infty}$, merely $\Cal^r$ for a sufficiently large $r$.
And so if the potentials are $\Cal^r$ but not $\Cal^{r+1}$, then $\eta_1^{\nu}$, $\eta_2^{\nu}$ are only $\Cal^r$, and thus $a_{\nu}$ is not necessarily small beyond all orders.
\end{remark}

\subsection{Further dialogue with the Lombardi--spatial dynamics method}\label{app: Beale sd comp}
The principal contrast between Lombardi's nanopteron construction and Beale's is that Lombardi fixes the amplitude of his periodic ripple and then runs a fixed point argument for the exponentially localized error alone, whereas Beale runs a fixed point argument for both the error and the amplitude.
Secondarily, Lombardi is required to accept a small phase shift in the periodic terms, whereas the phase shift is optional for Beale and, to all appearances, not necessarily small.
Based on the Amick--Toland method \cite{amick-toland} and the adapted outline by Faver and Wright \cite[Sec.\@ 7]{faver-wright}, it should be possible to select first an arbitrary phase shift and then run Beale's method.
See \cite[Rem.\@ 7.3.31]{Lombardi} for a comparison along these lines in Lombardi's own words.

Another contrast is that the original relative displacement problem \eqref{eqn: rel disp eqns} is not translation invariant in the sense that the position problem \eqref{eqn: original equations of motion} is.
That is, $\{r_j\}_{j \in \Z}$ solves \eqref{eqn: rel disp eqns}, it need not be the case that $\{\tilde{r}_j\}_{j \in \Z}$ does, too, where $\tilde{r}_j(t) := r_j(t)+d_1t+d_2$, $d_1$, $d_2 \in \R$.
This contrasts with the position invariances described in Remark \ref{rem: pos trans invar}.

Diagonalization has not been used in the other applications of Beale's method to FPUT and MiM lattices \cite{hoffman-wright, faver-hupkes-equal-mass, faver-mim-nanopteron}.
Instead, those traveling wave problems possess a ``mean-zero'' symmetry in one of their components; roughly, the Fourier mode at zero of one component of the problem is always zero.
This does occur in the first equation in \eqref{eqn: beale diag}, but we had to exploit it further with the ``Friesecke--Pego cancelation.''
In those other ``material limits,'' the mean-zero symmetry has a host of technical applications; in particular, it reduces by one the dimension of the kernel of the linearization of the periodic problem, thereby facilitating, in part, the modified bifurcation-from-a-simple-eigenvalue approach.

Finally, we note two possible disadvantages of the spatial dynamics/Lombardi method relative to Beale's.
First, the nature of Lombardi's rescaling (Lemma \ref{lem: Lombardi nf}) requires, in the spring dimer case, $\beta+\kappa^3\ne0$.
However, this is necessary to apply Lombardi's periodic construction.
In \cite{faver-spring-dimer}, it was possible to produce the periodics even with $\beta+\kappa^3=0$.

Second, as we noted in Section \ref{sec: prior nanopteron results}, most other applications of Beale's ansatz \cite{hoffman-wright,faver-mim-nanopteron, faver-hupkes-equal-mass} have produced nanopterons whose localized core is $\O(1)$ in the problem's small parameter.
However, Lombardi's long wave scaling ensures that his cores are indeed small in the relevant small parameter; this is apparent in \eqref{eqn: Lombardi nanopteron}.
Although the amplitude estimates of Beale's method have been, heretofore, less precise than those of Lombardi, it seems that Beale's method affords more flexibility as to the precise structure of the nanopteron.

\bibliographystyle{siam}
{ \footnotesize
\bibliography{spatial_dynamics_bib}{}
}

\end{document}